\numberwithin{equation}{section}
\newtheorem{theorem}{Theorem}[section]
\newtheorem{lemma}{Lemma}[section]
\newtheorem{corollary}{Corollary}[section]
\newtheorem{proposition}{Proposition}[section]
\renewcommand{\figurename}{\textbf{Fig.}}
\begin{document}
\baselineskip=14pt

\newcommand{\la}{\langle}
\newcommand{\ra}{\rangle}
\newcommand{\psp}{\vspace{0.4cm}}
\newcommand{\pse}{\vspace{0.2cm}}
\newcommand{\ptl}{\partial}
\newcommand{\dlt}{\delta}
\newcommand{\sgm}{\sigma}
\newcommand{\al}{\alpha}
\newcommand{\be}{\beta}
\newcommand{\G}{\Gamma}
\newcommand{\gm}{\gamma}
\newcommand{\vs}{\varsigma}
\newcommand{\Lmd}{\Lambda}
\newcommand{\lmd}{\lambda}
\newcommand{\td}{\tilde}
\newcommand{\vf}{\varphi}
\newcommand{\yt}{Y^{\nu}}
\newcommand{\wt}{\mbox{wt}\:}
\newcommand{\rd}{\mbox{Res}}
\newcommand{\ad}{\mbox{ad}}
\newcommand{\stl}{\stackrel}
\newcommand{\ol}{\overline}
\newcommand{\ul}{\underline}
\newcommand{\es}{\epsilon}
\newcommand{\dmd}{\diamond}
\newcommand{\clt}{\clubsuit}
\newcommand{\vt}{\vartheta}
\newcommand{\ves}{\varepsilon}
\newcommand{\dg}{\dagger}
\newcommand{\tr}{\mbox{Tr}}
\newcommand{\ga}{{\cal G}({\cal A})}
\newcommand{\hga}{\hat{\cal G}({\cal A})}
\newcommand{\Edo}{\mbox{End}\:}
\newcommand{\for}{\mbox{for}}
\newcommand{\kn}{\mbox{ker}}
\newcommand{\Dlt}{\Delta}
\newcommand{\rad}{\mbox{Rad}}
\newcommand{\rta}{\rightarrow}
\newcommand{\mbb}{\mathbb}
\newcommand{\lra}{\Longrightarrow}
\newcommand{\X}{{\cal X}}
\newcommand{\Y}{{\cal Y}}
\newcommand{\Z}{{\cal Z}}
\newcommand{\U}{{\cal U}}
\newcommand{\V}{{\cal V}}
\newcommand{\W}{{\cal W}}
\newcommand{\sta}{\theta}
\setlength{\unitlength}{3pt}
\newcommand{\msr}{\mathscr}
\newcommand{\wht}{\widehat}
\newcommand{\mfk}{\mathfrak}
\renewcommand{\figurename}{\textbf{Fig.}}

\begin{center}{\large \bf Hilbert Polynomials of Noncanonical Orthogonal\\ \pse Oscillator  Representations of $sl(n)$} \footnote {2010 Mathematical Subject
Classification. Primary 17B10, 13D40; Secondary 14M12, 35C05.}
\end{center}

\vspace{0.2cm}

\begin{center}{\large Hengjia Zhang\footnote{Corresponding author.} and Xiaoping Xu\footnote{Research supported
 by National Key R\&D Program of China 2020YFA0712600.
} }\end{center}

\begin{center}{
HLM, Institute of Mathematics, Academy of Mathematics \& System
Sciences\\ Chinese Academy of Sciences, Beijing 100190, P.R. China
\\ \& School of Mathematics, University of Chinese Academy of Sciences,\\ Beijing 100049, P.R. China}\end{center}

\begin {abstract}
\quad

By applying Fourier transformations to the natural orthogonal oscillator representations of special linear Lie algebras, Luo and the second author (2013) obtained a large family of infinite-dimensional irreducible representations of the algebras on the homogeneous solutions of the Laplace equation. 
In our earlier work, we proved that the associated varieties of these irreducible representations are the intersection of determinantal varieties.
In this paper, we find the Hilbert polynomial $\mfk p_{_M}(k)$ of these associated varieties. Moreover, we show that the  Hilbert polynomial $\mfk p_{_M, _{M_0}}(k)$ of such an  irreducible module $M$ with respect to any generating subspace $M_0$ satisfies $\mfk p_{_M}(k)\leq \mfk p_{_M, _{M_0}}(k)$ for sufficiently large positive integer $k$ and find a necessary and sufficient condition that
the equality holds. Furthermore, we explicitly determine the leading term of $\mfk p_{_M, _{M_0}}(k)$, which is independent of the choice of  $M_0$.

 \vspace{0.3cm}

\noindent{\it Keywords}:\hspace{0.3cm} special linear Lie
algebra; orthogonal oscillator
 representation; Hilbert polynomial; Hilbert series; determinantal varieties.

\end{abstract}

\section {Introduction}

Let $\msr A=\bigoplus^\infty_{k=0} \msr A_k$ be a finitely generated graded commutative algebra over a field $\mbb F$. The Hilbert function maps a nonnegative integer $k$ to the dimension of the subspace $\msr A_k$.
The Hilbert-Poincar\'{e} series is the formal series
\begin{equation}{\mfk s}_{_{\msr A}}(t)=\sum_{k=0}^{\infty}(\dim \msr A_k)t^k.\end{equation}

Hilbert \cite{Hi} (1893) showed that there exists a unique polynomial ${\mfk p}_{\msr A}(k)$ with rational coefficients, which coincides with the Hilbert function $\dim \msr A_k$ for sufficiently large $k$. This polynomial is later called {\it Hilbert polynomial}.
Moreover, he \cite{Hi} also introduced the notions of Hilbert functions and Hilbert series for a finitely generated graded module over the ring of polynomials in finitely many variables over a field.
Northcott \cite{Nd} (1947) gave a direct proof of abstract Tauberian theorems with applications to power series and Hilbert series.
In 1971, Bernstein \cite{Bj} defined a variety associated with a filtered module of the algebra of differential operators, in order to construct the fundamental solution of a linear partial differential equation with constant coefficients. He also showed that the dimension of the associated variety is equal to the Gelfand-Kirillov dimension of the module in \cite{Bj}, which is the degree of the Hilbert polynomial plus one.  Vogan \cite{Vd1} (1978) developed particularly refined techniques
in the description of the leading coefficient in the Hilbert-Samuel polynomial of a Harish-Chandra module for a semisimple Lie algebra and its maximal compact subalgebra.

Anic \cite{Ad} (1985) modeled Diophantine equations within the category of graded Hopf algebras and used their Hilbert series to determine undecidable spaces. Enright and Willenbring \cite{EW} (2004) found a connection among Hilbert series, Howe duality and branching for classical groups. Moreover,  Enright and Hunziker \cite{EH1} (2004) computed (minimal) resolutions and explicit formulas for the Hilbert series of the unitary highest weight modules of exceptional groups. They also \cite{EH2} calculated the Hilbert series of the Wallach representations and provided explicit formulas for the Hilbert series of determinantal varieties by using BGG resolutions of unitarizable highest weight modules. Gross and Wallach \cite{GW} (2011) studied the Hilbert polynomials and Hilbert series of homogeneous projective varieties. There are also many other interesting significant works related to Hilbert polynomials and Hilbert series (e.g., cf. \cite{CEU, AJ94, FMSV, DGP, Hj, SA21, Jw, K15, KS, Kt, CM04, KDM, Ku96, Nu, Rl,  Sb, Wj}).

Indeed, Hilbert polynomials and Hilbert series have played important roles in physics. For instance, Hanany and Kalveks \cite{Ha14} (2014) found an efficient encoding and analysis of the Hilbert series of the vacuum moduli spaces of instantons. Cremonesi, Mekareeya and Zaffaroni \cite{CMZ} (2016) presented a formula for the Hilbert series that counts gauge invariant chiral operators in a large class of $3d\; {\cal N}\geq 2$ Yang-Mills-Chern-Simons theories.  Gr\'{a}f, Henning, Lu,  Melia and Murayama \cite{GHLM} (2023) calculated the Hilbert series in connection with
the Higgs mechanism and high energy field theories. Kho and Seong \cite{KS} (2024) studied the algebraic structure of mesonic moduli spaces of bipartite field theories by computing the Hilbert series.

By applying Fourier transformations to the natural orthogonal oscillator representations of special linear Lie algebras, Luo and the second author \cite{LX} (2013) obtained a large family of infinite-dimensional irreducible representations of the algebras on  homogeneous solutions of the Laplace equation. In our earlier work \cite{ZX}, we proved that the associated varieties of these irreducible representations are the intersections of explicitly given determinantal varieties. In this paper, we find the Hilbert polynomial $\mfk p_{_M}(k)$ of these associated varieties. Moreover, we show that the  Hilbert polynomial $\mfk p_{M}(k)$ of such an  irreducible module $M$ with respect to any generating subspace $M_0$ satisfies $\mfk p_{_M}(k)\leq \mfk p_{_M, _{M_0}}(k)$ for sufficiently large positive integer $k$ and find a necessary and sufficient condition that
the equality holds. Furthermore, we explicitly determine the leading term of $\mfk p_{_M, _{M_0}}(k)$, which is independent of the choice of  $M_0$  and therefore is an invariant of $M$.

Let $\mbb F$ be a field with characteristic 0, and let $n>1$ be an integer. Denote by $\msr A=\mbb F[x_1,x_2,...,x_n,y_1,y_2,...,y_n]$ the polynomial algebra in $2n$ variables. Let $E_{r,s}$ be the square matrix with 1 as its $(r,s)$-entry and 0
as the others. The special linear Lie algebra
\begin{equation}
sl(n)=\sum_{1\leq i<j\leq
n}(\mbb{F}E_{i,j}+\mbb{F}E_{j,i})+\sum_{r=1}^{n-1}\mbb{F}(E_{r,r}-E_{r+1,r+1})\end{equation}
with the commutator as its Lie bracket.
Denote by $\mbb N$ the set of nonnegative integers. For any
two integers $p\leq q$, we denote $\ol{p,q}=\{p,p+1,\cdots,q\}$.
The natural orthogonal oscillator representation $\pi$ of $sl(n)$ on $\msr A$ is given as follows:
\begin{equation} \label{a1.2}
\pi(E_{i,j})=x_i\ptl_{x_j}-y_j\ptl_{y_i}\qquad\for\;\;i,j\in\ol{1,n}.\end{equation}

With respect to the above representation, the Laplace operator
\begin{equation}\Dlt=\sum_{i=1}^n\ptl_{x_i}\ptl_{y_i}.\end{equation}
 For $\ell_1,\ell_2\in \mbb N$, we denote by $\msr A_{\ell_1,\ell_2}$ the subspace of homogeneous polynomials with the degree $\ell_1$ in $\{x_1,x_2,...,x_n\}$ and the degree $\ell_2$ in $\{y_1,y_2,...,y_n\}$. Then $\msr A=\bigoplus_{\ell_1,\ell_2\in\mbb N}\msr A_{\ell_1,\ell_2}$ is an $\mbb N^2$-graded algebra.

Set the subspace of homogeneous harmonic polynomials as follows:
\begin{equation}{\msr H}_{\ell_1,\ell_2}=\{f\in {\msr A}_{\ell_1,\ell_2}\mid
\Dlt(f)=0\}. \end{equation}
Then ${\msr H}_{\ell_1,\ell_2}$ forms an irreducible $sl(n)$-module with highest weight $\ell_1\lmd_1+\ell_2\lmd_{n-1}$ (e.g., cf. \cite{Xx}), where $\lmd_i$ denotes the $i$th fundamental weight of $sl(n)$.

Denote by $\msr F_z$ the Fourier transformation on $z$. Fix $n_1,n_2\in\ol{1,n}$ with $n_1\leq n_2$. Letting (\ref{a1.2}) act on the space
\begin{equation}{\msr A}'=\msr F_{x_1}\cdots \msr F_{x_{n_1}}\msr F_{x_{n_2+1}}\cdots \msr F_{x_n}(\msr A)\end{equation}
of distributions, one can obtain a large family of infinite-dimensional irreducible representations of $sl(n)$ on certain spaces of homogeneous solutions of the Laplace equation. However, this picture is not convenient to be dealt with. Instead, we consider another equivalent representation as follows.
In the representation formulas given in (\ref{a1.2}), applying the Fourier transformations on operators:
\begin{equation}\label{a1.6}
\ptl_{x_r}\mapsto -x_r,\;
 x_r\mapsto\ptl_{x_r},\;\ptl_{y_s}\mapsto -y_s,\;
 y_s\mapsto\ptl_{y_s}\qquad \for\; r\in\ol{1,n_1},\;s\in\ol{n_2+1,n},\end{equation}
we obtain a new representation $\td{\pi}$ on $\msr A$. The representations $(\pi,\msr A')$ and $(\td{\pi},\msr A)$ are equivalent (or isomorphic).
Under the changes in (\ref{a1.6}), the Laplace operator becomes
\begin{equation}\td\Dlt=\sum_{i=1}^{n_1}x_i\ptl_{y_i}-\sum_{r=n_1+1}^{n_2}\ptl_{x_r}\ptl_{y_r}+\sum_{s=n_2+1}^n
y_s\ptl_{x_s}.\label{a1.7}\end{equation}

Take the degree of $\{x_1,...,x_{n_1},y_{n_2+1},...,y_n\}$ as $-1$ and that of $\{x_{n_1+1},...,x_n,y_1...,y_{n_2}\}$
as 1. Denote by $\mbb Z$ the ring of integers. For $\ell_1,\ell_2\in \mbb Z$, we denote by $\msr A_{\la\ell_1,\ell_2\ra}$ the subspace of homogeneous polynomials with the degree $\ell_1$ in $\{x_1,x_2,...,x_n\}$ and the degree $\ell_2$ in $\{y_1,y_2,...,y_n\}$.
 Then $\msr A=\bigoplus_{\ell_1,\ell_2\in\mbb Z}\msr A_{\la\ell_1,\ell_2\ra}$ is a $\mbb Z^2$-graded algebra. Set
\begin{equation}{\msr H}_{\la\ell_1,\ell_2\ra}=\{f\in {\msr A}_{\la\ell_1,\ell_2\ra}\mid
\td{\Dlt}(f)=0\}.\end{equation}
The following is a result from \cite{LX}:\psp

{\bf Proposition 1}\quad {\it  For
$\ell_1,\ell_2\in\mbb{Z}$ , the necessary and sufficient condition for ${\msr H}_{\la\ell_1,\ell_2\ra}$ to be an infinite-dimensional irreducible $sl(n)$-module is as follows:

(1)  When $n_1+1<n_2$, (a) $\ell_1+\ell_2\leq n_1-n_2+1$; (b) $n_2=n$, $\ell_1\in \mbb N$ and $\ell_2=0$; (c) $n_2=n$, $\ell_2\in \mbb N  $ and $\ell_1 \geq n_1-n+2$;

(2)  If $n_1+1=n_2$,  $\ell_1+\ell_2\leq 0$ or $n_2=n$ and
$0\leq\ell_2\leq\ell_1$;

(3)  In the case $n_1=n_2$, $\ell_1+\ell_2\leq 0$ and: (a)
 $\ell_2\leq 0 $, $n_1<n-1$ and $n\geq 3$;
 (b) $\ell_1\leq 0$, $1<n_1<n $ and $n\geq 3$; (c) $\ell_1,\ell_2\leq
 0$, $n_1=1$ and $n=2$. } \psp

 The above irreducible modules are infinite-dimensional highest weight modules with distinct highest weights. For instance,  if $n_1+1<n_2<n$
and $m_1,m_2\in\mbb{N}$ with $m_1+m_2\geq n_2-n_1-1$, ${\msr
H}_{\la-m_1,-m_2\ra}$ has the highest-weight vector
$x_{n_1}^{m_1}y_{n_2+1}^{m_2}$ of weight
$m_1\lmd_{n_1-1}-(m_1+1)\lmd_{n_1}-(m_2+1)\lmd_{n_2}+m_2(1-\dlt_{n_2,n-1})\lmd_{n_2+1}$.

Let $\mfk g$ be a finite-dimensional semisimple Lie algebra and let $U(\mfk g)$ be the universal enveloping algebra of $\mfk g$. Denote
$U_0(\mfk g)=\mbb F$ and for $m > 0$ with $m \in \mathbb{Z}$, $U_m(\mfk g)=\mbb F+\sum_{r=1}^m\mfk g^r$. Then $\{U_k(\mfk g)\mid k\in\mbb N\}$ forms a filtration of $U(\mfk g)$. Suppose that $M$ is a finitely generated $U(\mfk g)$-module and $M_0$ is a finite-dimensional subspace of $M$ satisfying
\begin{equation}\label{a1.4}
(U(\mfk g))(M_0)=M.\end{equation}
Setting $M_k =(U_k( \mathfrak{g}))(M_0)$, we get a filtration $\{M_k\mid k\in\mbb N\}$ of $M$.

The associated graded module is given by
\begin{equation}\label{a1.10}
\ol{M}=\mbox{gr}(M; M_0) = \bigoplus_{k = 0}^\infty \ol{M_k}\quad \text{with}\quad \ol{M}_k=M_k / M_{k-1},
\end{equation}
and it is a finitely generated graded $S( \mathfrak{g})$-module. Here we identify $S( \mathfrak{g})$ with $\mathbb{F}[\mathfrak{g}^*]$, the polynomial ring over $\mathfrak{g}^*$, in the canonical way through the Killing form of $  \mathfrak{g} $. The annihilator of $\ol{M}$ in $S( \mathfrak{g})$ is defined as
\begin{equation}\label{a1.11}
\operatorname{Ann}_{S( \mfk g)} (\ol{M})=\{ \xi \in  S( \mathfrak{g})\mid \xi(v)=0,\; \forall\; v \in \ol{M} \},\end{equation}
which is a graded ideal of $S( \mathfrak{g})$. It defines the associated variety in the dual space $\mathfrak{g}^*  $:
\begin{equation}
{\msr V}(M)=\{ f \in \mathfrak{g}^*\mid u(f) =0,\;\forall\; u\in \operatorname{Ann}_{S(\mathfrak{g})}(\ol{M})\}.\end{equation}
The variety does not depend on the choice of the subspace $M_0$ satisfying (\ref{a1.4}).
In this paper, we define the Hilbert polynomial and Hilbert series of the coordinate ring $S(\mfk g)/\sqrt{\operatorname{Ann}_{S( \mfk g)} (\ol{M}) }$ as  the Hilbert polynomial $\mfk p_{M}(k) $ and the Hilbert series $\mfk s_M(t)$ of the associated variety ${\msr V}(M)$.

The followings are our main theorems in this paper: \psp

{\bf Theorem 1}\quad
{\it If $n_1\leq n_2<n$, $\ell_1 \leq 0$ or $\ell_2 \leq 0$,
\begin{eqnarray}
\mfk s_{ {{\msr H}_{\la \ell_1,\ell_2 \ra}}}(t)
=
\frac{\sum\limits_{r,s=0}\limits^{\infty}
\left | \begin{matrix}
 \binom {n_2-2}{r}\binom {n - n_2 -1 }{r}  &
 \binom {n_2-2}{r-1}\binom {n - n_2  }{r}   \\
 \binom {n_1-1}{s+1}\binom {n-n_1-2}{s} &
 \binom {n_1-1}{s }\binom {n-n_1-1}{s}
\end{matrix} \right | t^{r+s}
}{(1-t)^{2n-3}}
\end{eqnarray}

If $n_1<n_2=n$,
\begin{equation}
\mfk s_{  {{\msr H}_{\la \ell_1,\ell_2 \ra}}}(t) =  \frac{  \sum_{r=0}^{\infty}\binom { n-n_2-1 }{r} \binom {n_1-1 }{r}t^r    }
{ \left(1-t\right)^{n-n_2+n_1-1} } .\end{equation} }\psp

As a consequence, we show that the arithmetic genus of $\msr V({\msr H}_{\la \ell_1,\ell_2 \ra})$ is 1, and its degree

 \begin{equation}
\deg (\msr V({\msr H}_{\la \ell_1,\ell_2 \ra}))=
\begin{cases}
\left( \frac{n-2}{n_1-1}-\frac{n-2}{n_2-1}\right) \binom {n-3}{n_1-2}\binom {n-3}{n_2-2}  ,\ \quad  \text{if }n_2\neq n \text{ and } n_1\neq n_2; \pse \\
 \frac{1}{n_1-1} \binom {n-4}{ n_1-2}\binom {n-3}{ n_1-2},\ \quad  \text{if }1 < n_1=n_2<n-1; \pse\\
\binom { n  -2}{n_1-1},\ \quad \text{if }   n_1\neq n_2=n ; \pse\\
1,\ \quad \text{if }  1=n_1=n_2<n \text{ or } n_1=n_2=n-1 .
\end{cases}
\end{equation}
The degree of the polynomial $\mfk p_{{\msr H}_{\la \ell_1,\ell_2 \ra}}(k)$ was obtained by Bai \cite{Bz} in term of Gelfand-Kirillov dimension. We difine the Hilbert polynomial of $M$ with respect to the generating subspace $M_0$ as $\mfk p_{M,M_0} =\dim( \ol{M}_k)$ for sufficiently large $k$.

\psp

{\bf Theorem 2}\quad
{\it Let $M_0$ be a subspace of the irreducible module ${\msr H}_{\la\ell_1,\ell_2\ra}$ in Proposition 1 satisfying (\ref{a1.4}).
Then we have $\mfk p_{M }(k) \leq \mfk p_{M,M_0}(k)$ for sufficiently large $k$. A necessary and sufficient condition for $\mfk p_{M }(k)=\mfk p_{M,M_0}(k)$ is that $M_0$ is certain one-dimensional subspace, such as:
$M_0=\mathbb{F}y_n^{-\ell_2}$ when $n_2=n-1$, $\ell_1 = 0$ and $\ell_2\leq 0$;
$M_0=\mathbb{F} x_{1}^{ -\ell_1} y_{n_2}^{ \ell_2}$ when $n_1+1=n_2=2$, $\ell_1 \leq 0$ and $\ell_2\geq 0$; $M_0=  \mbb F (x_ny_{n-1}-x_{n-1}y_n )^{ \ell_1}$ when $n_1=n_2=n-2$, $\ell_1 =-\ell_2 > 0$ etc.
 }\psp

For a polynomial $f(k)$, we denote by $lc(f)$ the leading coefficient of $f(k)$. We find that $lc (\mfk p_{M,M_0} )$ does not depend on the choice of the subspace $M_0$ satisfying (\ref{a1.4}). Thus we can simply redenote $lc(M)=lc (\mfk p_{M,M_0} )$.\psp

{\bf Theorem 3}\quad
{ \it
For the irreducible module ${\msr H}_{\la\ell_1,\ell_2\ra}$ in Proposition 1, we have

\begin{eqnarray}
\frac{lc({\msr H}_{\la\ell_1,\ell_2\ra})}{lc(\mfk p_{{\msr H}_{\la\ell_1,\ell_2\ra}})}=
\begin{cases}
1  & \text{ if }   n_1<n_2<n \text{ or } \ell_1=\ell_2=0; \pse\\
1-\ell_1-\ell_2& \text{ if }  1<n_1=n_2<n-1; \pse\\
\binom{ n+\ell_2 -2}{ \ell_2 } &\text{ if } n_1<n_2=n ; \pse\\
\binom{ n-\ell_2 -2}{-\ell_2 } &\text{ if } 1=n_1=n_2<n,   \ell_2\leq 0\text{ and }\ell_1+\ell_2 < 0 ; \pse\\
\binom{ n-\ell_1 -2}{-\ell_1 } &\text{ if } 1<n_1=n_2=n-1, \ell_1\leq 0\text{ and }\ell_1+\ell_2 < 0; \pse\\
\binom{ n+\ell_1 -3}{\ell_1 } &\text{ if } 1=n_1=n_2<n-1  \text{ and } \ell_2=-\ell_1<0  ; \pse\\
\binom{ n+\ell_2 -3}{\ell_2 } &\text{ if } 1<n_1=n_2=n-1  \text{ and }\ell_1 =-\ell_2<0 .
\end{cases}
\end{eqnarray}  }\psp

For instance, when $1<n_1=n_2<n-1$ and $\ell_1=\ell_2=-1$, we have $ lc({\msr H}_{\la\ell_1,\ell_2\ra})=3\, lc(\mfk p_{{\msr H}_{\la\ell_1,\ell_2\ra}}) $, and then $ \mfk p_{{\msr H}_{\la\ell_1,\ell_2\ra},M_0}>\mfk p_{{\msr H}_{\la\ell_1,\ell_2\ra}} $ for any $M_0$ satisfying (\ref{a1.4}).
When $n_1=2$, $n_2=3$, $n=5$ and $\ell_1=\ell_2=-1$, we have $ lc({\msr H}_{\la\ell_1,\ell_2\ra})=  lc(\mfk p_{{\msr H}_{\la\ell_1,\ell_2\ra}}) $, but $ \mfk p_{{\msr H}_{\la\ell_1,\ell_2\ra},M_0}>\mfk p_{{\msr H}_{\la\ell_1,\ell_2\ra}} $ holds for any $M_0$ satisfying (\ref{a1.4}).

The structure of the paper is as follows. In
Section 2 we prove Theorem 1 and (1.16).
Section 3 is devoted to the proof of Theorem 2. Finally we present the proof of Theorem 3 in Section 4.

\section{Hilbert Polynomial of $\msr V(M)$}
In the rest of this paper, we mainly consider the representation $\td\pi$ of $sl(n)$ obtained under the Fourier transformations (\ref{a1.6}), whose representation formulas are given by
\begin{equation}
\td{\pi}(E_{i,j})=E_{i,j}^x-E_{j,i}^y\qquad\for\;\;i,j\in\ol{1,n}\end{equation} with
\begin{equation}E_{i,j}^x=\left\{\begin{array}{ll}-x_j\ptl_{x_i}-\delta_{i,j}&\mbox{if}\;
i,j\in\ol{1,n_1};\\ \ptl_{x_i}\ptl_{x_j}&\mbox{if}\;i\in\ol{1,n_1},\;j\in\ol{n_1+1,n};\\
-x_ix_j &\mbox{if}\;i\in\ol{n_1+1,n},\;j\in\ol{1,n_1};\\
x_i\partial_{x_j}&\mbox{if}\;i,j\in\ol{n_1+1,n}
\end{array}\right.\end{equation}
and
\begin{equation}
E_{i,j}^y=\left\{\begin{array}{ll}y_i\ptl_{y_j}&\mbox{if}\;
i,j\in\ol{1,n_2};\\ -y_iy_j&\mbox{if}\;i\in\ol{1,n_2},\;j\in\ol{n_2+1,n};\\
\ptl_{y_i}\ptl_{y_j} &\mbox{if}\;i\in\ol{n_2+1,n},\;j\in\ol{1,n_2};\\
-y_j\partial_{y_i}-\delta_{i,j}&\mbox{if}\;i,j\in\ol{n_2+1,n}.
\end{array}\right.\end{equation}
Moreover, we adopt the notion
\begin{equation}\xi(f)=\td{\pi}(\xi)(f)\qquad\for\;\;\xi\in sl(n),\;f\in\msr A.\end{equation}
Denote
\begin{equation}x^\al=x_1^{\al_1}x_2^{\al_2}\cdots x_n^{\al_n},\;\;y^\al=y_1^{\al_1}y_2^{\al_2}\cdots y_n^{\al_n}\qquad \for\;\;\al=(\al_1,\al_2,...,\al_n)\in \mbb N^n.\end{equation}

When $n_1<n_2$, we define the linear operator $T$ on $\msr A$ by
\begin{align}
T(x^\al y^\be)=\sum_{i=0}^{\infty} \frac{\left(x_{n_1+1} y_{n_1+1}\right)^i
	 \left(\tilde{\Delta}+\partial_{x_{n_1+1}}\partial_{ y_{n_1+1}}\right)^i }
 {\prod_{r=1}^i \left(\alpha_{n_1+1} +r\right) \left(\beta_{n_1+1}+ r\right)}(x^\al y^\be)
\end{align}
 for $\al,\be\in \mbb N^n$.
 Then the set
\begin{align}
\left\{ T\left(x^\alpha y^\beta \right) \mid \alpha,\beta  \in \mathbb{N}^n,\alpha_{n_1+1}\beta_{n_1+1}=0,x^\alpha y^\beta \in {\msr A}_{\la\ell_1,\ell_2\ra}\right\}
\end{align}forms a basis of the module ${\msr H}_{\la\ell_1,\ell_2\ra}$, as shown in \cite{LX}. Furthermore, we introduce the notion
\begin{eqnarray}
&&
N\begin{pmatrix}
k_{12} & k_{12} & k_{13} \\
k_{21} & k_{22} &  k_{23}
\end{pmatrix} \nonumber\\ &=&
\left\{\left.   x^\alpha y^\beta \in \msr A_{\la\ell_1,\ell_2\ra}    \right\vert
  \sum_{r=1}^{n_1} \alpha_r =k_{11},\sum_{r=n_1+1}^{n_2} \alpha_r =k_{12},\sum_{r=n_2+1}^{n} \alpha_r =k_{13},
\right.  \nonumber\\  &&  \left.
 \sum_{r=1}^{n_1} \beta_r =k_{21}, \sum_{r=n_1+1}^{n_2} \beta_r =k_{22} ,\alpha_{n_1+1}\beta_{n_1+1}=0
\right\}
\end{eqnarray} and
\begin{align}
TN\begin{pmatrix}
k_{12} & k_{12} & k_{13} \\
k_{21} & k_{22} &  k_{23}
\end{pmatrix}=\text{Span} \left\{T(f)\mid f\in N\begin{pmatrix}
k_{12} & k_{12} & k_{13} \\
k_{21} & k_{22} &  k_{23}
\end{pmatrix}\right\}\end{align}
where $k_{11},k_{12},k_{13},k_{21},k_{22},k_{23} \in \mathbb{N}$.

Define the function ${\mfk d } : {\msr A}_{\la\ell_1,\ell_2\ra}  \rightarrow \mathbb{N}$ by
\begin{eqnarray}
& &{\mfk d} \left(x^\alpha y^\beta \right) =2 \sum_{i \in J_3} \alpha_i +\sum_{i \in J_2}\alpha_i +2\sum_{i \in J_1}\beta_i+\sum_{i \in J_2}\beta_i -\frac{\ell_1+|\ell_1|+\ell_2+|\ell_2|}{2}, \label{a2.46}\\
& &{\mfk d} \left( \lmd f\right) ={\mfk d} \left( f\right) , \  \forall \lmd   \in \mathbb{F} \setminus \{0\} ,\\
& &{\mfk d}\left(\sum_{i=1}^m  f_i\right) =\max \{{\mfk d}\left(f_1 \right) ,\cdots,{\mfk d}\left(f_m \right) \},\;\;\text{where $f_i$ are monomials.}
\label{a2.48}
\end{eqnarray}

We set
\begin{equation}
 N(k) =  \{  x^\alpha y^\beta    \mid
{\mfk d} \left(x^\alpha y^\beta  \right) = k ,x^\alpha y^\beta   \in {\msr A}_{\left \langle \ell_1,\ell_2  \right \rangle  }, \alpha_{n_1+1} \beta_{n_1+1} =0  \},\end{equation}
\begin{equation}
TN(k) = \text{Span} \{  T (x^\alpha y^\beta )      \mid
{\mfk d} \left(x^\alpha y^\beta  \right) = k ,x^\alpha y^\beta   \in {\msr A}_{\left \langle \ell_1,\ell_2  \right \rangle  }, \alpha_{n_1+1} \beta_{n_1+1} =0  \},\end{equation}
\begin{equation}
TN_k = \text{Span} \{  T (x^\alpha y^\beta )      \mid
{\mfk d} \left(x^\alpha y^\beta  \right)\leq k ,x^\alpha y^\beta   \in {\msr A}_{\left \langle \ell_1,\ell_2  \right \rangle  }, \alpha_{n_1+1} \beta_{n_1+1} =0  \}.\end{equation}

Denote
\begin{equation} Z = \{z_{j,i} \mid i \in J_1, j \in J_3 \} \end{equation}
 as a set of $n_1(n-n_2)$ variables.
Set $ \msr B=\mbb F[Z]$,
the polynomial algebra in $Z$. Denote by $Z^{(m)}$ the set of monomials with degree $m$.
Let $\msr C=  \mathbb{F}\left[x_i,y_j,Z \vert i,j\in \ol{1,n}\right]$.
Define an associative algebra homomorphism $\phi: \msr C \rta  \msr A$ by
\begin{equation}\phi(z_{j,i})=x_ix_j-y_iy_j,\;\;\phi(x_t)=x_t,\;\phi(y_t)=y_t\qquad i\in J_1,\;j\in J_3,t \in \ol{1,n}.\end{equation}

Set
\begin{eqnarray}\label{a2.11}
V_{0}=
\begin{cases}
   TN\begin{pmatrix}
-\ell_1 & 0 & 0 \\
0 & 0 & -\ell_2
\end{pmatrix} & \text{if } \ell_1\leq 0,\ell_2\leq 0  , \pse\\
TN\begin{pmatrix}
-\ell_1 & 0 & 0 \\
0 & \ell_2 & 0
\end{pmatrix} & \text{if } \ell_1\leq 0,\ell_2\geq 0 \text{ and } n_1<n_2<n, \pse\\
TN\begin{pmatrix}
0 & \ell_1 & 0 \\
0 & 0 & -\ell_2
\end{pmatrix} & \text{if } \ell_1\geq 0,\ell_2\leq 0 \text{ and } n_1<n_2<n, \pse\\
\text{Span} \left\{  TN\begin{pmatrix}
0 & \ell_1 & 0 \\
k_{21} & k_{22} & 0
\end{pmatrix}
\Big\vert
k_{21}+k_{22}=\ell_2
\right\}  &    \text{if } \ell_1,\ell_2 \geq 0 \text{ and } n_1<n_2=n , \pse\\
\text{Span} \left\{  TN\begin{pmatrix}
-\ell_1 & 0 & 0 \\
k_{21} & k_{22} & 0
\end{pmatrix}
\Big\vert
k_{21}+k_{22}=\ell_2
\right\}  &    \text{if } \ell_1\leq 0,\ell_2>0 \text{ and } n_1<n_2=n , \pse\\
\end{cases}
\quad
\end{eqnarray}
which is a finite-dimensional subspace of $M={\msr H}_{\la\ell_1,\ell_2\ra}$.
When $ 1<n_1=n_2<n $, $\ell_1 \leq 0$ and $\ell_2 >0$,  we set $\ell_1=-m_1-m_2$, $\ell_2= m_2$ with $m_1,m_2 \in \mathbb{N}$ and
\begin{eqnarray}\label{a5.11}
V_0 =  & &\mbox{Span}\Big\{  \left. \left[ \prod_{ 1\leq p<q\leq n_1} (x_py_q-x_q y_p)^{k_{p,q}}\right] \prod_{r=1}^{n_1}x_r^{\al_r} \right\vert
\nonumber \\
 & &\qquad \qquad
k_{p,q}\in \mathbb{N}, \sum_{ 1\leq p<q\leq n_1}k_{p,q} =m_2,\sum_{ 1\leq  i \leq n_1}\al_i =m_1
\Big\} .  \end{eqnarray}
Symmetrically, when $n_1=n_2<n-1 $, $\ell_1> 0$ and $\ell_2 \leq 0$,  we set $\ell_1= m_2$, $\ell_2= -m_1-m_2$ with $m_1,m_2 \in \mathbb{N}$ and
\begin{eqnarray} \label{f2.20}
V_0 =  & &\mbox{Span}\Big\{  \left. \left[ \prod_{ n_1+1\leq p<q\leq n } (x_py_q-x_q y_p)^{k_{p,q}}\right] \prod_{j=n_1+1}^{n }y_j^{\be_j} \right\vert
\nonumber \\
 & &\qquad \qquad
k_{p,q}\in \mathbb{N}, \sum_{n_1+ 1\leq p<q\leq n }k_{p,q} =m_2,\sum_{ n_1+1\leq  j \leq n }\be_j =m_1
\Big\} .  \end{eqnarray}

When $n_1<n_2$, $\ell_1\leq 0$ or $\ell_2\leq 0$,  we take $M_0=V_{0}$, and then we have
\begin{align}\label{a2.22}
M_k &= \text{Span}\{
TN\left(i\right),TN\left(k-i\right) \phi(Z^{(i)}) \mid
i=0,1,\dots,k\}
\nonumber\\
&=\text{Span}\{
TN_k,TN\left(k-i\right)\phi(Z^{(i)})  \mid
i=1,\dots,k\}.
\end{align}
 according to Proposition 2.1 in  \cite{ZX}.

Set
\begin{equation}\label{aa1.13}
J_1=\ol{1,n_1},\quad J_2=\ol{n_1+1,n_2},\quad J_3=\ol{n_2+1,n}.\end{equation}
 In \cite{ZX}, we determined the associated varieties $\mathscr{V}(\mathscr{H}_{\langle \ell_1, \ell_2 \rangle})$  as follows:
\begin{lemma}\label{lemc2.1}
\quad  If $n_1\leq n_2$, $\ell_1 \leq 0$ or $\ell_2 \leq 0$,
\begin{equation}\msr V({\msr H}_{\la\ell_1,\ell_2\ra})\cong \msr{V} _3(J_2\cup J_3, J_1\cup J_2)\cap \msr{V}_2(J_2 , J_1)\cap \msr{V}_2(J_3 , J_2) \cap\msr{V}_1(J_2 , J_2).\end{equation}
If $n_1<n_2=n$ and $\ell_1,\ell_2>0$,
\begin{equation}
\msr V( {\msr H}_{\la\ell_1,\ell_2\ra})\cong \msr{V}_2(J_2 , J_1).\end{equation}
\end{lemma}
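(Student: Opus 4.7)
\textbf{Proof proposal for Lemma \ref{lemc2.1}.}

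The plan is to extract the annihilator of the associated graded module $\ol{M}$ in $S(sl(n))$ from the explicit basis of $M = {\msr H}_{\la\ell_1,\ell_2\ra}$ provided in \cite{LX}, and then to identify its zero locus with the stated intersection of determinantal varieties.

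First I would fix $M_0 = V_0$ from (\ref{a2.11})--(\ref{f2.20}) and verify by induction on $k$ that the filtration $M_k = U_k(sl(n))(M_0)$ is described by (\ref{a2.22}). The key input is that the degree function $\mfk d$ from (\ref{a2.46})--(\ref{a2.48}) shifts by a block-dependent constant under each $\td\pi(E_{i,j})$, so that the image of $T(x^\al y^\be)\phi(z^\gm)$ in $\ol M_k$ is controlled by its $\mfk d$-type. Passing to principal symbols under the Killing-form identification $S(sl(n)) \cong \mbb F[sl(n)^*]$, each $E_{i,j}$ becomes a coordinate function on $sl(n)^*$, and reading off the formulas for $E^x_{i,j}$ and $E^y_{j,i}$ block by block in the partition $\{J_1,J_2,J_3\}$ one sees that precisely the $J_2$-involving operators are genuinely second-order (e.g.\ $\ptl_{x_i}\ptl_{x_j}$ for $i\in J_1,\,j\in J_2$), which forces determinantal rather than linear relations among the symbols. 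A direct computation then exhibits the following elements of $\operatorname{Ann}_{S(sl(n))}(\ol M)$: the entries of the $J_2\times J_2$ block (yielding $\msr V_1(J_2,J_2)$), the $2\times 2$ minors of the $J_2\times J_1$ and $J_3\times J_2$ blocks (yielding the two copies of $\msr V_2$), and the $3\times 3$ minors of the $(J_2\cup J_3)\times(J_1\cup J_2)$ block (yielding $\msr V_3$). Their common zero locus is the right-hand side of the first claim.

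For the reverse inclusion I would argue by dimension: $\dim \msr V(M)$ equals the Gelfand-Kirillov dimension of $M$, which is $\deg \mfk p_M(k)+1$; by Hochster--Eagon / Kempf theory each $\msr V_k(S,T)$ is Cohen--Macaulay of known dimension, so computing the dimension of the claimed intersection reduces to a combinatorial check that must match $\deg \mfk p_M(k)+1$ read off from Theorem 1. In the subcase $n_1<n_2=n$ with $\ell_1,\ell_2>0$, the block $J_3$ is empty, the $J_3\times J_2$ and $J_2\times J_2$ conditions become vacuous, the $\msr V_3$ condition collapses to the $\msr V_2$ condition on $J_2\times J_1$, and only the latter survives. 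The main obstacle is the dimension comparison in the first case: verifying that the intersection of four determinantal conditions is not merely set-theoretically contained in $\msr V(M)$ but in fact equidimensional of the correct dimension. I expect to handle this by stratifying the $(J_2\cup J_3)\times(J_1\cup J_2)$ submatrix by the joint rank pattern of its $J_2\times J_1$, $J_3\times J_2$ and $J_2\times J_2$ sub-blocks and applying the standard codimension formula for determinantal loci on each stratum.
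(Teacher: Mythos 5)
This lemma is stated here as an import from the earlier paper \cite{ZX} ("In \cite{ZX}, we determined the associated varieties\ldots") and is not proved in the present text, so there is no in-text proof to compare against; I evaluate your proposal on its own terms.

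The forward inclusion $\msr V(M)\subset\msr{V}_3\cap\msr{V}_2\cap\msr{V}_2\cap\msr{V}_1$ by a block-by-block symbol computation on the formulas for $E^x_{i,j}$ and $E^y_{j,i}$ is a plausible plan, and using the GK dimension of $M$ from Bai \cite{Bz} (rather than Theorem~1, which is derived downstream of this lemma and would be circular) keeps the dimension count honest. The genuine gap is the reverse inclusion. A one-way containment plus dimension agreement proves equality only when the larger variety is irreducible, or when $\msr V(M)$ is known to meet every top-dimensional component of it. The intersection $\msr{V}_3\cap\msr{V}_2\cap\msr{V}_2\cap\msr{V}_1$ is not obviously irreducible, and your rank-stratification remark is aimed at computing dimension, not at establishing irreducibility. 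The methodology this paper uses in an analogous computation (Proposition~\ref{propf2.2}) is quite different and more robust: it constructs an explicit linear isomorphism from each graded piece $\msr Z'_i/I_{(i)}$ of the coordinate ring to a concrete monomial basis cut out by 2-chain and 3-chain avoidance conditions, thereby identifying $\sqrt{\operatorname{Ann}_{S(\mfk g)}(\ol M)}$ with the ideal generated by the stated minors \emph{directly}, never touching equidimensionality or irreducibility. To repair your argument you would need either to prove irreducibility of the intersection (for instance by exhibiting it, after imposing $\msr{V}_1(J_2,J_2)$, as a one-sided ladder determinantal variety in the spirit of \cite{CM04,KDM,DGP}) or to adopt the constructive normal-form route.

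There is also a local error in your treatment of the second case. You assert that when $J_3=\emptyset$ "the $J_3\times J_2$ and $J_2\times J_2$ conditions become vacuous." The $J_3\times J_2$ condition is indeed vacuous, but $J_2=\ol{n_1+1,n}$ is nonempty when $n_1<n_2$, so $\msr{V}_1(J_2,J_2)$ is a genuine constraint; it does not disappear for free. What distinguishes $\msr{V}_2(J_2,J_1)$ alone from the fuller intersection is the sign condition $\ell_1,\ell_2>0$: in that regime the $J_2\times J_2$ entries and the $3\times 3$ minors are simply not in the annihilator, which has to be read off from the representation formulas, not deduced from $J_3=\emptyset$ as a formal collapse.
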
\pse

Let $\{(j_1,i_1),(j_2,i_2),...,(j_r,i_r)\}\subset \mbb Z^2$.
If there exist three distinct $1\leq k_1,k_2,k_3\leq r$ such that
$i_{k_1}<i_{k_2}< i_{k_3}$ and $j_{k_1}<j_{k_2}<j_{k_3}$, we say that $\{(j_1,i_1),(j_2,i_2),...,(j_r,i_r)\}$ contains a 3-chain $\left( j_{k_1},i_{k_1}\right) \prec   \left( j_{k_2},i_{k_2}\right)\prec  \left( j_{k_3},i_{k_3}\right) $. Similarly, we say that $\{(j_1,i_1),(j_2,i_2),...,(j_r,i_r)\}$ contains a 3-chain $\left( j_{k_1},i_{k_1}\right) \prec   \left( j_{k_2},i_{k_2}\right)  $ if $i_{k_1}<i_{k_2}$ and $j_{k_1}<j_{k_2}$.

Denote $\msr Z=\mathbb{F}\left[  z_{i,j}|1\leq i\leq m ,1\leq j\leq n\right]$ for $m,n \in \mathbb{N}$. Denote by $\msr Z^{(r)}$ the set of monomials in $\msr Z$ with degree $r$ for $r\in \mbb N$.

For $r \in \mbb N$, $a\in \mbb F$ and $\{(j_{t },i_{t })
 \in \ol{1,m} \times \ol{1,n} \vert  1 \leq t \leq r\}$, we set
\begin{eqnarray} \msr I\left( a \prod_{t =1}^{r} z_{i_{t },j_{t }} \right)  =
\{ (i_{t },j_{t })
\vert  1 \leq  t \leq r \}.\end{eqnarray}
For a monomial $f \in \msr Z^{(r)}$, we say that $f$ contains a 3-chain if $\msr I(f)$ contains a 3-chain.

 We denote the function $h_2 \left( m,n ,r\right)$ as the number of monomials in $\msr Z$ of degree $r$ without 2-chains, $h_3 \left( m,n ,r\right)$ as the number of monomials in $\msr Z$ of degree $r$ without 3-chains.

According to \cite{AJ94,Ku96}, the followings hold:
\begin{equation}\label{c2.10}
h_2 \left( m,n ,r\right) =\binom {m-1+r}{m-1} \cdot   \binom {n-1+r}{n-1}  ,
\end{equation}its generating function
\begin{equation}\label{c2.11}
{\msr S}^{m,n}_2(t)=\frac{    \sum_{r} \binom { m-1 }{r} \binom {n-1 }{r}t^r    }
{ \left(1-t\right)^{ m+n-1} },
\end{equation}
and the generating function
\begin{eqnarray}\label{b3.4}
{\msr S}_3^{m,n}(t)=\sum_{r=0}^{\infty} h_3 \left( m,n ,r\right)t^r
&= &
\frac{ \det \left(   \sum_{r} \binom { m-i }{r} \binom {n-j }{r+i-j}t^r\right)_{i,j=1,2}  }
{  \left(1-t\right)^{2\left( m+n-2\right)} }
\nonumber\\
&= &
\frac{ \det \left(   \sum_{r} \binom { m-i }{r} \binom {n-j }{r}t^r \right)_{i,j=1,2}  }
{ t \left(1-t\right)^{2\left( m+n-2\right)} }.
\end{eqnarray}
For convenience, we assume that ${\msr S}_3^{m,n}(t)={\msr S}_2^{m,n}(t)=0$ when $m\leq 0$ or $n\leq 0$.

For $m,m',n,n',r,\xi \in \mathbb{N}$, we define the set
\begin{eqnarray} \label{e2.31}
& &S_1^{\ol{m,m'},\ol{n,n'}}(\xi,r)  =  \Big\{ f=\prod_{t =1}^{r} z_{i_{t },j_{t }}\in \msr Z^{(r)} \vert
 m \leq i_t \leq m', n\leq j_t \leq n'; \nonumber \\& &
 \text{no 3-chains in $\msr I(f)$, no 2-chains in }   \{(i_t,j_t) \vert   j_t < \xi,t\in \ol{1,r}  \}
\Big\}
\end{eqnarray}
when $m \leq m'$ and $n \leq n'$. We also denote  $S_1^{\ol{m,m'},\ol{n,n'}}(\xi,r) =\emptyset$ when $m>m'$ or $n>n'$.
Denote by $P_1(m,n,\xi,r)$ the cardinality of $S_1^{\ol{1,m},\ol{1,n}}( \xi,r )$ for $m,n,\xi,r \in \mbb N$ with $\xi \in \ol{1,n}$.
In particular, we have
\begin{eqnarray}
P_1(m,n,\xi,r)=h_3(m,n,r) ,\quad \text{for} \,\, \xi \leq 2.
\end{eqnarray}
and
\begin{eqnarray}
P_1(m,n,\xi,r)=h_2(m,n,r) ,\quad \text{for} \,\, \xi > n.
\end{eqnarray}

Set
\begin{eqnarray}
{\msr S}^{m,n, \xi}(t)=
\sum_{r=0}^{\infty} P_1(m,n, \xi,r)  t^r,
\end{eqnarray}
then we have a recursion formula as follows:
\begin{lemma}\label{leme2.2}
For $m,n,r \in \mbb N$, $\xi \in \ol{3,n}$ with $m,n \geq 2$ and $r>1$, we have
\begin{eqnarray}\label{c2.17}
& &P_1(m,n,\xi,r) \nonumber\\
&=&
\Big(\sum_{r_1=1}^r \sum_{\xi'=1}^m
\binom { m-\xi'-1 +r_1}{r_1-1} \binom {\xi-2+r_1 }{r_1} P_1 ( n-\xi+1,m,m-\xi'+1,r-r_1 )\Big)
\nonumber\\
& &
+h_3(m,n-\xi+1,r).
\end{eqnarray}
The series form of this formula is
\begin{eqnarray} \label{c2.18}
 {\msr S}^{m,n, \xi}(t)
&=&   \sum_{\xi'=1}^{m}\Big(
 {\msr S}^{ n-\xi+1,m,m-\xi'+1}(t)
({\msr S}^{m-\xi'+1,\xi-1}_2(t)-{\msr S}^{m-\xi' ,\xi-1}_2(t))   \Big)
\end{eqnarray}
\end{lemma}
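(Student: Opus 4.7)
My plan is to prove the recursion (2.25) by decomposing each monomial $f$ counted by $P_1(m,n,\xi,r)$ into a left part $L=\{(i,j)\in\msr I(f)\mid j<\xi\}$ and a right part $R=\{(i,j)\in\msr I(f)\mid j\geq\xi\}$, then stratify by $r_1=|L|$ and, when $r_1\geq 1$, by $\xi':=\min\{i\mid(i,j)\in L\}$. The case $r_1=0$ yields $h_3(m,n-\xi+1,r)$ (the free summand on the right-hand side of (2.25)) by reindexing the $j$-coordinate. For $r_1\geq 1$, the no-2-chain condition on $L$ forces a canonical shape: sorting so that $j$-coordinates are weakly increasing makes $i$-coordinates weakly decreasing, and a short swap argument shows that given any $j$-multiset in $\ol{1,\xi-1}$ and $i$-multiset in $\ol{1,m}$ this order-pairing is the unique no-2-chain arrangement. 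Hence the number of $L$'s with $|L|=r_1$ and minimum $i$-value exactly $\xi'$ factors as $\binom{\xi-2+r_1}{r_1}\binom{m-\xi'-1+r_1}{r_1-1}$, where the first binomial counts $j$-multisets in $\ol{1,\xi-1}$ and the second counts $i$-multisets in $\ol{\xi',m}$ containing $\xi'$.

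Given $L$, the admissibility of $R\subset\ol{1,m}\times\ol{\xi,n}$ must be analyzed by classifying 3-chains in $L\cup R$ by type (LLL, LLR, LRR, RRR). The first two types are excluded because $L$ has no 2-chain, RRR is excluded iff $R$ has no 3-chain, and an LRR chain $(i_1,j_1)\prec(i_2,j_2)\prec(i_3,j_3)$ exists iff $R$ contains a 2-chain $(i_2,j_2)\prec(i_3,j_3)$ with $i_2>\xi'$ (since $j_1<\xi\leq j_2$ is automatic and taking $i_1=\xi'<i_2$ realizes such a 3-chain). So $R$ must have no 3-chain and no 2-chain with first $i$-coordinate exceeding $\xi'$. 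Applying the order-reversing involution $\sigma(i,j)=(n-j+1,m-i+1)$, which maps $\ol{1,m}\times\ol{\xi,n}$ bijectively onto $\ol{1,n-\xi+1}\times\ol{1,m}$, preserves 2-chains and 3-chains, and carries the condition ``$i>\xi'$'' to ``new $j<m-\xi'+1$'', identifies admissible $R$'s with the monomials enumerated by $P_1(n-\xi+1,m,m-\xi'+1,r-r_1)$. Summing over $\xi'\in\ol{1,m}$ and $r_1\in\ol{1,r}$, together with the $r_1=0$ contribution, gives (2.25).

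To pass to the series form (2.26), I would use the telescoping identity $\msr S_2^{a+1,b}(t)-\msr S_2^{a,b}(t)=\sum_{r\geq 1}\binom{a+r-1}{r-1}\binom{b-1+r}{r}t^r$ (a direct calculation from $\msr S_2^{a,b}(t)=\sum_r\binom{a-1+r}{r}\binom{b-1+r}{r}t^r$), valid for $a\geq 1$, to convert the inner double sum in (2.25) into the advertised product of series. The $\xi'=m$ term is special: the convention $\msr S_2^{0,\xi-1}(t)=0$ promotes the difference to the full $\msr S_2^{1,\xi-1}(t)$, whose extra constant term $1$, multiplied by $\msr S^{n-\xi+1,m,1}(t)=\msr S_3^{m,n-\xi+1}(t)$ (since threshold $\xi''=1$ makes the no-2-chain condition vacuous, so $P_1(\cdot,\cdot,1,\cdot)=h_3$), absorbs the free $h_3(m,n-\xi+1,r)$ term of (2.25) exactly into the $\xi'=m$ summand of (2.26).

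The main obstacle is the case-by-case verification in the second paragraph: carefully pinning the constraint on $R$ as precisely ``no 2-chain with $i_2>\xi'$'' (not weaker, not stronger) from the no-global-3-chain condition, and then checking that the double reflection $\sigma$ translates this condition into the specific threshold $j<m-\xi'+1$ that defines $P_1$. A secondary subtlety is the boundary bookkeeping at $\xi'=m$ in the series derivation, where the nonstandard value of $\msr S_2^{0,\xi-1}$ under the paper's convention is essential for the $h_3$ term to be absorbed without double counting.
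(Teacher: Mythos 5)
Your proof is correct and follows essentially the same route as the paper: split each monomial at the column threshold $j<\xi$ versus $j\geq\xi$, stratify by the size $r_1$ of the left part and, for $r_1\geq 1$, by the minimal row index $\xi'$ appearing there, observe that the no-global-3-chain condition reduces to ``no 3-chain in $R$ and no 2-chain in $R$ lying entirely above row $\xi'$,'' and finally transport $R$ to a smaller $P_1$ instance by a reflection. The only cosmetic differences are that you count the admissible left parts directly via the unique antichain pairing of a $j$-multiset with an $i$-multiset (the paper instead writes this as a set difference $h_2(m-\xi'+1,\xi-1,r_1)-h_2(m-\xi',\xi-1,r_1)$ and simplifies with Pascal), and you write the reflection as $\sigma(i,j)=(n-j+1,m-i+1)$ while the paper's transpose map $f\mapsto f^t$ composed with the index shift achieves the same effect; your treatment of the $\xi'=m$ term absorbing the free $h_3$ summand in the series form also matches the paper.
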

\begin{proof}
For a monomial $f\in  S_1^{\ol{1,m},\ol{1,n}}(\xi,r) $, we write
\begin{equation}
f=\prod_{1\leq i \leq m,1\leq j\leq n}z_{i,j}^{\gamma_{i,j}}=f_1 f_2
\end{equation}
where
\begin{equation}
f_1=\prod_{1\leq i \leq m,1\leq j\leq \xi-1 }z_{i,j}^{\gamma_{i,j}},\quad f_2=\prod_{1\leq i \leq m,\xi \leq j\leq n }z_{i,j}^{\gamma_{i,j}}
\end{equation}
Setting
\begin{equation}
r_1(f)=\deg(f_1)=\sum_{1\leq i \leq m,1\leq j<\xi } \gamma_{i,j},
\end{equation}
then we have
\begin{equation}
| \{f\in S_1^{\ol{1,m},\ol{1,n}}(\xi,r)\vert r_1(f)=0 \}| = h_3(m,n-\xi+1,r).
\end{equation}

For  $f\in  S_1^{\ol{1,m},\ol{1,n}}(\xi,r)  $ and $r_1(f)>0$, we set
\begin{equation}
\xi' = \min\{ i \in \ol{1,m} \vert \gamma_{i,j}>0 \text{ for some }j\in \ol{1,\xi-1}\},
\end{equation}
and then there exists $j_0\in \ol{1,\xi-1}$ such that $\gamma_{\xi' ,j_0}>0$.
Thus
\begin{equation}
f_1 \in S_1^{\ol{\xi',m},\ol{1,\xi-1}}(\xi ,r_1(f)) \setminus S_1^{\ol{\xi'+1,m},\ol{1,\xi-1}}(\xi ,r_1(f)).
\end{equation}
In particular, $f_1 \in S_1^{\ol{m,m},\ol{1,\xi-1}}(\xi ,r_1(f))$ when $\xi'=m$.

When there exists a 2-chain $(i_1,j_1) \prec (i_2,j_2)$ in $\msr I(f)\cap \{(i,j)  |  i> \xi' ,j\geq \xi \}$, $(\xi',j_0)\prec (i_1,j_1) \prec (i_2,j_2)$ is a 3-chain in $\msr I(f) $. Thus there are no 2-chains in $\msr I(f_2)\cap \{(i,j)  |  i > \xi'   \}$ and no 3-chain in $\msr I(f_2) $.

For $f =\prod_{1\leq i \leq m,1\leq j\leq n}z_{i,j}^{\gamma_{i,j}} \in \msr Z^{(r)}$, we denote \begin{equation}f^t =\prod_{1\leq i \leq m,1\leq j\leq n}z_{j,n-i+1}^{\gamma_{i,j}}\end{equation} in $\mbb F\left[  z_{j,i}|1\leq i\leq m ,1\leq j\leq n\right]$. Then $(\cdot )^t$ is a one-to-one mapping from $\msr Z^{(r)}$ to the set of monomials in $\mbb F\left[  z_{j,i}|1\leq i\leq m ,1\leq j\leq n\right]$ with degree $r$.
Hence $f_2^t \in S_1^{\ol{\xi,n},\ol{1,m} }(m-\xi'+1,r-r_1(f))$.

On the other hand, when $g_1\in  S_1^{\ol{\xi',m},\ol{1,\xi-1}}(\xi ,r_1 ) \setminus S_1^{\ol{\xi'+1,m},\ol{1,\xi-1}}(\xi ,r_1 )$ and $g_2^t \in S_1^{\ol{\xi,n},\ol{1,m} }(m-\xi'+1,r-r_1 )$ for some $\xi' \in \ol{1,m}$ and $r_1\in \ol{1,r}$, we have $g_1g_2 \in S_1^{\ol{1,m},\ol{1,n}}(\xi,r)$.

Therefore,
\begin{eqnarray}
&  &P_1(m,n,\xi,r)\nonumber\\& =&
\Big(\sum_{r_1=1}^r  \sum_{\xi'=1}^m
| S_1^{\ol{\xi,n},\ol{1,m} }(m-\xi'+1,r-r_1 )|
|  S_1^{\ol{\xi',m},\ol{1,\xi-1}}(\xi ,r_1 ) \setminus S_1^{\ol{\xi'+1,m},\ol{1,\xi-1}}(\xi ,r_1 ) | \Big)\nonumber\\& &
+ h_3(m,n-\xi+1,r)\nonumber\\
&=& \Big(\sum_{r_1=1}^r  \sum_{\xi'=1}^m
| S_1^{\ol{1,n-\xi+1},\ol{1,m} }(m-\xi'+1,r-r_1 )|
\left( h_2(m-\xi'+1,\xi-1,r_1)-h_2(m-\xi',\xi-1,r_1)\right)\Big)\nonumber\\& &
+ h_3(m,n-\xi+1,r)\nonumber\\
&=&\Big(\sum_{r_1=1}^r \sum_{\xi'=1}^m
\binom { m-\xi'-1 +r_1}{r_1-1} \binom {\xi-2+r_1 }{r_1} P_1 ( n-\xi+1,m,m-\xi'+1,r-r_1 )\Big)
\nonumber\\
& &
+h_3(m,n-\xi+1,r).
\end{eqnarray}
Thus, equation (\ref{c2.17}) holds.
The above equation can be rewritten in the form of a generating function, leading to (\ref{c2.18}).

\end{proof}

Note that $\deg(h_3(m,n))=2m+2n-5$.
By induction on $m+n$, it is easy to prove the following corollary of Lemma \ref{leme2.2}.
\begin{corollary}\label{core2.1}
For $m,n \in \mbb N$, $\xi \in \ol{1,n}$ with $m,n \geq 2$, we have
 \begin{equation}
\deg_r(P_1(m,n,\xi,r))=
\begin{cases}
2m+2n-5,\ \quad \text{if }\xi=1; \\
2m+2n -\xi-3,\ \quad \text{if }\xi\geq 2.
\end{cases}
\end{equation}
\end{corollary}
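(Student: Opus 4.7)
The plan is to induct on $m+n$, using the generating-function form (\ref{c2.18}) of the recursion in Lemma~\ref{leme2.2} as the workhorse. The guiding principle is that for a sequence that is polynomial in $r$ for large $r$, the degree of the polynomial equals (pole order at $t=1$ of its generating function) $-1$; so the task reduces to pinning down the pole order of $\msr S^{m,n,\xi}(t)$ at $t=1$.

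The base $m=n=2$, together with the subrange $\xi\le 2$ for all $m,n\ge 2$, is immediate: by definition $P_1(m,n,\xi,r)=h_3(m,n,r)$, and equation (\ref{b3.4}) exhibits $\msr S_3^{m,n}(t)$ with denominator $(1-t)^{2m+2n-4}$ and numerator that is nonzero at $t=1$, yielding pole order $2m+2n-4$ and degree $2m+2n-5$ in $r$. For the inductive step at $\xi\ge 3$, I would analyze each summand of (\ref{c2.18}). Pascal's identity collapses the second factor to
\begin{equation*}
\msr S_2^{m-\xi'+1,\xi-1}(t)-\msr S_2^{m-\xi',\xi-1}(t)=\sum_{r_1\ge 0}\binom{\xi-2+r_1}{\xi-2}\binom{m-\xi'-1+r_1}{m-\xi'}t^{r_1},
\end{equation*}
whose pole order at $t=1$ is exactly $\xi+m-\xi'-1$. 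Since $(n-\xi+1)+m<m+n$ whenever $\xi\ge 2$, the induction hypothesis applies to $\msr S^{n-\xi+1,m,m-\xi'+1}(t)$ when $n-\xi+1\ge 2$; the boundary $n-\xi+1=1$ (i.e.\ $\xi=n$) is handled directly from the definition, giving $P_1(1,m,m-\xi'+1,r)=\binom{m-1+r}{m-1}$ with pole order $m$. A short case analysis then shows that the maximum combined pole order over $\xi'\in\ol{1,m}$ equals $2m+2n-\xi-2$, while the standalone $h_3(m,n-\xi+1,r)$ term contributes only $2m+2n-2\xi-2<2m+2n-\xi-2$.

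The main (though mild) obstacle is ruling out cancellation of leading contributions across the sum over $\xi'$. This is automatic because every summand of (\ref{c2.18}) is the generating function of a nonnegative sequence (it counts monomials), so the pole order of the total sum equals the maximum pole order among the summands, with a strictly positive leading Laurent coefficient. This yields $\deg_r P_1(m,n,\xi,r)=2m+2n-\xi-3$ for $\xi\ge 3$ and completes the induction.
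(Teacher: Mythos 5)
Your proof is correct and follows exactly the approach the paper sketches: induction on $m+n$ driven by the recursion of Lemma~\ref{leme2.2}, with the base range $\xi\le 2$ settled by $P_1(m,n,\xi,r)=h_3(m,n,r)$ and $\deg h_3 = 2m+2n-5$. Phrasing the degree bookkeeping via pole orders at $t=1$, and invoking nonnegativity of all counts to preclude cancellation of leading terms across the $\xi'$-sum, is a clean way to carry out what the paper leaves to the reader, but it is not a different route.
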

\psp

For $m,m',n,n',r,\varrho,\xi \in \mathbb{N}$, we define the set
\begin{eqnarray}
& &S_2^{\ol{m,m'},\ol{n,n'}}(\varrho,\xi,r) =   \Big\{ f=\prod_{t =1}^{r} z_{i_{t },j_{t }}\in \msr Z^{(r)} \vert
 m \leq i_t \leq m', n\leq j_t \leq n',  \nonumber \\& & i_t \leq \varrho \text{ or } j_t\geq \xi \text{ for } t\in \ol{1,r};
 \text{ no 3-chains in }  \msr I(f),  \text{no 2-chains in } \nonumber \\& &  \{(i_t,j_t) \vert   j_t < \xi,t\in \ol{1,r}\} \text{ and }\{ (i_t,j_t) \vert   i_t > \varrho,t\in \ol{1,r} \}
\Big\}
\end{eqnarray}
when $m \leq m'$ and $n \leq n'$, and $S_2^{\ol{m,m'},\ol{n,n'}}(\varrho,\xi,r) =\emptyset$ when $m>m'$ or $n>n'$.
Denote by $P_2(m,n,\varrho,\xi,r)$ the cardinality of $S_2^{\ol{1,m },\ol{1,n }}(\rho,\xi,r)$ for $m,n,\varrho,\xi,r \in \mbb N$ with $\varrho\in \ol{1,m}$ and $\xi \in \ol{1,n}$.
In particular, \begin{equation}\label{c2.28}
P_2(m,n,m, \xi,r)=P_1(m,n,   \xi,r) .
\end{equation}
For $f =\prod_{1\leq i \leq m,1\leq j\leq n}z_{i,j}^{\gamma_{i,j}} \in S_2^{\ol{m,m'},\ol{n,n'}}(\varrho,\xi,r)$, we set the one-to-one mapping $(\cdot)^{T'}$ from $ S_2^{\ol{1,m },\ol{1,n }}(\varrho,\xi,r)$ to $ S_2^{\ol{1,n},\ol{1,m}}(n-\xi+1,m-\varrho+1,r)$ by
\begin{equation}f^{T'} =\prod_{1\leq i \leq m,1\leq j\leq n}z_{n-j+1,m-i+1}^{\gamma_{i,j}}.\end{equation}
Hence
\begin{equation}\label{c2.30}
P_2(m,n,\varrho, \xi,r)=P_2(n,m,n-\xi+1,m-\varrho+1,r) .
\end{equation}
By (\ref{c2.28}) and (\ref{c2.30}),
 \begin{equation}
P_2(m,n,\varrho,1,r)=P_2(n,m,n ,m-\varrho+1,r)= P_1(n,m,m-\varrho+1,r).
\end{equation}

Denote
\begin{eqnarray} \label{c2.32}
{\msr S}^{m,n, \varrho,\xi}(t)=
\sum_{r=0}^{\infty} P_2(m,n, \varrho,\xi,r)  t^r,
\end{eqnarray}
then we have the recursion formula as follows:

\begin{lemma}\label{lemd2.3}
For $m,n\in \mbb N$, $\varrho \in \ol{1,m-1}$, $\xi \in \ol{2,n}$ and $r>1$, we have
\begin{eqnarray} \label{c2.33}
 & & P_2(m,n,\varrho,\xi,r) \nonumber \\
&=  &
\sum_{\varrho'=1}^{\varrho} \sum_{\xi'=\xi}^{n}
\bigg( \sum_{r_1,r_2>0;r_1+r_2\leq r}
\binom {\varrho-\varrho'-1 +r_2}{r_2-1} \binom {\xi-2+r_2 }{r_2}
\binom {m-\varrho-1 +r_1 }{r_1 }
\nonumber \\
& &\;\;\;
 \binom {\xi'-\xi-1+r_1 }{r_1-1}   P_2(\varrho,n-\xi+1,\varrho',\xi'-\xi+1,r-r_1-r_2)    \bigg)
  +P_1 ( \varrho ,n,\xi,r )
\nonumber \\
& &\;\;\;
 +P_1(n-\xi+1,m,m-\varrho+1,r)
-h_3( \varrho ,n-\xi +1,r)
.\end{eqnarray}
The series form of this formula is
\begin{eqnarray}  \label{c2.34}
{\msr S}^{m,n, \varrho,\xi}(t)
&=&
\sum_{\varrho'=1}^{\varrho} \sum_{\xi'=\xi}^{n}
 {\msr S}^{\varrho,n-\xi+1,\varrho',\xi'-\xi+1}(t)
({\msr S}^{\varrho-\varrho'+1,\xi-1}_2(t)-{\msr S}^{\varrho-\varrho' ,\xi-1}_2(t))
\nonumber \\& &\qquad\qquad
({\msr S}^{m-\varrho,\xi'-\xi+1}_2(t)-{\msr S}^{m-\varrho,\xi'-\xi}_2(t)) .
\end{eqnarray}
\end{lemma}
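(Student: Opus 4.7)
The plan is to mimic the decomposition argument in the proof of Lemma~\ref{leme2.2}, adapted to the L-shaped admissible region defined by $i_t\le\varrho$ or $j_t\ge\xi$. Given a monomial $f\in S_2^{\ol{1,m},\ol{1,n}}(\varrho,\xi,r)$, I would partition its support as $\msr I(f)=A\sqcup B\sqcup C$ with
\[
A=\msr I(f)\cap(\ol{1,\varrho}\times\ol{1,\xi-1}),\quad B=\msr I(f)\cap(\ol{1,\varrho}\times\ol{\xi,n}),\quad C=\msr I(f)\cap(\ol{\varrho+1,m}\times\ol{\xi,n}),
\]
and factor $f=f_Af_Bf_C$ with $\deg f_A=r_2$, $\deg f_C=r_1$, and $\deg f_B=r-r_1-r_2$; the fourth sub-rectangle $\ol{\varrho+1,m}\times\ol{1,\xi-1}$ is forbidden by hypothesis and therefore empty. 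The recursion splits according to whether $A$ or $C$ is empty.

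In the main case $A,C\ne\emptyset$, set $\varrho'=\min\{i:(i,j)\in A\}\in\ol{1,\varrho}$ and $\xi'=\max\{j:(i,j)\in C\}\in\ol{\xi,n}$. Then $f_A$ is a degree-$r_2$ no-2-chain monomial in $\ol{\varrho',\varrho}\times\ol{1,\xi-1}$ with row $\varrho'$ attained, counted by $h_2(\varrho-\varrho'+1,\xi-1,r_2)-h_2(\varrho-\varrho',\xi-1,r_2)=\binom{\varrho-\varrho'-1+r_2}{r_2-1}\binom{\xi-2+r_2}{r_2}$ via Pascal's identity, and symmetrically $f_C$ is counted by $\binom{m-\varrho-1+r_1}{r_1}\binom{\xi'-\xi-1+r_1}{r_1-1}$. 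The crucial step is identifying the constraints on $f_B$: besides the intrinsic no-3-chain requirement, the vertex $(\varrho',j_A)\in A$ forbids any 2-chain in $\msr I(f_B)\cap\{i>\varrho'\}$, the vertex $(i_C,\xi')\in C$ forbids any 2-chain in $\msr I(f_B)\cap\{j<\xi'\}$, and together they rule out even a single $(i,j)\in\msr I(f_B)$ with $\varrho'<i\le\varrho$ and $\xi\le j<\xi'$ (otherwise $(\varrho',j_A)\prec(i,j)\prec(i_C,\xi')$ would be a 3-chain). After the shift $j\mapsto j-\xi+1$, these three conditions match exactly the definition of $S_2^{\ol{1,\varrho},\ol{1,n-\xi+1}}(\varrho',\xi'-\xi+1,r-r_1-r_2)$, so $f_B$ contributes $P_2(\varrho,n-\xi+1,\varrho',\xi'-\xi+1,r-r_1-r_2)$.

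Assembly of (\ref{c2.33}) is then straightforward: multiply the three independent counts, sum over $\varrho',\xi',r_1,r_2$ with $r_1,r_2\ge 1$, and add the boundary contributions $P_1(n-\xi+1,m,m-\varrho+1,r)$, $P_1(\varrho,n,\xi,r)$, and $-h_3(\varrho,n-\xi+1,r)$ arising from the cases $A=\emptyset$, $C=\emptyset$, and both empty (inclusion--exclusion). The generating-function identity (\ref{c2.34}) follows by multiplying (\ref{c2.33}) by $t^r$ and summing: the triple sum factorizes as the displayed product of three series, and the three boundary terms are absorbed automatically because the $r_2=0$ coefficient of $\msr S^{\varrho-\varrho'+1,\xi-1}_2(t)-\msr S^{\varrho-\varrho',\xi-1}_2(t)$ vanishes unless $\varrho'=\varrho$ (and analogously for the other difference), so the degenerate $\varrho'=\varrho$ or $\xi'=\xi$ contributions re-express the boundary terms via the identity $P_2(\varrho,n-\xi+1,\varrho,1,r)=h_3(\varrho,n-\xi+1,r)$ derived from (\ref{c2.28})--(\ref{c2.30}).

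The main obstacle is the verification of the $f_B$ constraints in both directions: showing that any 3-chain in $\msr I(f)$ meeting two or three of the blocks $A,B,C$ necessarily violates one of the listed restrictions, and conversely that every triple $(f_A,f_B,f_C)$ satisfying the block-wise conditions reassembles into a legal element of $S_2^{\ol{1,m},\ol{1,n}}(\varrho,\xi,r)$. This reduces to enumerating the short list of monotone 3-chain patterns distributed across $A$, $B$, $C$ (the fourth block being vacuous) and checking each against the three explicit $f_B$ conditions, with the extremal choices of $\varrho'$ and $\xi'$ guaranteeing that the worst-case 3-chain bridging $A$ and $C$ through $B$ is always captured.
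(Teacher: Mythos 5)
Your proposal is correct and follows essentially the same route as the paper: decompose $\msr I(f)$ into the three admissible blocks, set $r_1=\deg f_C$ and $r_2=\deg f_A$, use the extremal row/column $\varrho',\xi'$ to pin down the induced constraints on the middle block $B$, count the outer blocks via differences of $h_2$-values, and finish with inclusion--exclusion on $A=\emptyset$ and $C=\emptyset$. One remark worth noting: your choice $\varrho'=\min\{i:(i,j)\in A\}$ is what the argument actually requires (and is what the difference $h_2(\varrho-\varrho'+1,\xi-1,r_2)-h_2(\varrho-\varrho',\xi-1,r_2)$ encodes), while the paper's displayed definition reads $\varrho'=\max\{\cdots\}$; this appears to be a typo in the source, since the rest of the paper's own proof — the membership $f_2 \in S_1^{\ol{\varrho',\varrho},\ol{1,\xi-1}}\setminus S_1^{\ol{\varrho'+1,\varrho},\ol{1,\xi-1}}$ and the resulting binomial coefficients — is only consistent with the minimum. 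Your explanation of how the three boundary terms are absorbed into the generating-function product, via the vanishing of the $r_2=0$ (resp. $r_1=0$) coefficients of the $h_2$-differences except at $\varrho'=\varrho$ (resp. $\xi'=\xi$) together with $P_2(\varrho,n-\xi+1,\varrho,1,r)=h_3(\varrho,n-\xi+1,r)$, is also correct and somewhat more detailed than the paper, which only asserts the conversion.
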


\begin{proof}
 For $f\in S_2^{\ol{1,m},\ol{1,n}}(\varrho,\xi,r)$, we write
\begin{equation}
 f=\prod_{1\leq i \leq m,1\leq j\leq n}z_{i,j}^{\gamma_{i,j}}.
\end{equation}
We set
\begin{equation}\label{c2.36}
 r_1(f)=\sum_{\varrho<i\leq m, 1\leq j\leq n } \gamma_{i,j}, \;\;\;
 r_2(f)=\sum_{ 1\leq i\leq m,1\leq j<\xi  } \gamma_{i,j} .
\end{equation}

Then we have
\begin{equation}\label{c2.37}
| \{f\in S_2^{\ol{1,m},\ol{1,n}}(\varrho,\xi,r)\vert  r_1(f)=r_2(f)=0\}|
= h_3(\varrho, n-\xi+1,r) .
\end{equation}
Since
\begin{equation}
\{f\in S_2^{\ol{1,m},\ol{1,n}}(\varrho,\xi,r)\vert  r_1(f)= 0\}=
 S_1^{\ol{1,\varrho},\ol{1,n}}(\xi, r),
\end{equation}
\begin{eqnarray}\label{c2.39}
| \{f\in S_2^{\ol{1,m},\ol{1,n}}(\varrho,\xi,r)\vert  r_1(f)= 0\}|= P_1 ( \varrho ,n,\xi,r )  .
\end{eqnarray}
Similarly,
\begin{equation}\label{c2.40}
| \{f\in S_2^{\ol{1,m},\ol{1,n}}(\varrho,\xi,r)\vert  r_2(f)=0\}|
=P_1(n-\xi+1,m,m-\varrho+1,r).
\end{equation}

Suppose that $r_1(f),r_2(f)>0$. We denote
\begin{equation}\label{c2.41}
\xi'=\max\{ j\in \ol{\xi,n} \vert \gamma_{i,j}>0 \text{ for some }i\in \ol{\varrho+1,m}\}
\end{equation}
and
\begin{equation}
\varrho'=\max\{ i\in \ol{1,\varrho} \vert \gamma_{i,j}>0 \text{ for some }j\in \ol{1,\xi-1}\}.
\end{equation}
Then there exist $i_0 \in \ol{\varrho+1,m}$ and $j_0\in \ol{1,\xi-1}$ such that $\gamma_{\varrho' ,j_0}>0$ and $\gamma_{i_0,\xi' }>0$.
We set
\begin{equation}
f_1=\prod_{\varrho<i\leq m, 1\leq j\leq n }z_{i,j}^{\gamma_{i,j}},\quad
f_2=\prod_{ 1\leq i\leq m,1\leq j<\xi  }z_{i,j}^{\gamma_{i,j}} ,\quad
f_3=\prod_{1\leq i \leq \varrho,\xi \leq j\leq n }z_{i,j}^{\gamma_{i,j}},
\end{equation}
and then $f=f_1f_2f_3$.
By (\ref{c2.36}) and (\ref{c2.41}), we have
\begin{equation}
f_1 \in S_1^{\ol{\varrho+1,m},\ol{\xi,\xi'}}(n+1,r_1(f)) \setminus S_1^{\ol{\varrho+1,m},\ol{\xi,\xi'-1}}(n+1,r_1(f)).
\end{equation}
Similarly,
\begin{equation}
f_2 \in S_1^{\ol{\varrho',\varrho},\ol{1,\xi-1}}(n+1,r_2(f)) \setminus S_1^{\ol{\varrho'+1,\varrho},\ol{1,\xi-1}}(n+1,r_2(f)).
\end{equation}

When there exists $\gamma_{i',j'}>0$ for some $i'\in \ol{\varrho'+1,\varrho}$ and $j'\in \ol{\xi,\xi'-1}$, $(\varrho',j_0) \prec (i',j') \prec (i_0,\xi')$ is a 3-chain in $\msr I(f)$. Hence $\{(i,j)\in \msr I(f_3) \vert i>\varrho'  ,j< \xi'  \}=\emptyset$.
When there exists a 2-chain $(i_1,j_1) \prec (i_2,j_2)$ in $  \{(i,j)  \in \msr I(f_3)|  i> \varrho'  \}$, $(\varrho',j_0)\prec (i_1,j_1) \prec (i_2,j_2)$ is a 3-chain in $\msr I(f) $. Thus there are no 2-chains in $ \{(i,j)  \in \msr I(f_3)|  i> \varrho'  \}$.
Symmetrically, there are no 2-chains in $ \{(i,j)  \in \msr I(f_3)|  j< \xi'  \}$.
Hence,
\begin{equation}
f_3 \in S_2^{\ol{1,\varrho},\ol{\xi,n}}(\varrho',\xi',r-r_1(f)-r_2(f)) .
\end{equation}

On the other hand, when $g_1\in   S_1^{\ol{\varrho+1,m},\ol{\xi,\xi'}}(n+1,r_1(f)) \setminus S_1^{\ol{\varrho+1,m},\ol{\xi,\xi'-1}}(n+1,r_1(f)) $ and $g_2  \in S_1^{\ol{\varrho',\varrho},\ol{1,\xi-1}}(n+1,r_2(f)) \setminus S_1^{\ol{\varrho'+1,\varrho},\ol{1,\xi-1}}(n+1,r_2(f))$ and $g_3 \in S_2^{\ol{1,\varrho},\ol{\xi,n}}(\varrho',\xi',r-r_1(f)-r_2(f))$ for some $\varrho' \in \ol{1,\varrho}$, $\xi' \in \ol{\xi,n}$; $r_1,r_2\in \ol{1,r}$ with $r_1+r_2\leq r$, we have $g_1g_2g_3 \in S_2^{\ol{1,m},\ol{1,n}}(\varrho,\xi,r)$.

Therefore,
\begin{eqnarray} \label{c2.47}
&  &
| \{f\in S_2^{\ol{1,m},\ol{1,n}}(\varrho,\xi,r)\vert  r_1(f),r_2(f)>0\}|
\nonumber\\& =&
\sum_{r_1,r_2>0,r_1+r_2\leq r}   \sum_{\xi'=\xi}^m \sum_{\varrho'=1}^{\varrho}\Big(
|  S_1^{\ol{\varrho+1,m},\ol{\xi,\xi'}}(n+1,r_1 ) \setminus S_1^{\ol{\varrho+1,m},\ol{\xi,\xi'-1}}(n+1,r_1 ) |
\nonumber\\& &
| S_1^{\ol{\varrho',\varrho},\ol{1,\xi-1}}(n+1,r_2 ) \setminus S_1^{\ol{\varrho'+1,\varrho},\ol{1,\xi-1}}(n+1,r_2 )|
| S_2^{\ol{1,\varrho},\ol{\xi,n}}(\varrho',\xi',r-r_1 -r_2 )|\Big)
\nonumber\\
&=&\sum_{r_1,r_2>0;r_1+r_2\leq r}   \sum_{\xi'=\xi}^m \sum_{\varrho'=1}^{\varrho}\Big(
(h_2(m-\varrho, \xi'-\xi+1,r_1 )- h_2(m-\varrho, \xi'-\xi ,r_1 ) )
\nonumber\\& &
(h_2( \varrho-\varrho'+1,  \xi-1,r_2 )- h_2(\varrho-\varrho', \xi -1 ,r_2 ) )
| S_2^{\ol{1,\varrho},\ol{1,n-\xi+1}}(\varrho',\xi'-\xi+1,r-r_1 -r_2 )|\Big)
\nonumber\\
&=&\sum_{\varrho'=1}^{\varrho} \sum_{\xi'=\xi}^{n}
\bigg( \sum_{r_1,r_2>0;r_1+r_2\leq r}
\binom {\varrho-\varrho'-1 +r_2}{r_2-1} \binom {\xi-2+r_2 }{r_2}
\binom {m-\varrho-1 +r_1 }{r_1 }
\nonumber \\
& &\;\;\;
 \binom {\xi'-\xi-1+r_1 }{r_1-1}   P_2(\varrho,n-\xi+1,\varrho',\xi'-\xi+1,r-r_1-r_2)    \bigg)
\end{eqnarray}
Since
\begin{eqnarray}
& & P_2(m,n,\varrho,\xi,r)   \nonumber \\
& =&| \{f\in S_2^{\ol{1,m},\ol{1,n}}(\varrho,\xi,r)\vert  r_1(f),r_2(f)>0\}|-
| \{f\in S_2^{\ol{1,m},\ol{1,n}}(\varrho,\xi,r)\vert  r_1(f)=r_2(f)=0\}|
\nonumber \\
& & +
| \{f\in S_2^{\ol{1,m},\ol{1,n}}(\varrho,\xi,r)\vert   r_2(f)=0\}|
+| \{f\in S_2^{\ol{1,m},\ol{1,n}}(\varrho,\xi,r)\vert  r_1(f)=0\}|,
\end{eqnarray}
  (\ref{c2.33}) holds by (\ref{c2.37}),(\ref{c2.39}),(\ref{c2.40}) and (\ref{c2.47}).
Moreover, Equation (\ref{c2.33}) can be rewritten as a generating function, yielding (\ref{c2.34}).

\end{proof}
\pse
By induction on $m+n$, it is easy to prove the following corollary of Lemma \ref{lemd2.3}.

\begin{corollary}\label{core2.2}
For $m,n \in \mbb N$, $\xi \in \ol{1,n}$ and $\varrho \in \ol{1,m}$ with $m,n \geq 2$, we have
 \begin{equation}
\deg_r(P_2(m,n,\varrho,\xi,r))=
\begin{cases}
2m+2n-5,\ \quad &\text{if } \xi=1 \text{ and } \varrho=m; \\
 m+2n+\varrho-\xi-3,\ \quad &\text{if } \xi > 1 \text{ or } \varrho<m.
\end{cases}
\end{equation}
\end{corollary}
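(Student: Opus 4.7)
The plan is to proceed by induction on $m+n$. For the base case $m=n=2$, I would verify all four combinations $(\varrho,\xi)\in\{1,2\}^2$ by direct enumeration, using $P_2(m,n,m,1,r)=h_3(m,n,r)$, the transposition symmetry (\ref{c2.30}), and Corollary \ref{core2.1}.

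For the inductive step the distinguished branch $\xi=1$, $\varrho=m$ is immediate from $P_2(m,n,m,1,r)=P_1(m,n,1,r)=h_3(m,n,r)$ and Corollary \ref{core2.1}, yielding degree $2m+2n-5$. The case $\xi=1$, $\varrho<m$ reduces via (\ref{c2.30}) to one with new parameter $\xi=m-\varrho+1\geq 2$ and the same target degree $m+2n+\varrho-4$. The substantive case is $\xi\geq 2$, to which I would apply the recursion (\ref{c2.33}). The three correction terms $P_1(\varrho,n,\xi,r)$, $P_1(n-\xi+1,m,m-\varrho+1,r)$, and $h_3(\varrho,n-\xi+1,r)$ have their degrees controlled by Corollary \ref{core2.1}, and one checks using $\varrho\leq m$ and $\xi\geq 2$ that each is bounded by $m+2n+\varrho-\xi-3$.

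The heart of the argument is the principal double sum. For fixed $(\varrho',\xi')$ the summand is a three-fold convolution in $r_1$, $r_2$, and $r-r_1-r_2$: the $r_1$-factor has degree $(m-\varrho-1)+(\xi'-\xi)$, the $r_2$-factor has degree $(\varrho-\varrho')+(\xi-2)$, and by the inductive hypothesis (legitimate because $\varrho+(n-\xi+1)<m+n$ when $\xi\geq 2$) the factor $P_2(\varrho,n-\xi+1,\varrho',\xi'-\xi+1,\cdot)$ has degree $\varrho+2n+\varrho'-\xi'-\xi-2$ in the non-degenerate range where $\varrho'<\varrho$ or $\xi'>\xi$. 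Adding $+2$ for the triple convolution, these degrees telescope to exactly $m+2n+\varrho-\xi-3$, independently of $(\varrho',\xi')$; the leading coefficients are positive products of binomials, summed without cancellation, so this value is attained.

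The main obstacle is the edge regions where $\varrho=1$ or $n-\xi+1=1$, since then the inductive $P_2$-value falls outside the range $m,n\geq 2$ required by Corollary \ref{core2.2}. I would handle these directly via the collapses $P_2(1,n-\xi+1,1,\xi'-\xi+1,r)=\binom{n-\xi+r}{r}$ (degree $n-\xi$) and $P_2(\varrho,1,\varrho',1,r)=\binom{\varrho-1+r}{r}$ (degree $\varrho-1$), substitute those degrees into the convolution count, and verify by maximizing over $(\varrho',\xi')$ (for instance $\xi'=n$ when $\varrho=1$, or $\varrho'=1$ when $\xi=n$) that the overall maximum still equals $m+2n+\varrho-\xi-3$.
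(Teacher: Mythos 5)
Your proposal is correct and takes the same route as the paper, which simply invokes induction on $m+n$ via the recursion of Lemma~\ref{lemd2.3} without writing out the details; you supply essentially the verification the paper leaves to the reader. One small point you should add: in your case analysis you treat ``$\xi\geq 2$'' as a single branch and apply (\ref{c2.33}), but Lemma~\ref{lemd2.3} is stated only for $\varrho\in\ol{1,m-1}$, so the subcase $\xi\geq 2$, $\varrho=m$ must be peeled off first; there it follows immediately from (\ref{c2.28}) and Corollary~\ref{core2.1}, since $\deg_r P_1(m,n,\xi,r)=2m+2n-\xi-3=m+2n+\varrho-\xi-3$ when $\varrho=m$. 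With that split made explicit, the degree bookkeeping on the triple convolution, the observation that only non-degenerate $(\varrho',\xi')$ (i.e.\ $\varrho'<\varrho$ or $\xi'>\xi$) contribute to the top degree, the positivity argument ruling out cancellation, and the direct handling of the collapsed edge cases $\varrho=1$ and $\xi=n$ are all sound.
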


\begin{lemma} \label{lemd2.4}
\begin{enumerate}
For $p,q,n\in \mbb N$,
  \item[(1)]  Vandermonde's identity:
\begin{equation}
\sum_{r=0}^n \binom {p}{r}\binom {q}{n-r}=\binom {p+q}{n}
\end{equation}
\item [(2)]
\begin{equation}
\sum_{r=0}^n \binom {r}{p}\binom {n-r}{q}=\binom {n+1}{p+q+1}
\end{equation}
\item [(3)]
\begin{equation}\label{d2.51}
\sum_{r=0}^{\infty } \binom {p}{r+n}\binom {q}{r}=\binom {p+q}{q+n}
\end{equation}
\item [(4)]
\begin{equation} \label{f2.69}
\sum_{r=0}^{n} \binom {p+r}{p}\binom {n-r}{q}=\binom {p+n+1}{p+q+1}
\end{equation}
\item [(5)]
\begin{equation} \label{f2.70}
\sum_{r=0}^{n} \binom {p+r}{p}\binom {n-r+q}{q}=\binom {p+q+n+1}{p+q+1}
\end{equation}
\end{enumerate}
\end{lemma}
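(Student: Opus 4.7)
The plan is to prove all five binomial identities uniformly by the generating-function / Cauchy-product method. The three building blocks I would use are the standard formal series $(1+x)^p = \sum_{r\geq 0}\binom{p}{r}x^r$, $(1-x)^{-(p+1)} = \sum_{r\geq 0}\binom{p+r}{p}x^r$, and $x^p(1-x)^{-(p+1)} = \sum_{r\geq 0}\binom{r}{p}x^r$, each valid for $p\in \mbb N$.

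For part (1), I would compare the coefficient of $x^n$ on both sides of $(1+x)^p(1+x)^q = (1+x)^{p+q}$. For part (2), I would multiply the two series $x^p(1-x)^{-(p+1)}$ and $x^q(1-x)^{-(q+1)}$ to obtain $x^{p+q}(1-x)^{-(p+q+2)}$; the coefficient of $x^n$ on the left is exactly the sum in (2), while on the right it equals $\binom{n+1}{p+q+1}$. For parts (4) and (5), the analogous Cauchy products $(1-x)^{-(p+1)} \cdot x^q(1-x)^{-(q+1)}$ and $(1-x)^{-(p+1)} \cdot (1-x)^{-(q+1)}$ yield respectively $x^q(1-x)^{-(p+q+2)}$ and $(1-x)^{-(p+q+2)}$, and extracting the coefficient of $x^n$ gives the claimed right-hand sides immediately.

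For part (3), which is nominally an infinite sum, I would first note that only finitely many terms are nonzero since $\binom{p}{r+n}=0$ for $r+n>p$ and $\binom{q}{r}=0$ for $r>q$, so it is a polynomial identity. Then either I reindex by $s = r+n$ and use the symmetry $\binom{q}{s-n} = \binom{q}{q-s+n}$ to reduce it to part (1) with total index $q+n$, or equivalently rewrite $\binom{p}{r+n} = \binom{p}{p-r-n}$ and read off the coefficient of $x^{p-n}$ in $(1+x)^p(1+x)^q = (1+x)^{p+q}$.

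No step here presents a genuine obstacle: these are all classical identities whose formal-power-series proofs reduce to one-line coefficient comparisons. The only point requiring minimal care is to ensure that the conventions on $\binom{\cdot}{\cdot}$ (in particular, that it vanishes outside the natural range) are applied consistently when reindexing in part (3), and that one consistently interprets the series identities in the ring $\mbb F[[x]]$ so that no convergence question arises.
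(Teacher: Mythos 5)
Your proposal is correct, but it takes a genuinely different route from the paper. The paper cites Vandermonde for (1), proves (2) by induction on $n$, proves (3) by induction on $p$ using a Pascal-type recurrence for $\binom{p+1}{r+n}$ (base case $p=1$), and handles (4), (5) by induction on $n$. You instead prove all five identities uniformly by Cauchy products in $\mathbb{F}[[x]]$ and coefficient extraction: using $(1+x)^p=\sum_r\binom{p}{r}x^r$, $(1-x)^{-(p+1)}=\sum_r\binom{p+r}{p}x^r$, and $x^p(1-x)^{-(p+1)}=\sum_r\binom{r}{p}x^r$, each of (1), (2), (4), (5) drops out of a one-line product identity, and (3) reduces to (1) by reindexing and the symmetry $\binom{q}{r}=\binom{q}{q-r}$. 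I checked the coefficient bookkeeping in your parts (2), (4), (5) — e.g.\ in (2) the product is $x^{p+q}(1-x)^{-(p+q+2)}=\sum_n\binom{n+1}{p+q+1}x^n$, which indeed gives $\binom{n+1}{p+q+1}$ — and it is right. The trade-off is that the paper's inductive argument is more elementary and self-contained, whereas your generating-function argument is shorter, treats the five statements on a common footing, and makes the combinatorial origin of each right-hand side transparent. Both are valid; you've simply made a stylistic choice the paper did not.
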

\begin{proof}
(1) is the Vandermonde's identity which is well-known and (2) can be easily proved by induction on $n$.
When $p=1$, it is straightforward to verify that (\ref{d2.51}) holds.
Since
\begin{equation}
\binom {p+1}{r+n}=\binom {p}{r+n}-\binom {p}{r+n-1},
\end{equation}
we can prove that (\ref{d2.51}) holds by induction on $p$.
Similarly, we can prove (\ref{f2.69}) and (\ref{f2.70}) by induction on $n$.

\end{proof}

\begin{proposition} \label{propd2.1}
For $m,n,\varrho,\xi \in \mbb N$ with $1\leq  \varrho\leq m $ and $1\leq \xi \leq n$, we have
 \begin{eqnarray}\label{d2.53}
{\msr S}^{m,n, \varrho,\xi}(t)
=
\frac{
\left | \begin{matrix}
\sum_{r=0}^{\infty}\binom {m-2}{r}\binom {n-\xi}{r}t^r &  \sum_{r=0}^{\infty}\binom {m-2}{r-1}\binom {n-\xi+1}{r}t^r  \\
\sum_{r=0}^{\infty}\binom {\varrho-1}{r+1}\binom {n-2}{r}t^r & \sum_{r=0}^{\infty}\binom {\varrho-1}{r }\binom {n-1}{r}t^r
\end{matrix} \right |
}{(1-t)^{m+2n+\varrho-\xi-2}}
\end{eqnarray}
In particular,
 \begin{eqnarray}
{\msr S}^{m,n,  \xi}(t)
=
\frac{
\left | \begin{matrix}
\sum_{r=0}^{\infty}\binom {m-2}{r }\binom {n-\xi}{r}t^r  &  \sum_{r=0}^{\infty}\binom {m-1}{r }\binom {n-2}{r-1}t^r  \\
\sum_{r=0}^{\infty}\binom {m-2}{r}\binom {n-\xi+1}{r+1}t^r & \sum_{r=0}^{\infty}\binom {m-1}{r}\binom {n-1}{r}t^r
\end{matrix} \right |
}{(1-t)^{2m+2n -\xi-2}}
\end{eqnarray}
\end{proposition}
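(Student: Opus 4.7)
The strategy is to establish (2.53) by strong induction on the complexity parameter $(n-\xi) + (m-\varrho)$, using the recursion (2.34) of Lemma 2.3 as the inductive engine. The base cases are the extremes $\varrho = m$ or $\xi = n$. When $\varrho = m$, identity (2.28) reduces $P_2$ to $P_1$, so (2.53) must be checked against (2.17)--(2.18) of Lemma 2.2; when $\xi = n$, the only admissible monomials live in the single last column of $Z$, and a direct Vandermonde count gives a closed form. In each of these boundary regimes the stated determinant collapses to a single entry divided by a power of $(1-t)$, and a direct comparison confirms the claim, also fixing the overall normalization so that the determinantal denominator $(1-t)^{m+2n+\varrho-\xi-2}$ is correct.

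For the inductive step, I would substitute the inductive hypothesis into the right-hand side of (2.34). The inner factor ${\msr S}^{\varrho,n-\xi+1,\varrho',\xi'-\xi+1}(t)$ becomes an explicit $2 \times 2$ determinant in the summation indices $\varrho'$ and $\xi'$, while each of the two outer difference factors ${\msr S}^{\varrho-\varrho'+1,\xi-1}_2(t) - {\msr S}^{\varrho-\varrho',\xi-1}_2(t)$ and ${\msr S}^{m-\varrho,\xi'-\xi+1}_2(t) - {\msr S}^{m-\varrho,\xi'-\xi}_2(t)$ can be put in the compact form $\sum_r \binom{a-1}{r-1}\binom{b}{r} t^r / (1-t)^{a+b}$ by one application of Pascal's identity. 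After this simplification the whole summand is a rational function of $t$ whose denominator exponent adds up exactly to $m+2n+\varrho-\xi-2$, matching (2.53).

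The hard part will be the double summation over $\varrho'$ and $\xi'$: one must expand the $2\times 2$ determinant into four bilinear terms, interchange the sums, and collapse the inner $\varrho'$- and $\xi'$-sums against the binomial coefficients of the two difference factors using the Vandermonde-type identities (2.50)--(2.52) of Lemma 2.4. The delicate point is that the four resulting terms, once (2.51) and (2.52) are used to turn sums of binomial products into single binomials, must recombine into precisely the four entries of the target determinant in (2.53); this hinges on the specific pairing of shifted binomials such as $\binom{m-2}{r-1}\binom{n-\xi+1}{r}$ with $\binom{\varrho-1}{r+1}\binom{n-2}{r}$, and requires careful tracking of the boundary contributions (the $P_1$ and $h_3$ terms explicit in (2.33) but absorbed into (2.34)) to verify that no off-diagonal cross-term is miscounted. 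Finally, the particular case formula for ${\msr S}^{m,n,\xi}(t)$ follows by specializing $\varrho = m$, invoking (2.28), and rewriting the resulting $(1,2)$ and $(2,1)$ entries of the determinant via one further Vandermonde manipulation; this is a purely algebraic step on the entries which leaves the determinant unchanged and matches the transposed form displayed in the statement.
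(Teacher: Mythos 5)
Your algebraic plan is essentially the paper's: use the recursion (2.34), rewrite the two difference factors via Pascal's identity so each takes the form $\sum_r\binom{a-1}{r-1}\binom{b}{r}t^r/(1-t)^{a+b}$, expand the $2\times2$ determinant by bilinearity, interchange sums, and collapse the inner sums using the binomial identities of Lemma 2.4. Your observation about the ``In particular'' case is also correct: after setting $\varrho=m$, the $(1,2)$- and $(2,1)$-entries differ from the displayed ones only by factors of $t$ and $t^{-1}$ that cancel in the product, so the determinant is unchanged.

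However, your induction is not well-founded, and this is a fatal flaw. You propose inducting on $(n-\xi)+(m-\varrho)$, but this quantity does not strictly decrease under the recursion (2.34). The inner term is ${\msr S}^{\varrho,\,n-\xi+1,\,\varrho',\,\xi'-\xi+1}$, and its complexity under your measure is
\begin{equation*}
\bigl((n-\xi+1)-(\xi'-\xi+1)\bigr)+\bigl(\varrho-\varrho'\bigr)=(n-\xi')+(\varrho-\varrho').
\end{equation*}
This need not be smaller than $(n-\xi)+(m-\varrho)$, and can even be larger: take $m=4$, $n=5$, $\varrho=3$, $\xi=2$ (so $\varrho<m$ and $\xi>1$, and the recursion is applicable), and the summand with $\varrho'=1$, $\xi'=2$. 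Then the outer complexity is $(5-2)+(4-3)=4$, but the inner complexity is $(5-2)+(3-1)=5$. Your inductive hypothesis would therefore need to be invoked on a term of strictly higher complexity, which is unavailable. The remedy is to replace your measure by $m+n$: the inner term has $m_{\rm inner}+n_{\rm inner}=\varrho+(n-\xi+1)$, and $\varrho+n-\xi+1<m+n$ holds whenever $\varrho<m$ or $\xi>1$, with the single residual boundary case $\varrho=m$, $\xi=1$ handled directly via the known generating function for $h_3$ (the determinantal formula of Conca--Herzog). Your proposed base cases ``$\varrho=m$ or $\xi=n$'' are also misaligned with where the recursion actually fails to reduce: with the $m+n$ measure the only genuine base case is $\varrho=m$, $\xi=1$ (together with the trivial $m+n=1$); the case $\xi=n$ is not special and is subsumed by the inductive step.
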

\begin{proof}
We prove (\ref{d2.53}) by induction on $m+n$.
When $m+n=1$, we have $m=n=\varrho=\xi=1$, and then
\begin{eqnarray}
{\msr S}^{1,1,1,1}(t)
=\frac{1}{1-t},
\end{eqnarray}
and (\ref{d2.53}) holds.
We assume that (\ref{d2.53}) holds when $m+n< l$ and consider $m+n= l$.

When $\varrho=m$ and $\xi=1$, we have
\begin{eqnarray}
{\msr S}^{m,n, m,1}(t)={\msr S}_3^{m,n }(t),
\end{eqnarray}
then (\ref{d2.53}) holds by Corollary 1 in \cite{AJ94}.

Suppose that $ \varrho<m$ or $\xi>1$, then $\varrho+n-\xi+1<m+n =l$.
By the inductional assumption, for $\xi'\in \ol{\xi,n}$ and $\varrho'=\ol{1,\varrho}$, we have
\begin{eqnarray} \label{d2.56}
{\msr S}^{\varrho,n-\xi+1,\varrho',\xi'-\xi+1}(t)
=
\frac{
\left | \begin{matrix}
\sum_{r=0}^{\infty}\binom {\varrho-2}{r}\binom {n-\xi'}{r}t^r &  \sum_{r=0}^{\infty}\binom {\varrho-2}{r-1}\binom {n-\xi'+1}{r}t^r  \\
\sum_{r=0}^{\infty}\binom {\varrho'-1}{r+1}\binom {n-\xi-1}{r}t^r & \sum_{r=0}^{\infty}\binom {\varrho'-1}{r }\binom {n-\xi}{r}t^r
\end{matrix} \right |
}{(1-t)^{ 2n+\varrho+\varrho'-\xi-\xi'-1}}
\end{eqnarray}

Denote
\begin{equation}
\mfk f_1^{\xi,\varrho,\varrho'}=\sum_{r=1}^{\infty}
\binom {\varrho-\varrho'-1}{r-1} \binom {\xi-1}{r} t^r
\end{equation}
and
\begin{equation}
\mfk f_2^{x,\xi,\xi'}=\sum_{r=1}^{\infty}
\binom {x}{r} \binom {\xi'-\xi-1}{r-1} t^r
\end{equation}
for $x,\xi,\xi',\varrho,\varrho' \in \mbb N$.

By (\ref{c2.11}), we have
\begin{eqnarray}   \label{d2.57}
 {\msr S}^{\varrho-\varrho'+1,\xi-1}_2(t)-{\msr S}^{\varrho-\varrho' ,\xi-1}_2(t)
=\frac{\mfk f_1^{\xi,\varrho,\varrho'}}{(1-t)^{\varrho-\varrho'+\xi-1}}
\end{eqnarray}
and
\begin{eqnarray}   \label{d2.58}
{\msr S}^{m-\varrho,\xi'-\xi+1}_2(t)-{\msr S}^{m-\varrho,\xi'-\xi}_2(t)
=\frac{\mfk f_2^{m-\varrho,\xi,\xi'}
}{(1-t)^{m-\varrho+\xi'-\xi }}.
\end{eqnarray}

According to Lemma \ref{lemd2.3}, (\ref{d2.56}), (\ref{d2.57}) and (\ref{d2.58}), we have
\begin{eqnarray}
& &{\msr S}^{m,n, \varrho,\xi}(t)(1-t)^{ m+2n+\varrho-\xi-2}
\nonumber \\
&=&\sum_{\varrho'=1}^{\varrho} \sum_{\xi'=\xi}^{n}
\frac{
\left | \begin{matrix}
\sum_{r=0}^{\infty}\binom {\varrho-2}{r}\binom {n-\xi'}{r}t^r &  \sum_{r=0}^{\infty}\binom {\varrho-2}{r-1}\binom {n-\xi'+1}{r}t^r  \\
\sum_{r=0}^{\infty}\binom {\varrho'-1}{r+1}\binom {n-\xi-1}{r}t^r & \sum_{r=0}^{\infty}\binom {\varrho'-1}{r }\binom {n-\xi}{r}t^r
\end{matrix} \right |
}{(1-t)^{ 2n+\varrho+\varrho'-\xi-\xi'-1}}
 \cdot
\frac{\mfk f_1^{\xi,\varrho,\varrho'}
}{(1-t)^{\varrho-\varrho'+\xi-1}}
\nonumber \\
& &\quad \cdot
\frac{\mfk f_2^{m-\varrho,\xi,\xi'}
}{(1-t)^{m-\varrho+\xi'-\xi }}
\cdot (1-t)^{ m+2n+\varrho-\xi-2}
\nonumber \\
&=&\sum_{\varrho'=1}^{\varrho} \sum_{\xi'=\xi}^{n}
\left | \begin{matrix}
\mfk f_2^{m-\varrho,\xi,\xi'}\left( \sum_{r=0}^{\infty}\binom {\varrho-2}{r}\binom {n-\xi'}{r}t^r\right)
 & \mfk f_2^{m-\varrho,\xi,\xi'}\left(  \sum_{r=0}^{\infty}\binom {\varrho-2}{r-1}\binom {n-\xi'+1}{r}t^r \right)  \\
\mfk f_1^{\xi,\varrho,\varrho'}\left(\sum_{r=0}^{\infty}\binom {\varrho'-1}{r+1}\binom {n-\xi-1}{r}t^r\right) &\mfk f_1^{\xi,\varrho,\varrho'}\left( \sum_{r=0}^{\infty}\binom {\varrho'-1}{r }\binom {n-\xi}{r}t^r\right)
\end{matrix} \right |
\nonumber \\
&=&
\left | \begin{matrix}
\sum\limits_{\xi'=\xi}\limits^{n} \mfk f_2^{m-\varrho,\xi,\xi'}\left( \sum_{r=0}^{\infty}\binom {\varrho-2}{r}\binom {n-\xi'}{r}t^r\right)
 & \sum\limits_{\xi'=\xi}\limits^{n} \mfk f_2^{m-\varrho,\xi,\xi'}\left(  \sum_{r=0}^{\infty}\binom {\varrho-2}{r-1}\binom {n-\xi'+1}{r}t^r \right)  \\
\sum\limits_{\varrho'=1}\limits^{\varrho} \mfk f_1^{\xi,\varrho,\varrho'}\left(\sum_{r=0}^{\infty}\binom {\varrho'-1}{r+1}\binom {n-\xi-1}{r}t^r\right) &
\sum\limits_{\varrho'=1}\limits^{\varrho} \mfk f_1^{\xi,\varrho,\varrho'}\left( \sum_{r=0}^{\infty}\binom {\varrho'-1}{r }\binom {n-\xi}{r}t^r\right)
\end{matrix} \right |
\end{eqnarray}

By Lemma \ref{lemd2.4}, we have
\begin{eqnarray}
& &\sum\limits_{\xi'=\xi}\limits^{n} \mfk f_2^{m-\varrho,\xi,\xi'}\left( \sum_{r=0}^{\infty}\binom {\varrho-2}{r}\binom {n-\xi'}{r}t^r\right)
\nonumber \\
&=&\sum_{\xi'=\xi}^{n}   \sum_{r=0}^{\infty} \left(\sum_{r'=0}^{r}
\binom {\varrho-2}{r-r'}\binom {n-\xi'}{r-r'}
 \binom {m-\varrho}{r'} \binom {\xi'-\xi-1}{r'-1}
\right)t^r
\nonumber \\
&=&   \sum_{r=0}^{\infty}\left[ \sum_{r'=0}^{r} \left(\sum_{\xi'=\xi}^{n}
\binom {n-\xi'}{r-r'} \binom {\xi'-\xi-1}{r'-1}\right)
\binom {\varrho-2}{r-r'}
 \binom {m-\varrho}{r'} \right]  t^r
\nonumber \\
&=&    \sum_{r=0}^{\infty} \left( \sum_{r'=0}^{r}
\binom {n-\xi }{r }
\binom {\varrho-2}{r-r'}
 \binom {m-\varrho}{r'} \right)  t^r \nonumber \\
&=&    \sum_{r=0}^{\infty}
\binom {n-\xi }{r }
\binom {m -2}{r }   t^r
\end{eqnarray}

Similarly, we can prove other three equations.
Hence \begin{eqnarray}
(1-t)^{m+2n+\varrho-\xi-2}
{\msr S}^{m,n, \varrho,\xi}(t)
=
\left | \begin{matrix}
\sum_{r=0}^{\infty}\binom {m-2}{r}\binom {n-\xi}{r}t^r &  \sum_{r=0}^{\infty}\binom {m-2}{r-1}\binom {n-\xi+1}{r}t^r  \\
\sum_{r=0}^{\infty}\binom {\varrho-1}{r+1}\binom {n-2}{r}t^r & \sum_{r=0}^{\infty}\binom {\varrho-1}{r }\binom {n-1}{r}t^r
\end{matrix} \right |
\end{eqnarray}
and (\ref{d2.53}) holds.

\end{proof}
 \pse

For $ i_1,i_2,i_3,j_1,j_2,j_3 \in \ol{1,n} $, we denote

 \begin{equation}
\Delta^{i_1,i_2}_{ j_1,j_2} =z_{i_1,j_1}z_{i_2,j_2}-z_{i_1,j_2}z_{i_2,j_1}
\end{equation}
and
\begin{eqnarray}
\Delta^{i_1,i_2,i_3}_{ j_1,j_2,j_3}&=& z_{i_1,j_1}z_{i_2,j_2}z_{i_3,j_3}+
z_{i_2,j_1}z_{i_3,j_2}z_{i_1,j_3}+z_{i_3,j_1}z_{i_1,j_2}z_{i_2,j_3} \nonumber \\
& &
-z_{i_2,j_1}z_{i_1,j_2}z_{i_3,j_3}-z_{i_1,j_1}z_{i_3,j_2}z_{i_2,j_3}-z_{i_3,j_1}z_{i_2,j_2}z_{i_1,j_3}.
\end{eqnarray}

Denote $\msr Z'=\mbb F \left[ z_{i,j} \mid 1\leq i\leq n-n_1,  1\leq j \leq n_2  \right]$. Denote by $\msr Z'_m$ the subspace of its homogeneous polynomials with degree $m$.
Set the ideal of $\msr Z'$
\begin{eqnarray}\label{c2.51}
I&=&  \Big\langle z_{i_0,j_0}, \Delta^{k_1',k_2'}_{ k_1,k_2} , \Delta^{k_3',k_4'}_{ k_3,k_4} ,\Delta^{i_1,i_2,i_3}_{ j_1,j_2,j_3} \big\vert i_0\in \ol{1, n_2-n_1},j_0\in \ol{n_1+1,n_2}; 1\leq k_1<k_2\leq n_2-n_1, \nonumber \\& &   \quad 1\leq k_1'<k_2'\leq n_1,
n_2-n_1+1\leq k_3<k_4\leq n-n_1,     n_1+1 \leq k_3'<k_4'\leq n_2 ;\nonumber \\& &  \quad
 1\leq i_1<i_2<i_3\leq n-n_1,1\leq j_1<j_2<j_3 \leq n_2   \Big\rangle.
\end{eqnarray}
then by Lemma \ref{lemc2.1}, the homogeneous ideal of $\msr V({\msr H}_{\la \ell_1,\ell_2 \ra} )$ is $I$.
Denote $I_{(i)}= I \cap \msr Z'_m$, then $I=\bigoplus_{i} I_{(i)}$, and $R =\bigoplus_{i} \msr Z'_i /I_{(i)} $ is the homogeneous coordinate ring of the associated variety $\msr V({\msr H}_{\la \ell_1,\ell_2 \ra})$.

\begin{proposition} \label{propf2.2}
Let $\mfk g= sl(n)$, $M={\msr H}_{\la \ell_1,\ell_2 \ra} $. If $n_1\leq n_2<n$, $\ell_1 \leq 0$ or $\ell_2 \leq 0$, the Hilbert polynomial of $\msr V(M)$ is
\begin{equation}\label{c2.52}
\mfk p_{{M}}(k) = P_2( n_2,n-n_1,n_1,n_2-n_1+1 ,k ).\end{equation}
If $n_1<n_2=n$,
\begin{equation}\label{c2.53}
\mfk p_{{M}}(k) = h_2(n-n_2,n_1).\end{equation}
\end{proposition}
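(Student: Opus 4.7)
\textbf{Proof Plan for Proposition \ref{propf2.2}.} The plan is to reduce the computation of $\mfk p_M(k)$ to counting standard monomials in the explicit determinantal coordinate ring $R = \msr Z'/I$, and then to match that count with the combinatorial quantity $P_2$ (respectively $h_2$) defined earlier in the section.

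First, by Lemma \ref{lemc2.1} together with the description (\ref{c2.51}), the homogeneous coordinate ring of $\msr V(M)$ is $R = \msr Z'/I$, and so $\mfk p_M(k) = \dim R_k$ for large $k$. The ideal $I$ has three types of generators: (i) the linear forms $z_{i_0,j_0}$ that zero out a designated rectangle of variables, (ii) two families of $2\times 2$ minors imposing the rank-$1$ conditions on two sub-blocks, and (iii) all $3\times 3$ minors of the full $(n-n_1)\times n_2$ generic matrix. I would then establish a standard-monomial basis of $R$ via a diagonal (antidiagonal) term order: the leading terms of the $2\times 2$ minors are the products $z_{i_1,j_1}z_{i_2,j_2}$ encoding $2$-chains in the respective subblocks, the leading terms of the $3\times 3$ minors are the triple products encoding $3$-chains in the full matrix, and the linear generators simply delete the forbidden variables. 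Analogously to Sturmfels' theorem for classical determinantal ideals (extended to the mixed-rank case by first collapsing the zeroed variables and treating the two $2\times 2$ rank conditions on disjoint subblocks separately), these should form a Gr\"obner basis of $I$. Consequently a basis of $R_k$ is given by the monomials of degree $k$ in the surviving variables whose index multi-set contains no $3$-chain in the full matrix and no $2$-chain in either designated subblock.

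Comparing this description with the definition of $S_2^{\ol{1,m},\ol{1,n}}(\varrho,\xi,r)$ in (\ref{e2.31}), after transposing the roles of rows and columns so that the row range $\ol{1,n_2}$ of $S_2$ corresponds to the column range of $\msr Z'$ (and vice versa), the forbidden rectangle, the 3-chain restriction, and the two 2-chain restrictions match exactly with the parameters $(m,n,\varrho,\xi) = (n_2, n-n_1, n_1, n_2-n_1+1)$. Therefore $\dim R_k = P_2(n_2, n-n_1, n_1, n_2-n_1+1, k)$, which establishes (\ref{c2.52}). In the remaining case $n_1 < n_2 = n$, Lemma \ref{lemc2.1} collapses the intersection to the single determinantal variety $\msr V_2(J_2, J_1)$, whose coordinate ring is the standard $2\times 2$ determinantal quotient on an $(n-n_1)\times n_1$ generic matrix; its Hilbert polynomial is then the classical 2-chain-free monomial count from (\ref{c2.10}), yielding (\ref{c2.53}).

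The main obstacle is verifying that the generators listed in (\ref{c2.51}) actually form a Gr\"obner basis, equivalently that the standard monomials span $R$. Linear independence is automatic since $\mathrm{in}(I)$ is a squarefree monomial ideal, so the difficulty lies in the straightening argument: given any monomial, one must use the minor relations successively to eliminate forbidden $3$-chains and $2$-chains and show that each rewriting strictly decreases a suitable term-order invariant. The mixed-rank/linear structure of $I$ complicates the direct application of classical results, so I would handle this by decomposing the straightening into stages, first killing the linear forms, then separately straightening in each $2\times 2$ subblock using the Doubilet--Rota--Stein procedure, and finally straightening the remaining $3\times 3$-minor relations across the full matrix. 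Once spanning is established, the identifications with $P_2$ and $h_2$ follow immediately from the defining formulas.
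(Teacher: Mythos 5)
Your overall strategy---count standard monomials in the coordinate ring $R = \msr Z'/I$ via chain avoidance, then match the count to $P_2$ (and to $h_2$ in the $n_1<n_2=n$ case)---is exactly what the paper does, though the paper phrases it as an explicit reduction map $\varPhi$ on each graded piece rather than invoking Gr\"obner-basis theory directly. Your parameter identification $(m,n,\varrho,\xi)=(n_2,n-n_1,n_1,n_2-n_1+1)$ via row/column transposition, and the reduction of the $n_1<n_2=n$ case to equation (\ref{c2.10}), are both correct and coincide with the paper's.

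The gap is in the proposed \emph{staged} straightening. You suggest first killing the linear variables, then straightening $2$-chains within each designated subblock via Doubilet--Rota--Stein, and only then straightening $3$-chains over the full matrix. But the $3\times3$ straightening of $z_{i_1,j_1}z_{i_2,j_2}z_{i_3,j_3}$ produces terms such as $z_{i_1,j_1}z_{i_3,j_2}z_{i_2,j_3}$, in which $(i_1,j_1)\prec(i_3,j_2)$ is a freshly created $2$-chain. When both of those indices lie in one of the rank-one subblocks, the final stage has reintroduced precisely the kind of violation the middle stage was supposed to have discharged, so running the stages sequentially to completion does not terminate in a chain-free representative. The paper avoids this by a \emph{single} unified induction on a total order $\msr I(f)$ on multisets of index pairs: at each step it locates the smallest violating generator (linear, $2\times2$, or $3\times3$), applies the corresponding rewrite, and checks case by case that every admissible rewrite strictly increases $\msr I(f)$. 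That monotonicity is what guarantees termination even though one rewrite type can reintroduce violations of another type. A second point you leave implicit: it is not enough to show that the chain-free monomials span $R$; one also needs $\ker\varPhi = I_{(i)}$, which the paper establishes by a further induction over the generator index. Your remark that linear independence is ``automatic since $\mathrm{in}(I)$ is squarefree'' presupposes the Gr\"obner-basis property you are trying to prove---Macaulay's theorem gives a basis of standard monomials with respect to the \emph{actual} initial ideal, but the chain-avoidance description of that initial ideal is exactly the content at stake.
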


\begin{proof}
(\ref{c2.53}) holds by Lemma \ref{lemc2.1} and (\ref{c2.10}), we only need to prove (\ref{c2.52}).

We take $\leq$ be the lexical order on $\ol{1,m} \times \ol{1,n}$. For
\begin{equation}S_1=\{(j_1,i_1),...,(j_k,i_k)\},\quad S_2=\{(j_1',i_1'),...,(j_k',i_k')\}\end{equation}
with $(j_s,i_s),(j_s',i_s')\in \ol{1,m} \times \ol{1,n}$.
Since the notion of sets $\{...\}$ is independent of the order of its elements, we may assume
\begin{equation}(j_1,i_1)\leq\cdots \leq (j_k,i_k),\qquad (j_1',i_1')\leq\cdots \leq (j_k',i_k').\end{equation}
Take the lexical order $\leq$ on $(\ol{1,m} \times \ol{1,n})^k$ and define $S_1\leq S_2$ if
\begin{equation}\label{f2.92}((j_1,i_1),\dots,(j_k,i_k))\leq ((j_1',i_1'),\dots,(j_k',i_k'))\;\;\mbox{in}\;\;(\ol{1,m} \times \ol{1,n})^k.\end{equation}
Assume the above  $S_1\leq S_2$ and
\begin{equation}T_1=\{(m_1,\ell_1),...,(m_\iota,\ell_\iota)\},\quad T_2=\{(m_1',\ell_1'),...,(m_\iota',\ell_\iota')\}\end{equation}
with $(m_t,\ell_t),(m_t',\ell_t')\in \ol{1,m} \times \ol{1,n}$
also satisfying $T_1\leq T_2$. Then
\begin{equation}\{S_1,T_1\}\leq \{S_2,T_2\}.\end{equation}

Consider elements in the set of generating  elements of $I$ in (\ref{c2.51}).
Each $\Delta^{k_1',k_2'}_{ k_1,k_2}$ corresponds to a 2-chain $(k_1,k_1') \prec (k_2,k_2')$, each $\Delta^{k_3',k_4'}_{ k_3,k_4}$ corresponds to a 2-chain $(k_3,k_3') \prec (k_4,k_4')$, and each $\Delta^{i_1,i_2,i_3}_{ j_1,j_2,j_3}$ corresponds to a 3-chain $(i_1,j_1) \prec (i_2,j_2) \prec (i_3,j_3)$. Thus this set of generating elements corresponds to
\begin{eqnarray}\label{c2.59}
S_I&=&\Big\{ \{(i_0,j_0)\}, \{(k_1,k'_1), (k_2,k'_2)\} ,\{(k_3,k'_3), (k_4,k'_4)\} ,\{(i_1,j_1), (i_2,j_2) , (i_3,j_3)\}\vert   \nonumber \\& & \quad i_0\in \ol{1, n_2-n_1},j_0\in \ol{n_1+1,n_2};  1\leq k_1<k_2\leq n_2-n_1,      1\leq k_1'<k_2'\leq n_1,\nonumber \\& &  \quad
n_2-n_1+1\leq k_3<k_4\leq n-n_1,      n_1+1 \leq k_3'<k_4'\leq n_2 ;    \nonumber \\& &\quad
 1\leq i_1<i_2<i_3\leq n-n_1,1\leq j_1<j_2<j_3 \leq n_2    \Big\}
\nonumber \\ &=&
\{ w_1,\cdots,w_{N} \}
\end{eqnarray}
where $N=|S_I|$.

We set the linear map $ (\cdot)^{T } $ from $\msr Z $ to  $\mbb F\left[z_{j,i}\vert i\in \ol{1,m},j\in \ol{1,n}\right] $ by
\begin{equation} \left(\prod_{1\leq i \leq m,1\leq j\leq n}z_{i,j}^{\gamma_{i,j}}\right) ^{T } =\prod_{1\leq i \leq m,1\leq j\leq n}z_{j,i}^{\gamma_{i,j}}.\end{equation}
Then $(f^T)^T=f$ for $f\in \msr Z$.

For $f \in \msr Z'_i$, we write $f = \sum_{j=1}^{m_0}  f_j$, where $f_j$ are distinct monomials in $\msr Z'_i$, with $\msr I(f_1) \leq \msr I(f_2) \leq \cdots \leq \msr I(f_{m_0})$ according to the lexical order defined in (\ref{f2.92}). 
Define $\msr I(f) = \{\msr I(f_1) ,\msr I(f_2) ,\cdots,\msr I(f_{m_0}) \}$.
For another $g\in \msr Z'_i$ with $g=\sum_{j=1}^{m'_0} g_j$ where $g_j$ are distinct monomials in $\msr Z'_i$.
We define $\msr I(f) \leq \msr I(g)$ if $\msr I(f_j) \leq \msr I(g_j) $ for $j\in \ol{1,\min \{m_0,m_0'\}}$. 

We want to show that there exists a $\widetilde{f} \in  \msr Z'_i  $ such that $\widetilde{f} +I_{(i)}=f+I_{(i)}$ and 
\begin{equation}\label{f2.97}
\widetilde{f}^T\in \text{Span} \left\{S_2^{\ol{1,n_2},\ol{1,n-n_1}}(n_1,n_2-n_1+1,i)\right\} .
\end{equation}  
by induction on $\msr I(f)$, since $|\{ \msr I(f) \mid f\in \msr Z'_i\}|<\infty$.

When \begin{equation}\label{e2.98} w_{\eta} \not\subset \msr I(f_j)\qquad
\for\,\, \eta\in \ol{1,N} \text{ and } j\in \ol{1,m_0},
\end{equation} we have $f_j^{T } \in S_2^{\ol{1,n_2},\ol{1,n-n_1}}(n_1,n_2-n_1+1,i)$ for $j\in \ol{1,m_0}$.
Then we take $\widetilde{f}=f$ in this case, and (\ref{f2.97}) holds. In particular, when $\msr I(f)\geq \msr I(g)$ for each $ g\in \msr Z'_i $, we have $f\in \mbb F z_{n-n_1,n_2}^{i}$. Then we take $\widetilde{f}=f$ and (\ref{f2.97}) holds.
 
For $g \in \msr Z'_i$ with $ \msr I(g) >\nu $, there exists a $\widetilde{g}\in  \msr Z'_i  $ such that $\widetilde{g} +I_{(i)}=g+I_{(i)}$ and (\ref{f2.97}) holds. Consider $f  \in \msr Z'_i  $ with  $ \msr I(f) =\nu $.

Suppose that there exists some $w_{\eta}  \in S_I$ such that $w_{\eta} \subset \bigcup_{j=1}^{m_0} \msr I(f_j)$.  
Set
\begin{equation}\label{e2.99}
 \eta_0(f)=\min\left\{\eta \in \ol{1,N} \Big\vert w_{\eta} \subset \bigcup_{j=1}^{m_0} \msr I(f_j) \right\}.
\end{equation} 
and \begin{equation}
J_0= \{ j\in \ol{1,m_0} \vert w_{\eta_0(f)} \subset \msr I(f_j)\} \neq \emptyset.
\end{equation} 
Now we want to show that there exists $\ol{f} \in \msr Z'_i $ such that $\ol{f} +I_{(i)}=f+I_{(i)}$, $\msr I(\ol{f})> \msr I(f)$ or (\ref{e2.98}) holds for $\ol{f}$.

(1) When $w_{\eta_0(f)} =\{(k_1,k'_1), (k_2,k'_2)\}$ for some $k_1,k_2 \in \ol{1,n_2-n_1}$ and $k_1',k_2'\in \ol{1,n_1}$ with $k_1<k_2$ and $k_1'<k_2'$.\pse

Write $f_j =f_j' z_{k_1,k_1'}z_{k_2,k_2'} $ for $j\in J_0$, and
we set
\begin{equation}
\ol{f}= \sum_{j\not\in J_0}f_j+ \sum_{j \in J_0}( f_j' z_{k_1,k_2'}z_{k_2,k_1'}) .
\end{equation} 
Then we have
\begin{equation}
\ol{f}+I_{(i)}= f+I_{(i)}
\end{equation} 
and
\begin{eqnarray}
\msr I(f_j' z_{k_1,k_2'}z_{k_2,k_1'})&=&\{ (k_1,k_2') , (k_2,k_1'),  \msr I(f_j')\}\nonumber\\
&>&\{ (k_1,k_1') , (k_2,k_2') \msr I(f_j')\}=\msr I(f_j),
\end{eqnarray}
for $j\in J_0$. 
Hence $\msr I(\ol{f})> \msr I(f)$.

(2) When $w_{\eta_0(f)} =\{(k_3,k'_3), (k_4,k'_4)\}$ with $n_2-n_1+1\leq k_3<k_4\leq n-n_1$ and $ n_1+1 \leq k_3'<k_4'\leq n_2 $. \pse

Similar to the case (2), we set
\begin{equation}
\ol{f}= \sum_{j\not\in J_0}f_j+ \sum_{j \in J_0}( f_j' z_{k_3,k_4'}z_{k_3,k_4'}) .
\end{equation} 
Then we also have $\ol{f}+I_{(i)}= f+I_{(i)}$ and $\msr I(\ol{f})> \msr I(f)$.

(3) When $w_{\eta_0(f)} =\{(i_1,j_1),(i_2,j_2),(i_3,j_3)\}$ with  $1\leq i_1<i_2<i_3\leq n-n_1,1\leq j_1<j_2<j_3 \leq n_2  $. \pse

Write $f_j=z_{i_1,j_1}z_{i_2,j_2}z_{i_3,j_3}f_j'$ for $j\in J_0$, we set 
\begin{eqnarray}
\ol{f}&=& \sum_{j\not\in J_0}f_j + \sum_{j \in J_0}( (z_{i_2,j_1}z_{i_3,j_2}z_{i_1,j_3}+z_{i_3,j_1}z_{i_1,j_2}z_{i_2,j_3} 
-z_{i_2,j_1}z_{i_1,j_2}z_{i_3,j_3}\nonumber \\  
& &\quad -z_{i_1,j_1}z_{i_3,j_2}z_{i_2,j_3}-z_{i_3,j_1}z_{i_2,j_2}z_{i_1,j_3} )f_j') .
\end{eqnarray}  
Then we have $\ol{f}+I_{(i)} =f+I_{(i)}$ and
\begin{eqnarray}\msr I(f_j)&=&\{(i_1,j_1),(i_2,j_2),(i_3,j_3), \msr I(f_j')\}\nonumber\\&<&\{(i_1,j_2),(i_2,j_1),(i_3,j_3), \msr I(f_j')\}=\msr I(z_{i_1,j_2}z_{i_2,j_1}z_{i_3,j_3}f_j'),\end{eqnarray}
\begin{eqnarray}\msr I(f_j)&=&\{(i_1,j_1),(i_2,j_2),(i_3,j_3), \msr I(f_j')\}\nonumber\\&<&\{(i_1,j_3),(i_2,j_2),(i_3,j_1), \msr I(f_j')\}=\msr I(z_{i_1,j_3}z_{i_2,j_2}z_{i_3,j_1}f_j'),\end{eqnarray}
\begin{eqnarray}\msr I(f_j)&=&\{(i_1,j_1),(i_2,j_2),(i_3,j_3), \msr I(f_j')\}\nonumber\\&<&\{(i_1,j_1),(i_2,j_3),(i_3,j_2), \msr I(f_j')\}=\msr I(z_{i_1,j_1}z_{i_2,j_3}z_{i_3,j_2}f_j'),\end{eqnarray}
\begin{eqnarray}\msr I(f_j)&=&\{(i_1,j_1),(i_2,j_2),(i_3,j_3), \msr I(f_j')\}\nonumber\\&<&\{(i_1,j_2),(i_2,j_3),(i_3,j_1), \msr I(f_j')\}=msr I(z_{i_1,j_2}z_{i_2,j_3}z_{i_3,j_1}f_j'),\end{eqnarray}
\begin{eqnarray}\msr I(f_j)&=&\{(i_1,j_1),(i_2,j_2),(i_3,j_3), \msr I(f_j')\}\nonumber\\&<&\{(i_1,j_3),(i_2,j_1),(i_3,j_2), \msr I(f_j')\}=\msr I(z_{i_1,j_3}z_{i_2,j_1}z_{i_3,j_2}f_j').\end{eqnarray}
Hence $\msr I(\ol{f})> \msr I(f)$.

(4) When $w_{\eta_0(f)} =\{(i_0,j_0)\}$ for some $i_0\in \ol{1,n_2-n_1}$ and $j_0\in \ol{n_1+1,n_2}$, we have $f_j \in I_{(i)}$ for $j\in J_0$. \pse

We set
\begin{equation}
F(f)=f-\sum_{j\in J_0}f_j, 
\end{equation}  thus $F(f) +I_{(i)}=f+I_{(i)}$, $\msr I(F(f))\geq \msr I(f)$ and $  (i',j')\not\in \bigcup_{j=1}^{m_0} \msr I(f_j) $ for $(i',j')\in \ol{1,n_2-n_1}  \times \ol{n_1+1,n_2}$. 
If $\eta_0(f)=N$, we take $\ol{f}=F(f)$ and (\ref{e2.98}) holds for $\ol{f}$.  
When $\eta_0(f)<N$, we have $\eta_0(F(f))>\eta_0(f)$.
By induction on $\eta_0(f)$, there exists $\ol{F(f)} \in \msr Z'_i $ such that $\ol{F(f)} +I_{(i)}=F(f)+I_{(i)}$, $\msr I(\ol{F(f)}) > \msr I(F(f))$ or (\ref{e2.98}) holds for $\ol{F(f)}$. 
We take $\ol{f}=\ol{F(f)} $, then $ \ol{f} +I_{(i)}=f+I_{(i)}$, $\msr I(\ol{f})>\msr I(f)$ or (\ref{e2.98}) holds for $\ol{f}$. 

By inductional assumption, there exists a $\widetilde{\ol{f}} \in \msr Z'_i$ such that $\widetilde{\ol{f}} +I_{(i)}=\ol{f}+I_{(i)}$ and 
\begin{equation} 
\widetilde{\ol{f} }^T\in \text{Span} \left\{S_2^{\ol{1,n_2},\ol{1,n-n_1}}(n_1,n_2-n_1+1,i)\right\} .
\end{equation}  
We take $\widetilde{f}=\widetilde{\ol{f}} $, then $\widetilde{f} +I_{(i)}=f+I_{(i)}$ and (\ref{f2.97}) holds. 
Moreover, according to our inductive process, we can specifically determine the $\widetilde{f}$ related to $f$.

Note that $\widetilde{F_1+F_2} = \widetilde{F_1}+ \widetilde{F_2}$ for $F_1,F_2 \in \msr Z'_i$.
Hence we can define a linear map $\varPhi$ from $\msr Z'_i  $ to $\text{Span} \left\{S_2^{\ol{1,n_2},\ol{1,n-n_1}}(n_1,n_2-n_1+1,i)\right\} $ by $\varPhi(f)=\widetilde{f} ^T$.

Suppose $f\in \ker \varPhi$, we have $f\in I_{(i)}$ since $\widetilde{f} +I_{(i)}=f+I_{(i)}$. Hence $\ker \varPhi  \subset I_{(i)}$.

Denote by $\mfk{W}_j$ the generating element in (\ref{c2.51}) corresponding to $w_j$ in (\ref{c2.59})  and by $\mfk{w}_j$ the corresponding monomial in $\msr Z'$ for $j\in \ol{1,N}$. For example, $\mfk{W}_j=\Delta^{i_1,i_2,i_3}_{ j_1,j_2,j_3}$ and $\mfk{w}_j=z_{i_1,j_1}z_{i_2,j_2}z_{i_3,j_3}$ if $w_j=\{(i_1,j_1), (i_2,j_2) , (i_3,j_3)\}$.

Now we suppose $f=f' \mfk{W}_j$ where $f'$ is a monomial in $\msr Z'$.
When $j=1$, we have $\ol{f}=f' \ol{\mfk{W}_1}=0$, thus $\varPhi(f)=0$.
We assume that $\varPhi(f)=\varPhi( f' \mfk{W}_j)=0$ for $j \in \ol{1,j_0-1}$. Consider $j=j_0$, we write $f=\sum_{l=1}^{m_1} f_l$ where $f_l$ are distinct monomials in $\msr Z'$.
By the inductional assumption, $\varPhi(f)=\varPhi(g)$ if $f-g\in \la \mfk{W}_j\vert j \in \ol{1,j_0-1}\ra$.
When $w_s \not\subset \msr I(f_l)  $ for $s\in \ol{1,j_0-1}$ and $l\in \ol{1,m_1}$, we have $\ol{f}=f' \ol{\mfk{W}_{j_0}}=0$, thus $\varPhi(f)=0$.
Otherwise, we suppose that $w_s \subset \msr I(f_l)  $ for some $s\in \ol{1,j_0-1}$ and $l\in \ol{1,m_1}$. We write $f_l = f_l'\mfk{w}_s$, then $\varPhi(f)=\varPhi(f-f_l'\mfk{W}_{s})$ and $\msr I(f-f_l'\mfk{W}_{s}) >\msr I(f)$.
Continue this process for $f-f_l'\mfk{W}_{s}$, we can prove that there exists $g\in \la \mfk{W}_j\vert j \in \ol{1,j_0-1}\ra$ such that $w_s \not\subset \msr I(h_l)  $ for each $l$, where $h_l$ are monomials in $f-g=\sum_{l} h_l$.
Hence $\varPhi(f)=0$.
Note that $\varPhi$ is a linear map, we have $\varPhi(I_{(i)})=\{0\}$.
Therefore, $\ker \varPhi=I_{(i)}$.

Then $\varPhi$ induces a linear isomorphism $\Phi$ from $\msr Z'_i /I_{(i)} $ to $\text{Span} \left\{S_2^{\ol{1,n_2},\ol{1,n-n_1}}(n_1,n_2-n_1+1,i)\right\} $ by 
 \begin{equation}
\Phi(f+I_{(i)})=\varPhi(f).
\end{equation} 
Hence  
\begin{equation}
\dim (\msr Z'_i /I_{(i)}) = \left|  \left\{S_2^{\ol{1,n_2},\ol{1,n-n_1}}(n_1,n_2-n_1+1,i)\right\} \right|
=P_2(n_2,n-n_1,n_1,n_2-n_1+1,i).
\end{equation} 
Therefore,
\begin{equation}
\mfk p_{M}(k) = P_2( n_2,n-n_1,n_1,n_2-n_1+1 ,k ).\end{equation} 
and the lemma holds.
\end{proof}

From (\ref{c2.32}), Proposition \ref{propd2.1} and  Proposition \ref{propf2.2},  we have our main theorem in this section:

\begin{theorem}\label{thef2.1}
Let $\mfk g= sl(n)$, $M={\msr H}_{\la \ell_1,\ell_2 \ra} $.
 If $n_1\leq n_2<n$, $\ell_1 \leq 0$ or $\ell_2 \leq 0$, the Hilbert series of $\msr V(M)$ is
\begin{eqnarray}\label{c2.79}
\mfk s_{_{M}}(t)
=\frac{\sum\limits_{r,s=0}\limits^{\infty}
\left | \begin{matrix}
 \binom {n_2-2}{r}\binom {n - n_2 -1 }{r}  &
 \binom {n_2-2}{r-1}\binom {n - n_2  }{r}   \\
 \binom {n_1-1}{s+1}\binom {n-n_1-2}{s} &
 \binom {n_1-1}{s }\binom {n-n_1-1}{s}
\end{matrix} \right | t^{r+s}
}{(1-t)^{2n-3}}
\end{eqnarray} 
If $n_1<n_2=n$,
\begin{equation}\label{c2.80}
\mfk s_{_{M}}(t) =  \frac{  \sum_{r=0}^{\infty}\binom { n-n_2-1 }{r} \binom {n_1-1 }{r}t^r    }
{ \left(1-t\right)^{n-n_2+n_1-1} } .\end{equation}
\end{theorem}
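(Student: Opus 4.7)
The plan is to assemble Theorem~\ref{thef2.1} from three ingredients already established in this section: Proposition~\ref{propf2.2}, which identifies the graded dimensions of the coordinate ring of $\msr V(M)$ with values of $P_2$ or $h_2$; the definition~(\ref{c2.32}) of ${\msr S}^{m,n,\varrho,\xi}(t)$ as the generating function of $P_2$; and the closed form of that generating function in Proposition~\ref{propd2.1}.

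First, I would observe that the straightening argument carried out in the proof of Proposition~\ref{propf2.2} gives more than the leading polynomial behaviour: the map $\Phi$ constructed there is, for each $i\ge 0$, a linear isomorphism between $\msr Z'_i/I_{(i)}$ and the span of the set enumerated by $P_2(n_2,n-n_1,n_1,n_2-n_1+1,i)$. Consequently $\dim(\msr Z'_i/I_{(i)}) = P_2(n_2,n-n_1,n_1,n_2-n_1+1,i)$ for every $i\geq 0$, and summing over $i$ together with (\ref{c2.32}) yields
\[
\mfk s_{_M}(t) \;=\; \sum_{i\ge 0}\dim(\msr Z'_i/I_{(i)})\,t^i \;=\; {\msr S}^{n_2,n-n_1,n_1,n_2-n_1+1}(t).
\]
I would then substitute $(m,n,\varrho,\xi)=(n_2,n-n_1,n_1,n_2-n_1+1)$ into (\ref{d2.53}): the denominator exponent $m+2n+\varrho-\xi-2$ collapses to $2n-3$, and the four determinant entries convert via $n-\xi = n-n_2-1$, $n-\xi+1 = n-n_2$, $n-2 = n-n_1-2$, $n-1 = n-n_1-1$, reproducing the numerator of (\ref{c2.79}) after renaming the summation index in the second row as $s$ and writing $t^{r+s}$. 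For the remaining case $n_1<n_2=n$, Proposition~\ref{propf2.2} identifies the Hilbert function with $h_2(n-n_2,n_1,k)$, so (\ref{c2.11}) with $(m,n)=(n-n_2,n_1)$ yields (\ref{c2.80}) at once.

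The proof is therefore essentially bookkeeping once the hard results are in place. There is no new obstacle: the combinatorial core lies in Proposition~\ref{propf2.2} (the explicit straightening map on the ideal $I$), and the generating-function manipulation is done in Proposition~\ref{propd2.1} (by induction on $m+n$ using the recursions of Lemmas~\ref{leme2.2} and~\ref{lemd2.3}). The only subtlety worth flagging is that the equality $\mfk s_{_M}(t)={\msr S}^{n_2,n-n_1,n_1,n_2-n_1+1}(t)$ is used as a formal identity of power series, which is legitimate precisely because the proof of Proposition~\ref{propf2.2} produces an exact dimension count in every graded degree rather than merely polynomial agreement for large $k$.
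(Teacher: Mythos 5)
Your proposal is correct and takes precisely the same route as the paper: the paper itself states ``From (\ref{c2.32}), Proposition \ref{propd2.1} and Proposition \ref{propf2.2}, we have our main theorem,'' and you have simply carried out the bookkeeping explicitly, correctly verifying that the substitution $(m,n,\varrho,\xi)=(n_2,n-n_1,n_1,n_2-n_1+1)$ into (\ref{d2.53}) produces the denominator exponent $2n-3$ and the four determinant entries of (\ref{c2.79}). Your remark that the straightening map $\Phi$ of Proposition~\ref{propf2.2} gives an exact dimension count in every graded degree (not merely eventual agreement) is the right observation for upgrading a Hilbert-polynomial statement to a Hilbert-series identity, and it is implicit but not spelled out in the paper's terse proof.
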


This completes the proof of Theorem 1 that we stated in the introduction.

Recall that the constant term of the Hilbert polynomial $\mfk p_{M}$ is the arithmetic genus of $\msr V(M)$, which we denote as $p_a(\msr V(M))$, then we can find $p_a(\msr V(M))$ by the above corollary.
\begin{corollary}\label{cord2.2}
Let $\mfk g= sl(n)$ and $M={\msr H}_{\la \ell_1,\ell_2 \ra} $ under the conditions stated in Lemma \ref{lemc2.1},  the arithmetic genus of $\msr V(M)$ is 1.
\end{corollary}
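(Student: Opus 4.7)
The plan is to read $\mfk p_M(0)$ off directly from the Hilbert series formulas in Theorem \ref{thef2.1}. The key elementary identity is: if $\mfk s_M(t)=P(t)/(1-t)^d$ with $P\in\mbb F[t]$ satisfying $\deg P<d$, then the Hilbert polynomial equals $\mfk p_M(k)=\sum_{j}p_j\binom{k+d-1-j}{d-1}$, and evaluating at $k=0$ leaves only the $j=0$ contribution because $\binom{d-1-j}{d-1}=0$ for $1\leq j\leq d-1$. Hence $\mfk p_M(0)=P(0)$, and the corollary reduces to verifying $P(0)=1$ and $\deg P<d$ in each of the two cases of Theorem \ref{thef2.1}.

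For the case $n_1<n_2=n$, the numerator is $P(t)=\sum_{r\geq 0}\binom{n-n_2-1}{r}\binom{n_1-1}{r}t^r$ and $d=n-n_2+n_1-1$. The $r=0$ term yields $P(0)=1$, and the supports of the binomial factors bound $\deg P\leq \min(n-n_2-1,n_1-1)<d$, giving $\mfk p_M(0)=1$.

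For the case $n_1\leq n_2<n$ with $\ell_1\leq 0$ or $\ell_2\leq 0$, we have $d=2n-3$ and $P(t)=A_1(t)B_1(t)-A_2(t)B_2(t)$, where $A_i,B_i$ are the four single-variable series that form the entries of the $2\times 2$ determinant in \eqref{c2.79}. Setting $t=0$ yields $P(0)=1\cdot 1-0\cdot(n_1-1)=1$, since $\binom{n_2-2}{-1}=0$ kills the second product. For the degree bound, the nonvanishing supports of the binomial coefficients together with $\min(a,b)\leq (a+b)/2$ give $\deg(A_iB_j)\leq n-3$ in each of the four products, and $\deg(F-G)\leq\max(\deg F,\deg G)$ then yields $\deg P\leq n-3<2n-3=d$.

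The only technical point is the worry that cancellation in the difference $A_1B_1-A_2B_2$ could raise $\deg P$ above the individual termwise bounds; this concern is moot, since the degree of a difference is bounded by the maximum of the summand degrees, so the bound $\deg P\leq n-3$ holds regardless of cancellations. Combining the two cases, $\mfk p_M(0)=P(0)=1$, which is the claim.
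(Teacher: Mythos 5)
Your overall strategy is the same as the paper's: read off the constant term of the numerator $P(t)$ of the Hilbert series and identify it with $\mathfrak p_M(0)$. The paper's proof simply asserts $p_a(\msr V(M))=\mathfrak s_M(0)$ and then evaluates the determinant at $r=s=0$; you correctly note that the identification $\mathfrak p_M(0)=P(0)$ is not automatic and requires the Hilbert function to already agree with the Hilbert polynomial at $k=0$, which is guaranteed when $\deg P<d$. Supplying this check is a genuine improvement in rigor over the paper's one-line proof, and your computation $P(0)=1$ in the main case $n_1\leq n_2<n$ (via $\binom{n_2-2}{-1}=0$) and the crude bound $\deg(A_iB_j)\leq n-3<2n-3$ are sound, even though in some edge cases (e.g.\ $n_1=n_2=n-1$, where $\binom{n-n_1-2}{s}=\binom{-1}{s}$ has full support) the constant $n-3$ is slightly off; what matters is only the strict inequality against $d=2n-3$, which still holds with a cruder estimate.

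However, your argument for the case $n_1<n_2=n$ does not close: you claim $\deg P\leq\min(n-n_2-1,n_1-1)$, which with $n_2=n$ reads $\deg P\leq -1$, yet you also claim $P(0)=1$. A nonzero polynomial cannot have degree $\leq -1$, so the two assertions contradict each other. The source of the trouble is that $\binom{n-n_2-1}{r}=\binom{-1}{r}=(-1)^r$ is nonzero for every $r\geq 0$, so the ``support'' heuristic does not bound the degree; indeed under the stated formula $P(t)=(1-t)^{n_1-1}$ with $\deg P=d$, and the whole method fails. This appears to trace back to the printed formula in Theorem \ref{thef2.1} (equally equation (\ref{c2.53})): since by Lemma \ref{lemc2.1} the variety is $\msr V_2(J_2,J_1)$ with $|J_2|=n_2-n_1=n-n_1$, the Hilbert series should be $\msr S_2^{n-n_1,n_1}(t)$, i.e.\ the numerator should carry $\binom{n-n_1-1}{r}$ and the pole order should be $n-1$. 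With that reading, $\min(n-n_1-1,n_1-1)\leq (n-2)/2<n-1=d$ and your argument goes through; as written, though, the step is a gap you should flag and resolve rather than gloss over.
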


\begin{proof}
When $n_1\leq n_2<n$, $\ell_1 \leq 0$ or $\ell_2 \leq 0$, we have
\begin{eqnarray}
p_a(\msr V(M))=\mfk s_{_{M} } (0)
=
\left | \begin{matrix}
 \binom {n_2-2}{0}\binom {n - n_2 -1 }{0}  &  \binom {n_2-2}{0-1}\binom {n- n_2  }{0}   \\
 \binom {n_1-1}{ 1}\binom {n-n_1-2}{0} & \binom {n_1-1}{r }\binom {n-n_1-1}{0}
\end{matrix} \right | =1,
\end{eqnarray}
by Proposition \ref{propd2.1} and Corollary \ref{thef2.1}.
When $n_1<n_2=n$, we also have $p_a(\msr V(M))=1$ by (\ref{c2.10}).
Therefore, the arithmetic genus of $\msr V(M)$ is 1.
\end{proof}

Note that the Hilbert series may be rewritten as $\mfk s_{ M }(t)=\frac{Q(t)}{(1-t)^d}$, where $Q(t)$ is a polynomial with integer coefficients, and $d$ is the Krull dimension of the coordinate ring of the variety $\msr V(M)$.
Then the degree of the variety is $\deg (\msr V(M))=  Q(1)=  (d-1)!  \cdot lc (\mfk p_{{M}} )  $.
Hence we have the following corollary from Proposition \ref{propf2.2}.

\begin{corollary}
Let $\mfk g= sl(n)$ and $M={\msr H}_{\la \ell_1,\ell_2 \ra} $.
\quad
\begin{equation} \label{d2.99}
\deg (\msr V({\msr H}_{\la \ell_1,\ell_2 \ra}))=
\begin{cases}
\left( \frac{n-2}{n_1-1}-\frac{n-2}{n_2-1}\right) \binom {n-3}{n_1-2}\binom {n-3}{n_2-2}  ,\ \quad  \text{if }n_2\neq n \text{ and } n_1\neq n_2; \pse \\
 \frac{1}{n_1-1} \binom {n-4}{ n_1-2}\binom {n-3}{ n_1-2},\ \quad  \text{if }1 < n_1=n_2<n-1; \pse\\
\binom { n +n_1-n_2-2}{n_1-1},\ \quad \text{if }   n_1\neq n_2=n ; \pse\\
1,\ \quad \text{if }  1=n_1=n_2<n \text{ or } n_1=n_2=n-1 .
\end{cases}
\end{equation}
\end{corollary}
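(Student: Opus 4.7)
The plan is to apply the identity stated immediately before the corollary, that $\deg(\msr V(M))=Q(1)$ whenever the Hilbert series has the reduced form $\mfk s_M(t)=Q(t)/(1-t)^d$ with $d$ the Krull dimension. Combined with the explicit expressions in Theorem~\ref{thef2.1}, this reduces the degree computation in each case to the evaluation of a finite sum of products of binomial coefficients at $t=1$, which I would dispatch using the Vandermonde-type identities in Lemma~\ref{lemd2.4}.

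First I would treat the generic case $n_1<n_2<n$. By Corollary~\ref{core2.2} the Krull dimension equals the denominator exponent $d=2n-3$ in (\ref{c2.79}), so no cancellation against $(1-t)$ is needed. Since the summation index $r$ occurs only in the first row of the $2\times 2$ determinant and $s$ only in the second row, setting $t=1$ permits pushing the two independent sums inside the determinant, and each of the four entries then collapses to a single binomial coefficient by Vandermonde:
\[
Q(1)=\left|\begin{matrix}
\binom{n-3}{n_2-2} & \binom{n-2}{n_2-1}\\
\binom{n-3}{n_1-2} & \binom{n-2}{n_1-1}
\end{matrix}\right|.
\]
Factoring $\binom{n-3}{n_1-2}\binom{n-3}{n_2-2}$ out of this determinant and simplifying the ratio $\binom{n-2}{k-1}/\binom{n-3}{k-2}=(n-2)/(k-1)$ in the second column yields the first formula of the corollary.

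For $n_1\neq n_2=n$, the Hilbert series in (\ref{c2.80}) carries a single binomial sum on top, and a single application of Vandermonde evaluates $Q(1)$ to $\binom{n-2}{n_1-1}$, matching the third case. In the ``linear'' cases $1=n_1=n_2<n$ and $n_1=n_2=n-1$, the index set $J_2$ is empty and most generators in the ideal description of Lemma~\ref{lemc2.1} become vacuous, so $\msr V(M)$ is cut out by linear forms alone and $\deg=1$ is immediate.

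The main obstacle is the remaining range $1<n_1=n_2<n-1$. Here the generic computation yields $Q(1)=0$, since the two rows of the above $2\times 2$ determinant coincide when $n_1=n_2$; this signals that $(1-t)$ divides $Q(t)$ and the true Krull dimension drops to $2n-4$ by Corollary~\ref{core2.2}. To extract the actual degree I would return to Proposition~\ref{propf2.2}: at $n_1=n_2$ the specialization $\xi=n_2-n_1+1=1$ combined with (\ref{c2.28}) and (\ref{c2.30}) gives $\mfk p_M(k)=P_2(n_1,n-n_1,n_1,1,k)=P_1(n-n_1,n_1,1,k)=h_3(n-n_1,n_1,k)$, whose generating function (\ref{b3.4}) is already reduced with denominator $(1-t)^{2n-4}$. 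Evaluating its $2\times 2$ determinantal numerator at $t=1$ by Vandermonde produces a new $2\times 2$ determinant of single binomial coefficients, and a short combinatorial simplification using $\binom{n-2}{n_1-1}/\binom{n-3}{n_1-2}=(n-2)/(n_1-1)$ together with the Pascal relation collapses it to $\tfrac{1}{n_1-1}\binom{n-4}{n_1-2}\binom{n-3}{n_1-2}$, matching the second formula.
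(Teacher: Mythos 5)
Your proposal is correct and follows essentially the same route as the paper: read off the Krull dimension, evaluate the numerator of the reduced Hilbert series at $t=1$ via the Vandermonde-type identities of Lemma \ref{lemd2.4}, and for $1<n_1=n_2<n-1$ fall back on the $h_3$ generating function (\ref{b3.4}) once the generic numerator vanishes; your chain $P_2\to P_1\to h_3$ in that case is simply a slightly more explicit derivation of the paper's direct identification $\msr V(M)\cong\msr V_3(n-n_1,n_1)$. One small remark: your value $\binom{n-2}{n_1-1}$ in the case $n_1\neq n_2=n$ agrees with the version of the formula stated in the introduction but not with the literal expression $\binom{n+n_1-n_2-2}{n_1-1}$ in the corollary as printed, which degenerates to $\binom{n_1-2}{n_1-1}=0$ when $n_2=n$; the mismatch traces back to $n-n_2$ appearing in (\ref{c2.53}) and (\ref{c2.80}) where $n_2-n_1$ is evidently intended, and your evaluation gives the intended, correct degree.
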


\begin{proof}
According to Proposition \ref{propd2.1} and Corollary \ref{thef2.1} ,
 \begin{eqnarray}
\mfk s_{_{M} }(t)
=
\frac{
\left | \begin{matrix}
\sum_{r=0}^{\infty}\binom {n_2-2}{r}\binom {n - n_2 -1 }{r}t^r &  \sum_{r=0}^{\infty}\binom {n_2-2}{r-1}\binom {n- n_2  }{r}t^r  \\
\sum_{r=0}^{\infty}\binom {n_1-1}{r+1}\binom {n-n_1-2}{r}t^r & \sum_{r=0}^{\infty}\binom {n_1-1}{r }\binom {n-n_1-1}{r}t^r
\end{matrix} \right |
}{(1-t)^{2n-3}},
\end{eqnarray}
when $n_1\leq n_2<n$, $\ell_1 \leq 0$ or $\ell_2 \leq 0$.
Note that
\begin{equation}\label{d2.100}
\dim ( I(\msr{V}(M)))=
\begin{cases}
2n-3,\ \quad \text{if }n_2\neq n \text{ and } n_1\neq n_2; \\
2n-4,\ \quad \text{if }1 < n_1=n_2<n-1; \\
n-1,\ \quad \text{if } 1=n_1=n_2<n \text{ or } n_1=n_2=n-1\text{ or } n_1\neq n_2=n .
\end{cases}
\end{equation}
Hence, when $n_2 \neq n$ and $n_1\neq n_2$, we have
\begin{eqnarray}
 \deg (\msr{V}(M))
&=&
\left | \begin{matrix}
\sum_{r=0}^{\infty}\binom {n_2-2}{r}\binom {n - n_2 -1 }{r}  &  \sum_{r=0}^{\infty}\binom {n_2-2}{r-1}\binom {n- n_2  }{r}  \\
\sum_{r=0}^{\infty}\binom {n_1-1}{r+1}\binom {n-n_1-2}{r}  & \sum_{r=0}^{\infty}\binom {n_1-1}{r }\binom {n-n_1-1}{r}
\end{matrix} \right |
\nonumber  \\
&=&
\left | \begin{matrix}
 \binom {n-3}{n_2-2} &  \binom {n-2}{n_2-1}   \pse\\
 \binom {n-3}{n_1-2} &  \binom {n-2}{n_1-1}
\end{matrix} \right | \nonumber \\
&=& \binom {n-3}{n_1-2}\binom {n-3}{n_2-2} \left( \frac{n-2}{n_1-1}-\frac{n-2}{n_2-1}\right)
\end{eqnarray}
by Lemma \ref{lemd2.4}.

When $n_1<n_2=n$, according to (\ref{d2.100}), (\ref{c2.53}) and (\ref{c2.11}), we have
\begin{equation}
 \deg \msr{V}(M) = \binom { n +n_1-n_2-2}{n_1-1}.
\end{equation}

When $ 1<n_1=n_2<n-1$, we have $\msr{V}(M) \cong \msr V_3(n-n_1,n_1)$, and then
\begin{equation}
 \deg( \msr{V}(M)) = \frac{1}{n_1-1} \binom {n-4}{ n_1-2}\binom {n-3}{ n_1-2}  .
\end{equation}
by (\ref{b3.4}) and Lemma \ref{lemd2.4}.

When $ 1=n_1=n_2<n$ or $n_1=n_2=n-1$, we have $\msr{V}(M) \cong \mbb P^{n-2}$,  and then
\begin{equation}
 \deg( \msr{V}(M)) = 1.
\end{equation}
Therefore, (\ref{d2.99}) holds.

\end{proof}

By the same method, we can derive the following more general conclusion.
 \begin{proposition}
\quad  For $m,n,\varrho,\xi \in \mbb N$ with $1\leq \varrho \leq m $ and $1\leq \xi \leq n$, we have
\begin{eqnarray}
& &lc_r(P_1(m,n, \xi,r))\cdot(\deg_r(P_1(m,n, \xi,r))!)   \nonumber \\ &=&
\begin{cases}
1,\ \quad  \text{if }m=1 \text{ or } n<3; \pse \\
\frac{1}{n-1}  \binom {m+n -4}{m-2}\binom {m+n-3}{n -2}
,\ \quad  \text{if }\xi=1, m\geq 2\text{ and } n\geq 3; \pse \\
 \frac{\xi-1}{n-1 }\binom {m+n-\xi-2}{m-2}\binom {m+n-3}{n -2}   ,\ \quad  \text{else. }
\end{cases}
\end{eqnarray}
and
\begin{eqnarray}
& & lc_r(P_2(m,n,\varrho,\xi,r)) \cdot(\deg_r(P_2(m,n,\varrho,\xi,r))!)
\nonumber \\
&=&
\begin{cases}
1,\ \quad \text{if }m =1\text{ or }n=1; \pse \\
\frac{1}{n-1}  \binom {m+n -4}{m-2}\binom {m+n-3}{n -2} ,\ \quad  \text{if }m=\varrho,\xi=1 \text{ and } m,n>1; \pse \\
 \frac{(m-\varrho)(n-1)+(\varrho-1)(\xi-1)}{(m-1)(n-1)}\binom {m+n-\xi-2}{m-2}\binom {\varrho+n-3}{n -2}   ,\ \quad \text{else. }
\end{cases}
\end{eqnarray}
\end{proposition}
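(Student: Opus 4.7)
The plan is to read off both leading coefficients from the closed-form Hilbert series in Proposition \ref{propd2.1}, using the standard identity that if a formal power series $\sum_{r\geq 0}a_r t^r$ agrees with a rational function $Q(t)/(1-t)^d$ in lowest form, then $a_r$ is eventually polynomial in $r$ of degree $d-1$ with leading coefficient $Q(1)/(d-1)!$; equivalently $lc\cdot (d-1)!=Q(1)$. Since $P_1(m,n,\xi,r)=P_2(m,n,m,\xi,r)$ by (\ref{c2.28}), it suffices to prove the $P_2$ identity and then specialize $\varrho=m$, handling the ranges $m=1$ or $n<3$ for $P_1$ as separate boundary cases.

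In the generic case $\xi>1$ or $\varrho<m$, Corollary \ref{core2.2} gives $\deg_r P_2=m+2n+\varrho-\xi-3$, so the denominator $(1-t)^{m+2n+\varrho-\xi-2}$ in Proposition \ref{propd2.1} is already the true Krull-dimension power and the numerator $Q(t)$ satisfies $Q(1)\neq 0$. I would evaluate the four binomial sums in the $2\times 2$ determinant at $t=1$ by Chu--Vandermonde together with identity (\ref{d2.51}) of Lemma \ref{lemd2.4}, obtaining
\[
Q(1)=\binom{m+n-\xi-2}{m-2}\binom{\varrho+n-2}{\varrho-1}-\binom{m+n-\xi-1}{m-1}\binom{\varrho+n-3}{\varrho-2}.
\]
Applying the elementary ratios $\binom{m+n-\xi-1}{m-1}=\frac{m+n-\xi-1}{m-1}\binom{m+n-\xi-2}{m-2}$ and $\binom{\varrho+n-2}{\varrho-1}=\frac{\varrho+n-2}{\varrho-1}\binom{\varrho+n-3}{\varrho-2}$ factors $Q(1)$ as $\binom{m+n-\xi-2}{m-2}\binom{\varrho+n-3}{\varrho-2}\cdot\frac{(m-1)(\varrho+n-2)-(\varrho-1)(m+n-\xi-1)}{(m-1)(\varrho-1)}$, and the bracketed expression simplifies by straightforward expansion to $(m-\varrho)(n-1)+(\varrho-1)(\xi-1)$. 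The final substitution $\binom{\varrho+n-3}{\varrho-2}=\frac{\varrho-1}{n-1}\binom{\varrho+n-3}{n-2}$ cancels the $\varrho-1$ and yields the stated expression for $lc(P_2)\cdot\deg_r!$.

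The degenerate sub-case $\xi=1,\varrho=m$ has $P_2=P_1=h_3$; here the determinant in Proposition \ref{propd2.1} vanishes at $t=1$, reflecting one spurious factor of $(1-t)$ in both numerator and denominator. Instead I would use the formula (\ref{b3.4}) for $\mfk s_3^{m,n}(t)$, which is already in lowest form with denominator $(1-t)^{2(m+n-2)}$ matching the jump in Corollary \ref{core2.2} to $\deg_r=2m+2n-5$. Evaluating its $2\times 2$ numerator at $t=1$ by the same identities gives $\binom{m+n-2}{m-1}\binom{m+n-4}{m-2}-\binom{m+n-3}{n-1}\binom{m+n-3}{m-1}$, and the Pascal move $\binom{m+n-2}{m-1}=\binom{m+n-3}{m-1}+\binom{m+n-3}{m-2}$ together with $\binom{m+n-4}{m-2}-\binom{m+n-3}{m-1}=-\binom{m+n-4}{m-1}$ and the symmetry $\binom{m+n-3}{n-1}=\binom{m+n-3}{m-2}$ collapses this to $\frac{1}{n-1}\binom{m+n-4}{m-2}\binom{m+n-3}{n-2}$. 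For the remaining boundary $P_1$ cases $m=1$ or $n\leq 2$, the index rectangle $\ol{1,m}\times\ol{1,n}$ admits no 3-chain and no 2-chain with strict increase in both coordinates, so the constraints defining $S_1^{\ol{1,m},\ol{1,n}}(\xi,r)$ are vacuous and $P_1(m,n,\xi,r)=\binom{mn-1+r}{mn-1}$, whence $lc\cdot\deg_r!=1$.

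The main obstacle will be the binomial-coefficient algebra: recognizing that the generic $2\times 2$ determinant at $t=1$ factors as a product of two binomials with the clean prefactor $(m-\varrho)(n-1)+(\varrho-1)(\xi-1)$ hinges on the right sequence of Pascal-type moves and is not transparent from the raw formula. A subtler point is that at $\xi=1,\varrho=m$ Proposition \ref{propd2.1} produces an indeterminate $0/0$ at $t=1$, so one has to pivot to the more compact expression (\ref{b3.4}) rather than extracting a derivative.
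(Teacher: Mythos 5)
Your proposal is correct and follows essentially the same route as the paper: the paper does not write out a proof for this proposition but states it is obtained ``by the same method'' as the preceding degree computation, namely evaluating the numerator of the rational Hilbert series from Proposition \ref{propd2.1} (or, in the degenerate $\xi=1,\varrho=m$ case, from (\ref{b3.4})) at $t=1$ via Vandermonde-type identities and dividing by the factorial of the degree supplied by Corollaries \ref{core2.1} and \ref{core2.2}. Your identification of the $0/0$ at $\xi=1,\varrho=m$ and the pivot to (\ref{b3.4}), as well as the binomial manipulations producing the $(m-\varrho)(n-1)+(\varrho-1)(\xi-1)$ factor, are exactly the intended calculation; the only small omission is the trivial $P_2$ boundary cases $m=1$ or $n=1$, which are handled the same way as the $P_1$ boundaries you did treat.
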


\section{Relations between $\mfk p_{M}$ and $\mfk p_{M,M_0}$}

Recall that the Hilbert polynomial $\mfk p_{M,M_0}(k)$ depends on the choice of the subspace $M_0$.
In this section, we study the relationship between $\mfk p_{M,M_0}(k) $ and $\mfk p_{M }(k)$.
We show that the  Hilbert polynomial $\mfk p_{M, M_0}(k)$ of such a completely reducible module $M$ satisfies $\mfk p_{_M}(k)\leq \mfk p_{_M, _{M_0}}(k)$ for sufficiently large positive integer $k$.
In particular, when $\mfk g =sl(n)$ and $M=\msr H_{\la \ell_1,\ell_2\ra}$, we find a necessary and sufficient condition for $\mfk p_{M}(k) = \mfk p_{M,\mbb F v}(k)$. Recall $M_k =(U_k( \mathfrak{g}))(M_0)$ and $M_k' =(U_k( \mathfrak{g}))(M_0')$.\psp

\begin{lemma}\label{lemd3.1}
Suppose that $M_0$ and $M_0'$ are finite-dimensional subspaces of $M$ satisfying $(U(\mfk g))(M_0) = (U(\mfk g))(M_0') = M$.
If $\dim (M_k) \leq \dim (M_k')$ for sufficiently large $k$, then $\mfk p_{M,M_0}(k) \leq \mfk p_{M,M_0'}(k)$ for sufficiently large $k$.
In particular, if $M_0 \subset M_0'$, we have $\mfk p_{M,M_0}(k) \leq \mfk p_{M,M_0'}(k)$ for sufficiently large $k$.
\end{lemma}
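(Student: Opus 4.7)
The key observation is that while the hypothesis involves the Hilbert--Samuel function $\dim M_k$, the conclusion involves the Hilbert function $\dim \ol{M}_k = \dim M_k - \dim M_{k-1}$. So I would first pass from the dimensions back to their governing polynomials. Since $\ol{M} = \mathrm{gr}(M; M_0)$ is a finitely generated graded $S(\mfk g)$-module, its Hilbert function $\dim \ol{M}_k$ agrees with a polynomial for large $k$, namely $\mfk p_{M,M_0}(k)$. Summing, the function $\dim M_k = \sum_{i=0}^{k} \dim \ol{M}_i$ also agrees with a polynomial $\mfk q_{M,M_0}(k)$ for large $k$, whose first difference satisfies $\mfk q_{M,M_0}(k) - \mfk q_{M,M_0}(k-1) = \mfk p_{M,M_0}(k)$. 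The same applies to the primed filtration, yielding $\mfk q_{M,M_0'}(k)$ and $\mfk p_{M,M_0'}(k)$.

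The heart of the argument is then the following elementary polynomial lemma: if $R(k)$ is a polynomial with $R(k) \geq 0$ for all sufficiently large $k$, then $R(k) - R(k-1) \geq 0$ for all sufficiently large $k$ as well. To prove this, I would apply it to $R = \mfk q_{M,M_0'} - \mfk q_{M,M_0}$, which is non-negative for large $k$ by the hypothesis $\dim M_k \leq \dim M_k'$. If $R \equiv 0$, the claim is trivial. Otherwise, non-negativity at infinity forces the leading coefficient of $R$ to be strictly positive; writing $R(k) = a_d k^d + \text{(lower)}$ with $a_d > 0$, the first difference $R(k) - R(k-1)$ has degree $d-1$ with leading coefficient $d\,a_d$, which is positive when $d \geq 1$ (and the difference is the constant $0$ when $d = 0$). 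In either event, $R(k) - R(k-1) \geq 0$ for large $k$, which rearranges exactly to $\mfk p_{M,M_0}(k) \leq \mfk p_{M,M_0'}(k)$ for large $k$.

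For the second assertion, if $M_0 \subset M_0'$, then $M_k = U_k(\mfk g)(M_0) \subset U_k(\mfk g)(M_0') = M_k'$ for every $k$, so $\dim M_k \leq \dim M_k'$ holds for all $k$, and the conclusion follows from the first part.

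I expect the main (albeit mild) obstacle to be articulating cleanly the passage from the filtration inequality on the Samuel-type polynomial $\mfk q$ to the inequality on its first difference $\mfk p$, since it is tempting but incorrect to conclude $\dim \ol{M}_k \leq \dim \ol{M}_k'$ directly from $\dim M_k \leq \dim M_k'$ term-by-term (this fails in general without using the polynomial structure). The polynomial lemma above is exactly what bridges this gap, and I would make a point of isolating it so that the reader sees the role of eventual polynomial behavior.
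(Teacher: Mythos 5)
Your proof is correct and follows essentially the same route as the paper. The paper also passes from the Hilbert function to the cumulative (Hilbert--Samuel) function, observes via $\sum_{i=1}^k \mfk p_{M,M_0}(i) = d_1 + \dim M_k$ that the difference of cumulative functions is a polynomial that is eventually a nonnegative integer (hence has positive leading coefficient if nonzero), and then descends to the first difference to conclude that $\mfk p_{M,M_0'} - \mfk p_{M,M_0}$ has positive leading coefficient; your formulation merely packages this as an explicit polynomial lemma about $R(k)$ and its first difference.
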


\begin{proof}
Assume that $\mfk p_{M,M_0'} \neq \mfk p_{M,M_0}$.
By the definition of $\mfk p_{M,M_0}$, there exist $N_1, N_2 \in \mbb N$ such that $\mfk p_{M,M_0}(i) = \dim (M_i) - \dim (M_{i-1})$ for $i \geq N_1$ and $\mfk p_{M,M_0'}(i) = \dim (M_i') - \dim (M_{i-1}')$ for $i \geq N_2$.
Define
\begin{equation}
d_1 = \sum_{i=1}^{N_1} \mfk p_{M,M_0}(i) - \dim (M_{N_1}), \quad
d_2 = \sum_{i=1}^{N_2} \mfk p_{M,M_0'}(i) - \dim (M_{N_2}) \in \mbb N.
\end{equation}
Then, for sufficiently large $k$,
\begin{equation}
\sum_{i=1}^k \mfk p_{M,M_0}(i) = d_1 + \dim (M_k), \quad
\sum_{i=1}^k \mfk p_{M,M_0'}(i) = d_2 + \dim (M_k').
\end{equation}

Since
\begin{equation}\label{f3.3}
\sum_{i=1}^k i^p = \frac{1}{p+1} k^{p+1} + \text{lower degree terms in } k,
\end{equation}
for $p \in \mbb N$,
\begin{equation}\sum_{i=1}^k (\mfk p_{M,M_0'}(i) - \mfk p_{M,M_0}(i))+d_1-d_2=\dim (M_k')- \dim (M_k)\end{equation}
is a polynomial in $k$ with positive degree and takes nonnegative integers when $k$ sufficiently large.
Thus the leading coefficient of $\sum_{i=1}^k (\mfk p_{M,M_0'}(i) - \mfk p_{M,M_0}(i))$ is positive.
By (\ref{f3.3}), the leading coefficient of $\mfk p_{M,M_0'}(i) - \mfk p_{M,M_0}(i)$ is also positive. Hence, $\mfk p_{M,M_0}(k) \leq \mfk p_{M,M_0'}(k)$ for sufficiently large $k$.
\end{proof}

For $i\in \mbb N$, we denote by $S^i(\mfk g)$ the subspace of homogeneous polynomials with the degree $i$.
Denote
\begin{equation}
\left(\operatorname{Ann}_{S( \mfk g)} (\ol{M}) \right)^{ i }= \operatorname{Ann}_{S( \mfk g)} (\ol{M})\cap S^i(\mfk g),
\end{equation}
and
\begin{equation}\label{a6.6}
I_{(i)}= \sqrt{ \operatorname{Ann}_{S( \mfk g)} (\ol{M})}\cap S^i(\mfk g),
\end{equation}
Denote by $I(\msr V(M))=\bigoplus_{i} I_{(i)}$ the homogeneous ideal of $\msr V(M)$ and $R(M)=\bigoplus_{i} S^i(\mfk g)/I_{(i)} $ is the homogeneous coordinate ring of the associated variety $\msr V(M)$.

We say that $M_0$ is radical for $M$, if
\begin{equation}
I_{(i)} =\left(\operatorname{Ann}_{S( \mfk g)} (\ol{M}) \right)^{i}
\end{equation}for $i$ large enough.
Note that for $p\in \mbb N$, $M_p =(U_p( \mathfrak{g}))(M_0)$ is radical for $M$, if $M_0$ is radical for $M$.

\begin{lemma} \label{lem:b5.2}
$M$ is a finitely generated $U(\mfk g)$-module. We take $M_0= \mbb F v_0$, where $v_0 \in M$ and $(U(\mfk g))(\mbb F v_0)=M$. Then we have
\begin{equation}\label{a6.7}
\mfk p_{M}(k) \leq \mfk p_{M,M_0}(k) .
\end{equation}
Moreover, $\mbb F v_0$ is radical for $M$ if and only if
\begin{equation}\label{a6.8}
\mfk p_{M}(k)=\mfk p_{M,M_0}(k) .
\end{equation}
\end{lemma}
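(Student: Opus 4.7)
The plan is to exploit the hypothesis $M_0=\mbb F v_0$ in order to turn $\ol M$ into a cyclic graded $S(\mfk g)$-module, and then compare the two Hilbert polynomials term by term. Concretely, I would first observe that the composition
\[
S^k(\mfk g)\twoheadrightarrow U_k(\mfk g)/U_{k-1}(\mfk g)\xrightarrow{\;u\mapsto u(v_0)+M_{k-1}\;}\ol M_k
\]
is surjective, because $(U(\mfk g))(v_0)=M$ forces $M_k=U_k(\mfk g)(v_0)$ and hence $\ol M_k=U_k(\mfk g)(v_0)/M_{k-1}$. Denote by $\ol{v_0}$ the image of $v_0$ in $\ol M_0$. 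Then $\ol M=S(\mfk g)\cdot\ol{v_0}$ as a graded $S(\mfk g)$-module, so
\[
\operatorname{Ann}_{S(\mfk g)}(\ol{v_0})=\operatorname{Ann}_{S(\mfk g)}(\ol M),
\]
where the inclusion $\supseteq$ is trivial and $\subseteq$ uses that $S(\mfk g)$ is commutative (if $\xi\ol{v_0}=0$ then $\xi(\eta\ol{v_0})=\eta(\xi\ol{v_0})=0$ for every $\eta\in S(\mfk g)$). Thus the above surjection identifies $\ol M_k$ with $S^k(\mfk g)/(\operatorname{Ann}_{S(\mfk g)}(\ol M))^k$, giving
\[
\dim\ol M_k=\dim S^k(\mfk g)-\dim(\operatorname{Ann}_{S(\mfk g)}(\ol M))^k.
\]

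Next I would compare this with the Hilbert polynomial of $\msr V(M)$. By the definition adopted in the paper, $\mfk p_M(k)$ is the Hilbert polynomial of $R(M)=S(\mfk g)/I(\msr V(M))$, so for $k$ large enough
\[
\mfk p_M(k)=\dim S^k(\mfk g)-\dim I_{(k)},
\]
with $I_{(k)}$ as in (\ref{a6.6}). Subtracting the two expressions,
\[
\mfk p_{M,M_0}(k)-\mfk p_M(k)=\dim I_{(k)}-\dim(\operatorname{Ann}_{S(\mfk g)}(\ol M))^k
\]
for all sufficiently large $k$. Since $\operatorname{Ann}_{S(\mfk g)}(\ol M)\subseteq\sqrt{\operatorname{Ann}_{S(\mfk g)}(\ol M)}=I(\msr V(M))$, we have $(\operatorname{Ann}_{S(\mfk g)}(\ol M))^k\subseteq I_{(k)}$ in every degree, hence the right-hand side is nonnegative. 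This establishes (\ref{a6.7}).

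Finally, the equality (\ref{a6.8}) holds for sufficiently large $k$ if and only if $\dim(\operatorname{Ann}_{S(\mfk g)}(\ol M))^k=\dim I_{(k)}$ for all such $k$, which, together with the inclusion $(\operatorname{Ann}_{S(\mfk g)}(\ol M))^k\subseteq I_{(k)}$, is equivalent to $(\operatorname{Ann}_{S(\mfk g)}(\ol M))^k=I_{(k)}$ for $k\gg 0$. By the definition of ``radical for $M$'' introduced just before the lemma, this is precisely the condition that $\mbb F v_0$ be radical for $M$, which finishes the characterization.

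The content of the argument is routine once the cyclic structure is set up; the only step that needs genuine care is the identification $\operatorname{Ann}_{S(\mfk g)}(\ol{v_0})=\operatorname{Ann}_{S(\mfk g)}(\ol M)$, which is the essential use of the hypothesis $\dim M_0=1$ and of the commutativity of $S(\mfk g)$. Without it, one would only get the surjection $S^k(\mfk g)\otimes M_0\twoheadrightarrow\ol M_k$, whose kernel need not come from $\operatorname{Ann}_{S(\mfk g)}(\ol M)$, and the clean comparison between the two Hilbert polynomials would break down.
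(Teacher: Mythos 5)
Your proposal is correct and follows essentially the same route as the paper: both exploit the cyclic structure $\ol M=S(\mfk g)\cdot\ol{v_0}$ to identify $\ol M_k$ with $S^k(\mfk g)/\bigl(\operatorname{Ann}_{S(\mfk g)}\ol M\bigr)^k$ and then compare dimensions against $S^k(\mfk g)/I_{(k)}$. The paper packages this as a surjection $\ol{\theta_p}\colon M_p/M_{p-1}\twoheadrightarrow S^p(\mfk g)/I_{(p)}$ whose injectivity for large $p$ is the radical condition, while you carry out the equivalent dimension count directly, but the mathematical content is identical.
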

\begin{proof}

For $f\in U_k(\mfk g)$, we denote by $\ol{f} \in S^p(\mfk g)$ the image of $f$ under the natural map from $U_k(\mfk g)$ to $U_k(\mfk g)/U_{k-1}(\mfk g)$.

For $p$ large enough, we define the linear map $ \theta_p : M_p  \rta  S^{p}(\mfk g)$ by $ \theta_p (f(v_0) )  = \ol{f} $ with $f \in U_p(\mfk g)$.

Suppose that $f \in U_p(\mfk g)$ satisfying $f (v_0) \in M_{p-1}$.
For any $v\in M_k$, we may assume that $v=g(v_0)$ with $g\in  U_k(\mfk g)$.
Then
\begin{equation}
f ( v)=f(g(v_0)) \in g(f(v_0)) +M_{k+p-1} \subset M_{k+p-1}.
\end{equation}
Hence $ \theta_p(M_{p-1}) \subset I_{(p)} $, and we can define the linear map $ \ol{\theta_p} : M_p/M_{p-1}  \rta S^p(\mfk g)/I_{(p)}$ by $ \theta_p (f(v_0) +M_{p-1})  = \ol{f} +I_{(p)}$ with $f \in U_p(\mfk g)$ for $p$ large enough.
Since $ \theta_p$ is surjective, $\ol{\theta_p}$ is surjective. This yields
 \begin{equation}
\dim (M_p/M_{p-1}) = \dim (  S^p(\mfk g)/I_{(p)})
\end{equation}
for $p$ large enough.
Therefore, (\ref{a6.7}) holds.

The injectivity of $\theta_p$ for sufficiently large $p$ is equivalent to (\ref{a6.8}). That is,
\begin{equation}
\ker \ol{\theta_p} =0+ M_{p-1},
\end{equation}
or equivalently, for any $f \in  U_p(\mfk g)$ such that $\ol{f} \in I_{(p)}$, we have $f(v_0) \in M_{p-1}$.

When $I_{(p)} =\left(\operatorname{Ann}_{S( \mfk g)} (\ol{M}) \right)^{p}$ for $p$ large enough, we have $\ol{f} \in \left(\operatorname{Ann}_{S( \mfk g)} (\ol{M}) \right)^{p}$ and $f(v_0) \in M_{p-1}$.
On the other hand, for $f\in I_{(p)}$, $f(v_0) \in M_{p-1}$ yields that $f(v) \in M_{p+k-1}$ for any $v\in M_k$. Then $f\in \left(\operatorname{Ann}_{S( \mfk g)} (\ol{M}) \right)^{p}$.
\end{proof}

\begin{corollary} \label{cord3.1}
For a finitely generated completely reducible infinite-dimensional $U(\mfk g)$-module $M$ with $M_0 \in M$ such that $(U(\mfk g))( M_0 )=M$,  we have $\mfk p_{M}(k) \leq \mfk p_{M,M_0}(k)  $.
In particular, when $M$ is irreducible, $\mfk p_{M}(k) \leq \mfk p_{M,M_0}(k)  $ holds.
\end{corollary}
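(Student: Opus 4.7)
The plan is to first dispatch the irreducible case by combining Lemmas \ref{lemd3.1} and \ref{lem:b5.2}, and then leverage this to handle the general completely reducible case via a direct-sum decomposition into irreducibles.

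For the irreducible case, I would pick any nonzero vector $v_0 \in M_0$. Since $M$ is irreducible, $(U(\mfk g))(\mbb F v_0) = M$, so Lemma \ref{lem:b5.2} applied to the one-dimensional subspace $\mbb F v_0$ gives $\mfk p_{M}(k) \leq \mfk p_{M, \mbb F v_0}(k)$. Because $\mbb F v_0 \subset M_0$ and both subspaces generate $M$, Lemma \ref{lemd3.1} then yields $\mfk p_{M, \mbb F v_0}(k) \leq \mfk p_{M, M_0}(k)$ for sufficiently large $k$. Chaining the two inequalities proves the irreducible case.

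For a general completely reducible $M$, I would decompose $M = \bigoplus_{i=1}^r M^{(i)}$ into irreducibles, with finiteness of $r$ forced by finite generation: any finite-dimensional generating $M_0$ has only finitely many nonzero projections. Let $\pi_i : M \to M^{(i)}$ denote the projection and set $M_0^{(i)} = \pi_i(M_0)$. Since each $\pi_i$ is a $U(\mfk g)$-module homomorphism commuting with the filtration, the filtration splits as $M_k = \bigoplus_i (U_k(\mfk g))(M_0^{(i)})$, whence
\begin{equation*}
\mfk p_{M, M_0}(k) = \sum_{i=1}^r \mfk p_{M^{(i)}, M_0^{(i)}}(k) \geq \sum_{i=1}^r \mfk p_{M^{(i)}}(k),
\end{equation*}
where the last step invokes the already-established irreducible case applied to each $M^{(i)}$.

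To complete the argument I would establish the companion inequality $\mfk p_{M}(k) \leq \sum_i \mfk p_{M^{(i)}}(k)$. Choosing $M_0 = \bigoplus_i M_0^{(i)}$ makes $\ol{M}$ decompose as $\bigoplus_i \ol{M^{(i)}}$, so $\sqrt{\operatorname{Ann}(\ol{M})} = \bigcap_i \sqrt{\operatorname{Ann}(\ol{M^{(i)}})}$ and hence $\msr V(M) = \bigcup_i \msr V(M^{(i)})$; inductively applying the graded short exact sequence
\begin{equation*}
0 \to S(\mfk g)/(I_1 \cap I_2) \to S(\mfk g)/I_1 \oplus S(\mfk g)/I_2 \to S(\mfk g)/(I_1+I_2) \to 0
\end{equation*}
to the defining ideals yields $\dim R(M)_k \leq \sum_i \dim R(M^{(i)})_k$ in each sufficiently high degree, which is the polynomial inequality needed. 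The main obstacle is verifying that the filtration of $M$ truly splits as the direct sum of filtrations of the $M^{(i)}$ and that the projections generate the respective components; both reduce to the $U(\mfk g)$-equivariance of the $\pi_i$, so this is the crux that makes the reduction from completely reducible to irreducible work cleanly.
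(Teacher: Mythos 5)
Your argument takes essentially the same route as the paper's proof of Corollary~\ref{cord3.1}: decompose $M$ into irreducibles, reduce each factor to the one-dimensional case via Lemmas~\ref{lemd3.1} and~\ref{lem:b5.2}, and finish with the variety-union bound $\mfk p_M \leq \sum_i \mfk p_{M^{(i)}}$. Your direct justification of that last bound (via $\msr V(M)=\bigcup_i\msr V(M^{(i)})$ and the graded short exact sequence) is a reasonable substitute for the paper's citation of Joseph's lemma, and your treatment of the irreducible case is correct.

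The genuine gap is the claim that the filtration splits as $M_k=\bigoplus_i(U_k(\mfk g))(M_0^{(i)})$ with $M_0^{(i)}=\pi_i(M_0)$. Equivariance of the $\pi_i$ only gives $\pi_i(M_k)=(U_k(\mfk g))(\pi_i(M_0))$ and hence the inclusion $M_k\subseteq\bigoplus_i(U_k(\mfk g))(\pi_i(M_0))$; equality forces $M_0=\bigoplus_i\pi_i(M_0)$, i.e. that $M_0$ itself is compatible with the chosen direct-sum decomposition, which need not hold. Concretely, take $M=V\oplus V\oplus V$ for $V$ irreducible infinite-dimensional, fix linearly independent $v_1,v_2,v_3\in V$, and let $M_0=\mbb F\bigl(v_1^{(1)}+v_2^{(2)}\bigr)+\mbb F\bigl(v_3^{(2)}+v_1^{(3)}\bigr)$ (superscripts denoting which summand the vector sits in). Then $M_0$ generates $M$, $\dim M_0=2$, but $\dim\bigoplus_i\pi_i(M_0)=4$; moreover no nonzero vector of $M_0$ lies in a single irreducible summand for \emph{any} direct-sum decomposition of $M$, and no single vector of $M_0$ generates $M$. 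In that situation the inclusion only yields $\mfk p_{M,M_0}\leq\sum_i\mfk p_{M^{(i)},M_0^{(i)}}$, which is the wrong direction for chaining with $\sum_i\mfk p_{M^{(i)},M_0^{(i)}}\geq\sum_i\mfk p_{M^{(i)}}\geq\mfk p_M$. You should be aware that the paper's own proof silently makes the same assumption (``we can also decompose $M_0=\bigoplus_t M_0^{(t)}$''), so the defect is inherited rather than introduced by you; nevertheless, the step genuinely requires either the extra hypothesis that $M_0$ is compatibly decomposable, or a different argument (for instance, bounding $\mfk p_{M,M_0}$ below using the fact that $\ol M$ is a faithful graded $S(\mfk g)/\operatorname{Ann}_{S(\mfk g)}(\ol M)$-module generated in degree zero).
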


\begin{proof}

We assume that
\begin{equation}
M=\bigoplus_{t=1}^s M^{(t)}
\end{equation}
where $M^{(t)}$ are irreducible modules for $t\in \ol{1,s}$.
Since $(U(\mfk g))( M_0 )=M$, we can also decompose $M_0= \bigoplus_{t=1}^s M^{(t)}_0$ such that $(U(\mfk g))( M^{(t)}_0 )=M^{(t)}$ for $t\in \ol{1,s}$.
Then
\begin{equation}
\ol{M}=\bigoplus_{t=1}^s \ol{M^{(t)}}.
\end{equation}
Since the Hilbert polynomial is additive with respect to exact sequences,
\begin{equation}\label{d3.13}
\mfk p_{M,M_0} (k) = \sum_{t=1}^s \mfk p_{M^{(t)},M^{(t)}_0} (k).
\end{equation}
For $t\in \ol{1,s}$, we take $v_t\in M^{(t)}_0$ with $v_t\neq 0$, and then $(U(\mfk g))( \mbb F v_t )= M^{(t)}$ by the irreducibility of $M^{(t)}$.
According to Lemma \ref{lem:b5.2} and Lemma \ref{lemd3.1}, we have
\begin{equation}\label{d3.14}
\mfk p_{M^{(t)},M^{(t)}_0} (k) \geq \mfk p_{M^{(t)},\mbb F v_t} (k) \geq \mfk p_{M^{(t)} } (k).
\end{equation}
According to Lemma 4.2 in \cite{Ja2}, we have
\begin{equation}
\msr V(M) =\bigcup_{t=1}^s \msr V(M^{(t)}).
\end{equation}
Thus
\begin{equation}\label{d3.16}
\mfk p_{M  } (k) \leq \sum_{t=1}^s     \mfk p_{M^{(t)}  } (k).
\end{equation}
Hence,
\begin{equation}
\mfk p_{M,M_0} (k) \geq  \sum_{t=1}^s  \mfk p_{M^{(t)} } (k) \geq \mfk p_{M  } (k)
\end{equation}
by (\ref{d3.13}), (\ref{d3.14}) and (\ref{d3.16}).

\end{proof}

We set
\begin{equation}\label{b5.13}
\eta=\sum_{i\in J_1}y_i\ptl_{x_i}+\sum_{r\in J_2}x_r y_r+\sum_{s\in J_3}x_s \ptl_{y_s}.
\end{equation}
Then we have
\begin{equation} \label{a6.13}
\eta \td{\pi}(E_{i,j})= \td{\pi}(E_{i,j}) \eta\qquad\for\;\;  i,j\in \ol{1,n}
\end{equation}
and
\begin{equation}
\eta (\msr H_{\la \ell_1,\ell_2 \ra}) \subset \msr A_{ \la \ell_1+1,\ell_2+1 \ra} .
\end{equation}

For $n_1,n_2 ,n \in \mathbb{N}$ with $n_1\leq n_2 \leq n$, we denote
\begin{equation}
{\msr H}_{\la\ell_1,\ell_2\ra}^{n_1,n_2,n}={\msr H}_{\la\ell_1,\ell_2\ra} \qquad \td\Dlt^{n_1,n_2,n}=\td\Dlt, \qquad \mfk d_{\la\ell_1,\ell_2\ra}^{n_1,n_2,n}=\mfk d,
\end{equation}
and
\begin{equation}\label{a2.24}
 ({\msr H}_{\la\ell_1,\ell_2\ra}^{n_1,n_2,n})_k= U(\mfk g)(V_0).
\end{equation}

For any $g\in \msr H_{\la \ell_1,\ell_2 \ra}^{n_1,n_2,n} $ and $0<m\in \mbb N$,
\begin{equation}
\td{\Delta }(\eta^m (g )) = m (-\ell_1-\ell_2+n_1-n_2-m+1)\eta^{m-1} (g ) .
\end{equation}
Thus, when $ -\ell_1-\ell_2+n_1-n_2+1 \geq 0$,
\begin{equation}
\eta^{ -\ell_1-\ell_2+n_1-n_2+1}(   {\msr H}_{\la \ell_1,\ell_2 \ra}^{n_1,n_2,n}  )
\subset {\msr H}_{\la n_1-n_2-\ell_2+1,n_1-n_2-\ell_1+1 \ra}^{n_1,n_2,n}.
\end{equation}
Furthermore, for $g \in   \msr H_{\la \ell_1,\ell_2 \ra}^{n_1,n_2,n}  $, we have
\begin{equation} \label{a6.17}
\mfk d_{\la \ell_1+1,\ell_2+1\ra}^{n_1,n_2,n}  (  \eta (g))
=\mfk d_{\la \ell_1 ,\ell_2  \ra}^{n_1,n_2,n}( g)+2.
\end{equation}
Then for $k\in \mbb N$,
\begin{equation} \label{b5.19}
\eta^{ -\ell_1-\ell_2+n_1-n_2+1} \left(\left( \msr H_{\la \ell_1,\ell_2 \ra}^{n_1,n_2,n}  \right)_{k} \right)
\subset \left( {\msr H}_{\la n_1-n_2-\ell_2+1,n_1-n_2-\ell_1+1 \ra}^{n_1,n_2,n}\right)_{k+2( -\ell_1-\ell_2+n_1-n_2+1)},
\end{equation}
when $ -\ell_1-\ell_2+n_1-n_2+1 \geq 0$, by (\ref{a6.13}) and (\ref{a6.17}).

Denote
\begin{equation}
L= (J_3\times J_1) \cup(J_2\times J_1) \cup (J_3\times J_2)
\end{equation}and
\begin{equation}
L^a= (J_1\times J_3) \cup(J_2\times J_3) \cup (J_1\times J_2).
\end{equation}

We define the Lie subalgebra $\mfk a=\text{Span}\{ E_{i,j}, E_{r,r}-E_{r+1,r+1} \mid (i,j)\notin L, 1\leq r\leq n-1 \}$.
We denote
\begin{equation}
\mfk{g}_+ =sl(n)_+=\sum_{1\leq i<j\leq n}\mbb F E_{i,j}.
\end{equation}
\begin{lemma}\label{lem:a6.3}
Let $\mfk g=sl(n)$ and $ M={\msr H}_{\la\ell_1,\ell_2\ra}\neq 0$ be an infinite-dimensional $\mfk g$-module with $n_1<n_2$, $\ell_1 \leq 0$ or $\ell_2\leq 0$. $M_0$ is a finite-dimensional $U(\mfk a)$-invariant subspace (i.e. $U(\mfk a) (X_0) \subset X_0$). If there exists $ f\in M_0 \cap V_0$ with $f\neq 0 $ (cf. (\ref{a2.11})), we have $V_0 \subset M_0$.
\end{lemma}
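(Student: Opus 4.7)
The plan is to establish that $V_0$ is a $U(\mfk a)$-stable subspace of $M$ and is irreducible as a $U(\mfk a)$-module. These two properties yield the conclusion at once: for any nonzero $f\in V_0\cap M_0$, irreducibility gives $U(\mfk a)f=V_0$, while the hypothesis $U(\mfk a)M_0\subset M_0$ gives $U(\mfk a)f\subset M_0$, so $V_0\subset M_0$.

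To check $\td\pi(\mfk a)V_0\subset V_0$, I would go case by case through the definition (\ref{a2.11}). The algebra $\mfk a$ is the block parabolic subalgebra of $sl(n)$ whose Levi is $\mfk l=s\bigl(gl(J_1)\oplus gl(J_2)\oplus gl(J_3)\bigr)$ and whose nilpotent radical is spanned by the $E_{i,j}$ with $(i,j)\in L^a$. On a basis vector $T(x^\al y^\be)$ of $V_0$ the supports of $\al$ and $\be$ are constrained to specific blocks, so inspection of the formulas (2.2)--(2.3) shows that each $\td\pi(E_{i,j})$ with $(i,j)\notin L$ either rearranges indices within the allowed block pattern or contains an annihilation factor $\ptl_{x_r}$ or $\ptl_{y_r}$ with $r$ outside the support and therefore kills the vector. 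Moreover the same support conditions force $\td\Dlt(x^\al y^\be)=0$, so $T$ acts as the identity on these generators and the whole computation reduces to a monomial one.

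For the irreducibility of $V_0$ under $U(\mfk a)$, the annihilation argument from Step 1 also shows that every root vector in the nilpotent radical of $\mfk a$ acts as zero on $V_0$, so the $\mfk a$-action factors through the Levi $\mfk l$. Among the three $gl$-factors of $\mfk l$, only the two corresponding to the blocks supporting $\al$ and $\be$ act nontrivially, while the third contributes only a scalar character. Identifying $V_0$ with the tensor product of symmetric powers of the natural or dual-natural modules of those two $gl$-factors -- which is irreducible as a tensor product of irreducible modules over commuting general linear Lie algebras -- completes the argument.

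The main obstacle is the case-by-case verification in the first step: the three subcases of (\ref{a2.11}) with $n_1<n_2<n$ each involve different supports (e.g.\ $\be$ on $J_3$ versus $J_2$, and $\al$ on $J_1$ versus $J_2$), so each requires its own inspection of which annihilation factors appear. A secondary technical point is to write out the Levi action on $V_0$ explicitly to confirm that the operators $-x_j\ptl_{x_i}-\dlt_{i,j}$, $y_i\ptl_{y_j}+\dlt_{i,j}$, etc.\ realize precisely the expected (dual) natural representations of $gl(J_1), gl(J_2), gl(J_3)$ on the symmetric tensor factors, which is what makes the irreducibility of the resulting tensor product manifest.
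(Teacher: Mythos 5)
The paper's proof is constructive: it writes $f=\sum_k a_k\prod_s x_{i_{k,s}}\prod_t y_{j_{k,t}}$ (with indices in $\ol J,\ol J'$), picks a monomial $\prod x_{i_{1,s}}\prod y_{j_{1,t}}$ realizing the minimal overlap $n_x+n_y$ with a target generator $g$ of $V_0$, and applies the explicit product $\prod_{t>n_y}E_{j_{1,t},j'_t}\prod_{s>n_x}E_{i_{1,s},i'_s}$ of operators in $\mfk a$ to produce a nonzero scalar multiple of $g$. Your plan — show $V_0$ is $\mfk a$-stable, the nilradical acts by zero, and $V_0$ is an irreducible module for the Levi $\mfk l$, then invoke irreducibility — is a genuinely different and more structural argument; for the three subcases of (\ref{a2.11}) with $n_1<n_2<n$ it is correct and in fact cleaner than the paper's explicit combinatorics.

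However, there is a real gap: your assertion that ``the same support conditions force $\td\Dlt(x^\al y^\be)=0$, so $T$ acts as the identity'' is only true when $n_1<n_2<n$, yet the lemma (and its use in the proof of Lemma~\ref{lemd3.4}, Case~2) also covers $n_1<n_2=n$. In the subcase $\ell_1\le 0,\ \ell_2>0,\ n_1<n_2=n$ of (\ref{a2.11}), the generating monomials $x^\al y^\be$ have $\be$ supported on $J_1\cup J_2$, so $\td\Dlt(x^\al y^\be)=\sum_{i\in J_1}x_i\ptl_{y_i}(x^\al y^\be)$ is generally nonzero and $T$ is not the identity; $V_0$ is then not spanned by monomials. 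More importantly, the two structural facts you rely on both fail there. For instance with $n=3$, $n_1=1$, $n_2=3$, $\ell_1=-2$, $\ell_2=1$ one has $T(x_1^2 y_1)=x_1^2 y_1+x_1^3 x_2 y_2$, and the ``nilradical'' element $\td\pi(E_{1,2})=\ptl_{x_1}\ptl_{x_2}-y_2\ptl_{y_1}$ sends it to $2x_1^2y_2$, so the nilpotent radical of $\mfk a$ does not annihilate $V_0$; moreover $\mathrm{Span}\{x_1^2y_2,\,x_1^2y_3\}$ is then a proper nonzero $\mfk a$-submodule of $V_0$, so $V_0$ is not irreducible over $\mfk a$ and the key step of your argument breaks down. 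Your proposal explicitly restricts the case analysis to ``$n_1<n_2<n$,'' but this does not exhaust the situations the lemma is stated for, so as written the plan is incomplete and the irreducibility mechanism cannot be salvaged verbatim for $n_2=n$.
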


\begin{proof}

Denote
\begin{eqnarray}
\ol{J}=\begin{cases}
  J_1 & \text{if } \ell_1\leq 0,  \pse\\
 J_2 & \text{if } \ell_1>0 ;
\end{cases}
\qquad
\text{ and }
\quad
\ol{J}'=\begin{cases}
  J_3 & \text{if } \ell_2\leq 0,  \pse\\
 J_2 & \text{if } \ell_2>0  .
\end{cases}
\end{eqnarray}
 Write
\begin{equation}
f= \sum_{k=1}^N \left( a_k \prod_{s=1}^{|\ell_1|}x_{i_{k,s}}\prod_{t=1}^{|\ell_2|}y_{j_{k,t}}  \right)
\end{equation}
with $0 \neq a_k \in \mbb F$, $i_{k,s} \in \ol{J}$, $j_{k,t} \in \ol{J}'$ for $s\in \ol{1,|\ell_1|}$, $t\in \ol{1,|\ell_2|}$ and $k\in\ol{1,N}$.
For convenience, we assume that the monomials $\{\prod_{s=1}^{|\ell_1|}x_{i_{k,s}}\prod_{t=1}^{|\ell_2|}y_{j_{k,t}} \}_{1\leq k\leq N} $ are distinct.

Consider the generators of $V_0$ and we set
\begin{equation}
g=  \prod_{s=1}^{|\ell_1|}x_{i'_s}\prod_{t=1}^{|\ell_2|}y_{j'_t}
\end{equation}
where $i'_s \in \ol{J}$, $j'_t \in \ol{J}'$ for $s\in \ol{1,|\ell_1|}$ and $t\in \ol{1,|\ell_2|}$.
Denote
\begin{equation}
n_x=\vert \{i_{1,s} \mid s\in \ol{1,  |\ell_1| }\}\cap \{ i'_s  \vert 1\leq s \leq |\ell_1| \}\vert,\end{equation}  \begin{equation}
n_y=\vert \{j_{1,t} \mid s\in \ol{1,  |\ell_2| }\}\cap \{ j'_t  \vert 1\leq t\leq |\ell_2| \}\vert.
\end{equation}
 For convenience, we assume that
\begin{equation}
 \{i_{1,s} \mid s\in \ol{1,  n_x}\}= \{i_{1,s} \mid s\in \ol{1,  |\ell_1| }\}\cap \{ i'_s  \vert 1\leq s \leq |\ell_1| \},
\end{equation}
\begin{equation}
 \{j_{1,t} \mid t\in \ol{1,  n_y}\}= \{j_{1,t} \mid t\in \ol{1,  |\ell_2| }\}\cap \{ j'_t  \vert 1\leq t \leq |\ell_2| \},
\end{equation}
and
\begin{eqnarray}
& &n_x+n_y  \leq  \min  \big\{ \left\vert \{i_{k,s},j_{k,t}\mid s\in \ol{1,  |\ell_1|}, t \in \ol{1,  |\ell_2|}\}\cap \{ i'_s,j'_t \vert 1\leq s \leq |\ell_1|, 1 \leq t \leq |\ell_2|\}\right\vert \big\vert \nonumber\\
& &\qquad\qquad  k\in \ol{1,N} \big\} .
\end{eqnarray}

Since $X_0$ is $U(\mfk a)$-invariant, we have $E_{i,j}(X_0) \subset X_0$ if $(i,j) \not\in L$.

Then
\begin{eqnarray}
& &\prod_{t=n_y+1}^{|\ell_2|} E_{j_{1,t},j'_t} \prod_{s=n_x+1}^{|\ell_1|} E_{i_{1,s},i'_s} (f)\nonumber \\
&=&
\prod_{t=n_y+1}^{|\ell_2|}\left( y_{j'_t}   \ptl_{ y_{j_{1,t}}}\right)
\prod_{s=n_x+1}^{|\ell_1|} \left( -x_{i'_s}   \ptl_{ x_{i_{1,s}}}\right)(f)
\nonumber \\
 &=&
\prod_{t=n_y+1}^{|\ell_2|}\left( y_{j'_t}   \ptl_{ y_{j_{1,t}}}\right)
\prod_{s=n_x+1}^{|\ell_1|}\left( -x_{i'_s}   \ptl_{ x_{i_{1,s}}}\right)\left(a_1 \prod_{s=1}^{|\ell_1|}x_{i_{1,s}}\prod_{t=1}^{|\ell_2|}y_{j_{1,t}} \right)
\nonumber \\
&=& c\cdot g\in X_0
\end{eqnarray}
with some $0\neq c\in \mbb F$.
Hence $M_0 \subset X_0$ and the lemma holds.
\end{proof}

\begin{lemma}\label{lemd3.4}
Let $\mfk g=sl(n)$ and let $M={\msr H}_{\la\ell_1,\ell_2\ra} $ be an infinite-dimensional $\mfk g$-module under the conditions  in Lemma \ref{lemc2.1}. If
$N$ is a submodule of $M$, then $N=M$ or $\eta^m ( ({\msr H}_{\la\ell_1-m,\ell_2-m\ra}) ) \subset N$ with $m= \ell_1+\ell_2+n_2-n_1-1$.
\end{lemma}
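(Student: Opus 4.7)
The plan is to exploit the $\eta$-filtration of $M$ and reduce the question about submodules to Lemma \ref{lem:a6.3}. For $0\leq k\leq m$, define $F_k:=\eta^k\bigl(\msr H_{\la\ell_1-k,\ell_2-k\ra}\bigr)$ as a subspace of $M$. By (\ref{a6.13}) each $F_k$ is an $sl(n)$-submodule, and the formula $\td\Dlt(\eta^p g)=p(-\ell_1'-\ell_2'+n_1-n_2-p+1)\eta^{p-1}(g)$ combined with (\ref{b5.19}) produces the nested chain $M=F_0\supseteq F_1\supseteq\cdots\supseteq F_m$ with $F_{m+1}=0$; the choice $m=\ell_1+\ell_2+n_2-n_1-1$ is exactly the largest $p$ for which $\eta^p$ still carries the shifted harmonic space nontrivially into $M$.

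Given a nonzero $v\in N$, I would let $k_v$ be the smallest index with $v\in F_{k_v}$, and choose $w\in\msr H_{\la\ell_1-k_v,\ell_2-k_v\ra}$ so that $v\equiv\eta^{k_v}(w)\pmod{F_{k_v+1}}$ with $w\neq 0$. Using the $sl(n)$-equivariance of $\eta^{k_v}$, the submodule $U(\mfk g)\cdot v$ projects onto $\eta^{k_v}\bigl(U(\mfk g)\cdot w\bigr)$ modulo $F_{k_v+1}$, and $U(\mfk g)\cdot w$ is in particular an $\mfk a$-invariant subspace of $\msr H_{\la\ell_1-k_v,\ell_2-k_v\ra}$ containing $w\neq 0$. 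I would then apply upper-triangular operators from $\mfk a$ to push $w$ into the corresponding $V_0$-subspace at the shifted parameters $(\ell_1-k_v,\ell_2-k_v)$, so that Lemma \ref{lem:a6.3} places the full $V_0$-space inside $U(\mfk g)\cdot w$; the generating relation (\ref{a2.22}) then promotes this to $U(\mfk g)\cdot w=\msr H_{\la\ell_1-k_v,\ell_2-k_v\ra}$, giving $F_{k_v}\subseteq N$. If $k_v=0$ this yields $N=M$; otherwise $F_m\subseteq F_{k_v}\subseteq N$, which is the second alternative. Boundary cases with $m\leq 0$ are absorbed by declaring $F_m=0$, which forces the first alternative $N=M$ by the irreducibility of $M$ established in Proposition 1.

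The principal obstacle is the middle step: one must verify carefully that when operators from $\mfk a$ move $w$ into the $V_0$-component at the shifted weights, any lower-order pieces in the $\eta$-filtration they might create do not obstruct the application of Lemma \ref{lem:a6.3}, and one must check that the shifted weights $(\ell_1-k_v,\ell_2-k_v)$ still satisfy the hypotheses of Proposition 1 so that the $V_0$-generation statement continues to hold. A secondary subtlety is ensuring the leading $\eta$-component $w$ of a given $v\in F_{k_v}\setminus F_{k_v+1}$ is well-defined and nonzero; this can be read off from the explicit $T$-basis of $M$ together with the injectivity of $\eta$ on the appropriate graded pieces implied by (\ref{b5.19}).
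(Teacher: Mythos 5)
Your proposed $\eta$-filtration $M=F_0\supseteq F_1\supseteq\cdots\supseteq F_m$ does not exist, and this is a fatal gap. You define $F_k=\eta^k\bigl(\msr H_{\la\ell_1-k,\ell_2-k\ra}\bigr)$, but for $0<k<m$ this is \emph{not} a subspace of $M=\msr H_{\la\ell_1,\ell_2\ra}$. Indeed, the formula you quote,
$$\td\Dlt\bigl(\eta^{p}(g)\bigr)=p\bigl(-\ell_1'-\ell_2'+n_1-n_2-p+1\bigr)\eta^{p-1}(g),\qquad g\in\msr H_{\la\ell_1',\ell_2'\ra},$$
shows that for $g\in\msr H_{\la\ell_1-k,\ell_2-k\ra}$ the element $\eta^{k}(g)$ is annihilated by $\td\Dlt$ only when $k=0$ or when $k(-\ell_1-\ell_2+2k+n_1-n_2-k+1)=0$, i.e.\ $k=\ell_1+\ell_2+n_2-n_1-1=m$. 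So the only images of shifted harmonic spaces under powers of $\eta$ that land inside $M$ are $F_0=M$ and $F_m=\eta^{m}\bigl(\msr H_{\la\ell_1-m,\ell_2-m\ra}\bigr)$; there is no intermediate chain, and (\ref{b5.19}) is a statement about that single power $m$ rather than a hierarchy of submodules. Consequently the central device of your proof --- taking the leading $\eta$-component $w$ of $v\in N$ at depth $k_v$ and reducing modulo $F_{k_v+1}$ --- has no meaning. (There is also a minor indexing slip: were such a filtration to exist, you would want the \emph{largest} index with $v\in F_{k_v}$, not the smallest, since $F_0=M$ always contains $v$.)

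The paper's proof works quite differently and is where the real input lies. It picks a highest-weight vector $v\in N$ (existence follows from local nilpotence of $\mfk g_+$ on $\msr A_{\la\ell_1,\ell_2\ra}$) and then invokes the explicit classification in \cite{Xx} (Theorems 6.4.3, 6.5.1, 6.6.3 and Lemmas 6.4.2, 6.6.2) which shows the space of $\mfk g_+$-singular vectors in $M$ has dimension at most two, spanned by $f^0_{\la\ell_1,\ell_2\ra}$ and $\eta^{m}\bigl(f^0_{\la\ell_1-m,\ell_2-m\ra}\bigr)$ in the nontrivial cases. The argument then applies suitably chosen powers of individual root vectors $E_{i,j}$ to a general linear combination of these two to isolate one component, after which Lemma \ref{lem:a6.3} and the generating property $U(\mfk g)(V_0)=M$ from \cite{ZX} finish the job. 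Your idea of landing in the shifted $V_0$ and invoking Lemma \ref{lem:a6.3} is the right endgame, but without the (nonexistent) filtration there is no way to produce the leading term $w$ you build the argument around; the replacement is exactly the two-dimensionality of the singular-vector space, which must be imported from \cite{Xx}.
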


\begin{proof}
Since $\msr A_{\la \ell_1,\ell_2 \ra}$ is nilpotent with respect to $sl(n)_+$, there exists $v\in N$ such that $\mfk{g} _+(v)=0$.
Then $N=M$ if $\dim (\{ v\in M\vert \mfk{g} _+(v)\})=1$.
According to the Theorem 6.4.3, the Theorem 6.5.1 and the Theorem 6.6.3 in \cite{Xx}, we only need to consider the follwing: (1) $n_1+1<n_2 $ and $\ell_1+\ell_2+n_2-n_1-1>0$; (2) $n_1+1<n_2=n$ and $n_1-n+1-\ell_2 < \ell_1 \leq n_1-n+1$; (3) $n_1+1=n_2 $, $\ell_1+\ell_2>0$ and $\ell_1 \ell_2 \leq 0$.

(1) Case 1 :$n_1+1<n_2 $ and $\ell_1+\ell_2+n_2-n_1-1>0$.\pse

 When $n_1+1<n_2 $ and $\ell_1+\ell_2+n_2-n_1-1>0$, according to Lemma 6.4.2 in \cite{Xx}, when $\mfk{g} _+(g)=0$, yields that
\begin{equation}
g \in \text{Span}\{f^0_{\la \ell_1,\ell_2\ra} ,\eta^{\ell_1+\ell_2+n_2-n_1-1} (f^0_{\la  -\ell_2+n_1+1-n_2, -\ell_1+n_1+1-n_2 \ra}) \},
\end{equation}
where
\begin{eqnarray}
f^0_{\la \ell_1,\ell_2 \ra}  =\begin{cases}
  x_{n_1+1}^{\ell_1}  y_{n_2}^{\ell_2} & \text{if } \ell_1,\ell_2 \geq 0;  \pse\\
  x_{n_1+1}^{\ell_1}  y_{n_2+1}^{-\ell_2} & \text{if } \ell_1\geq 0,\ell_2 \leq 0;  \pse\\
  x_{n_1 }^{-\ell_1}  y_{n_2 }^{\ell_2} & \text{if } \ell_1\leq 0,\ell_2\geq 0;  \pse\\
  x_{n_1 }^{-\ell_1}  y_{n_2+1}^{-\ell_2} & \text{if } \ell_1,\ell_2\leq 0.
\end{cases}
\end{eqnarray}
According to (\ref{a6.13}),
\begin{equation}
\eta^{\ell_1+\ell_2+n_2-n_1-1} (f^0_{\la  -\ell_2+n_1+1-n_2, -\ell_1+n_1+1-n_2 \ra}) \in N
\end{equation}
also yields that $\eta^m ( ({\msr H}_{\la\ell_1-m,\ell_2-m\ra})_0) \subset N$ with $m=\ell_1+\ell_2+n_2-n_1-1$.

Since $\ell_1+\ell_2+n_2-n_1-1>0$, we suppose that
\begin{equation}
g=a f^0_{\la \ell_1,\ell_2\ra} +b \eta^{\ell_1+\ell_2+n_2-n_1-1} (f^0_{\la  -\ell_2+n_1+1-n_2, -\ell_1+n_1+1-n_2 \ra})
\end{equation}
 with $a,b\neq 0$.
We assume $\ell_1>0$ and $\ell_2 \leq 0$, and in another case $\ell_1\leq 0$ and $\ell_2 >0$ is symmetric.
Then
\begin{equation}
g=
\begin{cases}
a x_{n_1+1}^{\ell_1}  y_{n_2+1}^{-\ell_2} +b \eta^{m} (  x_{n_1+1}^{ -\ell_2+n_1+1-n_2}  y_{n_2+1}^{\ell_1-n_1-1+n_2 })
 & \text{if }  -\ell_2+n_1+1-n_2\geq 0,  \pse\\
a x_{n_1+1}^{\ell_1}  y_{n_2+1}^{-\ell_2} +b \eta^{m} (  x_{n_1}^{ \ell_2-n_1-1+n_2}  y_{n_2+1}^{\ell_1-n_1-1+n_2 }) & \text{if } -\ell_2+n_1+1-n_2\leq 0.
\end{cases}
\end{equation}
When $-\ell_2+n_1+1-n_2\leq 0$,
\begin{equation}
E_{n_2,n_1+1}^{ -\ell_2+n_1+2-n_2}(g)=\frac{ a \ell_1 ! x_{n_2}^{ -\ell_2+n_1+2-n_2} x_{n_1+1}^{\ell_1 +\ell_2-n_1+n_2-2}  y_{n_2+1}^{-\ell_2}}{(\ell_1 +\ell_2-n_1+n_2-2)!}   \in N,
\end{equation}
then $l=0$ and $V_0 \subset N$ by Lemma \ref{lem:a6.3}.
When $-\ell_2+n_1+1-n_2\geq 0$ and $n_2<n$, we have
\begin{eqnarray}
E_{n,n_2+1}^{1-\ell_2}(g)&=& \frac{ b (\ell_1-n_1-1+n_2 )!}{(\ell_1 +\ell_2-n_1+n_2-2)!}\eta^{m}\left(   x_{n_1}^{\ell_2-n_1-1+n_2}  y_{n_2+1}^{\ell_1+\ell_2-n_1-2+n_2 }y_n^{1-\ell_2}
\right)\nonumber \\
&\in &\eta^m ( ({\msr H}_{\la\ell_1-m,\ell_2-m\ra})_0).
\end{eqnarray}
Then $\eta^m ( ({\msr H}_{\la\ell_1-m,\ell_2-m\ra})_0) \subset N$ by Lemma \ref{lem:a6.3} and (\ref{a6.13}).
Since we have shown that $U(\mfk g)(V_0)=M$ in \cite{ZX}, we have $M\subset N$ or $\eta^m ( ({\msr H}_{\la\ell_1-m,\ell_2-m\ra}) ) \subset N$.

(2) Case 2: $n_1+1<n_2=n$ and $n_1-n+1-\ell_2 < \ell_1 \leq n_1-n+1$.

When $n_1+1<n_2=n$ and $n_1-n+1-\ell_2 < \ell_1 \leq n_1-n+1$, we have $\ell_2 \geq 0$ (otherwise $M=\{0\}$) and
\begin{equation}
g=
a x_{n_1 }^{-\ell_1}  y_{n }^{ \ell_2} +b \eta^{m} (  x_{n_1 }^{ \ell_2-n_1-1+n}  y_{n }^{-\ell_1+n_1+1-n}).
\end{equation}
Then for $n_1>1$, we have
\begin{eqnarray}
E_{n_1,1}^{1-\ell_1}(g)&=& \frac{ b (\ell_2-n_1-1+n)!}{(\ell_1 +\ell_2-n_1+n_2-2)!}\eta^{m}\left(    x_{n_1 }^{ \ell_2-n_1-2+n+\ell_1}  y_{n }^{-\ell_1+n_1+1-n}x_1^{1-\ell_1}
\right)\nonumber \\
&\in &\eta^m ( ({\msr H}_{\la\ell_1-m,\ell_2-m\ra})_0)
\end{eqnarray}
yields that $\eta^m ( ({\msr H}_{\la\ell_1-m,\ell_2-m\ra})_0) \subset N$.
Consider $n_1=1$, we have
\begin{eqnarray}
E_{n,2}^{-\ell_1+n_1+2-n}(g)&=& \frac{ b  \ell_2  !
 x_{n_1 }^{-\ell_1}  y_{n }^{ \ell_2+\ell_1-n_1-2+n)} y_2^{-\ell_1+n_1+2-n}
}{(\ell_1 +\ell_2-n_1+n_2-2)!}  \in N
\end{eqnarray}
yields that $V_0 \subset N$ by Lemma \ref{lem:a6.3}.

(3) Case $n_1+1=n_2 $, $\ell_1+\ell_2>0$ and $\ell_1 \ell_2 \leq 0$.\pse

According to Lemma 6.6.2 in \cite{Xx},  we assume $\mfk g_+(g)=0 $ with $n_1+1=n_2 $, $\ell_1+\ell_2>0$ and $\ell_1 \ell_2 \leq 0$, and then we have
\begin{eqnarray}
g \in \text{Span} \{  f_{\la \ell_1,\ell_2\ra } ,  \eta^{m}( f_{\la \ell_1-m,\ell_2-m\ra }) ,\eta^{m_1+m_2} ( x_{n_1}^{m_2}y_{n_1+1}^{-m_1}  ) ,\eta^{m_1+m_2 } ( y_{n_1+1}^{ m_2 }x_{n_1+1}^{-m_1 }  )  \}.
\end{eqnarray}
when $n_2<n$, and
\begin{eqnarray}
g &\in& \text{Span} \{  x_{n-1}^{m_1}y_{n}^{m_2} , x_{n }^{m_1} ,\eta^{m_1+1} ( x_{n-1}^{m_1+m_2+1}y_{n}^{m_2}  ) ,
\nonumber \\
& & \qquad
\eta^{m_1+m_2+1} ( x_{n-1}^{ m_2+1}y_{n}^{-m_1-1}  )  \vert m_1,m_2\in \mbb N \}.
\end{eqnarray}
when $n_2=n$.

According to (6.5.60) and (6.5.62) in \cite{Xx}, we have $\dim (\{ v\in M\vert \mfk{g} _+(v)\}) \leq 2 $, and we can use the similar method as (1) and (2) to prove that the lemma still holds in this case.
\end{proof}

\begin{lemma}\label{lem:b5.5}
Let $\mfk g=sl(n)$ and $M={\msr H}_{\la\ell_1,\ell_2\ra} $ be an infinite-dimensional $\mfk g$-module, under the conditions stated in Lemma \ref{lemc2.1}. Let $v\in M $ and $(U(\mfk g))(\mbb F v)=M$, the necessary and sufficient condition for $\mfk p_{M}(k) = \mfk p_{M,\mbb F v}(k)$ is $\mbb Fv=V_0$.
\end{lemma}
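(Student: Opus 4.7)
The plan is to invoke Lemma \ref{lem:b5.2}, which tells us that $\mfk p_M(k) = \mfk p_{M, \mbb F v}(k)$ holds if and only if $\mbb F v$ is radical for $M$. The task thus reduces to characterizing which one-dimensional generating subspaces of $M$ are radical, and we will show the only such subspace is $V_0$ (which in particular forces $V_0$ to be one-dimensional whenever equality holds).

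For sufficiency, assume $\mbb F v = V_0$. Then the filtration $M_k = (U_k(\mfk g))(\mbb F v) = (U_k(\mfk g))(V_0)$ is exactly the one studied throughout Section 2. Combining the explicit decomposition (\ref{a2.22}) with Proposition \ref{propf2.2}, the surjection $\ol{\theta_p} : M_p/M_{p-1} \to S^p(\mfk g)/I_{(p)}$ constructed in the proof of Lemma \ref{lem:b5.2} matches dimensions on both sides for all large $p$, so it is in fact an isomorphism. Hence $\mbb F v = V_0$ is radical, and $\mfk p_M = \mfk p_{M, \mbb F v}$.

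For necessity, assume $\mbb F v$ is radical and generates $M$, and suppose for contradiction $\mbb F v \neq V_0$. If $V_0$ is not one-dimensional, pick $v_0 \in V_0 \setminus \mbb F v$ and set $M_0' = V_0 + \mbb F v$; using Lemma \ref{lemd3.1} together with the ratio $lc(M)/lc(\mfk p_M)$ from Theorem 3 (and, in the borderline cases where this ratio equals $1$, a finer lower-order analysis via the $\mfk d$-function (\ref{a2.46})), one obtains $\mfk p_{M,\mbb F v}(k) > \mfk p_M(k)$ for large $k$, contradicting radicality. If $V_0 = \mbb F v_0$ is one-dimensional but $v \notin \mbb F v_0$, let $k_0 > 0$ be minimal with $v_0 \in M_{k_0}^{(v)}$; then the $V_0$- and $\mbb F v$-filtrations are shifted by $k_0$ units. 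Using Lemma \ref{lemd3.4} (pinning down the proper submodules of $M$) and Lemma \ref{lem:a6.3} (lifting $U(\mfk a)$-invariant subspaces that meet $V_0$ to all of $V_0$), we exhibit some $\xi \in I_{(p)}$ with a lift $\tilde\xi \in U_p(\mfk g)$ such that $\tilde\xi(v) \notin M_{p-1}^{(v)}$, again contradicting that $\mbb F v$ is radical.

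The hard part will be producing the explicit witness $\xi$: in each of the cases catalogued by (\ref{a2.11}), (\ref{a5.11}), and (\ref{f2.20}), one needs an element of the defining ideal (\ref{c2.51}) whose action on $v$ picks up an extra power of the intertwiner $\eta$ from (\ref{b5.13}) produced by the shift $k_0$. The parallel finer analysis in the borderline sub-case of the first half, where leading coefficients agree but lower-order terms differ, similarly hinges on tracking $\eta$ against the $\mfk d$-degree. Once the case analysis is complete, the necessity direction follows and the lemma is established.
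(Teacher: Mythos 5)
Your plan correctly begins by invoking Lemma \ref{lem:b5.2} to reduce the problem to characterizing when $\mbb F v$ is radical, and this matches the paper's opening move. From there, however, you diverge substantially from the paper's argument and the divergence introduces a genuine gap.

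The paper's necessity direction does not split on $\dim V_0 > 1$ versus $\dim V_0 = 1$; it argues uniformly and structurally. Starting from $I_{(p)} = (\operatorname{Ann}_{S(\mfk g)}(\ol M))^{p}$, it picks elements such as $E_{i,i'} \in I_{(1)}$ and shows that if $E_{i,i'}(v)\neq 0$ then $E_{n,i'}^{p-1}E_{i,i'}(v)$ violates the degree bound (\ref{d3.46})--(\ref{d3.48}) on $M_{p-1}$; this forces $E_{i,j}(v)=0$ for enough $(i,j)$ to pin $v$ down, and the surviving solutions are precisely the one-dimensional $V_0$'s of (\ref{a2.11}). The constraint $\dim V_0 = 1$ emerges as a \emph{consequence} of the structural constraints on $v$, not as a separate hypothesis to dispose of.

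Your case (a) is where the plan breaks. You want to use the ratio $lc(M)/lc(\mfk p_M)$ from Theorem 3 to derive $\mfk p_{M,\mbb F v} > \mfk p_M$ whenever $\dim V_0 > 1$. But Theorem 3 shows this ratio equals $1$ for the entire range $n_1 < n_2 < n$, precisely where $\dim V_0 > 1$ is generic (take the paper's own example $n_1=2$, $n_2=3$, $n=5$, $\ell_1=\ell_2=-1$). So the leading-coefficient comparison gives no information there, and the appeal to a ``finer lower-order analysis via the $\mfk d$-function'' is a placeholder for the genuinely hard part: no such sub-leading comparison of $\mfk p_{M,V_0}$ against $\mfk p_M$ is carried out anywhere in the paper, and the example sentence after Theorem 3 is \emph{deduced} from Lemma \ref{lem:b5.5}, not proved independently. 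Your invocation of Lemma \ref{lemd3.1} with $M_0' = V_0 + \mbb F v$ also points the wrong way: that lemma only gives $\mfk p_{M,\mbb F v} \le \mfk p_{M,M_0'}$, an upper bound, while you need a lower bound exceeding $\mfk p_M$. The sufficiency direction has a softer gap: the identity $\dim(M_p/M_{p-1}) = \mfk p_M(p)$ is the content of the non-trivial basis constructions of Section 4, not an immediate consequence of (\ref{a2.22}) plus Proposition \ref{propf2.2} (the latter computes $\dim(\msr Z'_p/I_{(p)})$, not $\dim(M_p/M_{p-1})$), and those verifications exist only for some of the twelve cases of Theorem \ref{prope3.1} and otherwise rely on \cite{ZX}. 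To close the argument you would need either to carry out the lower-order Hilbert-polynomial comparison in the $n_1 < n_2 < n$ case, or — more economically — replace your case (a) entirely by the paper's degree-counting constraints on $E_{i,j}(v)$.
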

\begin{proof}
We suppose that $ \mfk p_{M}(k) = \mfk p_{M,\mbb F v}(k)$.

According to Lemma \ref{lem:b5.2}, we have $I_{(i)} =\left(\operatorname{Ann}_{S( \mfk g)} (\ol{M}) \right)^{i}$ for $i$ large enough.

\psp

(1)  Consider the case of $n_1+1<n_2<n$.\pse

(1.a) Suppose that $|J_1| \geq 2$ and $|J_3| \geq 2$.\pse

For $i,i'\in J_1$ with $i\neq i'$, $E_{i,i'} \in I_{(1)}$. Assume that $E_{i,i'}(v)\neq 0$, then $E_{n,i'}^{p-1} E_{i,i'}\in I_{(p)} =\left(\operatorname{Ann}_{S( \mfk g)} (\ol{M}) \right)^{p}$  for $p$ large enough.

Denote
\begin{equation} \label{d3.46}
(\deg_{x_{n}}+\deg_{x_{i'}}+\deg_{y_{n}}+\deg_{y_{i'}})(v)=d_0
\end{equation}
then
\begin{equation}
(\deg_{x_{n}}+\deg_{x_{i'}}+\deg_{y_{n}}+\deg_{y_{i'}})(E_{n,i'}^{p-1} E_{i,i'}(v))=d_0+2p-1.
\end{equation}
On the other hand, for each $f\in (U_{p-1}(\mfk g))(\mbb F v) $, we have
\begin{equation} \label{d3.48}
(\deg_{x_{n}}+\deg_{x_{i'}}+\deg_{y_{n}}+\deg_{y_{i'}})(f)\leq d_0+2p-2.
\end{equation}
Hence $E_{i,i'}(v)=0$ for $i,i'\in J_1$ with $i\neq i'$.
Similarly, we have $E_{j,j'}(v)=0$ for $j,j'\in J_3$ with $j\neq j'$.

Assume that $E_{r,r'}(v)\neq 0$ for $r,r'\in J_2$ with $r\neq r'$.
Then $E_{n,1}^{p-1} E_{r,r'}(v) \in  (U_{p-1}(\mfk g))(\mbb F v)$ for $p$ large enough.
Write $v=\sum_{i=1}^m v_i $, where $v_i$ are monomials in $\msr A$ for $i\in{1,m}$.
For $d\in \mbb N$, we set
\begin{equation}
v'=\sum_{\{ i:{\mfk d} (v_i)= \mfk d (v ) \}}v_i
\end{equation}
Note that $ \mfk d \left( E_{n,1}^{p-1} E_{r,r'}(v') \right) =2p-2+ \mfk d  (v )$, then there exist $N\in \mbb N$ and $f_i \in Z^{(p-1)}$ for $i\in \ol{1,N}$ such that
 \begin{equation} \label{b5.76}
\phi(z_{n,1})^{p-1} E_{r,r'}(v') = \sum_{i=1}^N \phi(f_i)  v'.
\end{equation}
Since $\deg_{x_{r'}}(E_{r,r'}(v')) =\deg_{x_{r'}}( v')-1$, we have
\begin{equation}
\deg_{x_{r'}}\left( \phi(z_{n,1})^{p-1} E_{r,r'}(v') \right) \neq \deg_{x_{r'}}\left(\sum_{i=1}^N \phi(f_i)  v'\right).
\end{equation}
Then $ E_{r,r'}(v')=0$, and so $v'=0$ by (\ref{b5.76}).
Hence $E_{r,r'}(v  )=0$ for $r,r'\in J_2$ with $r\neq r'$.

Denote $w_1=x_1y_2-x_2y_1$, $w_n=x_ny_{n-1}-x_{n-1}y_n$ and $\eta_2=\sum_{r\in J_2}x_r y_r$.
When $n_1+1<n_2$, $|J_1|\geq 2$ and $|J_3| \geq 2$, $v$ can be written as a rational function $g$ in $\{x_i,y_j, w_1,w_n,\eta_2,x_{n_1+1},y_{n_2} \vert 2\leq i \leq n,1\leq j\leq n-1 \}$ according to Lemma 6.2.3 in \cite{Xx}.
Since $E_{n_2,n_1+1}(v)=0$, we have
 \begin{equation}  \label{b5.85}
E_{n_2,n_1+1}(v)= x_{n_2}g_{x_{n_1+1}}-y_{n_1+1}g_{y_{n_2}}=0.
\end{equation}
Then $g_{x_{n_1+1}}=g_{y_{n_2}}=0$.

When $|J_1| \geq 2$, we have
 \begin{equation}
E_{1,2}(v)=(-x_2\ptl_{x_{1}}-y_2\ptl_{y_{1}})(v)=g_{y_1}=0.
\end{equation}
For $i\in \ol{2,n_1}$,
 \begin{eqnarray}
E_{i,1}(v)&=&(-x_1\ptl_{x_{i}}-y_1\ptl_{y_{i}})(v)
\nonumber \\
&=& -x_1( g_{x_i}+x_2^{-1} y_2 g_{y_i})-x_2^{-1}w_1 g_{y_i}=0.
\end{eqnarray}
Since $g_{x_i}+x_2^{-1} y_2 g_{y_i}$ and $x_2^{-1}w_1 g_{y_i}$ are independent of $x_1$, we have
$ g_{x_i}+x_2^{-1} y_2 g_{y_i} =0$ and $x_2^{-1}w_1 g_{y_i}=0$. Thus $ g_{x_i}=g_{y_i}=0$ for $i\in J_1$ when $|J_1| \geq 2$.

Similarly, we have $g_{x_j}=g_{y_j}=0$ for $j\in J_3$ when $|J_3| \geq 2$.

Assume that $E_{i_1,i_2}(v)\neq 0$ for some $i_1\in J_1$ and $i_2 \in J_2$. When $|J_1|\geq 2$, we denote
\begin{equation}
(\deg_{x_{n}}+\deg_{x_{i'}}+\deg_{y_{n}}+\deg_{y_{i'}})(v)=d'_0.
\end{equation}
for some $i_1\neq i ' \in J_1$.
Then
\begin{equation}
(\deg_{x_{n}}+\deg_{x_{i'}}+\deg_{y_{n}}+\deg_{y_{i'}})\left( E_{n,i'}^{p-1}E_{i_1,n_2}v\right)=2p+d_0'-2.
\end{equation}
For each $f\in (U_{p-1}(\mfk g))(\mbb F v) $, we have
\begin{equation}
(\deg_{x_{n}}+\deg_{x_{i'}}+\deg_{y_{n}}+\deg_{y_{i'}} )(f)\leq d_0'+2p-2.
\end{equation}
Moreover, the equality holds if and only if $ (y_ny_{i'}-x_nx_{i'})^{p-1} \vert f$.
The inequality holds with equality if and only if $ (y_ny_{i'}-x_nx_{i'})^{p-1} \vert f$.
Hence $E_{n,i'}^{p-1}E_{i_1,i_2}v\in (U_{p-1}(\mfk g))(\mbb F v)$ yields that $E_{i_1,i_2}v \in \mbb F v$ which leads to a contradiction. Therefore $E_{i_1,i_2}(v)=0$ for $i_1\in J_1$ and $i_2\in J_2$.

Then we have
\begin{eqnarray}\label{d3.58}
(\ptl_{x_{1}}\ptl_{x_{n_2}}-y_{n_2}\ptl_{y_{1}})(v)=
g_{\eta_2 ,w_1} y_2 y_{n_2} -y_{n_2}x_2 g_{w_1}=0.
\end{eqnarray}
Then there exists a rational function $g'$ such that $\frac{\ptl (g')}{\ptl \eta_2 }=0$ and
\begin{equation} \label{d3.59}
g_{w_1} = e^{\frac{x_2}{y_2}\eta_2} g'
\end{equation}
which is not a rational function when $g'\neq 0$.
Hence $g_{w_1}=0$.
Similarly, $E_{i_3,i_2}(v)=0$ for $i_3\in J_3$,$i_2 \in J_2$ and $g_{w_n}=0$ when $|J_3| \geq 2$. Hence $v$ only depends on $ \eta_2 $.

Since $  \eta_2 =\eta(1)$, $\mbb F [\eta_2] =\text{Span}\{\eta^i(1)\vert i\in \mbb N\}$.
Note that $1\in  {\msr H}_{\la 0,0 \ra} $ and $\eta^i(1) \in {\msr H}_{\la i,i \ra} $, we have $v=\eta^{i_0}(1)$ for some $i_0 \in \mbb N$.
Then $i_0=m$ and $\ell_1-m=\ell_2-m=0$ with $m=\ell_1+\ell_2+n_2-n_1-1$ by (\ref{b5.19}). Hence $i_0=n_1+1-n_2 \leq 0$ yields that $i_0=0$.
Therefore, we have $\ell_1=\ell_2=0$ and $v\in \mbb F$.\pse

 (1.b) Suppose that $|J_1|=1$ and $|J_3| \geq 2$. \pse

In this case, we also have $E_{i,i'}(v)=0$ with $i,i' \in J_j$ and $i'\neq i$ for $j=2,3$. Then $v$ can be written as a rational function $g$ in $\{x_1,y_1, w_n,\eta_2 \}$.

Assume that $E_{1,i_2}(v)\neq 0$ for some $i_2 \in J_2$. Suppose that $\ptl_{y_{1}}(v)=0$, we denote
\begin{equation} \label{d3.60}
(\deg_{y_{n}}+ \deg_{y_{1}} )(v)=\deg_{y_{n}}(v)=d_0.
\end{equation}
Then
\begin{equation}
(\deg_{y_{n}}+ \deg_{y_{1}} )\left( E_{n,1}^{p-1}E_{1,i_2}(v)\right)=2p-2+d_0  .
\end{equation}
For each $f\in (U_{p-1}(\mfk g))(\mbb F v) $, we have
\begin{equation}
(\deg_{y_{n}}+ \deg_{y_{1}}  )(f)\leq d_0 +2p-2
\end{equation}
and $(\deg_{y_{n}}+ \deg_{y_{1}}  )(f)\leq d_0+2p-2 $ if and only if $f\in \mbb F E_{n,1}^{p-1}(v)$.
Hence $E_{n,1}^{p-1}E_{1,i_2}(v)\in \mbb F E_{n,1}^{p-1}(v) $ yields that $E_{1,i_2}(v)=0$.
Now we suppose that $\ptl_{y_{1}}(v) \neq 0$, and denote
\begin{equation}
(\deg_{x_{1}}+ \deg_{x_{n}} )(v)= (\deg_{x_{1}}+ \deg_{x_{n}} )(E_{1,i_2}(v))=d'_0.
\end{equation}
Then
\begin{equation} \label{d3.64}
(\deg_{x_{1}}+ \deg_{x_{n}} )\left( E_{n,1}^{p-1}E_{1,i_2}(v)\right)=2p-2+d_0' ,
\end{equation}
which can also yields that $E_{1,i_2}(v)=0$.
Note that we also have $E_{i_3,i_2}(v)=0$ and $g_{w_n}=0$ since $|J_2| \geq 2$. Then $v$ only depends on $ \{ x_1,y_1,\eta_2 \}$.
Hence $E_{1,j}(v) =0$ for $j\in J_3$.
Therefore, $\mbb Fv$ is a $U(\mfk a)$-invariant subspaces. Note that $\mfk g_+(v)=0$ yields that $v\in V_0$ by Lemma 6.4.2 in \cite{Xx}. So we have $V_0 \subset \mbb Fv$ by Lemma \ref{lem:a6.3} and the proof of Lemma \ref{lemd3.4}. Hence $\dim V_0 =1$ yields $\ell_1 \leq 0$, $\ell_2=0$ and $\mbb F v= \mbb F x_1^{-\ell_1}$.\pse

(1.c) The case of $|J_3| =1$ and $|J_1| \geq 2$ is symmetrical to the case (1.b), and we have $\ell_1= 0$, $\ell_2 \leq 0$ and $\mbb F v= \mbb F y_n^{-\ell_2}$.\pse

(1.d) When $|J_1|=|J_3|=1$, we have $n_1=1$ and $n_2=n-1$.\pse

Since (\ref{d3.60})-(\ref{d3.64}) still holds, we also have $E_{1,i}(v)=E_{n,i}(v)=0$ for $i\in J_2$.
Assume that $E_{n,1}(v) \neq 0$, then $\ptl_{y_1}\ptl_{y_n}(v) \neq 0$ or $\ptl_{x_1}\ptl_{x_n}(v) \neq 0$.
When $\ptl_{x_1}\ptl_{x_n}(v) \neq 0$, we denote
\begin{equation}
(\deg_{y_{n}}+ \deg_{y_{1}} )(v) =d_0.
\end{equation}
Then
\begin{equation}
(\deg_{y_{n}}+ \deg_{y_{1}} )\left( E_{n,1}^{p-1}E_{1,n}(v)\right)=2p-2+d_0  .
\end{equation}
For each $f\in (U_{p-1}(\mfk g))(\mbb F v) $, we have
\begin{equation}
(\deg_{y_{n}}+ \deg_{y_{1}}  )(f)\leq d_0 +2p-2 ,
\end{equation}
which yields $E_{n,1}(v)=0$.
Similarly, $E_{n,1}(v)=0$ also holds when $\ptl_{x_1}\ptl_{x_n}(v) \neq 0$.
Then $\mfk g_+(v)=0$ and $\mbb Fv$ is a $U(\mfk a)$-invariant subspaces.
According to Lemma 6.4.2 in \cite{Xx} and Lemma \ref{lem:a6.3}, we have $\dim V_0 =1$ which yields that $\ell_1 \leq 0$, $\ell_2 \leq 0$ and $\mbb F v= \mbb F x_1^{-\ell_1}y_n^{-\ell_2}$.\psp

(2) Consider the case of $n_1+1<n_2=n$.\psp

(2.a) Suppose that $|J_1| \geq 2$, we also have $E_{i,i'}(v)=0$ for $i,i'\in J_j$, $i'\neq i$ and $j=1,2$.\pse

Assume that $E_{i_1,i_2}(v)=0$ for some $i_1 \in J_1$ and $i_2\in J_2$.
For some $i_1'\in J_1$ and $i_2' \in J_2$ with $i_1' \neq i_1$ and $i_2' \neq i_2$, we denote
\begin{equation}
(\deg_{x_{i_2'}}+\deg_{x_{i_1'}} )(v)=d_0
\end{equation}
then
\begin{equation}
(\deg_{x_{i_2'}}+\deg_{x_{i_1'}} )(E_{i_2',i_1'}^{p-1} E_{i_1,i_2}(v))=d_0+2p-2.
\end{equation}
On the other hand, for each $f\in (U_{p-1}(\mfk g))(\mbb F v) $, we have
\begin{equation}
(\deg_{x_{i_2'}}+\deg_{x_{i_1'}} )(f)\leq d_0+2p-2.
\end{equation}
and $(\deg_{x_{i_2'}}+\deg_{x_{i_1'}} )(f)\leq d_0+2p-2$ if and only if $f \in \mbb F (E_{i_1',i_2'}^{p-1}(v))$.
Hence $E_{i_1,i_2}(v)=0$ for $i_1 \in J_1$ and $i_2  \in J_2$.\pse

(2.b) Suppose that $|J_1|=1$ and $E_{1,i}(v) \neq 0$ for some $i\in J_2$.\pse

When $\ptl_{y_1}(v) \neq 0$, $\deg_{x_1}(v)=\deg_{x_1}(E_{1,i}(v))$.
For $i'\in J_2$ with $i'\neq i$, we have
\begin{equation}
\deg_{x_1}(E_{i',1}^{p-1}E_{1,i}(v)) =p-1+\deg_{x_1}(v),\,\,
\deg_{x_{i'}}(E_{i',1}^{p-1}E_{1,i}(v)) =p-1+\deg_{x_3}(v),\;
\end{equation}
yields that $E_{1,i}(v)=0$.
When  $\ptl_{y_1}(v) = 0$, we have \begin{equation}\deg_{y_1} (E_{i',1}^{p-1}E_{1,i} (v))=p-1,\,\,  \deg_{x_{i'}}(E_{i',1}^{p-1}E_{1,i}(v)) =p-1+\deg_{x_3}(v),\end{equation} which also yields that $E_{1,i}(v)=0$.
Hence $\mbb Fv$ is a $U(\mfk a)$-invariant subspaces and $\mfk g_+(v)=0$ for the cases (2.a) and (2.b).

When $\ell_1,\ell_2>0$, we have $v\in \mbb F x_{n_1+1}^{\ell_1} y_{n_2}^{\ell_2}$. However,
\begin{equation}
E_{n_2,n_1+1}(x_{n_1+1}^{\ell_1} y_{n_2}^{\ell_2})
=\ell_1 x_{n_2} x_{n_1+1}^{\ell_1-1} y_{n_2}^{\ell_2} -
\ell_2 y_{n_1+1} x_{n_1+1}^{\ell_1} y_{n_2}^{\ell_2-1} \neq 0.
\end{equation}
Hence $\ell_1\leq 0$ or $\ell_2 \leq 0$, and we have $V_0\subset \mbb Fv$ by Lemma \ref{lem:a6.3}.
Then $n_1=1$, $\ell_1\leq 0$, $\ell_2=0$ and $\mbb F v = \mbb F x_1^{-\ell_1}$.\psp

(3) The case of $ n_1 + 1 = n_2 $.\psp

(3.a) Suppose that $|J_1|\geq 2$ and $|J_3|\geq 2$.\pse

Compare with the case (1.a), the main difference lies in $ \eta_2 = x_{n_2} y_{n_2} $ and we do not have the expression (\ref{b5.85}).  Similar to (1,a), we know that $v$ can be written as a rational function $g$ in $\{ x_{n_2},y_{n_2} \}$.
Since $\ell_1\leq 0$ or $\ell_2 \leq 0$, $ \deg_{x_{n_2}}(v) $ and $ \deg_{y_{n_2}}(v) $ cannot both be positive, so we have $ v \in \mathbb{F}[x_{n_2}] $ or $ v \in \mathbb{F}[y_{n_2}] $.
There are only three possible cases:  (3.a.1)  $ \mbb F v = \mbb F x_{n_2}^{\ell_1}$  with $\ell_1 > 0$  , $\ell_2 = 0 $; (3.a.2)   $\mbb F v =  \mbb F y_{n_2}^{\ell_2}$   with  $ \ell_2 > 0 $, $\ell_1 = 0$; (3.a.3) $v \in \mbb F$  with  $\ell_1=\ell_2=0$.\pse

(3.b) Suppose that $|J_1|= 1$ and $|J_3|\geq 2$, then $n_1=1$ and $n_2=2$.\pse

Similar to (1.b), (\ref{d3.60})-(\ref{d3.64}) still holds. Hence $E_{1,2}(v)=0$ and $v$ can be written as a rational function $g$ in $\{ x_{1},y_{1},x_{2},y_{2} \}$.
Then $\mfk g_+(v)=0$ and $\mbb F v $ is a $U(\mfk a)$-invariant subspaces.
According to the Theorem 6.5.1 in \cite{Xx} and Lemma \ref{lem:a6.3} and the proof of Lemma \ref{lemd3.4}, $V_0 =\mbb F v$ and $\dim (V_0)=1$. There are only two possible cases: (3.b.1)   $ \mbb F v =  \mbb F x_{1}^{-\ell_1}y_2^{\ell_2}$  with  $\ell_1 \leq 0$  , $\ell_2 \geq 0 $; (3.b.2)   $\mbb F v =   \mbb F x_{ 2}^{\ell_1}$  and $ \ell_1 > 0 $, $\ell_2 = 0$.\pse

(3.c)  The case of $|J_3| =1$ and $|J_1| \geq 2$ is symmetrical to the case (3.b), and we have two possible cases: (3.c.1)   $ \mbb F v = \mbb F x_{n-1}^{\ell_1}y_n^{-\ell_2}$  with  $\ell_1 \geq 0$  , $\ell_2 \leq 0 $; (3.c.2)   $\mbb F v =  \mbb F y_{n-1}^{\ell_2}$  and $ \ell_1 = 0 $, $\ell_2 > 0$. \pse

(3.d) When $|J_1|=|J_2|=1$, we have $n_1=1$, $n_2=2$ and $n=3$.\pse

Note that
\begin{equation}
V_0= \begin{cases}
\mbb F  x_{2}^{\ell_1}  y_{3}^{-\ell_2}    & \text{if } \ell_1\geq 0,\ell_2 \leq 0;  \pse\\
\mbb F x_{1}^{-\ell_1}  y_{2}^{\ell_2}   & \text{if } \ell_1\leq 0,\ell_2\geq 0;  \pse\\
\mbb F x_{1}^{-\ell_1}  y_{3}^{-\ell_2}  & \text{if } \ell_1,\ell_2\leq 0.
\end{cases}
\end{equation}
Similar to (1.d), we also have $\mfk g_+ (v)=0$ and $V_0=\mbb F v$.\pse

(3.e) Suppose that $ 2<n_1 + 1 = n_2 = n $.\pse

For $i,i' \in J_1$ with $i\neq i$, we have $E_{n ,i'}^{p-1}E_{i, } (v) \in U_{p-1}(v)$ for $p$ large enough, we have $ E_{i,n}(v)=0$ by considering  $(\deg_{x_{i'}}+\deg_{x_n})(E_{n ,i'}^{p-1}E_{i, } (v))$.
Hence $v$ can be written as a rational function $g$ in $\{ x_{n},y_{n} \}$. When $ \ell_1, \ell_2 > 0 $, $ v \in \mathbb{F}[x_{n_2}, y_{n_2}] $ is not possible. Thus, we only need to consider the cases where $ \ell_1 \leq  0 $ or $ \ell_2 \leq  0 $. There are only two possible cases: (3.e.1)   $ \mbb F v = \mbb F x_{n}^{ \ell_1} $  with  $\ell_1 \geq 0$  , $\ell_2 =0 $; (3.e.2)   $  \mbb F v =\mbb F y_{n}^{ \ell_2} $  with  $\ell_1 = 0$  , $\ell_2 \geq 0 $.\pse

(3.f) Suppose that $n_1=1$, $n_2=n=2$. \pse

Since $E_{2,1}^{p-1}E_{1,2} (v) \in U_{p-1}(v)$ for $p$ large enough, we have $ E_{1,2}(v)=0$ by considering  $(\deg_{x_1}+\deg_{x_2})(E_{2,1}^{p-1}E_{1,2} (v))$.
Thus  we have
 \begin{equation}
\mbb F v= \begin{cases}
\mbb F x_{1}^{-\ell_1}  y_{2}^{\ell_2}    & \text{if } \ell_1\leq 0,\ell_2\geq 0;  \pse\\
\mbb F x_{1}^{ \ell_1}     & \text{if } \ell_1\geq 0,\ell_2 = 0;  \pse\\
\mbb F   \eta^{\ell_1+\ell_2}(x_1^{  \ell_2}   y_{2}^{- \ell_1} )  & \text{if } \ell_1,\ell_2\geq 0.
\end{cases}
\end{equation}
Note that
\begin{eqnarray}
& &\eta^{\ell_1+\ell_2 } ( x_{n-1}^{ \ell_2 }y_{n}^{-\ell_1}  )\nonumber \\
&=& \left( \sum_{i=1}^{n-1} y_i\ptl_{x_i} +x_n y_n \right)^{\ell_1+\ell_2 }( x_{n-1}^{ \ell_2 }y_{n}^{-\ell_1}  ) \nonumber\\
&=& \sum_{j=0}^{\ell_1+\ell_2 } \left( \sum_{i=1}^{n-1} y_i\ptl_{x_i}\right)^{\ell_1+\ell_2 -j}(x_n y_n)^{j}( x_{n-1}^{ \ell_2 }y_{n}^{-\ell_1}  ) \nonumber\\
& \equiv &  \left(   y_{n-1}\ptl_{x_{n-1}}\right)^{ \ell_2 }(x_n y_n)^{\ell_1}( x_{n-1}^{ \ell_2 }y_{n}^{-\ell_1}  )
\quad(\mbox{mod }  x_n y_n  )\nonumber\\
& \equiv &  \ell_2!  x_n^{\ell_1} y_{n-1}^{\ell_2 } \quad(\mbox{mod }  x_n y_n  ),
\end{eqnarray}
when $n_1+1=n_2=n$ and $\ell_1,\ell_2>0$, then
we have
\begin{equation}
\mbb F \eta^{\ell_1+\ell_2 } ( x_{n-1}^{ \ell_2 }y_{n}^{-\ell_1}  ) =
\mbb F  T\left(x_n^{\ell_1} y_{n-1}^{\ell_2 } \right) \subset V_0.
\end{equation}
Therefore, $\dim (V_0)=1$ when  $n_1+1=n_2=n$ and $\ell_1,\ell_2>0$ if and only if $n_1=1$ and $V_0=\mbb F   \eta^{\ell_1+\ell_2}(x_1^{  \ell_2}   y_{2}^{- \ell_1} )$.\psp

(4) When $ n_1 = n_2 $.\pse

 Using the above method along with Lemma 6.6.2 and the Theorem 6.64 in \cite{Xx}, we can also conclude that: (4.1) $v \in \mbb F$ and $\ell_1=\ell_2=0$; (4.2) $\mbb F v=  \mbb F (x_1y_2-x_2y_1)^{ \ell_2}$ and $\ell_2 =-\ell_1 > 0$  when $n_1=2<n $; (4.3) $\mbb F v=  \mbb F (x_ny_{n-1}-x_{n-1}y_n )^{ \ell_1}$ and $\ell_1 =-\ell_2 > 0$  when $n_1=n-2 $; (4.4) $\mbb F v=  \mbb F  x_1^{-\ell_1} y_2^{-\ell_2}$ and $\ell_1,\ell_2  \leq 0$  when $n_1=1$ and $n=2$.

\psp

In conclusion, $\mbb Fv=V_0$ if $I_{(i)} =\left(\operatorname{Ann}_{S( \mfk g)} (\ol{M}) \right)^{i}$.
Furthermore, it is easy to check that $I_{(i)} =\left(\operatorname{Ann}_{S( \mfk g)} (\ol{M}) \right)^{i}$ when $\mbb Fv=V_0$ from Lemma 4.1, Lemma 4.2, Lemma 4.3 and Lemma 4.5 in \cite{ZX}.

 \end{proof}

Now we have our main theorem in this section:

\begin{theorem} \label{prope3.1}
Let $\mfk g=sl(n)$ and $M={\msr H}_{\la\ell_1,\ell_2\ra} $ be an infinite-dimensional $\mfk g$-module under the conditions stated in Lemma \ref{lemc2.1}. $M_0$ is a subspace of $M $ satisfying $(U(\mfk g))(M_0)=M$, then $\mfk p_{M }(k) \leq \mfk p_{M,M_0}(k)$. The necessary and sufficient condition for $\mfk p_{M }(k)=\mfk p_{M,M_0}(k)$ is $M_0=V_0$ and $\dim (V_0)=1$.
Specifically, $M_0$ corresponds to one of the following cases:

(1) $M_0=\mathbb{F}$ when $\ell_1=\ell_2=0$;

(2) $M_0=\mathbb{F}x_1^{-\ell_1}$ when $n_1=1$, $\ell_1 \leq 0$ and $\ell_2=0$;

(3) $M_0=\mathbb{F}y_n^{-\ell_2}$ when $n_2=n-1$, $\ell_1 = 0$ and $\ell_2\leq 0$;

(4) $M_0=\mathbb{F}x_1^{-\ell_1}y_n^{-\ell_2}$ when $n_1=1$, $n_2=n-1$, $\ell_1\leq 0$ and $\ell_2\leq 0$;

(5) $M_0=\mathbb{F} y_n^{ \ell_2}$ when $n_2=n$, $\ell_1=0$ and $\ell_2\geq 0$;

(6) $M_0=\mathbb{F}x_{n_2}^{ \ell_1}$ when $n_1+1=n_2$, $\ell_1 \geq 0$ and $\ell_2= 0$;

(7) $M_0=\mathbb{F} y_{n_2}^{ \ell_2}$ when $n_1+1=n_2$, $\ell_1=0$ and $\ell_2\geq 0$;

(8) $M_0=\mathbb{F} x_{1}^{ -\ell_1} y_{n_2}^{ \ell_2}$ when $n_1+1=n_2=2$, $\ell_1 \leq 0$ and $\ell_2\geq 0$;

(9) $M_0=\mathbb{F} x_{n_2}^{ \ell_1} y_{n}^{ -\ell_2}$ when $n_1+1=n_2=n-1$, $\ell_1 \geq 0$ and $\ell_2\leq 0$;

(10) $M_0=  \mbb F (x_1y_2-x_2y_1)^{ \ell_2}$ when $n_1=n_2=2<n$, $\ell_2=-\ell_1 > 0$;

(11) $M_0=  \mbb F (x_ny_{n-1}-x_{n-1}y_n )^{ \ell_1}$ when $n_1=n_2=n-2$, $\ell_1 =-\ell_2 > 0$;

(12) $M_0=  \mbb F  T\left(x_2^{\ell_1} y_{1}^{\ell_2 } \right)$ when $n_1=1$, $n_2=n=2$ and $\ell_1,\ell_2 > 0$.
\end{theorem}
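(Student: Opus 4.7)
The plan is to reduce Theorem \ref{prope3.1} to the cyclic case already handled in Lemma \ref{lem:b5.5}, exploiting both the irreducibility of $M$ and the monotonicity of Hilbert polynomials in the generating subspace furnished by Lemma \ref{lemd3.1}.

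For the inequality $\mfk p_M(k) \leq \mfk p_{M,M_0}(k)$, I would fix any nonzero $v \in M_0$. Since $M = \msr H_{\la \ell_1,\ell_2 \ra}$ is irreducible, the submodule $(U(\mfk g))(\mbb F v)$ coincides with $M$, so Lemma \ref{lem:b5.2} gives $\mfk p_M(k) \leq \mfk p_{M, \mbb F v}(k)$ for sufficiently large $k$. The inclusion $\mbb F v \subset M_0$ together with Lemma \ref{lemd3.1} yields $\mfk p_{M, \mbb F v}(k) \leq \mfk p_{M, M_0}(k)$ for sufficiently large $k$. Chaining these two estimates produces the desired inequality.

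For the equality case, sufficiency is immediate: if $\dim V_0 = 1$ and $M_0 = V_0 = \mbb F v$, Lemma \ref{lem:b5.5} directly yields $\mfk p_M(k) = \mfk p_{M, \mbb F v}(k)$. For necessity, suppose $\mfk p_M(k) = \mfk p_{M,M_0}(k)$. Running the above inequality chain with any nonzero $v \in M_0$ collapses both inequalities into equalities; in particular $\mfk p_{M, \mbb F v}(k) = \mfk p_M(k)$, so Lemma \ref{lem:b5.5} forces $\mbb F v = V_0$. Since this conclusion applies to every nonzero element of $M_0$, the subspace $M_0$ is entirely contained in the one-dimensional space $V_0$; as $M_0 \neq 0$ (because it generates $M$), we conclude $M_0 = V_0$ with $\dim V_0 = 1$.

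The twelve explicit forms listed in the theorem are then a transcription of when $V_0$ collapses to a one-dimensional subspace, and this catalog has already been produced inside the case analysis of Lemma \ref{lem:b5.5} by splitting on the cardinalities $|J_1|$ and $|J_3|$, on whether $n_1+1<n_2$, $n_1+1=n_2$, or $n_1=n_2$, on whether $n_2<n$ or $n_2=n$, and on the signs of $(\ell_1,\ell_2)$. Each entry in the theorem matches exactly one subcase of that analysis; the only mildly delicate item is case (12), where the generator $T(x_2^{\ell_1} y_1^{\ell_2})$ is identified with $\eta^{\ell_1+\ell_2}(x_1^{\ell_2} y_2^{-\ell_1})$ up to a nonzero scalar, via the computation displayed at the end of case (3.f) of the proof of Lemma \ref{lem:b5.5}. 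I do not foresee a new obstacle here: all the technical work, including the degree-monitoring arguments pinning down the admissible vectors and the appeal to the classification of $\mfk g_+$-highest vectors from \cite{Xx}, has already been executed in Lemmas \ref{lemd3.4} and \ref{lem:b5.5}, so the present theorem is essentially a formal packaging of those results.
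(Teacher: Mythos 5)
Your argument is correct and cleaner than the paper's for the case when $M$ is irreducible, but it silently assumes irreducibility, and Theorem \ref{prope3.1} as stated in the body does not. The theorem's hypothesis is only that $M = \msr H_{\la\ell_1,\ell_2\ra}$ is infinite-dimensional and satisfies the conditions of Lemma \ref{lemc2.1}; those conditions (e.g.\ $n_1+1 < n_2 < n$ with $\ell_1 = \ell_2 = 0$, where $m = \ell_1+\ell_2+n_2-n_1-1 > 0$) allow $M$ to be reducible. Lemma \ref{lemd3.4} is present in the paper precisely to handle that possibility: for a reducible $M$, a nonzero $v \in M_0$ may generate a proper submodule $M_{\{v\}} = U(\mfk g)(\mbb F v) \subsetneq M$, and in that situation Lemma \ref{lem:b5.2} no longer gives $\mfk p_M \leq \mfk p_{M, \mbb F v}$, and Lemma \ref{lem:b5.5} cannot be invoked for that $v$ at all (its hypothesis is $U(\mfk g)(\mbb F v) = M$). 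Your chain $\mfk p_M \leq \mfk p_{M, \mbb F v} \leq \mfk p_{M, M_0}$, and the neat necessity argument ``$\mbb F v = V_0$ for every nonzero $v \in M_0$, hence $M_0 \subseteq V_0$,'' both break at the very first step once irreducibility is dropped.

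The paper fills this gap by splitting, via Lemma \ref{lemd3.4}, into the two cases $M_{\{u_0\}} = M$ and $\eta^m(\msr H_{\la\ell_1-m,\ell_2-m\ra}) \subseteq M_{\{u_0\}}$. In the second case it pushes the filtration through the injective map $\zeta = \eta^m$, compares with $\mfk p_{\msr H_{\la\ell_1-m,\ell_2-m\ra}, \mbb F u_0'}$, and then uses the equality of associated varieties $\msr V(\msr H_{\la\ell_1-m,\ell_2-m\ra}) = \msr V(\msr H_{\la\ell_1,\ell_2\ra})$ from \cite{ZX} to conclude $\mfk p_{M,M_0} \geq \mfk p_M$. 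For the equality case, when $M_{\{u_0\}} \neq M$ the paper shows the inequality is strict, so one may restrict attention to the situation $M_{\{u_0\}} = M$; the final step, showing $M_0 = \mbb F u_0$, is done by a primality/degree argument rather than your (simpler) ``every $v$ must individually equal $V_0$'' observation, but for the irreducible case your version is a genuine simplification and is worth keeping — it just needs to be wrapped inside the reducible/irreducible dichotomy of Lemma \ref{lemd3.4}. Your remarks on the twelve-case catalog and on the identification in case (12) are accurate.
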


\begin{proof}
For $u \in M_0$, we set $M_{\{u\}}= U(\mfk g) (\mbb F u)$, which is a $ U(\mfk g) $-submodule of $M$.
According to Lemma \ref{lemd3.4}, $M\subset M_{\{u\}}$ or $\eta^m({\msr H}_{\la\ell_1-m,\ell_2-m\ra})\subset M_{\{u\}}$ with $m= \ell_1+\ell_2+n_2-n_1-1$.

Assume that there exists $u_0\in M_0$ such that $M_{\{u_0\}}=M$,
then we have
\begin{equation}
\mfk p_{M,M_0}(k) \geq \mfk p_{M_{\{u_0\}},\mbb F u_0}(k) \geq \mfk p_{M}(k)
\end{equation}
according to Lemma \ref{lemd3.1} and Lemma \ref{lem:b5.2}.

Suppose that $\eta^m({\msr H}_{\la\ell_1-m,\ell_2-m\ra})\subset M_{\{u_0\}}$ with $u_0\in M_0$, 
we define a linear map $\zeta$ from ${\msr H}_{\la\ell_1-m,\ell_2-m\ra}$ to $M_{\{u_0\}}$ by $\zeta(f)=\eta^m(f)$.
Thus, $\zeta$ is injective.
There exists $u_0' \in {\msr H}_{\la\ell_1-m,\ell_2-m\ra}$ such that $f_0(\lambda u_0) = \eta^m( u_0')$ for some $f_0\in U_k(\mfk g)$ and $\lambda \in\mbb F$, and we have $\zeta\left( U_k(\mfk g)  (\mbb F u_0')\right)  \subset  U_k(\mfk g)(\mbb F f_0(\lambda u_0)) $.
Hence
\begin{equation}
\mfk p_{M,M_0}(k)  \geq  \mfk p_{M_{\{u_0\}},\mbb F u_0 }(k) \geq  \mfk p_{\eta^m({\msr H}_{\la\ell_1-m,\ell_2-m\ra}),\mbb F f_0(\lambda u_0) }(k) \geq\mfk p_{{\msr H}_{\la\ell_1-m,\ell_2-m\ra}, \mbb F u_0' }(k)
\end{equation}
by Lemma \ref{lemd3.1}.
According to the main theorem in \cite{ZX}, we have $\msr V( {\msr H}_{\la\ell_1-m,\ell_2-m\ra})=\msr V( {\msr H}_{\la\ell_1 ,\ell_2 \ra})$.
Therefore,
\begin{equation}
\mfk p_{M,M_0}(k)    \geq \mfk p_{ {\msr H}_{\la\ell_1-m,\ell_2-m\ra} }(k)
 = \mfk p_{M}(k).
\end{equation}

Now we suppose that $ \mfk p_{M}(k) = \mfk p_{M, M_0}(k) $, then all the inequalities mentioned above must hold as equalities.
When $M_{\{u_0\}}\neq M$, the linear map from $M_{\{u_0\}}$ to $M$ satisfying $U_k(\mfk g)( \mbb F u_0) \mapsto U_k(\mfk g)( \mbb F u_0) \subsetneq U_k(\mfk g)M_0$ for sufficiently large $k$, thus
\begin{equation}
\mfk p_{M,M_0}(k)  > \mfk p_{M_{\{u_0\}},\mbb F u_0 }(k).
\end{equation}
Then $\mfk p_{M,M_0}(k)  >\mfk p_{M}(k)$ holds.

Hence we suppose that $M_{\{u_0\}}=M$, using the conclusion in Lemma \ref{lem:b5.5},  we have $ \mathbb{F}u_0 = V_0 $ and $\dim( V_0)=1$.
Moreover, $\dim(U_k(\mfk g)(M_0)) =\dim(U_k(\mfk g) (\mbb F u_0))$ for sufficiently large $k$.

Assume that there exists $v\in M_0  \setminus \mbb F u_0$, then $ U_k(\mfk g)(\mbb Fv)\subset  U_k(\mfk g)(\mbb Fu_0)$ for sufficiently large $k$.
Denote
\begin{equation} \label{e4.80}
m_0=\min \{ k\in \mbb F \vert v\in U_k(\mfk g)(\mbb Fu_0) \}>0
\end{equation}
 and $v=f(\lambda' u_0)$, where $f\in U_{m_0}(\mfk g)$ and $\lambda'  \in \mbb F$.
Note that $I=I (\msr V(M_{\{u_0\}}))= \operatorname{Ann}_{S( \mfk g)} (\ol{M_{\{u_0\}}}) $ is a prime ideal in this case, since $\mathbb{F}u_0 = V_0$.
We denote by $\ol{g} \in S^p(\mfk g)$ the image of $g$ under the natural map from $U_k(\mfk g)$ to $U_k(\mfk g)/U_{k-1}(\mfk g)$.
For sufficiently large $k$ and $g\in U_k(\mfk g)$ such that $\ol{g} \not\in I$, we have
\begin{equation}
 (gf)(\lambda'  u_0)  =g(v) \in  U_k(\mfk g)(\mbb Fv)  \subset U_k(\mfk g)(\mbb Fu_0)\subset U_{k+m_0-1}(\mfk g) (\mbb Fu_0).
\end{equation} 
Then $\ol{gf} \in I$ yields that $\ol{f} \in I$.
Thus $v=f(\lambda' u_0) \in U_{m_0-1}(\mbb Fu_0)$, which contradicts (\ref{e4.80}).
Therefore, $M_0= \mbb F u_0 =V_0$.

\end{proof}

This completes the proof of Theorem 2 that we stated in the introduction.

\section{Leading Coefficient of $\mfk p_{M,M_0}$}

First, we will show that the leading coefficient of $\mfk p_{M,M_0}$ is independent of the choice of $M_0$.
For $\mfk g=sl(n)$, $M=\msr H_{\la \ell_1,\ell_2 \ra}$, we compute $lc(\mfk p_{M,V_0})$ and find the relationship between $lc(\mfk p_{M,M_0})$  and  $lc(\mfk p_{M })$.
For the case where $n_1 < n_2<n$ and $\ell_1\leq 0$ or $\ell_2 \leq 0$, we identify a subspace that is isomorphic to $\ol{M_k}$ and find a corresponding basis. To construct this basis, we introduce the concept of ``3-chain", which is a natural extension of the 3-chain notion introduced in \cite{ZX}.
Moreover, we define a map from the basis of $\ol{M_k}$ to the basis of $\ol{(\msr H_{\la 0,0 \ra})_k}$
to compute the leading coefficient.
Another case which is relatively more complicated is $n_1=n_2<n$ and $\ell_1\ell_2<0$. The main difficulty lying in the structure of $ M_0$. We introduce the concepts of $\omega$-2-chain and $\omega$-3-chain, from which we construct a basis and give a formula of $\mfk p_{M,V_0}$.

\begin{proposition}\pse
Let $M_0,M_0'$ be finite-dimensional subspaces of $M$ satisfying $(U(\mfk g))(M_0)=(U(\mfk g))(M_0')=M$, then we have
\begin{equation} \label{e3.3}
lc(\mfk p_{M,M_0})=lc(\mfk p_{M,M_0'}).
\end{equation}
\end{proposition}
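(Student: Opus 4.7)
The plan is to reduce the claim to the well-known fact that the Hilbert--Samuel polynomial of a finitely generated filtered $U(\mfk g)$-module has a leading coefficient independent of the choice of good filtration. Since $M_0$ and $M_0'$ are both finite-dimensional and each generates $M$, I would first produce positive integers $k_1,k_2$ such that $M_0\subset M_{k_1}'$ and $M_0'\subset M_{k_2}$. Indeed, $M=\bigcup_k U_k(\mfk g)(M_0')$, and by finite-dimensionality of $M_0$ some $U_{k_1}(\mfk g)(M_0')$ already contains $M_0$; reversing roles yields $k_2$.

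Next I would use the multiplicativity of the filtration $U_k(\mfk g)\cdot U_\ell(\mfk g)\subset U_{k+\ell}(\mfk g)$ to propagate these inclusions to
\begin{equation}
M_k=U_k(\mfk g)(M_0)\subset U_{k+k_1}(\mfk g)(M_0')=M_{k+k_1}',\qquad M_k'\subset M_{k+k_2},
\end{equation}
and consequently $\dim M_k\leq \dim M_{k+k_1}'$ and $\dim M_k'\leq \dim M_{k+k_2}$ for all $k\in\mbb N$. Setting $P(k)=\dim M_k$ and $P'(k)=\dim M_k'$, both coincide with polynomials in $k$ for $k$ large (these are the Hilbert--Samuel polynomials obtained by partial-summing $\mfk p_{M,M_0}$ and $\mfk p_{M,M_0'}$). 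If $P(k)=ak^d+\cdots$ and $P'(k)=bk^{d'}+\cdots$, then the two-sided inequalities above, together with $(k+k_i)^{d}=k^{d}+O(k^{d-1})$, force both $d=d'$ and $a=b$.

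Finally, the relation $\mfk p_{M,M_0}(k)=P(k)-P(k-1)$ for $k$ sufficiently large gives
\begin{equation}
lc(\mfk p_{M,M_0})=d\cdot a=d'\cdot b=lc(\mfk p_{M,M_0'}),
\end{equation}
which is (\ref{e3.3}). The only delicate point is ensuring that the degrees of $P$ and $P'$ coincide; as indicated above, that equality is forced by the two-sided containment of filtrations, so no separate argument about Gelfand--Kirillov dimension is required. After (\ref{e3.3}) is established, the common value may be denoted $lc(M)$ unambiguously, justifying the notation introduced before Theorem~3.
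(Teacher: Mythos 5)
Your proof is correct and follows essentially the same route as the paper: both arguments rest on the equivalence of the two filtrations (mutual containment up to a shift) and the invariance of leading coefficients under shifting. The only cosmetic difference is that the paper channels the comparison through its Lemma 3.1 on monotonicity of Hilbert polynomials, whereas you compare the cumulative functions $P(k)=\dim M_k$ and $P'(k)=\dim M_k'$ directly via polynomial asymptotics — and you supply the finite-dimensionality argument for why the filtrations are equivalent, which the paper merely asserts.
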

\begin{proof}
Note that the filtration generated by $M_0$ is equivalent to the filtration generated by $M_0'$, that is $M_k \subset M_{k+d}'$ and $M_{k}' \subset M_{k+d}$ for some $d\in \mbb N$ and each $k\in \mbb N$.
According to (\ref{lemd3.1}), we have
\begin{equation}
\mfk p_{M,M_0} (k)\leq \mfk p_{M,M_d'} (k) ,\,\, \mfk p_{M,M_0'} (k) \leq  p_{M,M_d} (k)
\end{equation}
for $k $ large enough.
Then $lc(\mfk p_{M,M_0})\leq lc(\mfk p_{M,M_d'})\leq lc(\mfk p_{M,M_{2d}})$ and (\ref{e3.3}) holds.

\end{proof}

Hence we can denote $lc(M)=lc(\mfk p_{M,M_0})$.

In this section, we will compute $lc(\mfk p_{M,M_0})$ when $ \mfk g=sl(n)$ and $ M={\msr H}_{\la\ell_1,\ell_2\ra} $.
Note that $\deg(\mfk p_{M,M_0})=\deg(\mfk p_{M })$  and $\deg(\msr V(M))=lc(\mfk p_{M })\cdot (\deg(\mfk p_{M }))!$, but we will see that $lc(M)=lc(\mfk p_{M })$ and $lc(M)\cdot (\deg(\mfk p_{M }))!=\deg(\msr V(M))$ does not always hold true.

In the rest of this section, we assume that $\mfk g=sl(n)$, $M=\msr H_{\la \ell_1,\ell_2 \ra}$ under the conditions stated in Lemma \ref{lemc2.1}, $M_0=V_0$ (cf. (\ref{a2.11})), and $M_k=U_k(\mfk g)V_0$ for $k\in \mbb N$. Then $lc(M ) =lc(\mfk p_{M,V_0} )$ by the proposition \ref{e3.3}.

\subsection{Case: $n_1<n_2=n$}

Since $M=0$ if $\ell_2>0$ and $n_1<n_2=n$, we assume that $\ell_2\geq 0$ in this subsection.
Recall that when $\ell_1,\ell_2>0$ and $n_1<n_2=n$, we have
\begin{equation}
V_0
 = \text{Span} \left\{  TN\begin{pmatrix}
0 & \ell_1 & 0 \\
k_{21} & k_{22} & 0
\end{pmatrix}
\Big\vert
k_{21}+k_{22}=\ell_2
\right\}.
\end{equation}
and
\begin{eqnarray}
M_k =U_k(\mfk g)(V_0)
=\text{Span} \left\{
TN\begin{pmatrix}
t & \ell_1+t & 0 \\
k_{21} & k_{22} & 0
\end{pmatrix} \Big\vert 0\leq t \leq k,k_{21}+k_{22}=\ell_2
\right\}
\end{eqnarray}
according to Lemma 4.4 in \cite{ZX}.

When $n_1+1<n_2$, we have
\begin{eqnarray}
& &\mfk p_{M,V_0}(k)\nonumber\\&=& \left\vert \left\{   N\begin{pmatrix}
k & \ell_1+k & 0 \\
k_{21} & k_{22} & 0
\end{pmatrix}
\Big\vert
k_{21}+k_{22}=\ell_2
\right\} \right\vert \nonumber\\
&=&
\binom{n +\ell_2-1}{\ell_2}\binom{n_1+k-1}{k}\binom{n -n_1-1+\ell_1+k-1}{\ell_1+k}\nonumber \\
& &
+\binom{n -1+\ell_2-1}{\ell_2}\binom{n_1+k-1}{k}\binom{n -n_1 +\ell_1+k-1}{\ell_1+k}\nonumber \\
& &
-\binom{n -1+\ell_2-1}{\ell_2}\binom{n_1+k-1}{k}\binom{n -n_1-1 +\ell_1+k-1}{\ell_1+k}
\nonumber  \\
&=&\left(
1+\frac{\ell_1}{n -1}+\frac{\ell_1+k}{n -n_1-1}
\right)
\binom{n  +\ell_2-2}{\ell_2}\binom{n_1+k-1}{k}\binom{n -n_1  +\ell_1+k-2}{\ell_1+k}\qquad \quad
\end{eqnarray}
Then
\begin{eqnarray}
lc (\mfk p_{M,V_0} )=  \frac{1}{(n_1-1)!(n -n_1-1)!}
\binom{n  +\ell_2-2}{\ell_2} .
\end{eqnarray}

When $n_1+1=n_2=n$, we have
\begin{eqnarray}
 \mfk p_{M,V_0}(k) &=&
\left\vert \left\{
x^\alpha y^\beta \in \msr A_{\la\ell_1,\ell_2\ra}  \Big\vert  \sum_{t=1}^{n-1} \alpha_t  = k, \sum_{s=1}^{n-1} \beta_s =\ell_2  \right\}
\right\vert
 \nonumber\\
&=&
\binom {n +k -2}{ k }\binom {n +\ell_2 -2}{\ell_2 },
\end{eqnarray}
and
\begin{equation}
lc(\mfk p_M)= \frac{1}{(n -2)!} \binom {n +\ell_2 -2}{\ell_2 }.
\end{equation}

Note that $\msr V(M)\cong \msr{V}_2(\ol{n_1+1,n} ,\ol{1,n_1}) $, then
\begin{equation}
lc(\mfk p_M) =\frac{\deg(\msr V(M))}{(\deg(\mfk p_M))!}=
\begin{cases}
\frac{1}{(n_1-1)!(n -n_1-1)!}    & \text{if } n_1>1 \text{ and } n_1+1<n ;  \pse\\
\frac{1}{(n-2)!}     & \text{if }  n_1=1 \text{ or } n_1+1=n .
\end{cases}
\end{equation}
by (\ref{c2.10}).

Similarly, when $n_1<n_2=n_2$, $\ell_1 \leq 0$ and $\ell_2\geq 0$, we have
\begin{eqnarray}
M_k =U_k(\mfk g)(V_0)
=\text{Span} \left\{
TN\begin{pmatrix}
-\ell_1+t &  t & 0 \\
k_{21} & k_{22} & 0
\end{pmatrix} \Big\vert 0\leq t \leq k,k_{21}+k_{22}=\ell_2
\right\}
\end{eqnarray}
and
\begin{eqnarray}
lc (\mfk p_{M,V_0} )=  \frac{1}{(n_1-1)!(n -n_1-1)!}
\binom{n  +\ell_2-2}{\ell_2} .
\end{eqnarray}

Therefore, we draw the following conclusion:
\begin{lemma}\pse\label{leme4.1}
For $\mfk g=sl(n)$ and infinite-dimensional $\mfk g$-module $M={\msr H}_{\la\ell_1,\ell_2\ra} $, if $n_1<n_2=n$, we have
\begin{equation}
\frac{lc(M)}{lc(\mfk p_{M })}
=\binom {n +\ell_2 -2}{\ell_2 }.
\end{equation}
\end{lemma}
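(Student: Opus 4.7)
\textbf{Proof plan for Lemma \ref{leme4.1}.} The plan is to reduce the computation of $lc(M)$ to a direct enumeration of a basis of $\ol{M_k}$, and then to compare the resulting leading coefficient with the already-known value of $lc(\mfk p_M)$. Since $n_1<n_2=n$, Lemma \ref{lemc2.1} gives $\msr V(M)\cong \msr V_2(J_2,J_1)$, which by (\ref{c2.10}) has Hilbert polynomial $h_2(n-n_1,n_1,k)$. Reading off the degree, one obtains
\begin{equation*}
lc(\mfk p_M)=\frac{1}{(n_1-1)!(n-n_1-1)!}\quad\text{when}\quad n_1>1\text{ and }n_1+1<n,
\end{equation*}
and $lc(\mfk p_M)=\frac{1}{(n-2)!}$ in the boundary cases $n_1=1$ or $n_1+1=n$. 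This takes care of the denominator.

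For the numerator $lc(M)=lc(\mfk p_{M,V_0})$, the plan is to use Lemma 4.4 of \cite{ZX}, which describes $M_k=U_k(\mfk g)V_0$ explicitly as a sum of $TN$-blocks. When $\ell_1,\ell_2\geq 0$, the formula gives
\begin{equation*}
M_k=\text{Span}\left\{TN\begin{pmatrix} t & \ell_1+t & 0 \\ k_{21} & k_{22} & 0\end{pmatrix}\bigg|\,0\leq t\leq k,\; k_{21}+k_{22}=\ell_2\right\},
\end{equation*}
with the analogous description for $\ell_1\leq 0,\ \ell_2\geq 0$. Because the basis elements $T(x^\alpha y^\beta)$ for these blocks are linearly independent (no collisions between different $(t,k_{21},k_{22})$), the dimension $\dim\ol{M_k}$ is counted by restricting to $t=k$. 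I would then count these monomials using standard stars-and-bars: a factor $\binom{n_1+k-1}{k}$ for the distribution of the $x$-variables in $J_1$, a factor for the distribution in $J_2$, and a factor accounting for $y^\beta$ with $\beta$ summing to $\ell_2$ over $J_1\cup J_2$, minus an overlap term coming from the constraint $\alpha_{n_1+1}\beta_{n_1+1}=0$ in the definition of $N(\cdot)$.

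Carrying out this count (as already sketched in the displayed computation preceding the statement) yields
\begin{equation*}
\mfk p_{M,V_0}(k)=\left(1+\frac{\ell_1}{n-1}+\frac{\ell_1+k}{n-n_1-1}\right)\binom{n+\ell_2-2}{\ell_2}\binom{n_1+k-1}{k}\binom{n-n_1+\ell_1+k-2}{\ell_1+k}
\end{equation*}
when $n_1+1<n_2=n$, whose leading coefficient is $\frac{1}{(n_1-1)!(n-n_1-1)!}\binom{n+\ell_2-2}{\ell_2}$. The case $n_1+1=n_2=n$ is handled similarly by a single-variable count and gives leading coefficient $\frac{1}{(n-2)!}\binom{n+\ell_2-2}{\ell_2}$. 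Dividing by $lc(\mfk p_M)$ in each case produces the asserted ratio $\binom{n+\ell_2-2}{\ell_2}$, uniformly.

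The main obstacle is not any single step of the arithmetic (which is elementary) but rather verifying that the $TN$-blocks appearing in the description of $M_k$ from \cite{ZX} really contribute independent monomials modulo $M_{k-1}$, so that the naive count gives the true dimension of $\ol{M_k}$. This requires checking the weight-matrix indexing: different choices of $(t,k_{21},k_{22})$ produce monomials of distinct bidegrees under the refined $\mbb Z^2$-grading, and the $T$-operator preserves leading monomials, so there is no cancellation. Once this independence is established, the count above is rigorous and both cases of the lemma follow without further computation. The case $\ell_1\leq 0,\ \ell_2\geq 0$ is entirely parallel after swapping the role of the two blocks of $x$-variables, and produces the same ratio, completing the proof.
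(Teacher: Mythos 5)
Your proposal takes essentially the same route as the paper: read off $lc(\mfk p_M)$ from Lemma \ref{lemc2.1} and the formula (\ref{c2.10}) for $h_2$, invoke Lemma 4.4 of \cite{ZX} to describe $M_k$ as a span of $TN$-blocks, count the monomials in the top-degree block by stars-and-bars, and divide; this is exactly the computation the paper carries out, including the separate treatment of $n_1+1<n$ and $n_1+1=n$ and the parallel $\ell_1\le 0$ case. Your flagged ``main obstacle'' is handled correctly in substance (the $T(x^\alpha y^\beta)$ with distinct $(t,k_{21},k_{22})$ have distinct leading monomials, and the $N$-blocks are disjoint, so there is no collision with $M_{k-1}$), though the phrase ``distinct bidegrees under the refined $\mathbb{Z}^2$-grading'' is imprecise — every element of $\msr H_{\la\ell_1,\ell_2\ra}$ has the same $\mathbb{Z}^2$-bidegree $(\ell_1,\ell_2)$; what actually distinguishes the blocks is the finer degree vector in $(J_1,J_2)$-variables encoded by $(t,k_{21},k_{22})$, together with the injectivity of $T$.
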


\subsection{Case: $n_1=n_2<n$ and $\ell_1,\ell_2 \leq 0$}

Recall that
\begin{equation}V_0 =   \mbox{Span}\Big\{ \prod_{i\in J_1,j\in J_3}x_{i}^{\al_i}y_{j}^{\be_j}
\vert \sum_{i\in J_1}\al_{i} =-\ell_1,\sum_{j\in J_3}\be_{j} =-\ell_2 \Big\},  \end{equation}
and according to (4.157) in \cite{ZX}, we have
\begin{equation}M_k =   \mbox{Span}\Big\{\prod_{i\in J_1,j\in J_3}x_{i}^{\al_i}y_{j}^{\be_j}\phi(h) \mid
\sum_{i\in J_1}\al_{i} =-\ell_1,\sum_{j\in J_3}\be_{j} =-\ell_2 , h \in Z^{(r)}, r\in \ol{0,k}  \Big\}  . \end{equation}

Denote the set
\begin{eqnarray}S_k &=  & \Big\{ f=\prod_{i\in J_1,j\in J_3}x_{i}^{\al_i}y_{j}^{\be_j}h \big\vert
h \in Z^{(k)},
\sum_{i\in J_1}\al_{i} =-\ell_1,\sum_{j\in J_3}\be_{j} =-\ell_2 ,\nonumber \\
& &\quad\qquad \text{ no 3-chains in }\msr I(f) \Big\}  . \end{eqnarray}
Then $\phi(S_k)$ is linearly independent and $T(\phi(S_k)) \oplus M_{k-1}=M_k$ by Lemma 3.3 in \cite{ZX}.

 Suppose that $n_1>1$ and $n-n_1>1$.
For $f\in S_k$, we denote $i_1(f)=\max \{i \in J_1\vert  \al_i>0 \})$ and $j_1(f)=\min(\{j \in J_3\vert  \be_j>0 \} $.
For $d,i_1\in \mbb N$, we denote
\begin{eqnarray}\label{e4.14}
&  & G_0(d,i_1,\ell_1,\ell_2) \nonumber\\
&=&
\left\{
    \begin{aligned}
    &  \binom {d  -\ell_2 -2}{-\ell_2-1}
\binom {i_1-\ell_1-2}{-\ell_1-1} , &\text{ if }&\ell_1,\ell_2< 0; \\
    &    \binom {d -\ell_2 -2}{-\ell_2-1}  , &\text{ if }&\ell_1=0,\ell_2< 0; \\
    &    \binom {i_1-\ell_1-2}{-\ell_1-1}  , &\text{ if }&\ell_1<0,\ell_2 = 0; \\
    &  1,  &\text{ if }&\ell_1=\ell_2 = 0.
    \end{aligned}
    \right.
\end{eqnarray}
Then for $i_1\in J_1$ and $j_1\in J_3$, we have
\begin{eqnarray}
& &|\{g=x^\al y^\be |  \max \{i \in J_1\vert  \al_i>0 \} =i_1,\min(\{j \in J_3\vert  \be_j>0 \})=j_1\}|
\nonumber \\
&=&
 G_0(n-j_1+1,i_1,\ell_1,\ell_2)
 \end{eqnarray}
and
\begin{eqnarray}
& &|\{ h\in Z^{(k)}| f=gh\in S_k \text{ for some } g=x^\al y^\be,i_1(f)=i_1, j_1(f)= j_1\}|
\nonumber \\
&=&
 P_2(n-n_1,n_1,j_1-n_1,i_1,k).
 \end{eqnarray}

Hence
\begin{eqnarray}\label{e4.17}
& &\mfk p_{M,V_0}(k) \nonumber\\
&=&\dim(M_k/M_{k-1})=|S_k|  \nonumber\\
&=&
\begin{cases}
\sum\limits_{i_1=1}\limits^{n_1}\sum\limits_{j_1=n_1+1}\limits^{n}
G_0(n-j_1+1,i_1,\ell_1,\ell_2) P_2(n-n_1,n_1,j_1-n_1,i_1,k ) \text{ if } \ell_1,\ell_2<0 ;  \pse\\
\sum\limits_{j_1=n_1+1}\limits^{n} G_0(n-j_1+1,1,\ell_1,\ell_2) P_1(n_1,n-n_1,n-j_1+1 , k)  \text{ if } \ell_1=0,\ell_2< 0 ;  \pse\\
\sum\limits_{i_1=1}\limits^{n_1} G_0(n,i_1,\ell_1,\ell_2) P_1(n-n_1,n_1, i_1,k) \text{ if } \ell_1<0,\ell_2 = 0;  \pse\\
h_3(n-n_1,n_1,k)    \text{ if } \ell_1=0,\ell_2= 0 .
\end{cases}
\end{eqnarray}

According to the corollary  \ref{core2.1} and the corollary \ref{core2.2}, when $1<n_1<n-1$,
\begin{eqnarray}
\deg_k\left( P_2(n-n_1,n_1,j_1-n_1,i_1,k )
 \right)=
\begin{cases}
2n-5  &\text{ if }  i_1=1,j_1=n ;  \pse\\
 n +j_1 -i_1-3 & \text{ else}.
\end{cases}
 \end{eqnarray}
and
\begin{eqnarray}\label{e4.19}
\deg_k\left( P_1(n-n_1,n_1, i_1,k )
 \right)= \begin{cases}
2n-5  &\text{ if }  i_1=1 ;  \pse\\
 2n-i_1-3 & \text{ else}.
\end{cases}
 \end{eqnarray}
Hence when $\ell_1,\ell_2<0$ and $|J_1|,|J_3| >1$,
\begin{eqnarray}
& &lc(M)=lc( \mfk p_{M,V_0})\nonumber \\
&=&lc\left(\sum_{(i_1,j_1)\in \{(1,n),(1,n-1),(2,n)\}}G_0(n-j_1+1,i_1,\ell_1,\ell_2) P_2(n-n_1,n_1,j_1-n_1,i_1,k ) \right)
\nonumber \\
&=&
(1 -\ell_1 -\ell_2 )lc_k(P_2(n-n_1,n_1,n-n_1,1,k ) )=(1 -\ell_1 -\ell_2 )lc(\mfk p_M).
 \end{eqnarray}

Similarly, we also have $ lc( \mfk p_{M,V_0})=(1-\ell_1) lc(\mfk p_M)  $ when $\ell_2=0$ and $ lc( \mfk p_{M,V_0})=(1-\ell_2)lc(\mfk p_M) $ when $\ell_1=0$ from (\ref{e4.14}),(\ref{e4.17}) and (\ref{e4.19}).

When $n_1=1$, we have $\deg(\msr V(M))=1$ and
\begin{eqnarray}
\mfk p_{M,V_0}(k)
&=& \left|\left\{ x_1^{-\ell_1} y^\be h \Big\vert \be_1=0  ,\sum_{j\in J_3}\be_j =-\ell_2, h \in Z^{(k)}  \right\}\right|
\nonumber \\
&=& \binom{  n + k-2}{k} \binom{  n  -\ell_2-2}{-\ell_2}.
\end{eqnarray}
Hence,
\begin{equation}
lc(\mfk p_{M,V_0}(k)) = \frac{1}{(n-2)!} \binom{  n  -\ell_2-2}{-\ell_2}.
\end{equation}
Similarly, when $n-n_2=1$,
\begin{equation}
lc(\mfk p_{M,V_0}(k)) = \frac{1}{(n-2)!} \binom{  n  -\ell_1-2}{-\ell_1}.
\end{equation}

Therefore, we draw the following conclusion.
\begin{lemma}\label{leme4.2}
\pse
If $\ell_1,\ell_2 \leq 0$ and $n_1=n_2<n$, we have
\begin{equation}
\frac{lc(M)}{lc(\mfk p_{M })}
=
\begin{cases}
1-\ell_1-\ell_2 &\text{ if }  1<n_1<n-1 ;  \pse\\
\binom{  n  -\ell_2-2}{-\ell_2}  &\text{ if }  n_1=1 ;  \pse\\
\binom{  n  -\ell_1-2}{-\ell_1}  &\text{ if } n_1=n-1.
\end{cases}
\end{equation}
\end{lemma}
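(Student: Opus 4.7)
The plan is to read off the leading coefficient directly from the explicit counting formula (\ref{e4.17}) for $|S_k| = \dim(M_k/M_{k-1})$, isolate the top-degree summands via Corollaries~\ref{core2.1}--\ref{core2.2}, and sum the $G_0$-weights defined in (\ref{e4.14}). Indeed, essentially all of the arithmetic needed is already carried out in the paragraphs preceding the lemma; the task is to package it correctly.

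For the generic case $1 < n_1 < n-1$ with $\ell_1,\ell_2 < 0$, one has $\deg \mfk p_M = 2n-5$. Applying Corollary~\ref{core2.2} to the summands $P_2(n-n_1, n_1, j_1-n_1, i_1, k)$, a short case check shows that the $k$-degree equals $2n-5$ precisely for the three triples $(i_1,j_1) \in \{(1,n),(1,n-1),(2,n)\}$ and is strictly smaller for every other choice. Using the closed formula for $lc(P_2)$ recorded in the last proposition of Section~2, I would verify that these three leading coefficients coincide, each equal to $lc(\mfk p_M)$; this reduces to the elementary binomial identity $\tfrac{1}{n_1-1}\binom{n-4}{n-n_1-2}\binom{n-3}{n_1-2} = \tfrac{1}{n-n_1-1}\binom{n-3}{n-n_1-2}\binom{n-4}{n_1-2}$. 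From (\ref{e4.14}) the three companion $G_0$-weights evaluate to $G_0(1,1,\ell_1,\ell_2)=1$, $G_0(2,1,\ell_1,\ell_2)=-\ell_2$, and $G_0(1,2,\ell_1,\ell_2)=-\ell_1$, so summing yields $(1-\ell_1-\ell_2)\,lc(\mfk p_M)$, which is the first branch of the lemma.

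The degenerate subcases $\ell_1=0$ or $\ell_2=0$ (still with $1<n_1<n-1$) are handled by the same strategy applied to the single-index $P_1$-sums in (\ref{e4.17}); Corollary~\ref{core2.1} now picks out the two dominant indices, the corresponding $P_1$-leading coefficients again collapse onto $lc(\mfk p_M)$ by an analogous binomial identity, and the $G_0$-factors sum to $1-\ell_1$ or $1-\ell_2$, both of which agree with the uniform expression $1-\ell_1-\ell_2$ in the respective regime.

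Finally I would dispatch the boundary cases $n_1=1$ and $n_1=n-1$. Here $\msr V(M)\cong \mbb P^{n-2}$, so $lc(\mfk p_M)=1/(n-2)!$. When $n_1=1$ the singleton $J_1=\{1\}$ forces $\prod_i x_i^{\al_i} = x_1^{-\ell_1}$, and since $Z = \{z_{j,1} : j\in J_3\}$ admits no $3$-chain to forbid, the set $S_k$ factors as an unconstrained product, giving $|S_k| = \binom{n+k-2}{k}\binom{n-\ell_2-2}{-\ell_2}$; extracting the leading term in $k$ yields the ratio $\binom{n-\ell_2-2}{-\ell_2}$, matching the lemma. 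The case $n_1=n-1$ is entirely symmetric under $x\leftrightarrow y$, $\ell_1\leftrightarrow \ell_2$. The only technical obstacle is the three-way coincidence of $P_2$-leading coefficients in the generic case, but this is a brief binomial manipulation from the formulas already established in Section~2, after which the remainder of the proof is mechanical.
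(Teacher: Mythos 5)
Your proposal is correct and follows essentially the same route as the paper: isolate from (\ref{e4.17}) the summands of top $k$-degree via Corollaries~\ref{core2.1} and \ref{core2.2}, verify that their $P_2$-leading coefficients coincide (with $lc(\mfk p_M)$) through the binomial identity you record, evaluate the three $G_0$-weights, and handle $n_1\in\{1,n-1\}$ by direct computation plus the $x\leftrightarrow y$ symmetry. The only thing left slightly implicit is that you state the binomial identity relating the $(1,n)$ and $(1,n-1)$ coefficients but should also note the analogous check for $(2,n)$ (it reduces to the same expression $\frac{1}{n_1-1}\binom{n-4}{n-n_1-2}\binom{n-3}{n_1-2}$ after applying the last proposition of Section~2 with $\varrho=m$, $\xi=2$); this is equally brief and the paper likewise leaves it as arithmetic.
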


\subsection{Case: $n_1<n_2<n $ and $\ell_1\leq 0$ or $\ell_2 \leq 0$}

For $n_1,n_2 ,n \in \mathbb{N}$ with $n_1< n_2\leq n$,  we define an associative algebra isomorphism $\Psi: \msr A \rta  \msr A$ by
\begin{eqnarray}\label{a2.25}
& &\Psi(x_i)=y_{n-i+1},\;\;\Psi(y_i)=x_{n-i+1},\Psi(x_j)=y_{n-j+1},\;\;\Psi(y_j)=x_{n-j+1}, \qquad j\in J_3,i\in J_1;\nonumber \\
& &\Psi(x_{n_1+s})=y_{n-n_2+s},\;\; \Psi(y_{n_1+s})=x_{n-n_2+s}, \qquad s\in \ol{1,n_2-n_1}.
\end{eqnarray}
According to (\ref{a1.7}), for $f\in {\msr H}_{\la\ell_1,\ell_2\ra}^{n_1,n_2,n}$, we have $\Psi(f)\in {\msr A}_{\la\ell_2,\ell_1\ra}^{n-n_2,n-n_1,n}$,
and
\begin{eqnarray}& &\td\Dlt^{n-n_2,n-n_1,n}(\Psi(f)) \nonumber\\
=& &\sum_{i=1}^{n-n_2}x_i\ptl_{y_i}(\Psi(f))-\sum_{r=n-n_2+1}^{n-n_1}\ptl_{x_r}\ptl_{y_r}(\Psi(f))+\sum_{s=n-n_1+1}^n
y_s\ptl_{x_s}(\Psi(f)) \nonumber\\
=& &\sum_{i=1}^{n-n_2}\Psi(y_{n-i+1}\ptl_{x_{n-i+1}}(f))-\sum_{r=n_1+1}^{n_2}\Psi(\ptl_{x_r}\ptl_{y_r}(f))+\sum_{s=n-n_1+1}^n\Psi(
x_{n-s}\ptl_{y_{n-s}}(f)) \nonumber\\
=& &\Psi(\td\Dlt^{n_1,n_2,n}(f))=0.
\end{eqnarray}
Thus $\Psi: {\msr H}_{\la\ell_1,\ell_2\ra}^{n_1,n_2,n} \rta {\msr H}_{\la\ell_2,\ell_1\ra}^{n-n_2,n-n_1,n}$ is a linear space isomorphism.
Moreover, according to (\ref{a2.11}),(\ref{a2.22}) and (\ref{a2.24}), we have
\begin{equation}\label{a2.27}
\dim\left(({\msr H}_{\la\ell_1,\ell_2\ra}^{n_1,n_2,n})_k \right)=
\dim\left(\Psi\left({\msr H}_{\la\ell_1,\ell_2\ra}^{n_1,n_2,n})_k \right)\right)=
\dim\left( ({\msr H}_{\la\ell_2,\ell_1\ra}^{n-n_2,n-n_1,n} )_k\right)
\end{equation}
for $k\geq 0$.

Therefore, in the rest of this section, we suppose that $\ell_2 \leq 0$.

Let $\{(j_1,i_1),(j_2,i_2),...,(j_r,i_r)\}\subset \mbb Z^2$.
For $k_1,k_2,k_3,k_4\in \mbb N$, $\{s_{1,t_1} \in J_1 \vert  1 \leq t_1\leq k_1\}$, $\{s_{2,t_2} \in J_3 \vert  1 \leq t_2\leq k_2\}$, $\{s_{3,t_3} \in J_3 \vert  1 \leq t_3\leq k_3\}$, $\{(j_{t_4},i_{t_4})
 \in J_3 \times J_1 \vert  1 \leq t_4\leq k_4\}$ and $g\in \mathbb{F}[x_i,y_j\vert i\in J_2,j\in J_1\cup J_2]$,
\begin{eqnarray}& &\msr I\left[\left(\prod_{t_1=1}^{k_1} x_{s_{1,t_1}}\right)  \left(\prod_{t_2=1}^{k_2} y_{s_{2,t_2}}\right)   \left(\prod_{t_3=1}^{k_3} x_{s_{3,t_3}}\right)  \left( \prod_{t_4=1}^{k_4} z_{j_{t_4},i_{t_4}}\right)g\right]\nonumber\\ &=&
\{(n+1,s_{1,t_1}),(s_{2,t_2},0),(s_{3,t_3},-1),(j_{t_4},i_{t_4})
\vert  1 \leq  t_1 \leq  k_1,\nonumber\\
& &1 \leq  t_2 \leq  k_2,1 \leq  t_3 \leq  k_3,1 \leq  t_4 \leq  k_4 \},\qquad\end{eqnarray}

We set
\begin{eqnarray}\label{a3.2}
S_k=S^{\la \ell_1,\ell_2 \ra}_k=\Big\{\;g h  & \vert&  g=\prod_{i\in \ol{1,n} }\prod_{ j\in \ol{n_1+1,n}} x_i^{\al_i} y_j^{\be_j}  \in N(k-r),h\in Z^{(r)}; \nonumber \\
& &\text{no 3-chains in $\msr I(gh)$};r=0,1,\cdots k\Big\},
\end{eqnarray}
and
\begin{eqnarray}
W_k=\text{Span}\{ \phi( S_k)\}.
\end{eqnarray}
Then $T(W_k)$ is a subspace of $M_k$.

Before proving $T(W_k)$ is actually a basis of $M_k/M_{k-1}$, we need some preparation, which have been proved in \cite{ZX}.

 (1) The Lemma 3.1 in \cite{ZX} is shown below..
\begin{lemma}\label{lem3.1}
Fix $k,k_{13},k_{21}\in\mbb N$. For each $j$, $i_{1,j} \in J_1$, $i_{3,j} \in J_3$, $v_0\in \mathbb{F}\left[X_{J_1},X_{J_2 \setminus \{n_1+1\}},Y_{J_3} \right]\\ \cup  \mathbb{F}\left[X_{J_1},Y_{J_2 \setminus \{n_1+1\}},Y_{J_3} \right]$, we have
\begin{eqnarray}
& &
\left( -1\right)^{k} \frac{k! }{\left( k- k_{13}\right) !} \  T\left( v_0 \cdot
\left( \prod_{j=1}^{k}  x_{i_{1,j}}\right)
\left( \prod_{j=1}^{k_{13}}  x_{i_{3,j}}\right) \cdot  x_{n_1+1}^{k-k_{13}}
 \right)\nonumber\\& =&
 E_{i_{3,k_{13}},n_1+1} \cdots  E_{i_{3,1},n_1+1}  E_{n_1+1,i_{1,1}}  \cdots E_{n_1+1,i_{1,k}}
 \left( v_0 \right)
\end{eqnarray}
when $k \geq k_{13}$,
and
\begin{eqnarray}
& &\left( -1\right)^{k} \frac{k! }{\left( k- k_{21}\right) !} \  T\left(v_0 \cdot
\left( \prod_{j=1}^{k_{21}}  y_{i_{1,j}}\right)
\left( \prod_{j=1}^{k }  y_{i_{3,j}}\right) \cdot  y_{n_1+1}^{k-k_{21}}
 \right) \nonumber\\&=&
 E_{n_1+1,i_{1,1}}  \cdots E_{n_1+1,i_{1,k_{21}}}  E_{i_{3,1},n_1+1} \cdots  E_{i_{3,k},n_1+1}
 \left( v_0 \right)
\end{eqnarray}
if $k \geq k_{21}$.
\end{lemma}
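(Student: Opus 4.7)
The plan is to compute both sides explicitly and match. Since $v_0$ lies in a polynomial algebra missing $y_{n_1+1}$ and all $y_i$ for $i\in J_1$, the operator $\td\pi(E_{n_1+1,i_{1,j}})=-x_{n_1+1}x_{i_{1,j}}-y_{i_{1,j}}\ptl_{y_{n_1+1}}$ acts on $v_0$ merely as multiplication by $-x_{n_1+1}x_{i_{1,j}}$: the derivation term is killed at every stage since no $y_{n_1+1}$ is ever introduced. Iterating gives
\[
E_{n_1+1,i_{1,1}}\cdots E_{n_1+1,i_{1,k}}(v_0)=(-1)^k x_{n_1+1}^k\prod_{j=1}^k x_{i_{1,j}}\cdot v_0.
\]

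Next I would note that the operators $\td\pi(E_{i_{3,j},n_1+1})=x_{i_{3,j}}\ptl_{x_{n_1+1}}+y_{n_1+1}y_{i_{3,j}}$ pairwise commute, because the only shared variables are the commuting pair $x_{n_1+1},y_{n_1+1}$. Setting $A_j:=x_{i_{3,j}}\ptl_{x_{n_1+1}}$ and $B_j:=y_{n_1+1}y_{i_{3,j}}$ and expanding $\prod_{j=1}^{k_{13}}(A_j+B_j)=\sum_{S\subseteq\{1,\dots,k_{13}\}}\prod_{j\in S}A_j\prod_{j\notin S}B_j$, I would apply this to the expression from the previous step to obtain
\[
(-1)^k\sum_{S}\frac{k!}{(k-|S|)!}\,x_{n_1+1}^{k-|S|}y_{n_1+1}^{k_{13}-|S|}\prod_{j\in S}x_{i_{3,j}}\prod_{j\notin S}y_{i_{3,j}}\prod_{j=1}^k x_{i_{1,j}}\cdot v_0.
\]

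For the left-hand side I would expand $T\bigl(v_0\prod_j x_{i_{1,j}}\prod_j x_{i_{3,j}}\,x_{n_1+1}^{k-k_{13}}\bigr)$ directly from the defining series with $\al_{n_1+1}=k-k_{13}$ and $\be_{n_1+1}=0$. On this monomial the operator $\td\Dlt+\ptl_{x_{n_1+1}}\ptl_{y_{n_1+1}}$ collapses to $\sum_{s\in J_3}y_s\ptl_{x_s}$, because the summands $\sum_{i\in J_1}x_i\ptl_{y_i}$ and $-\sum_{r\in J_2}\ptl_{x_r}\ptl_{y_r}$ vanish (no $y_{J_1\cup J_2}$ appear) while $\ptl_{x_{n_1+1}}\ptl_{y_{n_1+1}}$ vanishes since no $y_{n_1+1}$ is present at any stage of the iteration. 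Its $i$-th power converts $i$ of the factors $x_{i_{3,j}}$ into $y_{i_{3,j}}$, producing $i!\sum_{|S'|=i}\prod_{j\in S'}y_{i_{3,j}}\prod_{j\notin S'}x_{i_{3,j}}$. Substituting into the $T$-series and using $\prod_{r=1}^i(k-k_{13}+r)\cdot r=\tfrac{(k-k_{13}+i)!}{(k-k_{13})!}\,i!$, the $i!$ cancels, and after re-indexing by the complement $|S'|=k_{13}-|S|$ the $i$-th summand matches the corresponding summand of the subset expansion. Rescaling by $(-1)^k k!/(k-k_{13})!$ yields the asserted equality.

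The main obstacle I anticipate is purely bookkeeping: one must verify that the multinomial factors arising from the subset expansion, the falling factorials coming from repeated $\ptl_{x_{n_1+1}}$ differentiations, and the denominators in the defining series of $T$ all cancel precisely against the complementary combinatorics of the iterated $\sum_{s\in J_3}y_s\ptl_{x_s}$. The second identity is obtained by an entirely parallel computation with the roles of $x$ and $y$ interchanged: now $\td\pi(E_{i_{3,j},n_1+1})$ acts on $v_0$ purely by multiplication by $y_{n_1+1}y_{i_{3,j}}$ (since $\ptl_{x_{n_1+1}}$ annihilates $v_0$), while $\td\pi(E_{n_1+1,i_{1,j}})$ has both its multiplication and its derivation parts active on the intermediate $y_{n_1+1}^k\prod_j y_{i_{3,j}}\,v_0$, and the same subset-sum-versus-$T$-series matching applies.
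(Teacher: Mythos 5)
Your approach for the first identity is sound and I've checked that it goes through: the $k$ applications of $E_{n_1+1,i_{1,j}}$ produce $(-1)^k x_{n_1+1}^k\prod_j x_{i_{1,j}}\,v_0$, the $k_{13}$ pairwise commuting operators $E_{i_{3,j},n_1+1}$ expand as a subset sum exactly as you say, and on the $T$-side the operator $\td\Dlt+\ptl_{x_{n_1+1}}\ptl_{y_{n_1+1}}$ does reduce to $\sum_{s\in J_3}y_s\ptl_{x_s}$. (Your stated reason "no $y_{J_1\cup J_2}$ appear" covers only the first branch $v_0\in\mathbb F[X_{J_1},X_{J_2\setminus\{n_1+1\}},Y_{J_3}]$; in the second branch $y_{J_2\setminus\{n_1+1\}}$ can appear but the corresponding $x_r$ is absent, so $-\ptl_{x_r}\ptl_{y_r}$ still kills — the conclusion is the same, but the justification needs both cases.) The re-indexing by the complement and the falling-factorial bookkeeping then match term by term, so the first identity is proved.

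Your last paragraph claims the second identity is "obtained by an entirely parallel computation," and this is where there is a genuine problem you needed to catch. In the second identity the sign-carrying operator $E_{n_1+1,i_{1,j}}=-x_{n_1+1}x_{i_{1,j}}-y_{i_{1,j}}\ptl_{y_{n_1+1}}$ is applied only $k_{21}$ times, while $E_{i_{3,j},n_1+1}=x_{i_{3,j}}\ptl_{x_{n_1+1}}+y_{n_1+1}y_{i_{3,j}}$ (sign $+1$) is applied $k$ times. So the right-hand side comes out with overall sign $(-1)^{k_{21}}$, not $(-1)^k$; these agree only when $k\equiv k_{21}\pmod 2$. A minimal check: $n_1=1$, $n_2=2$, $n=3$, $v_0=1$, $k=1$, $k_{21}=0$, $i_{3,1}=3$. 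Then the left side of the second identity is $(-1)^1\,T(y_2y_3)=-y_2y_3$ while the right side is $E_{3,2}(1)=y_2y_3$, which disagree. Carrying out your own subset-sum-versus-$T$-series matching honestly on the second identity produces $(-1)^{k_{21}}$ on the right, exposing the exponent $(-1)^k$ in the printed statement as a typo (it should read $(-1)^{k_{21}}$). Because you asserted "entirely parallel" without running the computation, you missed this discrepancy; that assertion is the step that fails against the lemma as stated.
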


(2) The Corollary 2.1 in \cite{ZX} is shown below..
\begin{lemma}\label{lem3.2}
Fix $k,\alpha_{n_1+1},\beta_{n_1+1} \in\mbb N$.
For $j$, $i_{1,j} \in J_1$, $i_{3,j} \in J_3$, $v_1\in \mathbb{F}[X_{J_1},X_{J_2 \setminus \{n_1+1\}},\\ Y_{J_3}] \cup  \mathbb{F}\left[X_{J_1},Y_{J_2 \setminus \{n_1+1\}},Y_{J_3} \right]$, we have
\begin{align}\label{a3.7}
\begin{split}
 \frac{k! }{ \alpha_{n_1+1}  !} \  T\left( v_1
\left( \prod_{j=1}^{k- \alpha_{n_1+1} }  x_{i_{3,j}}\right) \cdot  x_{n_1+1}^{\alpha_{n_1+1}}
 \right) =
 E_{i_{3,k-\al_{n_1+1}},n_1+1} \cdots  E_{i_{3,1},n_1+1}
 \left( v_1 x_{n_1+1}^{k} \right) ,
\end{split}
\end{align}
when $k \geq \alpha_{n_1+1}$,
and
\begin{align}
\begin{split}
 \frac{k! }{ \beta_{n_1+1}  !} \  T\left(v_1
\left( \prod_{j=1}^{k-\beta_{n_1+1} }  y_{i_{1,j}}\right) \cdot  y_{n_1+1}^{\beta_{n_1+1}}
 \right) =
 E_{n_1+1,i_{1,1}}  \cdots E_{n_1+1,i_{1,k-\be_{n_1+1}}}
 \left( v_1 y_{n_1+1}^{k } \right) ,
\end{split}
\end{align}
where $k \geq \beta_{n_1+1}$.
\end{lemma}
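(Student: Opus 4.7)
The plan is to prove Lemma \ref{lem3.2} by induction on the parameter $\widetilde{k}:=k-\al_{n_1+1}$, i.e.\ the length of the $x_{i_{3,j}}$-product. The symmetric identity in the $y$-variables either follows by an analogous induction on $k-\be_{n_1+1}$, or more slickly by applying the involution $\Psi$ of (\ref{a2.25}), which interchanges $x$'s and $y$'s and conjugates $\td\pi$ so as to swap the two formulas.

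The base case $\widetilde{k}=0$ asserts that $T(v_1 x_{n_1+1}^{k})=v_1 x_{n_1+1}^{k}$, which reduces to the harmonicity statement $(\td\Dlt+\ptl_{x_{n_1+1}}\ptl_{y_{n_1+1}})(v_1 x_{n_1+1}^{k})=0$. This is immediate from the decomposition
\[
\td\Dlt+\ptl_{x_{n_1+1}}\ptl_{y_{n_1+1}}=\sum_{i\in J_1}x_i\ptl_{y_i}-\sum_{r\in J_2\setminus\{n_1+1\}}\ptl_{x_r}\ptl_{y_r}+\sum_{s\in J_3}y_s\ptl_{x_s},
\]
combined with the hypothesis that $v_1$ involves only variables in $X_{J_1}$, $Y_{J_3}$, and exactly one of $\{X_{J_2\setminus\{n_1+1\}},\,Y_{J_2\setminus\{n_1+1\}}\}$: in every summand above, the variable being differentiated does not appear in $v_1 x_{n_1+1}^{k}$. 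Hence only the $i=0$ term of the series defining $T$ survives.

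For the inductive step, I would assume (\ref{a3.7}) at level $\widetilde{k}-1$ and apply $\td\pi(E_{i_{3,\widetilde{k}},n_1+1})=x_{i_{3,\widetilde{k}}}\ptl_{x_{n_1+1}}+y_{n_1+1}y_{i_{3,\widetilde{k}}}$ to both sides. The right-hand side acquires exactly the desired extra operator, so the whole matter reduces to verifying the one-step recursion
\[
(k-\widetilde{k}+1)\,T\!\left(v_1\prod_{j=1}^{\widetilde{k}}x_{i_{3,j}}\cdot x_{n_1+1}^{k-\widetilde{k}}\right)=\bigl(x_{i_{3,\widetilde{k}}}\ptl_{x_{n_1+1}}+y_{n_1+1}y_{i_{3,\widetilde{k}}}\bigr)\,T\!\left(v_1\prod_{j=1}^{\widetilde{k}-1}x_{i_{3,j}}\cdot x_{n_1+1}^{k-\widetilde{k}+1}\right).
\]
Both sides lie in $\msr H_{\la\ell_1,\ell_2\ra}$, and the $T$-basis recalled after (\ref{a3.7}) identifies any element of $\msr H$ with the unique monomial $x^\al y^\be$ (with $\al_{n_1+1}\be_{n_1+1}=0$) appearing as its leading term. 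I would expand $T$ on both sides by its defining series and match leading terms: the $x_{i_{3,\widetilde{k}}}\ptl_{x_{n_1+1}}$ summand applied to the $i=0$ piece of the right-hand $T$ produces precisely $(k-\widetilde{k}+1)\,v_1\prod_{j=1}^{\widetilde{k}}x_{i_{3,j}}\cdot x_{n_1+1}^{k-\widetilde{k}}$, which is the leading term of the left, while the $y_{n_1+1}y_{i_{3,\widetilde{k}}}$ summand introduces factors of $x_{n_1+1}y_{n_1+1}$ that reassemble into the higher-order terms of the $T$-series on the left.

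The main obstacle is the bookkeeping in this last paragraph: one must check that the $y_{n_1+1}y_{i_{3,\widetilde{k}}}$ contribution, when combined with the higher-order $(\td\Dlt+\ptl_{x_{n_1+1}}\ptl_{y_{n_1+1}})^{i}$ terms inherited from $T$, exactly reproduces the factorial denominators $\prod_{r=1}^i(\al_{n_1+1}+r)\,r$ on the left-hand side. A way to bypass most of this bookkeeping is to observe that the operators $\{E_{i_{3,j},n_1+1}\}$ pairwise commute, since they sit in a nilpotent subalgebra of $sl(n)_+$; this lets one trivially peel off any single operator at the outside and therefore reduces the inductive step to a single direct computation against the definition of $T$, rather than a telescoping argument through $\widetilde{k}$ nested applications.
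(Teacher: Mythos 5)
Your induction on $\widetilde{k}=k-\alpha_{n_1+1}$ is sound and correctly proves the first identity. The paper does not reproduce a proof but cites the statement as Corollary 2.1 of \cite{ZX}; the intended route there is to specialize Lemma \ref{lem3.1}: when $v_0$ is $y_{n_1+1}$-free each $E_{n_1+1,i_{1,j}}$ acts just by multiplication by $-x_{n_1+1}x_{i_{1,j}}$, so $E_{n_1+1,i_{1,1}}\cdots E_{n_1+1,i_{1,k}}(v_0)=(-1)^k v_1 x_{n_1+1}^k$ with $v_1=v_0\prod_{j=1}^k x_{i_{1,j}}$, and the $(-1)^k$'s cancel. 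Your direct induction is a more elementary, self-contained alternative, and it handles arbitrary $v_1$ without having to reconstruct such a $v_0$. Moreover your leading-term observation already finishes the inductive step: the linear map on $\msr H_{\la\ell_1,\ell_2\ra}$ that discards every monomial with $\al_{n_1+1}\be_{n_1+1}>0$ is injective, since the $i\geq 1$ terms of the $T$-series all carry an extra $(x_{n_1+1}y_{n_1+1})^i$; two harmonic elements with the same reduced part must coincide, so the ``factorial bookkeeping'' you describe as the main obstacle is in fact not needed.

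Two cautions. First, the ``analogous'' argument for the second identity will not reproduce the lemma as printed. The operator $\td\pi(E_{i_3,n_1+1})=x_{i_3}\ptl_{x_{n_1+1}}+y_{n_1+1}y_{i_3}$ has no overall sign, but $\td\pi(E_{n_1+1,i_1})=-x_{n_1+1}x_{i_1}-y_{i_1}\ptl_{y_{n_1+1}}$ carries a global $-1$; each step of the induction therefore produces a $-1$, and the result acquires a factor $(-1)^{k-\be_{n_1+1}}$ that is absent from the stated formula. (Equivalently $\Psi\,\td\pi(E_{i_3,n_1+1})\,\Psi^{-1}=-\td\pi(E_{n_1'+1,i_1'})$, so the $\Psi$-conjugation route surfaces the same sign, and $\Psi$ is not an involution of a single $\msr H$ but an isomorphism onto a differently parametrized one.) A direct check at $k=1$, $\be_{n_1+1}=0$ confirms this: $T(v_1 y_{i_{1,1}})=v_1 y_{i_{1,1}}+x_{n_1+1}y_{n_1+1}x_{i_{1,1}}v_1$, whereas $E_{n_1+1,i_{1,1}}(v_1 y_{n_1+1})$ is its negative; the same test applied to Lemma \ref{lem3.1} suggests the $(-1)^k$ in its second formula should read $(-1)^{k_{21}}$. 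So your method is correct and would reveal a missing sign in the statement rather than reproduce it verbatim. Second, and minor: the operators $E_{i_3,n_1+1}$ with $i_3\in J_3$ and $n_1+1\in J_2$ satisfy $i_3>n_1+1$, so they are lower-triangular and lie in $sl(n)_-$, not $sl(n)_+$; they do pairwise commute (the bracket vanishes because $n_1+1\notin J_3$), which is all your argument actually uses, but the attribution should be corrected.
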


(3)
For $f \in M_k $ (cf. (\ref{a2.22})), we define the P-{\it order} of $f$ in $M_k$ as
\begin{align}\label{a4.1}
\mbox{ord}_k \left( f \right)
=\min \left\lbrace
s \in \mathbb{N} | f \in \mbox{Span}\{ TN_k, TN(k-r) P^r |
r=0,1,2,\cdots , s \}
\right\rbrace  .
\end{align}
In particular, $\mbox{ord}_k \left(f \right)=0  \iff  f\in TN_k$.

The Lemma 4.1 in  \cite{ZX} is shown below.
\begin{lemma}\label{lem3.3}
For $f \in M_{k}$, we have
\begin{align}
{\mfk d} \left( f \right)  \leq k+ \mbox{ord}_k\left(f \right) .
\end{align}
Moreover, ${\mfk d} \left( f \right) = k+ \mbox{ord}_k\left(f \right) $ if and only if $f \notin TN_{k-1}$.
\end{lemma}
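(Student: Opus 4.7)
My plan is to read off $\mfk d$ directly from the filtration (\ref{a2.22}) of $M_k$. The technical core is a uniformity claim: every nonzero monomial appearing in an element of $TN_k$ has $\mfk d \leq k$, while every nonzero monomial appearing in an element of $TN(k-r)\phi(Z^{(r)})$ (for $1 \leq r \leq k$) has $\mfk d$-value \emph{exactly} $k+r$. Granting this, the inequality $\mfk d(f) \leq k + \mbox{ord}_k(f)$ is immediate from (\ref{a4.1}): for any admissible decomposition $f = g_0 + g_1 + \cdots + g_s$ with $s = \mbox{ord}_k(f)$, $g_0 \in TN_k$ and $g_r \in TN(k-r)\phi(Z^{(r)})$, every monomial of $f$ has $\mfk d$-value at most $\max(k, k+1, \ldots, k+s) = k+s$.

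The uniformity claim has two ingredients. First, every monomial of $\phi(z)$ for $z \in Z^{(r)}$ is a signed product of $r$ factors $x_i x_j$ or $y_i y_j$ with $i \in J_1$, $j \in J_3$; each such factor lies in $\msr A_{\la 0,0 \ra}$ and contributes $2$ to the raw weight $2\sum_{J_3}\al + \sum_{J_2}\al + 2\sum_{J_1}\be + \sum_{J_2}\be$, so every monomial of $\phi(z)$ contributes $+2r$. Second, $T$ preserves the $\mfk d$-value on monomials: the summand of index $i$ in the definition of $T$ applies the operator $\tilde{\Dlt} + \ptl_{x_{n_1+1}}\ptl_{y_{n_1+1}} = \sum_{i \in J_1} x_i \ptl_{y_i} - \sum_{r \in J_2 \setminus \{n_1+1\}} \ptl_{x_r} \ptl_{y_r} + \sum_{s \in J_3} y_s \ptl_{x_s}$ exactly $i$ times (each application drops the raw weight by $2$, as one checks term by term), then multiplies by $(x_{n_1+1} y_{n_1+1})^i$ which raises it by $2i$; the net change vanishes. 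Combining the two, every nonzero monomial of $T(g)\phi(z)$ for monomial $g \in N(k-r)$ has $\mfk d = (k-r) + 2r = k+r$.

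For the equality statement, the $\Leftarrow$ direction splits into two cases. If $\mbox{ord}_k(f) = s \geq 1$, then by minimality $g_s \neq 0$ in every such decomposition (otherwise $f$ would lie in $\mbox{Span}\{TN_k, TN(k-r)P^r \mid r\leq s-1\}$, contradicting $\mbox{ord}_k(f) = s$); since every monomial of $g_s$ sits at $\mfk d$-level exactly $k+s$, at least one such monomial survives in $f$, because monomials contributed by $g_0, \ldots, g_{s-1}$ sit at strictly lower levels and cannot cancel it. If $\mbox{ord}_k(f) = 0$ and $f \in TN_k \setminus TN_{k-1}$, I would invoke the basis property from \cite{LX} that $\{T(x^\al y^\be) : \al_{n_1+1}\be_{n_1+1} = 0\}$ is linearly independent in ${\msr H}_{\la\ell_1,\ell_2\ra}$: the unique expansion $f = \sum_q c_q T(h_q)$ must contain some $h_q$ with $\mfk d(h_q) = k$, and in each $T(h_{q'})$ the only monomial with $\al_{n_1+1}\be_{n_1+1}=0$ is $h_{q'}$ itself (the $i\geq 1$ terms carry positive powers of both $x_{n_1+1}$ and $y_{n_1+1}$, since $D=\tilde\Dlt+\ptl_{x_{n_1+1}}\ptl_{y_{n_1+1}}$ does not touch the $n_1+1$ variables), so $h_q$ appears in $f$ with coefficient $c_q\neq 0$, forcing $\mfk d(f) \geq k$. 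The $\Rightarrow$ direction is immediate: $f \in TN_{k-1}$ gives $\mbox{ord}_k(f) = 0$ and $\mfk d(f) \leq k - 1 < k$. The main subtlety is that the decomposition $f = g_0 + \cdots + g_s$ is not unique, so a priori the top block $g_s$ might accidentally vanish; this is precisely where minimality of $\mbox{ord}_k$ combines with the level-by-level $\mfk d$-stratification from Step 1 to rule out cross-level cancellation and close the argument.
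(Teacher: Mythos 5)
Your proof is correct. The four ingredients all check out: (i) the raw weight $2\sum_{J_3}\alpha + \sum_{J_2}\alpha + 2\sum_{J_1}\beta + \sum_{J_2}\beta$ changes by exactly $-2$ under each of $x_i\ptl_{y_i}$ ($i\in J_1$), $\ptl_{x_r}\ptl_{y_r}$ ($r\in J_2$), and $y_s\ptl_{x_s}$ ($s\in J_3$), and by $+2$ under multiplication by $x_{n_1+1}y_{n_1+1}$, so $T$ is $\mfk d$-homogeneous; (ii) both monomials of $\phi(z_{j,i})=x_ix_j-y_iy_j$ raise the raw weight by $2$, so $\phi(Z^{(r)})$ shifts the level by exactly $2r$; (iii) because each $g_r$ for $r\geq 1$ lives entirely at level $k+r$ and $g_0$ entirely at levels $\leq k$, monomials cannot cancel across different $g_r$'s, which closes both the inequality and the $\mbox{ord}_k(f)\geq 1$ half of the equality; (iv) for the $\mbox{ord}_k(f)=0$, $f\in TN_k\setminus TN_{k-1}$ case, the observation that the $i\geq 1$ summands of $T(h)$ carry a factor $(x_{n_1+1}y_{n_1+1})^i$ — and that $D=\tilde\Dlt+\ptl_{x_{n_1+1}}\ptl_{y_{n_1+1}}$ never touches index $n_1+1$ — isolates the $i=0$ term $h$ itself as the unique monomial of $T(h)$ with $\al_{n_1+1}\be_{n_1+1}=0$, so the coefficient of the top-level $h_q$ in $f$ is exactly the nonzero basis coefficient $c_q$. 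The paper does not reproduce the proof (it cites Lemma 4.1 of \cite{ZX}), but this level-stratification argument is precisely what the surrounding machinery ($\mfk d$, $T$, $\phi$, $N(k)$) is built to support, and it is almost certainly the argument in \cite{ZX}.
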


(4)
Denote
\begin{equation}\label{a3.11}
J_3'=\ol{n_2+1,n+1},\quad J'_1=\ol{-1,n_1}
\end{equation}Set
 \begin{equation}\label{a3.17}
z_{n+1,0}=0,\;z_{n+1,i}=x_i,\;z_{j,0}=y_j\qquad\for\;\;i\in J_1,\;j \in J_3.
 \end{equation}
Then
\begin{equation}\msr C =\mbb F[z_{j,i}\mid j\in \ol{n_2+1,n+1},\;i\in \ol{0,n_1},\;(j,i)\neq (n+1,0)]\end{equation}
and
\begin{equation}
\msr R_3=\left\la \left|\begin{array}{ccc}
z_{j_1,i_1}  & z_{j_1,i_2}& z_{j_1,i_3}\\
z_{j_2,i_1} & z_{j_2,i_2}& z_{j_2,i_3}\\
z_{j_3,i_1} & z_{j_3,i_2}& z_{j_3,i_3}
\end{array}\right|\;\mid\;i_1,i_2,i_3\in J_1',\;j_1,j_2,j_3\in J_3'\right\ra\end{equation}
be an ideal of $\msr C$.

Then Lemma 3.3 in \cite{ZX} is shown below.
\begin{lemma}
\label{lem:a3.4}
We have $\ker \phi |_{\msr C }=\msr R_3$.\end{lemma}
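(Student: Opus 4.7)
The plan is to prove $\ker\phi|_{\msr C}=\msr R_3$ by establishing each containment separately.

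For the easy containment $\msr R_3\subseteq\ker\phi|_{\msr C}$, I would exhibit an explicit rank-$2$ factorization of the matrix $M=(\phi(z_{j,i}))_{(j,i)\in J_3'\times J_1'}$. Define the $|J_3'|\times 2$ matrix $U$ with rows $\tilde u_j=(x_j,y_j)$ for $j\in J_3$ and $\tilde u_{n+1}=(1,0)$, and the $2\times|J_1'|$ matrix $V$ with columns $\tilde v_i=(x_i,-y_i)^{\top}$ for $i\in J_1$ and $\tilde v_0=(0,1)^{\top}$. A direct case check against (\ref{a3.17}) verifies $\phi(z_{j,i})=\tilde u_j\tilde v_i$ for every admissible $(j,i)$; in particular $\tilde u_{n+1}\tilde v_0=0$ matches the convention $z_{n+1,0}=0$. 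Hence $M=UV$ has rank at most $2$ over $\msr A$, so every $3\times 3$ minor of $M$ vanishes identically, and all generators of $\msr R_3$ lie in $\ker\phi|_{\msr C}$.

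The reverse containment $\ker\phi|_{\msr C}\subseteq\msr R_3$ is the substantive part. Since $\phi$ descends to $\ol{\phi}:\msr C/\msr R_3\to\msr A$, it suffices to show $\ol{\phi}$ is injective. Let $\msr B$ denote the set of monomials $m=\prod_t z_{j_t,i_t}\in\msr C$ whose support $\msr I(m)$ is $3$-chain-free. I would first argue that $\msr B$ spans $\msr C/\msr R_3$ over $\mbb F$: if a monomial $m$ has a $3$-chain $(j_1,i_1)\prec(j_2,i_2)\prec(j_3,i_3)$ in $\msr I(m)$, the associated $3\times 3$ determinant relation in $\msr R_3$ rewrites the diagonal product $z_{j_1,i_1}z_{j_2,i_2}z_{j_3,i_3}$ as a signed sum of the five other permutation products, each supported on a strictly earlier multiset under a fixed lex-type order on subsets of $J_3'\times J_1'$; induction on this well-founded order then reduces $m$ modulo $\msr R_3$ to a linear combination of elements of $\msr B$.

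The main step is to show that $\{\phi(m):m\in\msr B\}$ is $\mbb F$-linearly independent in $\msr A$. The natural approach is to exploit the $\mbb Z^2$-bigrading of $\msr A$ by $(\deg_x,\deg_y)$: each mixed factor $\phi(z_{j,i})=x_ix_j-y_iy_j$ decomposes into a $(2,0)$-piece and a $(0,2)$-piece, while boundary factors contribute in fixed bidegrees, so $\phi(m)$ spreads across the bidegrees $(2s,2(d-s))$ for $s\in\{0,\dots,d\}$, where $d$ is the number of mixed factors of $m$. By inspecting each bigraded component and the monomials appearing in it, one recovers from $\phi(m)$ both the marginal multisets $\{i_t\}$ and $\{j_t\}$ and the pairing $t\mapsto(j_t,i_t)$; the $3$-chain-free hypothesis is exactly what makes this reconstruction unambiguous, via the same RSK-type bijection between $3$-chain-free supports and pairs of semistandard tableaux that underlies the enumerative identities in~\cite{AJ94, Ku96}. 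A standard leading-term induction on a suitable refinement of the bigrading then forces every coefficient in a hypothetical relation $\sum_{m\in\msr B}c_m\phi(m)=0$ to vanish, giving injectivity of $\ol{\phi}$ and hence $\ker\phi|_{\msr C}\subseteq\msr R_3$. The main obstacle lies precisely in this bijective reconstruction step, where the $3$-chain-free hypothesis is used essentially; Steps~1 and~2 reduce to case checking and a routine straightening induction.
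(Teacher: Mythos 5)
The paper cites this lemma from \cite{ZX} and does not reprove it, so there is no in-paper proof to compare against; I will assess your argument on its own merits.

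Your first step is correct and clean: the rank-$2$ factorization $M=UV$ with the boundary row $\tilde u_{n+1}=(1,0)$ and column $\tilde v_0=(0,1)^\top$ reproduces $\phi(z_{j,i})$ exactly in all four cases of (\ref{a3.17}), so every $3\times3$ minor lies in $\ker\phi|_{\msr C}$. Your second step (spanning of $\msr C/\msr R_3$ by $3$-chain-free monomials via straightening) is also sound, and it is literally the same induction on a lexicographic order of index multisets that this paper carries out in case (3) of the proof of Proposition~\ref{propf2.2}; the rewriting of a $3$-chain diagonal as a signed sum of the five other permutation products, each with strictly larger $\msr I$-support, is exactly the mechanism there.

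The gap is in your third step, which is the substantive one. Two issues. First, a small inaccuracy: the bidegrees of $\phi(m)$ are $(a+2s,\,b+2(d-s))$, not $(2s,2(d-s))$, where $a$ and $b$ count the boundary factors $z_{n+1,i}$ and $z_{j,0}$ respectively; one needs the natural $\mathbb{Z}^3$-grading of $\msr C$ by $(a,b,d)$ to reduce a relation $\sum c_m\phi(m)=0$ to one in which all $m$ share the same triple before the bidegree argument on $\msr A$ separates anything. Second, and more seriously: the corner bidegree component of $\phi(m)$ records only the marginal multisets of row and column indices, not the pairing, so two $3$-chain-free monomials with identical marginals but different pairings have identical corner monomials, and a naive leading-term argument (e.g.\ a term order putting $x$-variables above $y$-variables) collapses on exactly these pairs. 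Your resolution --- ``the $3$-chain-free hypothesis is exactly what makes this reconstruction unambiguous, via the same RSK-type bijection \dots that underlies the enumerative identities'' --- is a genuine leap: the RSK correspondence gives a bijection of combinatorial objects, which explains the Hilbert-function counts in \cite{AJ94,Ku96}, but does not by itself produce the $\mathbb{F}$-linear independence of $\{\phi(m)\}$. What is actually needed is the standard-bitableau independence theorem (De Concini--Eisenbud--Procesi style): one chooses a total order on monomials of $\msr A$ under which the ``diagonal bitableau term'' of $\phi(m)$ is extremal and distinct across $3$-chain-free $m$, and one also has to verify that the specialization imposed by the constant border (the entries $\tilde u_{n+1}$, $\tilde v_0$ are constants, not new indeterminates) does not introduce collisions among these leading terms. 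You allude to ``a standard leading-term induction on a suitable refinement of the bigrading,'' but neither the order nor the verification that it resolves the pairing is given, and this is exactly where the proof lives. Until that is filled in, step~3 is a plausible plan rather than a proof.
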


(5) The following lemma is actually a corollary of Lemma 3.1 in \cite{ZX}.
\begin{lemma}
\label{lem:a5.2}
For $c_1,c_2,n_1,n \in \mbb N$, $c_1,c_2>0$ and $1<n_1<n$, we define an associative algebra homomorphism
$\varphi_{x,y}: {\msr A}_{c_1,c_2}\rta  \msr A $ by
\begin{equation}
\varphi_{x,y}(x_i)=y_i,\;\varphi_{x,y}(x_{i'})=x_{i'},\;\varphi_{x,y}(y_j)=y_j,\;\;\;\for \;
i\in \ol{1,n_1},i'\in \ol{1},j\in \ol{1,n}.
\end{equation}
Then
\begin{equation}
\ker \varphi_{x,y}=  \left\la x_i y_j-x_j y_i \vert i,j \in \ol{1,n}
\right\ra \cap {\msr A}_{c_1,c_2} .
\end{equation}
\end{lemma}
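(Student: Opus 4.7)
\textbf{Proof plan for Lemma \ref{lem:a5.2}:} The plan is to establish the two containments separately. For the inclusion of the ideal into the kernel, I will directly verify that $\varphi_{x,y}(x_iy_j - x_jy_i) = 0$ on generators. When $i,j \in \ol{1,n_1}$, both $x$-variables become $y$-variables and the binomial vanishes by antisymmetry; for index pairs where $\varphi_{x,y}$ fixes the $x$-variable, the relation still maps to zero because $\varphi_{x,y}$ also fixes every $y_j$, and any residual non-vanishing pieces can be rewritten using the homomorphism property to land in the ideal generated by the displayed generators. Taking $\mbb F$-linear combinations and multiplying by arbitrary polynomials, and then intersecting with the bidegree subspace $\msr A_{c_1,c_2}$, yields the desired containment.

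For the reverse inclusion, the plan is to reduce any $f \in \ker\varphi_{x,y}\cap \msr A_{c_1,c_2}$ modulo the ideal to a canonical form and then force this form to vanish. The relations $x_iy_j \equiv x_jy_i \pmod{\la x_py_q - x_qy_p\ra}$ for indices in $\ol{1,n_1}$ are the classical $2\times 2$ minors of a generic matrix, and with respect to a lexicographic term order in which $x_i > y_i$ for $i \in \ol{1,n_1}$ they form a Grobner basis (their $S$-polynomials reduce to zero by the standard Plücker calculation). Consequently every residue class modulo the ideal has a unique representative that is an $\mbb F$-linear combination of standard monomials, i.e., monomials avoiding the leading patterns $x_iy_j$ with $i<j$ in $\ol{1,n_1}$. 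I then plan to argue that $\varphi_{x,y}$ is injective on the span of such standard monomials intersected with $\msr A_{c_1,c_2}$, which reduces the whole kernel to the ideal.

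The main obstacle will be verifying the injectivity of $\varphi_{x,y}$ on the space of standard monomials. Since $\varphi_{x,y}$ collapses $x_i$ and $y_i$ for $i \in \ol{1,n_1}$, distinct standard monomials could a priori collide in the image. I plan to handle this by observing that the forbidden-pattern condition fixes, for each standard monomial, a unique way of distributing the $x$-indices against the $y$-indices in $\ol{1,n_1}$; concretely, given an image monomial $\prod_{i\in \ol{1,n_1}} y_i^{\gamma_i}\cdot(\text{untouched factors})$ together with the original $x$-degree $c_1$ and the specification of which $x_{i'}$ variables in the fixed range appear, the preimage among standard monomials is uniquely determined. Injectivity on standard monomials then follows, completing the proof. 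As a complementary route, Lemma \ref{lem3.1} of the excerpt gives an operator-theoretic realization of precisely this kind of reduction via products of $E_{i,n_1+1}$ and $E_{n_1+1,j}$, which can be invoked to bypass the Grobner basis language if a more intrinsic derivation is preferred.
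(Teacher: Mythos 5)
Your proposed route is genuinely different from the paper's. The paper reduces any $f\in\ker\varphi_{x,y}$ to the case $f\in\mathbb F[x_1,\dots,x_{n_1},y_1,\dots,y_{n_1}]$ by grouping monomials with common factors in the untouched variables, then introduces auxiliary variables $u_{1,j},u_{2,j},v_1,v_2$ and a second homomorphism $\varphi_u(u_{1,j})=v_1y_j$, $\varphi_u(u_{2,j})=v_2y_j$ so that $v_1^{c_1}v_2^{c_2}\varphi_{x,y}(f)$ equals $\varphi_u$ applied to the lift of $f$; vanishing then transfers the problem to $\ker\varphi_u$, which is identified with the determinantal ideal by citing Lemma 3.1 of \cite{ZX}. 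You instead invoke the Gr\"obner-basis theory of $2\times 2$ minors under a diagonal term order and prove injectivity of $\varphi_{x,y}$ on standard monomials. That second route is valid and more self-contained, and your combinatorial argument for injectivity on standard monomials -- that the threshold index $i_0=\min\{i:a_i>0\}$ and hence the split $\gamma_i=a_i+b_i$ is forced by $c_1$ -- is correct in substance even if sketched tersely.

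The genuine gap is in your forward inclusion. You write that for index pairs where $\varphi_{x,y}$ fixes the $x$-variable ``the relation still maps to zero because $\varphi_{x,y}$ also fixes every $y_j$,'' and that residual pieces ``land in the ideal.'' Neither claim is correct: if $i,j>n_1$ then $\varphi_{x,y}$ fixes $x_iy_j-x_jy_i$ entirely, so it maps to a nonzero element, and if $i\le n_1<j$ then $\varphi_{x,y}(x_iy_j-x_jy_i)=y_iy_j-x_jy_i\ne 0$. Moreover ``lands in the ideal'' is irrelevant to the inclusion ideal~$\subseteq$~kernel. The reason you ran into trouble is that the lemma as printed contains a typo: the generator index set $\overline{1,n}$ should read $\overline{1,n_1}$ (the range on which $\varphi_{x,y}$ actually folds $x_i\mapsto y_i$). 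One sees this from the paper's own proof, which reduces to $f\in\mathbb F[x_i,y_i\mid i\le n_1]$, and from the only application (around (\ref{a3.68})) where the resulting ideal is generated by $x_ty_{t'}-x_{t'}y_t$ with $t,t'\in J_3$, i.e., only over the folded range. With the corrected index set the forward inclusion is one line: $\varphi_{x,y}(x_iy_j-x_jy_i)=y_iy_j-y_jy_i=0$. You should flag the typo rather than try to force a proof of the false statement. Separately, your ``complementary route'' via Lemma \ref{lem3.1} does not work: that is an operator identity for the $T$-map, while the citation in the paper's proof is to a different Lemma 3.1 of \cite{ZX} identifying $\ker\varphi_u$; the two are unrelated, so that remark should be dropped.
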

\begin{proof}
Suppose that $f=f(x_1,\cdots,x_n,y_1,\cdots,y_n) \in \ker \varphi_{x,y}$.
Write
\begin{equation}
f=\sum_{i=1}^m f_i ,\;\;f_i=g_{i }h_{i }f'_i
\end{equation}
where $f_i $ are monomials in $\msr A$, $g_i$ are monomials in $\mbb F[x_1,\cdots, x_{n_1}]$,  $h_i$ are monomials in $\mbb F[y_1,\cdots ,y_{n_1}]$ and $f'_i \in \mbb F[x_j,y_j \vert n_1<j\leq n]$.
Denote the distinct elements in $\{f_i' \vert  1\leq i \leq m\}$ by $\{F_1,\cdots,F_{m'}\}$.
 Then
\begin{equation}
f=\sum_{j=1}^{m'}\sum_{\{i: f'_i=F_j\}}f_i
\end{equation}
Since $\varphi_{x,y} (f)=0$, we have
\begin{equation}
\sum_{\{i: f'_i=F_j\}}\varphi_{x,y}( f_i )=
\varphi_{x,y}\left( \sum_{\{i: f'_i=F_j\}}   g_i h_i\right) F_j =0
\end{equation}
 for $j\in \ol{1,m'}$.
Hence we may assume that $f_i'=1$ for $i \in \ol{1,n}$.

Denote $U=\{u_{1,1},\cdots,u_{1,n_1},u_{2,1},\cdots,u_{2,n_1}\}$ as a set of $2n_1$ variables and $\{v_1,v_2\}$ as a set of $2$ variables.
Define associative algebra homomorphisms $\vf_u: \mbb F[U]\rta \mbb F[ v_1,v_2,y_i\vert i\in \ol{1,n_1}]$ by
\begin{equation}
\vf_u(u_{1,j})=v_1 y_j ,\;\vf_u(u_{2,j})=v_2 y_j  \qquad\for\;\; j\in \ol{1,n_1}.\end{equation}

Then we have
\begin{eqnarray}
& &v_1^{c_1}v_2^{c_2} \vf_{x,y} (f(x_1,\cdots,x_{n_1},y_1,\cdots,y_{n_1}))
\nonumber\\& =&
v_1^{c_1}v_2^{c_2} \vf_{x,y} \left(\sum_{i=1}^{m }g(x_1,\cdots,x_{n_1})h(y_1,\cdots,y_{n_1})\right)
\nonumber\\& =&
v_1^{c_1}v_2^{c_2}   \left(\sum_{i=1}^{m }g(y_1,\cdots,y_{n_1})h(y_1,\cdots,y_{n_1})\right)
\nonumber\\& =&
 \sum_{i=1}^{m }g(v_1y_1,\cdots,v_1y_{n_1})h(v_2y_1,\cdots,v_2y_{n_1})
\nonumber\\& =&
  \vf_u \left(\sum_{i=1}^{m }g(u_{1,1},\cdots,u_{1,{n_1}})h(u_{2,1},\cdots,u_{2,{n_1}})\right)
\nonumber\\& =&f(u_{1,1},\cdots,u_{1,{n_1}},u_{2,1},\cdots,u_{2,{n_1}})
\end{eqnarray}
According to Lemma 3.1 in \cite{ZX},
\begin{equation}
f \in \left\la x_i y_j-x_j y_i \vert i,j \in \ol{1,n}
\right\ra \cap {\msr A}_{c_1,c_2} .\end{equation}

\end{proof}

\begin{proposition} \label{prop1}
For $k\in \mbb N$, we have $T(W_k) \oplus M_{k-1}=M_k$ and $\phi(S_k)$ is a linearly independent set.
\end{proposition}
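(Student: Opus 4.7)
The plan is to establish both claims—linear independence of $\phi(S_k)$ and the decomposition $T(W_k)\oplus M_{k-1}=M_k$—by recognizing $S_k$ as the set of standard monomials for the determinantal straightening law associated to the ideal $\msr R_3=\ker\phi|_{\msr C}$ of Lemma \ref{lem:a3.4}, and then transporting that straightening through the operator $T$.

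For the linear independence of $\phi(S_k)$: every $f=gh\in S_k$ factors with $g$ a monomial in $\{x_i\colon i\in\ol{1,n}\}\cup\{y_j\colon j\in\ol{n_1+1,n}\}$ and $h$ a monomial in $Z$. The variables $x_i,y_i$ with $i\in J_2$ appear only in $g$ and are inert under $\phi$, so after grouping $S_k$ by its $J_2$-exponent vector the remaining data reside in (an extension of) the ring $\msr C$ of Lemma \ref{lem:a3.4}. There, $\ker\phi=\msr R_3$ is generated by the $3\times 3$ minors $\Delta^{j_1,j_2,j_3}_{i_1,i_2,i_3}$, and the ``no $3$-chain'' condition on $\msr I(gh)$ is precisely the standard-monomial condition for this ideal. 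By the standard-monomial theorem for determinantal rings, such monomials descend to a basis of $\msr C/\msr R_3$ in each multidegree, so $\phi$ is injective on the span of $S_k$.

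For $M_k\subset T(W_k)+M_{k-1}$: by (\ref{a2.22}) a typical generator is $T(f)\phi(h)$ with $f\in N(k-r)$ and $h\in Z^{(r)}$. Using Lemmas \ref{lem3.1} and \ref{lem3.2}, one can absorb the powers of $x_{n_1+1}$ and $y_{n_1+1}$ occurring in $f$ and rewrite $T(f)\phi(h)$, modulo $M_{k-1}$, in the form $T(g\phi(h))$ with $g$ a monomial whose $(n_1+1)$-support matches the ``standard'' shape. If $\msr I(gh)$ contains no $3$-chain then $gh\in S_k$ and the generator already lies in $T(W_k)$. Otherwise the $3\times 3$ determinantal relation $\Delta^{j_1,j_2,j_3}_{i_1,i_2,i_3}\in\ker\phi$ rewrites the offending factor as an $\mbb F$-linear combination of five products each strictly later in the lexical order (\ref{f2.92}); induction on that order terminates inside $T(W_k)+M_{k-1}$.

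Finally, for $T(W_k)\cap M_{k-1}=0$: I would invoke the $\mfk d$-function and P-order machinery. Suppose $0\neq u=\sum_{f=gh\in S_k}c_f\,T(\phi(f))\in M_{k-1}$; since $T$ preserves $\mfk d$ and $\mfk d(g\phi(h))=k+r$ with $r=\deg h$, the sum can be separated by $r$, and for each fixed $r$ the Part~1 linear independence of $\phi(S_k)$ prevents cancellation of the leading $\mfk d$-monomials. This forces $u\notin TN_{k-1}$ with $\mfk d(u)=k+\mathrm{ord}_k(u)$, yet by Lemma \ref{lem3.3} any element of $M_{k-1}$ has $\mfk d$-value bounded by $(k-1)+\mathrm{ord}_{k-1}(u)$, producing a contradiction once the P-orders are compared. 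The main obstacle is precisely reconciling the $T$-machinery with the straightening: because $\phi(h)$ is a signed sum rather than a single monomial, the expansion of $T(g\phi(h))$ produces many monomial contributions, and isolating the leading $\mfk d$-term while verifying that it does not cancel across the various $f\in S_k$ is where the standard-monomial basis property from Part~1 is fed back into the direct-sum argument.
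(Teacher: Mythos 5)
Your Part~2 (reducing $M_k$ modulo $M_{k-1}$ via the vanishing $3\times 3$ minors of $\phi(\mathbf{A})$ and induction on the lexical order) is essentially the same straightening step the paper uses, and your Part~1 is a plausible route, though the claim that ``no $3$-chain'' is \emph{precisely} the standard-monomial condition for $\msr R_3$ slides over the fact that $S_k$ also carries the variables $x_j$ with $j\in J_3$. These live outside the ring $\msr C$ of Lemma \ref{lem:a3.4} and correspond to an extra column $(j,-1)$ in $\msr I(gh)$, with the entry $(n+1,-1)$ of $\mathbf{A}$ being the constant $-1$; so the relevant determinantal ideal is not $\msr R_3$ but an extension whose straightening needs to be justified.

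The genuine gap is Part~3, and you already flag it yourself. Showing $T(W_k)\cap M_{k-1}=0$ is the heart of the proposition, and the plan ``separate by $r$ and use linear independence of $\phi(S_k)$ to prevent cancellation of leading $\mfk d$-monomials'' does not close. The difficulty is that for different $f=gh\in S_k$ the operator $T$ produces large families of monomials of the same top $\mfk d$-degree, and linear independence of $\phi(S_k)$ alone does not forbid those leading terms from cancelling across summands; $\mfk d$ is too coarse. The paper's proof (which you should compare) supplies exactly what is missing: (i) it first uses Lemmas \ref{lem3.1} and \ref{lem3.2} to rewrite $T(\phi(f))$ as iterated actions of $E_{\cdot,\cdot}$'s, giving an explicit handle on the leading part $H$ of (\ref{a3.53}); (ii) it introduces the \emph{refinement} $\mfk r(F,\iota')=\min\{\deg_{x_{J_3}}(F_i)+\deg_{y_{J_1}}(F_i):\mfk d(F_i)=\iota'\}$, which among monomials of top $\mfk d$-degree distinguishes the contributions of $f$ from the shape forced on an element of $M_{k-1}$ by Lemma \ref{lem3.3}, yielding the contradiction $\mfk r(T\phi(f),k+\jmath_\iota)\leq\jmath_\iota<\jmath_\iota+1$; (iii) it then extracts the coefficient-sum $\sum_i a_{i,r_0}g_{i,r_0}h_{i,r_0}$ via the evaluation $\psi$ (sending $x_j\mapsto y_j$ for $j\in J_3$) combined with Lemma \ref{lem:a5.2}, and uses the no-$3$-chain condition to show this sum vanishes, which finally kills $f$; and (iv) it splits on $\ell_1\leq 0$ vs.\ $\ell_1>0$, the latter case requiring yet another invariant $\mfk m=\deg_{x_{J_1}}+\deg_{y_{J_1}}$ in place of $\mfk r$. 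None of these ingredients appear in your outline, so as written the proposal does not establish the direct-sum decomposition; you would need to reproduce something like the $\mfk r$/$\mfk m$ refinement and the $\psi$-extraction step before the P-order comparison can be made to bite.
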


\begin{proof}
Denote
\begin{eqnarray}
\mathbf{A} &=&
 \begin{pmatrix}
z_{n_2+1,n_1} & \cdots &z_{n ,n_1} &z_{n +1,n_1}   \\
z_{n_2+1,n_1-1} & \cdots &z_{n ,n_1-1} &z_{n +1,n_1-1}   \\
\vdots  & \vdots  & \vdots  & \vdots    \\
z_{n_2+1,1} & \cdots &z_{n ,1} &z_{n +1,1}  \\
z_{n_2+1,0} & \cdots &z_{n ,0} &z_{n +1,0}  \\
x_{n_2+1} & \cdots &x_n & -1
\end{pmatrix}\nonumber \\
&=&
 \begin{pmatrix}
z_{n_2+1,n_1} & \cdots &z_{n ,n_1} &x_{n_1}   \\
z_{n_2+1,n_1-1} & \cdots &z_{n ,n_1-1} &x_{ n_1-1}   \\
\vdots  & \vdots  & \vdots  & \vdots    \\
z_{n_2+1,1} & \cdots &z_{n ,1} &x_{ 1}  \\
y_{n_2+1 } & \cdots &y_{n  } & 0  \\
x_{n_2+1} & \cdots &x_n & -1
\end{pmatrix}.
\end{eqnarray}
It is easy to verify that the 3-minors of $\phi(\mathbf{A})$ are all zero. Using the method in the proof of Proposition \ref{propf2.2} (or the proof of Lemma 3.3 in \cite{ZX}), we can show that $T(W_k) + M_{k-1} = M_k$.

Next, we will prove that $M_{k-1} \cap T(W_k) =\emptyset$ and $ \phi(S_k)$ is a basis of $W_k$.

We assume that exists $f \in \text{Span}\{S_k\}$ and $T(\phi(f)) \in M_{k-1}$.
Write \begin{eqnarray}\label{a3.39}
f=\sum_{r=0}^{r_0}  \sum_{i=1}^{N_r}a_{i,r} g_{i,r}h_{i,r}
\end{eqnarray}
where $g_{i,r}h_{i,r} \in S_k$, $g_{i,r}= \prod_{i\in \ol{1,n} }\prod_{ j\in \ol{n_1+1,n}} x_i^{\al_i} y_j^{\be_j} \in N(k-r) $, $h_{i,r} \in Z^{(r)}$ and there are no 3-chains in $\msr I(g_{i,r}h_{i,r})$, for $i\in \ol{1,N_r}$, $r\in \ol{0,r_0}$, $r_0= \mbox{ord}_k(T\phi(f)) \in \ol{0,k}$.
For convenience, we assume that $g_{i,r}h_{i,r}$ are distinct for different $(i,r)$ and $a_{i,r}\neq 0$ for each $(i,r)$.

We may assme that $g_{i,r} \in \mathbb{F}[x_i,y_j \vert i,j\in \ol{1,n}\setminus J_2]$. Otherwise, we  set
\begin{eqnarray}
g_{i,r}=u_{i,r}v_{i,r}
\end{eqnarray}
where $u_{i,r} \in \mathbb{F}[x_i,y_j \vert i,j\in \ol{1,n}\setminus J_2]$ and $v_{i,r}\in \mathbb{F}[x_i,y_j \vert i,j\in J_2]$.
Since $T\phi(f) \in M_{k-1}$, we have
\begin{eqnarray}
\phi(f)=\sum_{r=0}^{r'_0}  \sum_{i=1}^{N'_r} a'_{i,r}g'_{i,r}\phi(h'_{i,r})=\sum_{r=0}^{r'_0}  \sum_{i=1}^{N'_r} a'_{i,r}u'_{i,r}v'_{i,r}\phi(h'_{i,r})
\end{eqnarray}
where $g'_{i,r} \in N(k-1-r)$, $h'_{i,r} \in Z^{(r)}$, $u'_{i,r} \in \mathbb{F}[x_i,y_j \vert i,j\in \ol{1,n}\setminus J_2]$ and $v'_{i,r}\in \mathbb{F}[x_i,y_j \vert i,j\in J_2]$ for $r'_0\in \ol{1,k-1}$.

Set $\msr T=\{(i,r) \vert v_{i,r}=v_{1,r_0}\}$ and $\msr T'=\{(i,r) \vert v'_{i,r}=v_{1,r_0}\}$, then
\begin{eqnarray}
\sum_{(i,r)\in  \msr T'} a'_{i,r}u'_{i,r}\phi(h'_{i,r})=\sum_{(i,r)\in \msr T}a_{i,r} u_{i,r}\phi(h_{i,r})
\end{eqnarray}
For $j=1,2,3$ and $  x^\al y^\be \in \msr A$, we denote
\begin{equation}
\deg_{x_{_{J_j}}}( x^\al y^\be )= \sum_{i\in J_j}\al_{i}, \qquad
\deg_{y_{_{J_j}}}( x^\al y^\be )= \sum_{i\in J_j}\be_{i}\;. \end{equation}

Denote $\deg_{x_{_{J_2}}}(v_{1,r_0}) =k_{12}$, $\deg_{y_{_{J_2}}}(v_{1,r_0})=k_{22}$, and we set
\begin{eqnarray}
\ol{f}=\sum_{(i,r)\in \msr T} a_{i,r}u_{i,r} h_{i,r} .
\end{eqnarray}
Then $T(\phi(\ol{f}))\in \msr H_{\la \ell_1-k_{12},\ell_2-k_{22} \ra}$,
\begin{eqnarray}
\mfk d_{\la \ell_1-k_{12},\ell_2-k_{22} \ra}^{n_1,n_2,n} (u_{i,r})=\mfk d_{\la \ell_1,\ell_2 \ra}^{n_1,n_2,n} (u_{i,r})-k_{12}-k_{22}
\end{eqnarray}
for $(i,r) \in \msr T$.

Moreover,
\begin{eqnarray}
T\left(
\sum_{(i,r)\in \msr T'} a'_{i,r} u'_{i,r}\phi(h'_{i,r})\right) \in (\msr H_{\la \ell_1-k_{12},\ell_2-k_{22} \ra})_{k-1-k_{12}-k_{22}} .
\end{eqnarray}
Note that $g_{i,r} h_{i,r}=g_{i',r'} h_{i',r'}$ can be deduced from $u_{i,r} h_{i,r}=u_{i',r'} h_{i',r'}$.
By induction on $k$, we have $ \ol{f} =0$ and $a_{i,r}=0$ for each $(i,r) \in \msr T$ if $k_{12}+k_{22}>0$.

Since $ \ol{f} =0$ is independent of the selection of $v_{1,r_0}$, we have $a_{i,r}=0$ for $i\in \ol{1,N_r}$ and $r\in \ol{0,r_0}$. In other words, we can prove that $f\in M_{k-1}+T(W_k)$ and $M_{k-1}+T(W_k) = M_k$.

Hence, we assme that $g_{i,r} \in \mathbb{F}[x_i,y_j \vert i,j\in \ol{1,n}\setminus J_2]$ in the following.

(a) Suppose that $\ell_1\leq 0$.

We set
\begin{equation}
g_{i,r}=\left( \prod_{j=1}^{k_{13}}  x_{s_{i,r,j}} x_{t_{i,r,j}}\right)v_{i,r}
\end{equation}
and
\begin{equation}
h_{i,r}= \prod_{j=1}^{r}  z_{p_{i,r,j},q_{i,r,j}} ,
\end{equation}
where $s_{i,r,j}\in J_3$, $t_{i,r,j} \in J_1$, $(p_{i,r,j},q_{i,r,j})\in J_3\times J_1$,$v_{i,r}  \in M_0 \cap \mathbb{F}[x_i,y_j|i\in J_1,j \in J_3]$ and $2k_{13}+r=k$.

According to Lemma \ref{lem3.1}, we have
\begin{eqnarray}
& &T(\phi(f)) \nonumber\\
&=&T\left( \sum_{r=0}^{r_0}   \sum_{i=1}^{N_r} a_{i,r}g_{i,r}\phi(h_{i,r})  \right)\nonumber\\
&=&T\left(
\sum_{r=0}^{r_0}  \sum_{i=1}^{N_r} \left(\prod_{j'=1}^{r}  \phi\left(z_{p_{i,r,j'},q_{i,r,j'}}   \right) \prod_{j=1}^{k_{13}}  x_{s_{i,r,j}} x_{t_{i,r,j}}\right)a_{i,r} v_{i,r}
\right)\nonumber\\
&=&\sum_{r=0}^{r_0}  \sum_{i=1}^{N_r} \left( \frac{(-1)^{k_{13}}}{k_{13}!}
\prod_{j'=1}^{r} E_{p_{i,r,j'},q_{i,r,j'}}
\left( \prod_{j=1}^{k_{13}} E_{s_{i,r,j},n_1+1} \right)
\left( \prod_{j=1}^{k_{13}} E_{n_1+1,t_{i,r,j}} \right)
a_{i,r} v_{i,r}
\right) \nonumber\\
&\in & M_{k-1}
\end{eqnarray}
We set
\begin{equation}\label{a3.53}
H=\sum_{r=0}^{r_0}  \sum_{i=1}^{N_r} \left(\phi\left(\prod_{j'=1}^{r}  z_{p_{i,r,j'},q_{i,r,j'}}   \right)\left( \prod_{j=1}^{k_{13}}  y_{s_{i,r,j}}y_{n_1+1} x_{t_{i,r,j}}x_{n_1+1}\right)
a_{i,r} v_{i,r} \right)
\end{equation}

Suppose that the set of distinct elements of $\{ r \vert \ \text{exists } i \text{ such that } a_{i,r}\neq 0  \}$ is $\{\jmath_1,\dots,\jmath_N\}$, and $\jmath_1<\jmath_2<\cdots<\jmath_N=r_0$. For $\iota\in \ol{1,N}$, we set
\begin{equation}
H_\iota=   \sum_{i=1}^{N_{\jmath_\iota}} \left(\phi\left(\prod_{j'=1}^{\jmath_\iota}  z_{p_{i,\jmath_\iota,j'},q_{i,\jmath_\iota,j'}}   \right)\left( \prod_{j=1}^{k_{13}}  y_{s_{i,\jmath_\iota,j}}y_{n_1+1} x_{t_{i,\jmath_\iota,j}}x_{n_1+1}\right)
a_{i,\jmath_\iota} v_{i,\jmath_\iota} \right)
\end{equation}

For $\iota' \in \mathbb{N}$ and $F=\sum_{i=1}^{m'} F_i\in \msr A$, where $F_i$ are distinct monomials in $\msr A$, we define
\begin{equation}
{\mfk r} (F,\iota')=\min \{ \deg_{x_{_{J_3}}}(F_i) + \deg_{y_{_{J_1}}} (F_i) \mid \mfk d (F_i)=\iota', i\in \ol{1,m'} \}.
\end{equation}
Then we have
\begin{equation} \label{a3.56}
{\mfk r} (T(\phi(f)),k+ \jmath_\iota) \leq {\mfk r}\left(  H_\iota , \mfk d (H_\iota) \right)=\jmath_\iota.
\end{equation}

Since $T(\phi(f))\in M_{k-1}$ and $T(\phi(f)) \notin TN_{k-1}$, according to Lemma \ref{lem3.3}, we have
\begin{equation}
{\mfk d} \left( T(\phi(f)) \right)= \mbox{ord}_{k-1}\left(T(\phi(f))\right) + k-1
\Rightarrow \mbox{ord}_{ k-1}\left(T(\phi(f))\right)  \geq 1+ \jmath_\iota.
\end{equation}
For $r= \mbox{ord}_{ k-1}\left( T(\phi(f)) \right)  \geq 1+ \jmath_\iota $ in this case, we can write
\begin{align}
 T(\phi(f))=\sum_{s_r=1}^{l_r} G_{r,s_r} G'_{r,s_r} +  \sum_{s_{r-1}=1}^{l_{r-1}} G_{r-1,s_{r-1}} G'_{r-1,s_{r-1}} + \cdots +G_{0},
\end{align}
where $ G_{i,s_i}  \in P^{\left(i \right)}$, $G'_{i,s_i}  \in TN(k-1-i)$ for $  s_i\in\ol{1,l_i} $, and $G_{0} \in TN_{k-1}$.
Note that for $  s_i\in\ol{1,l_i} $ and $i\in \ol{1,r}$,
\begin{equation}
{\mfk d} ( G_{i,s_i} G'_{i,s_i} ) = k-1-i+2i=k-1+i
\end{equation}
Since $\ol{h}_\iota\neq 0$ and ${\mfk d} (\ol{h}_\iota)=p+\jmath_\iota$, we have
\begin{equation}  \sum_{s_{\jmath_\iota+1}=1}^{l_{\jmath_\iota+1}} G_{\jmath_\iota+1,s_{\jmath_\iota+1}} G'_{\jmath_\iota+1,s_{\jmath_\iota+1}} \neq 0.\end{equation}
Therefore,
\begin{equation}
{\mfk r} ( T(\phi(f)),k+\jmath_\iota) = {\mfk r}\left( \sum_{s_{\jmath_\iota+1}=1}^{l_{\jmath_\iota+1}} G_{\jmath_\iota+1,s_{\jmath_\iota+1}} G'_{\jmath_\iota+1,s_{\jmath_\iota+1}} , k+ \jmath_\iota \right) \geq \jmath_\iota+1,
\end{equation}
which contradicts (\ref{a3.56}).
Hence $H_\iota=0$ for $w\in \ol{1,N}$.

In particular, $H_{N}=0$.
Then
\begin{eqnarray}
0=\sum_{i=1}^{N_{r_0}} \left(\phi\left(\prod_{j'=1}^{r_0}  z_{p_{i,r_0,j'},q_{i,r_0,j'}}   \right)\left( \prod_{j=1}^{k_{13}}  y_{s_{i,r_0,j}} x_{t_{i,r_0,j}} \right)
a_{i,r_0} v_{i,r_0} \right)
\end{eqnarray}

Define an associative algebra homomorphism
$\psi: \msr C\rta  \msr C $ by
\begin{eqnarray}& &\psi(z_{j,i})=z_{j,i},\;\;\psi(x_t)=x_t, \;\psi(x_j)=y_j,\;\psi(y_r)=y_r, \nonumber\\
& &\quad i\in J_1,\;j\in J_3,t \in \ol{1,n_2},r \in \ol{1,n}.\end{eqnarray}

For $i\in \ol{1,N_{r_0}}$, we set
\begin{eqnarray}
\ol{f_{i,r_0}}=  \left(\prod_{j'=1}^{r_0}  z_{p_{i,r_0,j'},q_{i,r_0,j'}} \right) \left( \prod_{j=1}^{k_{13}}  y_{s_{i,r_0,j}} x_{t_{i,r_0,j}} \right)
a_{i,r_0} v_{i,r_0}
\end{eqnarray}
Then
\begin{equation}\phi \left(\sum_{i=1}^{N_{r_0}}  \ol{f_{i,r_0}} \right)=0\end{equation}
and
\begin{equation}\psi \left( a_{i,r_0}   g_{i,r_0}  h_{i,r_0} \right)=\ol{f_{i,r_0}} .\end{equation}

Since there are no 3-chains in $\msr I(g_{i,r_0}h_{i,r_0})$ for each $i\in \ol{1,N_{r_0}}$, there are no 3-chains in $ \msr I( \ol{f_{i,r_0}})$ for each $i\in\ol{1,N_{r_0}}$.
Therefore,
\begin{equation} \sum_{i=1}^{N_{r_0}}  \ol{f_{i,r_0}} =0 \end{equation}
and
\begin{equation}  \label{a3.65}
\sum_{i=1}^{N_{r_0}} a_{i,r_0}   g_{i,r_0}  h_{i,r_0}  \in \ker \psi   .\end{equation}

For a monomial $x^\al y^\be \in \msr A$, we denote the degree in $\{x_{i_j} \mid   i_j \in J_j  \} $ as $\deg_{x_{_{J_j}}} \left( x^\al y^\be\right)$ for $j=1,2,3$.
Define $\mfk w(x^\al y^\be) =\deg_{x_{J_3}}(x^\al y^\be)+
\deg_{y_{J_3}}(x^\al y^\be)$.
For $F\in \ker \psi  $, we can decompose $F$ by different $\mfk w$:
\begin{equation}
F
=\sum_{\{s: \mfk w(F_{s,t})=w_s\}}\sum_{t} c_{s,t} F_{s,t},\end{equation}
where $F_{s,t}$ are monomials with $\mfk w(F_{s,t})=w_s$ and $c_{s,t} \in \mbb F$ for each $s,t$.

Thus $\psi (F_{s,t}) $ are also monomials and $\mfk w \left( \psi(F_{s,t}) \right)=w_s$ for each $s,t$. Therefore, for a given $s$, we have
\begin{eqnarray}\label{a3.68}
 \sum_{t} c_{s,t} F_{s,t}   \in
\la x_{t}y_{t'}-x_{t'}y_{t} \vert\; t,t'\in J_3\ra
\end{eqnarray}
by Lemma \ref{lem:a5.2}.
Hence,
\begin{equation}\label{e3.59}
\sum_{i=1}^{N_{r_0}} a_{i,r_0}   g_{i,r_0}  h_{i,r_0}  \in \la x_{t}y_{t'}-x_{t'}y_{t} \vert\; t, t'\in J_3\ra.
\end{equation}

There do not exist $i_3,j_3 \in J_3$ such that $i_3<j_3$ and $ x_{i_3}y_{j_3} \mid g_{i,r}$.
Otherwise, there exists $i_1\in J_1$ such that $ x_{i_1} \mid g_{i,r}$ because of $\ell_1 \leq 0$. Then
$(i_3,-1)\prec   (j_3,0) \prec (n+1,i_1)$ is a 3-chain in $ \msr I (g_{i,r}h_{i,r} )$, which contradicts (\ref{a3.2}).
Then $g_{i,r} \equiv g' \quad (\mbox{mod}\;  \la x_{t}y_{t'}-x_{t'}y_{t} \vert\; t\neq t'\in J_3\ra)$ forces $g'=g_{i,r} $.
According to (\ref{e3.59}), $\sum_{i=1}^{N_{r_0}} a_{i,r_0}   g_{i,r_0}  h_{i,r_0} =0$, whcih contradicts $a_{i,r}\neq 0$. Therefore $f=0$.

Since elements in $S_k$ are monomials, $\phi(S_k)$ is a linearly independent set.

(b) Consider $\ell_1 > 0$.

We set
\begin{equation}
g_{i,r}=\left( \prod_{j=1}^{k_{13}}  x_{s_{i,r,j}}  \right)v_{i,r}
\end{equation}
and
\begin{equation}
h_{i,r}= \prod_{j=1}^{r}  z_{p_{i,r,j},q_{i,r,j}} ,
\end{equation}
where $s_{i,r,j}\in J_3$, $(p_{i,r,j},q_{i,r,j})\in J_3\times J_1$,$v_{i,r}  \in   \mathbb{F}[x_i,y_j|i\in J_1,j \in J_3]$.
Moreover, we have $k_{13} \geq \ell_1$, $\deg_{x_{J_1}} (v_{i,r}) =k_{13}-\ell_1$, $k-r=2 k_{13} -\ell_1$,  $k\geq \ell_1$, $r\leq r_0 \leq k-\ell_1$.

According to Lemma \ref{lem3.2}, we have
\begin{eqnarray}
& &T(\phi(f)) \nonumber\\
&=&T\left( \sum_{r=0}^{r_0}   \sum_{i=1}^{N_r} a_{i,r}g_{i,r}\phi(h_{i,r})  \right)\nonumber\\
&=&T\left(
\sum_{r=0}^{r_0}  \sum_{i=1}^{N_r} \left(\prod_{j'=1}^{r}  \phi\left(z_{p_{i,r,j'},q_{i,r,j'}}   \right) \prod_{j=1}^{k_{13}}  x_{s_{i,r,j}}  \right)a_{i,r} v_{i,r}
\right)\nonumber\\
&=&\sum_{r=0}^{r_0}  \sum_{i=1}^{N_r} \left( \frac{a_{i,r}}{k_{13}!}
\prod_{j'=1}^{r} E_{p_{i,r,j'},q_{i,r,j'}}
\left( \prod_{j=1}^{k_{13}} E_{s_{i,r,j},n_1+1} \right)
(v_{i,r} x_{n_1+1}^{k_{13}}  )
\right) \nonumber\\
&\in & M_{k-1}
\end{eqnarray}
with $ v_{i,r} x_{n_1+1}^{k_{13}} \in M_{k_{13}-\ell_1} $.

By Lemma \ref{lem:a5.2}, for $r\in \ol{0,r_0}$, we also have
\begin{equation} \label{a3.83}
\sum_{i=1}^{N_{r }} a_{i,r }   g_{i,r }  h_{i,r_0}  \in \la x_{t}y_{t'}-x_{t'}y_{t} \vert\; t, t'\in J_3\ra .
\end{equation}
We may assume that
\begin{equation}
\sum_{i=1}^{N_{r }} a_{i,r }   g_{i,r }  h_{i,r }  =v_r+v'_r,
\end{equation}
where $\deg_{x_{J_1}}(v_r) =0$ and each monomial in $v'_r$ has a factor in $\{x_{i} \vert i\in J_1\}$.
According to (\ref{a3.83}), we may assume that
\begin{equation}
\sum_{i=1}^{N_{r }} a_{i,r }   g_{i,r }  h_{i,r }  =
\sum_{i}(x_{t_i}y_{t'_i}-x_{t'_i}y_{t_i}) u_i +
\sum_{j}(x_{s_j}y_{s'_j}-x_{s'_j}y_{s_j}) u'_j
\end{equation}
where $u_i,u'_j \in \msr C$ are monomials such that $\deg_{x_{J_1}}(u_i) = 0$ and $\deg_{x_{J_1}}(u'_j) >0$ for each $i,j$.

Then
\begin{equation}
v_r=\sum_{i}(x_{t_i}y_{t'_i}-x_{t'_i}y_{t_i}) u_i ,\;
v'_r=\sum_{j}(x_{s_j}y_{s'_j}-x_{s'_j}y_{s_j}) u'_j \in \ker \psi.
\end{equation}

Since there are no 3-chain in $\msr I(v'_0)$, we also have $v'_r=0$ just like what we discussed in the case of $\ell_1 \geq 0$. Then we have $f= \sum_{r=0}^{r_0} v_r$. In other words, $ g_{i,r } \in \mathbb{F}[x_i,y_i|i\in J_3]$ for $i\in \ol{1,N_r}$, $r\in \ol{0,r_0}$.

According to Lemma \ref{lem3.3}, we have
\begin{equation}
\mbox{ord}_{k-1}(T(\phi(f)))=\mfk d(T\phi(f)) -(k-1)=\mbox{ord}_{k }(T(\phi(f)))+1 =r_0+1.
\end{equation}

Thus we may assume that
\begin{align}
T(\phi(f))=\sum_{s_{r_0+1}=1}^{l_{r_0+1}}c_{r_0+1,s_{r_0+1}} G_{r_0+1,s_{r_0+1}} H_{r_0+1,s_{r_0+1}} +  \sum_{s_{r_0}=1}^{l_{r_0}}c_{r_0,s_{r_0}} G_{r_0,s_{r_0}} H_{r_0,s_{r_0}} + \cdots +G_{0}
\end{align}
with $c_{r,s_{r}} \in \mathbb{F}$, $G_{r,s_{r}} \in TN(k-1-r)$ and $H_{r,s_{r}} \in \phi( Z^{(r)} )$ for $r\in \ol{1,r_0+1}$, $s_r \in \ol{1,l_{r}}$ and $G_{0} \in N_{k-1}$.
Note that
\begin{equation}
\xi=\sum_{s_{r_0+1}=1}^{l_{r_0+1}}c_{r_0+1,s_{r_0+1}} G_{r_0+1,s_{r_0+1}} H_{r_0+1,s_{r_0+1}} \neq 0,
\end{equation}
otherwise $\mbox{ord}_{k-1}(T(\phi(f))) <r_0+1$.

For a polynomial $f\in \msr A$, we denote
\begin{equation}
\mfk m (f)=\deg_{x_{J_1}}(f)+\deg_{y_{J_1}}(f).
\end{equation}
Then $\mfk m\left( H_{r_0+1,s_{r_0+1}} \right) = r_0+1$ for $s_{r_0+1} \in \ol{1,l_{r_0+1}}$.
For each monomial $u\in \msr A$ in $\xi$, we have $\mfk m (u) =r_0+1$.
Hence we have
\begin{equation} \label{a3.91}
\mfk m\left( T\phi(f)\right) \geq \mfk m\left( \xi\right)=r_0+1.
\end{equation}

On the other hand, since $ g_{i,r } \in \mathbb{F}[x_i,y_i|i\in J_3]$ for $i\in \ol{1,N_r}$, $r\in \ol{0,r_0}$, we have
\begin{equation}
\mfk m\left(  T(g_{i,r}) \right) =0.
\end{equation}
Hence
\begin{eqnarray}
\mfk m\left( T\phi(f)\right)
& = & \mfk m\left(\sum_{r=0}^{r_0}  \sum_{i=1}^{N_r}a_{i,r} T(g_{i,r})\phi(h_{i,r}) \right)
\nonumber\\
& =& \max_{0\leq r\leq r_0,1\leq i\leq N_r} \left\{
\mfk m\left(   T(g_{i,r})\phi(h_{i,r}) \right)\right\}
\nonumber\\
& \leq& \max_{0\leq r\leq r_0,1\leq i\leq N_r}  \left\{ r \right\} \leq r_0,
\end{eqnarray}
which contradicts (\ref{a3.91}).
Therefore, $f=0$.
\end{proof}

\begin{corollary}\label{core4.1}
When $n_1<n_2<n$, $\ell_2 \leq 0$, we have $\mfk p_{M,V_0} (k) =|S_k|$.
\end{corollary}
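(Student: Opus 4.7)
The plan is to read the corollary off directly from Proposition \ref{prop1}, where all the substantive work has already been done. For sufficiently large $k$, the Hilbert polynomial satisfies $\mfk p_{M,V_0}(k) = \dim(M_k/M_{k-1})$ by definition, and the decomposition $M_k = T(W_k) \oplus M_{k-1}$ supplied by Proposition \ref{prop1} gives
\[
\dim(M_k/M_{k-1}) = \dim T(W_k).
\]
Hence it suffices to show that $\dim T(W_k) = |S_k|$.

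For this I would verify two facts. First, that $\phi$ is a bijection from $\text{Span}(S_k)$ onto $W_k$: this is immediate from the linear independence of $\phi(S_k)$ in Proposition \ref{prop1} together with the definition $W_k = \text{Span}\,\phi(S_k)$, yielding $\dim W_k = |S_k|$. Second, that the restriction of $T$ to $W_k$ is injective. Here I would invoke the stronger conclusion embedded in the proof of Proposition \ref{prop1}: if $f \in \text{Span}(S_k)$ and $T(\phi(f)) \in M_{k-1}$, then necessarily $f = 0$. Specializing to $T(\phi(f)) = 0 \in M_{k-1}$ forces $f = 0$, so $T$ has trivial kernel on $W_k$; combined with the bijectivity of $\phi$ on $\text{Span}(S_k)$, this gives $\dim T(W_k) = \dim W_k = |S_k|$.

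Assembling the two identities, $\mfk p_{M,V_0}(k) = \dim T(W_k) = |S_k|$ for $k$ sufficiently large. There is no real obstacle here---the corollary is essentially a bookkeeping consequence of Proposition \ref{prop1}, the only subtlety being the observation that the injectivity of $T$ on $W_k$ is already implicit in that proof (since the argument there controlled membership in $M_{k-1}$, which is more restrictive than membership in $\{0\}$).
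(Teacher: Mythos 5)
Your proposal is correct and follows essentially the same route as the paper, which presents Corollary \ref{core4.1} as an immediate consequence of Proposition \ref{prop1} without a separate proof. Your careful note that the injectivity of $T$ on $W_k$ is established in the \emph{proof} of Proposition \ref{prop1} (via the implication $T(\phi(f)) \in M_{k-1} \Rightarrow f = 0$, specialized to $T(\phi(f)) = 0$) rather than being visible in its \emph{statement} is a valid observation, but it does not change the substance of the argument.
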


Note that $ \mfk p_{M,V_0} (k)=\mfk p_{M }(k)$ when $\ell_1=\ell_2=0$ according to Proposition \ref{prope3.1}.

For $f =gh \in S^{\la 0,0 \ra}_k$, $g= x^{\al} y^{\be} \in N(k-r) $ with $\be_i=0$ for $i\in J_1$ ,$h  \in Z^{(r)}$, we write
\begin{equation}
g= \left( \prod_{s=1}^{k_{12}+k_{13}} x_{i_{1,s}}\right)\left( \prod_{s=1}^{k_{12} } x_{i_{2,s}}\right)
\left( \prod_{s=1}^{ k_{13}} x_{i_{3,s}}\right)\left( \prod_{t=1}^{ k_{22}} y_{j_{2,t}}\right)
\left( \prod_{t=1}^{ k_{22}} y_{j_{3,t}}\right)
\end{equation}
with $n_1\geq i_{1,1} \geq i_{1,2}\geq\cdots \geq i_{1,k_{12}+k_{13}} \geq 1$, $n_2 \geq i_{2,1} \geq i_{2,2}\geq\cdots \geq i_{2,k_{12 }} \geq n_1+1$, $n_2+1 \leq i_{3,1} \leq i_{3,2} \leq \cdots \leq i_{3,k_{13}}\leq n$, $n_2\geq j_{2,1} \geq j_{2,2}  \geq \cdots  \geq j_{2,k_{22 }} \geq n_1+1$, $n_2+1\leq j_{3,1} \leq j_{3,2} \leq \cdots \leq j_{3,k_{23}}\leq n$ and $2k_{13}+k_{22}+k_{12}=k-r$.
We define the following map $\Upsilon$ from $S^{\la 0,0 \ra}_k$ to $S_2^{\ol{n_2+1,n+n_2-n_1},\ol{ n_1-n_2+1, n_1}}(n ,1,k)$ by
\begin{equation}
\Upsilon(f)=  \left( \prod_{s=1}^{k_{12}} z_{n+1+n_2-i_{2,s},i_{1,s}}\right)
\left( \prod_{t=1}^{ k_{22}} z_{j_{3,t},j_{2,t}}\right)
\left( \prod_{s=1}^{ k_{13}} z_{n+n_2-n_1,i_{1,s+k_{12}}}z_{i_{3,s},n_1-n_2+1}\right) h.
\end{equation}
 
For $f \in S_2^{\ol{n_2+1,n+n_2-n_1},\ol{ n_1-n_2+1, n_1}}(n ,1,k)$, we write
\begin{equation}
f=\left(\prod_{q=1}^{k_3} z_{p_{3,q},p_{1,q}} \right)\left(\prod_{s=1}^{k_1} z_{i_{2,s},i_{1,s}} \right)\left(\prod_{t=1}^{k_2} z_{j_{3,t},j_{2,t}} \right) .
\end{equation}
with $p_{3,q}\in J_3$, $p_{1,q} \in J_1$, $n+1 \leq i_{2,k_2}\leq \cdots  \leq i_{2,2}  \leq i_{2,1}  \leq n+n_2-n_1 $, $i_{1,s} \in J_1$, $j_{3,t} \in J_3$, $n_1-n_2+1 \leq j_{2,1} \leq j_{2,2} \cdots \leq j_{2, k_2}\leq 0$ and $k_1+k_2+k_3=k$ for $q\in\ol{1,k_3}$, $s\in \ol{1,k_1}$, $t\in\ol{1,k_2}$.

We define $\Upsilon^{-1}$ from $S_2^{\ol{n_2+1,n+n_2-n_1},\ol{ n_1-n_2+1, n_1}}(n ,1,k)$ to $S^{\la 0,0 \ra}_k$  by
\begin{eqnarray}
& &\Upsilon^{-1} (f) \nonumber\\
&=& \left(\prod_{q=1}^{k_3} z_{p_{3,q},p_{1,q}} \right)\left(\prod_{s=k_0+1}^{k_1 } x_{n+n_2+1-i_{2,s}}x_{i_{1,s}} \right)\left(\prod_{t=k_0+1}^{k_2 } y_{j_{3,t}}y_{j_{2,t}+n_2} \right)  \left(\prod_{s=1}^{ k_0} x_{j_{3,s}}x_{i_{1,s}} \right) \qquad\quad
\end{eqnarray}
where
\begin{eqnarray}
k_0& =& \min\{\min \{s\vert i_{2,s}<n+n_2-n_1 \}  ,\max \{t \vert j_{2,t}>n_1-n_2+1 \}  \}-1
\nonumber \\
&=&\min\left\{\sum_{i\in J_1} \deg_{z_{n+n_2-n_1,i}}(f),\sum_{j\in J_3} \deg_{z_{j,n_1-n_2+1}}(f)\right\}.
\end{eqnarray}
Then we have $\Upsilon(\Upsilon^{-1} (f))=f$ and $\Upsilon^{-1}(\Upsilon  (f'))=f'$ for $f\in S_2^{\ol{n_2+1,n+n_2-n_1},\ol{ n_1-n_2+1, n_1}}(n ,1,k)$ and $f' \in S^{\la 0,0 \ra}_k$.

Hence $\Upsilon$ is one-to-one, which yields that
\begin{eqnarray}
\mfk p_{M }(k)=|S^{\la 0,0 \ra}_k|&= &|S_2^{\ol{n_2+1,n+n_2-n_1},\ol{ n_1-n_2+1, n_1}}(n ,1,k)|
\nonumber \\
&=&P_2(n-n_1,n_2 ,n-n_2,n_2-n_1+1,k)
\nonumber \\
&=&P_2(n_2,n-n_1 ,n_1,n_2-n_1+1,k)
\end{eqnarray}
by (\ref{c2.30}).

Therefore, $ \mfk p_{M,V_0} (k)=\mfk p_{M }(k)=P_2(n_2,n-n_1 ,n_1,n_2-n_1+1,k) $, which is another proof of the  Proposition \ref{propf2.2}.

\begin{corollary}\label{core4.2}
If $\ell_1\leq 0$ or $\ell_2 \leq 0$ and $n_1<n_2<n$, we have
\begin{equation}  \label{e4.101}
 lc(M) =lc(\mfk p_{M })
\end{equation}
\end{corollary}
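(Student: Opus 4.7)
The plan is to reduce to the case $\ell_1=\ell_2=0$ already dispatched by the explicit bijection $\Upsilon$, and then extract the leading coefficient from a shape-decomposition of $S_k$. By the isomorphism $\Psi$ of (\ref{a2.25}) together with (\ref{a2.27}), I may assume throughout that $\ell_2\leq 0$; and by Corollary~\ref{core4.1} we have $\mfk p_{M,V_0}(k)=|S_k|$. When $\ell_1=\ell_2=0$ the map $\Upsilon$ constructed after Corollary~\ref{core4.1} already gives $\mfk p_{M,V_0}(k)=\mfk p_M(k)$ identically, so the only task is to show that every other admissible pair $(\ell_1,\ell_2)$ contributes the same leading $k$-coefficient.

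For the remaining cases I would partition $S_k$ according to the \emph{shape} of the first factor $g\in N(k-r)$, meaning the triple of supports of its $x$-factors and $y$-factors on $J_1,J_2,J_3$, together with the residue $r=k-\mfk d(g)$. For each fixed shape, counting the admissible $h\in Z^{(r)}$ avoiding 3-chains in $\msr I(gh)$ reduces, after cutting the index rectangle down to the sub-rectangle not blocked by the vertices already contributed by $g$, to one of the $P_1$ or $P_2$ generating functions of Section~2. This yields a closed formula
\begin{equation*}
|S_k|=\sum_{\text{shapes of }g}G(\text{shape};\ell_1,\ell_2)\,P_{1\text{ or }2}(\text{shape-parameters};\,k-\mfk d(g)),
\end{equation*}
exactly parallel to formula (\ref{e4.17}) in the $n_1=n_2$ case, with $G$ a nonnegative count depending only on $(\ell_1,\ell_2)$ and the shape.

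The next step is to read off the $k$-degree of each summand from Corollary~\ref{core2.2}: the maximum value $2n-4$ is attained only by shapes whose induced $(\varrho,\xi)$-parameters sit at the corner $(n_1,n_2-n_1+1)$. The main obstacle will be to verify that, because the strict inequalities $n_1<n_2<n$ force the three blocks $J_1,J_2,J_3$ to be nonempty and disjoint, this corner is reached by \emph{exactly one} shape of $g$, namely the unique ``minimal'' monomial in $V_0$ (up to a translation of $r$ by a constant $c=c(\ell_1,\ell_2)$). All other shapes either shrink the effective index window, lowering $\xi$ by at least one, or push $\varrho$ strictly below $n_1$, and by Corollary~\ref{core2.2} both options cost at least one in polynomial degree. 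Once uniqueness of the top-degree shape is established, the top-degree piece of $|S_k|$ is a single shifted copy of $P_2(n_2,n-n_1,n_1,n_2-n_1+1,\,k-c)$, whose leading coefficient coincides with that of $\mfk p_M(k)$, yielding $lc(M)=lc(\mfk p_M)$; the complementary sign pattern $\ell_1\geq 0$, $\ell_2<0$ is handled by a second application of $\Psi$.
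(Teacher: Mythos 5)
Your plan is genuinely different from the paper's argument, and its central step is not established. The paper proves Corollary~\ref{core4.2} by a \emph{sandwich argument}: for $\ell_1,\ell_2\leq 0$ the multiplication map $f\mapsto x_1^{-\ell_1}y_n^{-\ell_2}f$ gives an injection $S^{\la\ell_1,\ell_2\ra}_k\hookrightarrow S^{\la 0,0\ra}_{k-\ell_1-\ell_2}$ (any 3-chain blocked by $x_1$ or $y_n$ was already blocked in $f$, since $1=\min J_1$ and $n=\max J_3$), and together with the general inequality $\mfk p_M\leq\mfk p_{M,V_0}$ from Proposition~\ref{prope3.1} this squeezes $|S_k^{\la\ell_1,\ell_2\ra}|$ between two polynomials that differ only by a shift of the argument, so the leading coefficients agree. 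For $\ell_1>0,\ \ell_2\leq 0$ one first discards the lower-degree subset $Q^0_k$ and then applies the same kind of injection. No shape decomposition is ever performed. Your reduction via $\Psi$ and your appeal to Corollary~\ref{core4.1} match the paper, but everything after that diverges.

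The concrete gap in your proposal is the claim that the top-degree shape of $g$ is \emph{unique}. This is not justified in your sketch, and the analogous decomposition in the $n_1=n_2$ case (formula~(\ref{e4.17}) and the lines just below it) shows that precisely this uniqueness fails there: the three index pairs $(i_1,j_1)\in\{(1,n),(1,n-1),(2,n)\}$ all contribute at the top degree $2n-5$, and their combined contribution produces the factor $1-\ell_1-\ell_2$ in Theorem~3. Nothing in the statement $n_1<n_2<n$ rules out a similar multiplicity of contributing shapes; if several shapes do contribute at degree $2n-4$, you would need a nontrivial combinatorial identity (summing the $G(\text{shape};\ell_1,\ell_2)$ weights against the leading coefficients of the various $P_2$'s) to recover $lc(\mfk p_M)$, and your plan gives no mechanism for producing that identity. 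Even if uniqueness did hold, you would still need to check that the weight $G(\text{shape};\ell_1,\ell_2)$ of the unique top shape is exactly~$1$, which is also unaddressed. A further small inconsistency: having fixed $\ell_2\leq 0$ via $\Psi$ at the outset, your final remark about handling $\ell_1\geq 0,\ \ell_2<0$ ``by a second application of $\Psi$'' does not dispose of that subcase, since $\Psi$ maps it to $\ell_1<0,\ \ell_2\geq 0$, i.e.\ outside the normalization you started from; this subcase must be treated directly, as the paper does with the $Q^0_k$ estimate.
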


\begin{proof}

When $\ell_1 <0$ and $\ell_2\leq 0$, we define the map $\mfk t_{\la \ell_1,\ell_2 \ra}$ from $S^{\la \ell_1,\ell_2 \ra}_k$ to $S^{\la 0,0 \ra}_{k-\ell_1-\ell_2}$ by
\begin{equation}
\mfk t_{\la \ell_1,\ell_2 \ra}(f) =x_1^{-\ell_1} y_n^{-\ell_2 }f.
\end{equation}
Note that $\mfk t_{\la \ell_1,\ell_2 \ra}$ is injective, hence
\begin{equation}  \label{e4.103}
|S^{\la \ell_1,\ell_2 \ra}_k| \leq |S^{\la 0,0 \ra}_{k-\ell_1-\ell_2}|= \mfk p_{M}(k-\ell_1-\ell_2)
\end{equation}
for sufficiently large $k$.
According to Proposition \ref{prope3.1}, Corollary \ref{core4.1} and (\ref{e4.103}),
\begin{equation}
 \mfk p_{M }(k)\leq  \mfk p_{M,V_0}(k) \leq  \mfk p_{M }(k-\ell_1-\ell_2)
\end{equation}
for sufficiently large $k$.
Therefore,
\begin{equation}
lc(M)=lc(\mfk p_{M }).
\end{equation}

When $\ell_1>0$ and $\ell_2\leq 0$, for $f=gh\in S^{\la \ell_1,\ell_2 \ra}_k$ with $g= x^{\al} y^{\be} \in N(k-r) $ and $h  \in Z^{(r)}$, we set
\begin{equation}
j_0(f)=\min \{ j \in J_3 \vert  \al_j+\be_j >0\} ,
\end{equation}
Denote
\begin{equation}
Q^{j }_k
=\{f\in S^{\la \ell_1,\ell_2 \ra}_k \vert \deg_{x_{_{J_1}}}(f)=0 ,j_0(f)=j\}
\end{equation}
for $j\in J_3$, and \begin{equation}
Q^{0}_k=\bigcup_{j\in J_3} Q^{j}_k
=\{f\in S^{\la \ell_1,\ell_2 \ra}_k \vert \deg_{x_{_{J_1}}}(f)=0 , \deg_{x_{J_3}}(f)+ \deg_{y_{J_3}}(f)>0\}.
\end{equation}
For $f\in Q^{j_0}_k$, we write $f=gh$, $g=x^\al y^\be$ and $h\in Z^{(r)}$.
Then
 \begin{equation}
|\{  h\in Z^{(r)} \vert gh\in Q^{j_0}_k \text{ for some } g=x^\al y^\be  \}|
=P_1(n_1,n-n_2,n-j_0+1,r).
\end{equation}
by the definition of $P_1$(cf. (\ref{e2.31})).
According to Corollary \ref{core2.1},
 \begin{equation}
\deg_r(|\{  h\in Z^{(r)} \vert gh\in Q^{j_0}_k \text{ for some } g=x^\al y^\be  \}|)
 \leq n+2n_1-2n_2+j_0-4.
\end{equation}
Moreover,
\begin{eqnarray}
& &\deg_{k-r}(|\{ g=x^\al y^\be \vert gh\in Q^{j_0}_k \text{ for some }  h\in Z^{(r)}  \}|)
\nonumber \\
& \leq &\deg_{k-r}\left(
\sum_{k_{13}=0}^{\ell_1}\sum_{k_{23}=0}^{k-r-k_{13}}
k_{13}^{n_2-n_1-2}k_{23}^{n_2-n_1-2}(k_{13}+k_{23})\binom{n-j_0+k_{13}}{k_{13}}\binom{n-j_0+k_{23}}{k_{23}}
\right)
\nonumber \\
& = &\deg_{k-r}\left(
\sum_{k_{13}=0}^{\ell_1} \sum_{k_{23}=0}^{k-r-k_{13}}
k_{13}^{n_2-n_1-2}k_{23}^{n_2-n_1-1}  k_{13}^{n-j_0}  k_{23}^{n-j_0}
\right)\nonumber \\
& =&n+n_2-n_1-j_0-1.
\end{eqnarray}
Thus
\begin{eqnarray}
& &\deg_{k }(| Q^{j_0}_k|)
\nonumber \\
& \leq &(n+2n_1-2n_2+j_0-4)+(n+n_2-n_1-j_0-1)+1
\nonumber \\
& = &2n-4+(n_1-n_2)<2n-4,
\end{eqnarray}
yields that \begin{equation}
\deg_{k }(| Q^{0}_k|)<2n-4
\end{equation}
Note that
\begin{equation}
\deg_{k }(\mfk p_{M,V_0})=\deg_{k }(\mfk p_{M })=\deg_{k }(P_2(n_2,n-n_1 ,n_1,n_2-n_1+1,k) )=2n-4
\end{equation}
by the  Proposition \ref{propf2.2} and Corollary \ref{core2.2}.
Hence
\begin{equation}  \label{e4.115}
lc_k(\mfk p_{M,V_0})=lc_k(| S^{\la \ell_1,\ell_2 \ra}_k|)= lc_k(| S^{\la \ell_1,\ell_2 \ra}_k  \setminus Q^{0}_k |).
\end{equation}
Define the map $\mfk t_{\la \ell_1,\ell_2 \ra}$ from $S^{\la \ell_1,\ell_2 \ra}_k\setminus Q^{0}_k$ to $S^{\la 0,0 \ra}_{k+\ell_1-\ell_2}$ by
\begin{equation}
\mfk t_{\la \ell_1,\ell_2 \ra}(f) =x_1^{ \ell_1} y_n^{-\ell_2 }f.
\end{equation}
Note that $\mfk t_{\la \ell_1,\ell_2 \ra}$ is still injective and we also have
\begin{equation}
lc(M)=lc_k(|S^{\la \ell_1,\ell_2 \ra}_k\setminus Q^{0}_k|)=lc_{k}(|S^{\la 0,0 \ra}_{k }|)=lc(\mfk p_{M })
\end{equation}
by Proposition \ref{prope3.1}, Corollary \ref{core4.1} and (\ref{e4.115}).

According to (\ref{a2.27}), (\ref{e4.101}) still holds when $\ell_1\leq 0$ and $\ell_2 >0$.
\end{proof}

\subsection{Case: $1<n_1=n_2<n $, $\ell_1 \leq  0$ and $\ell_2 \geq  0$.}

We set $\ell_1=-m_1-m_2$ and $\ell_2= m_2$ with $m_1,m_2 \in \mathbb{N}$. In this case,
\begin{eqnarray}M&=&{\msr H}_{\la
-m_1-m_2,m_2\ra}\nonumber\\&=&\mbox{Span} \Big\{[\prod_{r=1}^{n_1}x_r^{l_r}]
[\prod_{1\leq p<q\leq
n_1}(x_py_q-x_qy_p)^{k_{p,q}}][\prod_{r=1}^{n_1}\prod_{s=n_1+1}^n(x_rx_s-
y_ry_s)^{l_{r,s}}]\nonumber\\& &\qquad\;\;\mid
l_r,k_{p,q},l_{r,s}\in\mbb{N};\sum_{r=1}^{n_1}l_r=m_1;\sum_{1\leq
p<q\leq n_1}k_{p,q}=m_2\Big\}\end{eqnarray}
by (6.6.53) in \cite{Xx}.
Recall that we take
\begin{eqnarray}\label{e5.11}
M_0=V_0 =  & &\mbox{Span}\Big\{  \left. \left[ \prod_{ 1\leq p<q\leq n_1} (x_py_q-x_q y_p)^{k_{p,q}}\right] \prod_{r=1}^{n_1}x_r^{\al_r} \right\vert
\nonumber \\
 & &\qquad \qquad
k_{p,q}\in \mathbb{N}, \sum_{ 1\leq p<q\leq n_1}k_{p,q} =m_2,\sum_{ 1\leq  i \leq n_1}\al_i =m_1
\Big\} . \end{eqnarray}
According to (4.163) in \cite{ZX}, we have
\begin{equation}M_k =   \mbox{Span}\{ M_0  \phi(Z^{(i)}) \mid i\leq k
\} . \end{equation}
In this case, in order to ensure $\mfk d^{n_1,n_1,n}_{\la -m_1- m_2,m_2\ra}(M_0)=0$, we have
\begin{equation}
{\mfk d}_{\la -m_1- m_2,m_2\ra} \left(x^\alpha y^\beta \right) =2 \sum_{i \in J_3} \alpha_i  +2\sum_{i \in J_1}\beta_i  .
\end{equation}
Set
\begin{equation}W'_k= \mbox{Span} \{  M_0  \phi(Z^{(k)}) \} .\end{equation}
For $f\in W_k$, we assume that $f=\sum_{i} f_i$, where $f_i$ are monomials in $\msr A$. Then we have
\begin{equation} \deg_{x_{J_1}} (f_i)+\deg_{y_{J_1}} (f_i) =k. \end{equation}
Hence
\begin{equation}\label{a5.13}
W'_k \oplus M_{k-1}=M_k,\end{equation}
 for $k\geq 1$.

Set
\begin{equation}O=\{(j,i) \in J_1\times J_1\vert j>i\}\end{equation}
and
\begin{equation}\Omega = \{\omega_{j,i} \mid (j,i)\in O \} \end{equation}
 as a set of $\frac{n_1(n_1-1)}{2}$ variables.
Set
\begin{equation} \msr E =\mbb F[ \Omega ]\end{equation}
the polynomial algebra in $\Omega$. Denote by $\msr E_m$ the subspace of its homogeneous polynomials with degree $m$ and by $\Omega^{(m)}$ the set of monomials with degree $m$.

Let
\begin{equation}\label{a5.15}
\msr F=\mbb F[z_{j,i},\Omega \mid j\in J_3',\;i\in \ol{0,n_1},\;(j,i)\neq (n+1,0)]\end{equation}
(cf.(\ref{a3.17})) be the algebra of polynomials in the variables $\{z_{j,i},x_i,y_j,\omega_{j',i'} \mid j\in J_3,\;i\in J_1,\;(j,i)\neq (n+1,0),(j',i')\in O\}$.

Define associative algebra homomorphisms $\td{\phi}: \msr F \rta \msr A$ by
\begin{equation}
\td{\phi} ( f )=\phi( f) ,\;\; \td{\phi} (\omega_{j,i})= x_j y_i-x_j y_j \qquad\for\;\;f\in \msr C ,\;i<j\in J_1. \end{equation}

Then $V_0 = \text{Span}\Big\{   \td{\phi}(g) \prod_{i=1}^{n_1}x_i^{\al_i}
\vert g\in \Omega^{(m_2)},\sum_{ 1\leq  i \leq n_1}\al_i =m_1  \}$ and
\begin{equation}W'_k= \mbox{Span} \left\{  \td{\phi}(g  h)\prod_{i=1}^{n_1}x_i^{\al_r}  \Big\vert
\sum_{ 1\leq  i \leq n_1}\al_i =m_1,\;
 g\in \Omega^{(m_2)},\; h\in   Z^{(k)}  \right\} .\end{equation}

For $g\in \msr C$, $N \in \mbb N$ and $ 1\leq i_t<j_t  \leq n_1$ for $1\leq t\leq N$, we define
\begin{equation}   \td{\msr I} \left(g \prod_{t=1}^N \omega_{i_t,j_t}
\right)
=\msr I(g) \cup
 \{ (n+n_1-j_t+2, i_t)  \vert 1\leq t\leq N\}
\end{equation}

If there exist three distinct $1\leq k_1,k_2 \leq r$ such that
$i_{k_1}<i_{k_2} $, $j_{k_1}<j_{k_2} $ and $j_{k_2}>n+1$, we say that $\{(j_1,i_1),(j_2,i_2),...,(j_r,i_r)\}$ contains a $\omega $-2-chain $\left( j_{k_1},i_{k_1}\right) \prec   \left( j_{k_2},i_{k_2}\right)  $.

If there exist three distinct $1\leq k_1,k_2,k_3 \leq r$ such that
$i_{k_3} \leq i_{k_1}<i_{k_2} $, $j_{k_1}<j_{k_2} \leq n+1< j_{k_3} < n +n_1+2-i_{k_2} $, we say that $\{(j_1,i_1),(j_2,i_2),...,(j_r,i_r)\}$ contains a $\omega$-3-chain $  \{\left( j_{k_1},i_{k_1}\right) , \left( j_{k_2},i_{k_2}\right),( j_{k_3},i_{k_3}) \}  $ and $ \left( j_{k_1},i_{k_1}\right) \prec   \left( j_{k_2},i_{k_2}\right) \prec( n+n_1+2-i_{k_3},n+n_1+2-j_{k_3})   $.

We set
\begin{eqnarray}
S'_k&=&\Big\{
  g  h \prod_{i=1}^{n_1}x_i^{\al_r}  \Big\vert
\;
 g\in \Omega^{(m_2)},\; h\in   Z^{(k)}; \text{no 3-chains in } \msr I(h\prod_{i=1}^{n_1}x_i^{\al_r}),
  \nonumber\\
& & \qquad
\text{no $\omega$-2-chains or $\omega$-3-chains in } \td{\msr I}(gh\prod_{i=1}^{n_1}x_i^{\al_r}) ,
\sum_{ i=1}^{n_1}\al_i =m_1
\Big\}.
\end{eqnarray}

\begin{lemma}\label{lem:a5.1}
For $k \geq 0$, $\text{Span}\{ \td{\phi}(S'_k)\}=W_k'$.
\end{lemma}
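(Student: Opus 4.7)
The inclusion $\text{Span}\{\td\phi(S'_k)\}\subseteq W'_k$ is immediate from the definitions, since every element of $S'_k$ is by construction one of the generating monomials $g\,h\,\prod_{i=1}^{n_1}x_i^{\al_i}$ spanning $W'_k$. The content lies in the reverse inclusion: every such generator $\mu=g\,h\,\prod_{i=1}^{n_1}x_i^{\al_i}$ (with $g\in\Omega^{(m_2)}$, $h\in Z^{(k)}$, $\sum\al_i=m_1$) has image $\td\phi(\mu)\in\text{Span}\{\td\phi(S'_k)\}$.

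My plan is to extend the lexicographic order used in the proof of Proposition \ref{propf2.2} from $\msr I(\cdot)$ to the enlarged multiset $\td{\msr I}(\cdot)$, and then proceed by induction on this order. If $\mu\in S'_k$, there is nothing to prove. Otherwise, $\mu$ violates one of the three chain-freeness conditions, and it suffices to exhibit an explicit element of $\ker\td\phi$ expressing $\td\phi(\mu)$ as a linear combination of $\td\phi(\mu')$ with $\td{\msr I}(\mu')$ strictly larger than $\td{\msr I}(\mu)$; this makes $S'_k$ the set of maximal (normal-form) elements, so Noetherian induction terminates.

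Three rewriting rules will be required. First, when $\msr I(h\prod x_i^{\al_i})$ contains a $3$-chain, apply the $3\times 3$ determinantal relation $\Delta^{i_1,i_2,i_3}_{j_1,j_2,j_3}=0$ from (\ref{c2.51}), exactly as in the proof of Proposition \ref{propf2.2}. Second, when $\td{\msr I}(\mu)$ contains an $\omega$-$2$-chain formed by some $\omega_{j_1,i_1}$ and a $z$-type index $(j_2,i_2)$, expanding $(x_{j_1}y_{i_1}-x_{i_1}y_{j_1})(x_{i_2}x_{j_2}-y_{i_2}y_{j_2})$ and recollecting the four resulting monomials back into the image of $\td\phi$ provides the needed reduction. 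Third, when $\td{\msr I}(\mu)$ contains an $\omega$-$3$-chain, use the classical Pl\"ucker syzygy
\[ [pq][rs]-[pr][qs]+[ps][qr]=0,\qquad [pq]:=x_py_q-x_qy_p, \]
among the $2\times 2$ minors of the $n_1\times 2$ matrix $(x_i,y_i)_{i\in J_1}$, combined with rule (i) to eliminate any residual $3$-chains that the syzygy introduces.

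The main obstacle will be rule (iii): one must verify that each of the two replacement monomials produced by the Pl\"ucker syzygy has strictly larger $\td{\msr I}$ than the original, since this relation mixes $\omega$-variables among themselves in a delicate way that depends on the specific inequalities $i_{k_3}\leq i_{k_1}<i_{k_2}$ and $j_{k_1}<j_{k_2}\leq n+1<j_{k_3}<n+n_1+2-i_{k_2}$ appearing in the definition of an $\omega$-$3$-chain; furthermore one must check that the new $3$-chains possibly introduced can be cleared by rule (i) without reopening a chain of the original type. Once this interlocking termination is established, Noetherian induction on the well-founded order finishes the proof.
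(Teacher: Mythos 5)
Your overall plan (lexicographic ordering on $\td{\msr I}$, Noetherian induction, explicit $\td\phi$-kernel elements that increase the order) is the same strategy the paper uses, and the easy inclusion and the role of the chain-freeness conditions are correctly identified. However, the specific syzygies you propose in rules (ii) and (iii) are not the ones actually needed, and this is a genuine gap rather than a matter of checking details.

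For rule (iii): by definition an $\omega$-$3$-chain consists of \emph{two} $z$-type indices $(j_{k_1},i_{k_1})$, $(j_{k_2},i_{k_2})$ with $j_{k_1}<j_{k_2}\leq n+1$, and exactly \emph{one} $\omega$-type index $(j_{k_3},i_{k_3})$ with $j_{k_3}>n+1$. The monomial therefore has the form $z_{s_1,t_1}z_{s_2,t_2}\,\omega_{t_3,t_0}\cdot(\cdots)$; the relation required to kill it lives in $\ker\td\phi$ and mixes one $\omega$-factor with two $z$-factors. The classical Pl\"ucker identity $[pq][rs]-[pr][qs]+[ps][qr]=0$ among the $2\times 2$ minors $[pq]=x_py_q-x_qy_p$ only relates products of two $\omega$-type factors, so it is simply inapplicable here — in particular when $m_2=1$ the monomial contains a single $\omega$-factor and no Pl\"ucker product exists to rewrite. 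What is actually needed are the $3$-term and $5$-term mixed relations (the paper's cases (3) and (4), split according to whether the $\omega$-index shares its lower row index $t_0$ with the first $z$-index or has $t_0<t_1$ strictly) such as $\td\phi(z_{s_1,t_1}z_{s_2,t_2}\omega_{t_3,t_1}f')=\td\phi(z_{s_1,t_1}z_{s_2,t_3}\omega_{t_2,t_1}f')+\td\phi(z_{s_1,t_2}z_{s_2,t_1}\omega_{t_3,t_1}f')-\td\phi(z_{s_1,t_3}z_{s_2,t_1}\omega_{t_2,t_1}f')$. These come from the Pl\"ucker relations of the extended $2\times(n_1+1)$ matrices $\bigl(\begin{smallmatrix}x_1&\cdots&x_{n_1}&y_s\\ y_1&\cdots&y_{n_1}&x_s\end{smallmatrix}\bigr)$ (whose extra column encodes $\phi(z_{s,i})=x_ix_s-y_iy_s$), not from the Pl\"ucker relations of $(x_i,y_i)_{i\in J_1}$ alone. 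Your rule (ii) has a parallel problem: the definition of $\omega$-$2$-chain requires only $j_{k_2}>n+1$, so the first index can itself be an $\omega$-index, and then you need a separate Pl\"ucker-type $\omega$-$\omega$ relation (the paper's case (2)) in addition to your proposed $z$-$\omega$ expansion; moreover the $z$-$\omega$ relation you want is the two-term $z_{s,t}\omega_{j,i}\mapsto z_{s,i}\omega_{j,t}-z_{s,j}\omega_{i,t}$ (again from the extended matrix), not an ``expand and recollect'' of a single product $(x_{j_1}y_{i_1}-x_{i_1}y_{j_1})(x_{i_2}x_{j_2}-y_{i_2}y_{j_2})$, which does not terminate as a $\td\phi$-monomial rewriting. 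So the main obstacle is not, as you suggest, verifying that $\td{\msr I}$ strictly increases under the Pl\"ucker syzygy and clearing residual $3$-chains; it is that you are reaching for the wrong kernel elements, and with the correct mixed relations the order increase can be checked directly without invoking rule (i) at all.
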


\begin{proof}
For $f= gh  \prod_{i=1}^{n_1}x_i^{\al_i} \in W_k'$ and $\td{\phi}(f)=\td{\phi}(gh) \prod_{i=1}^{n_1}x_i^{\al_i} \in W_k'$ with $\sum_{ 1\leq  i \leq n_1}\al_i =m_1$, $g\in \Omega^{(m_2)}$ and $h\in   Z^{(k)} $, if $\td{\msr I}(f) $ does not contain $\omega$-2-chains or $\omega$-3-chains, we have proved that $f \in W_k'$ from Lemma 3.4 (The Lemma 3.3 in \cite{ZX}). Hence we suppose that $\td{\msr I}(f) $ contains $\omega$-2-chains or $\omega$-3-chains.
We will discuss the following four cases separately and make a $\td{\phi}$-invariant transformation on $f$ in each case.

(1) Suppose the $\td{\msr I}(f)$ contains a $\omega$-2-chain $(s,t) \prec (n+n_1+2-j,i)$ with $1\leq i<j \leq n_1$, $s\in J_3'$ and $1 \leq t <i$.
Write $f= z_{s,t}\omega_{j,i} f'$, then
\begin{eqnarray}
\td{\phi}(f)&=&  \phi( z_{s,t})(x_j y_i-x_iy_j)\td{\phi}(  f')
\nonumber\\
&=&
\td{\phi}(  f')\phi( z_{s,i})(x_j y_t-x_ty_j)
-\td{\phi}(  f')\phi( z_{s,j})(x_i y_t-x_ty_i)
\nonumber\\
&=&
\td{\phi}(  f' z_{s,i}  \omega_{j,t} )
-\td{\phi}(  f'  z_{s,j}\omega_{i,t} )  .
\end{eqnarray}
Moreover, we have
\begin{eqnarray}\td{\msr I}(f )&=&
\{(s,t),(n+n_1 - j+2, i ), \td{\msr I}(f')\}
\nonumber\\&<&
\{(s,i),(n+n_1 - j+2, t), \td{\msr I}(f')\} =
\td{\msr I} (f' z_{s,i}  \omega_{j,t}).
\end{eqnarray}
and
\begin{eqnarray}\td{\msr I}(f )&=&
\{(s,t),(n+n_1 - j+2, i ), \td{\msr I}(f')\}
\nonumber\\&<&
\{(s,i),(n+n_1 - j+2, t), \td{\msr I}(f')\} =
\td{\msr I} (f' z_{s,i}  \omega_{j,t}).
\end{eqnarray}

(2) Similar to the Case (1), we suppose the $\td{\msr I}(f)$ contains a $\omega$-2-chain $(n+n_1+2-s,t) \prec (n+n_1+2-j,i)$ with $1\leq i<j \leq n_1$, $1\leq t < s \leq n_1$.
Write $f= \omega_{s,t}\omega_{j,i} f'$, then
\begin{eqnarray}
\td{\phi}(f)&=&
\td{\phi}(  f')(x_s y_i-x_s y_i)(x_j y_t-x_ty_j)
-\td{\phi}(  f')(x_s y_j-x_s y_j)(x_i y_t-x_ty_i)
\nonumber\\
&=&
\td{\phi}(  f' \omega_{s,i}\omega_{j,t} )
-\td{\phi}(  f' \omega_{s,j} \omega_{i,t} )  .
\end{eqnarray}
And we have
\begin{eqnarray}\td{\msr I}(f )&=&
\{(n+n_1+2-s ,t),(n+n_1 - j+2, i ), \td{\msr I}(f')\}
\nonumber\\&<&
\{(n+n_1+2-s,i),(n+n_1 - j+2, t), \td{\msr I}(f')\} =
\td{\msr I} (f' z_{s,i}  \omega_{j,t}).
\end{eqnarray}
and
\begin{eqnarray}\td{\msr I}(f )&=&
\{(n+n_1+2-s,t),(n+n_1 - j+2, i ), \td{\msr I}(f')\}
\nonumber\\&<&
\{(n+n_1+2-s,i),(n+n_1 - j+2, t), \td{\msr I}(f')\} =
\td{\msr I} (f' z_{s,i}  \omega_{j,t}).
\end{eqnarray}

(3) Suppose the $\td{\msr I}(f)$ contains a $\omega$-3-chain $\{ \left( s_1,t_1\right), \left( s_2,t_2\right) ,(n+n_1+2-t_3,t_1)  \}$ with $1\leq t_1<t_2 <t_3\leq n_1$, $s_1<s_2\in J_3'$.
Write $f=  z_{s_1,t_1} z_{s_2,t_2}\omega_{t_3,t_1} f'$, then
\begin{eqnarray}
\td{\phi}(f)&=&
\td{\phi}(  z_{s_1,t_1}z_{s_2,t_3}\omega_{t_2,t_1}f')
+\td{\phi}( z_{s_1,t_2}z_{s_2,t_1}\omega_{t_3,t_1}f')
-\td{\phi}( z_{s_1,t_3}z_{s_2,t_1}\omega_{t_2,t_1}f').
\end{eqnarray}
Moreover, we have
\begin{eqnarray}\td{\msr I}(f )&=&
\{(s_1 ,t_1),(s_2,t_2),(n+n_1 +2-t_3, t_1 ), \td{\msr I}(f')\}
\nonumber\\&<&
\{(s_1 ,t_1),(s_2,t_3),(n+n_1 +2-t_2, t_1 ), \td{\msr I}(f')\}
\nonumber\\&=&
\td{\msr I} (z_{s_1,t_1}z_{s_2,t_3}\omega_{t_2,t_1}f'),
\end{eqnarray}
\begin{eqnarray}\td{\msr I}(f )&=&
\{(s_1 ,t_1),(s_2,t_2),(n+n_1 +2-t_3, t_1 ), \td{\msr I}(f')\}
\nonumber\\&<&
\{(s_1 ,t_2),(s_2,t_1),(n+n_1 +2-t_3, t_1 ), \td{\msr I}(f')\}
\nonumber\\&=&
\td{\msr I} (z_{s_1,t_2}z_{s_2,t_1}\omega_{t_3,t_1}f'),
\end{eqnarray}
\begin{eqnarray}\td{\msr I}(f )&=&
\{(s_1 ,t_1),(s_2,t_2),(n+n_1 +2-t_3, t_1 ), \td{\msr I}(f')\}
\nonumber\\&<&
\{(s_1 ,t_3),(s_2,t_1),(n+n_1 +2-t_2, t_1 ), \td{\msr I}(f')\}
\nonumber\\&=&
\td{\msr I} (z_{s_1,t_3}z_{s_2,t_1}\omega_{t_2,t_1}f').
\end{eqnarray}

(4) Suppose the $\td{\msr I}(f)$ contains a $\omega$-3-chain $\{ \left( s_1,t_1\right), \left( s_2,t_2\right) ,(n+n_1+2-t_3,t_0)  \}$ with $1\leq t_0< t_1<t_2 <t_3\leq n_1$, $s_1<s_2\in J_3'$.
Write $f=  z_{s_1,t_1} z_{s_2,t_2}\omega_{t_3,t_0} f'$, then
\begin{eqnarray}
\td{\phi}(f)&=&
\td{\phi}(  z_{s_1,t_3}z_{s_2,t_2}\omega_{t_1,t_0}f')
+\td{\phi}( z_{s_1,t_1}z_{s_2,t_3}\omega_{t_2,t_0}f')
+\td{\phi}( z_{s_1,t_2}z_{s_2,t_1}\omega_{t_3,t_0}f')
\nonumber\\
& &-\td{\phi}( z_{s_1,t_2}z_{s_2,t_3}\omega_{t_1,t_0}f')
-\td{\phi}( z_{s_1,t_3}z_{s_2,t_1}\omega_{t_2,t_0}f').
\end{eqnarray}
Moreover, we also have
\begin{eqnarray}\td{\msr I}(f )&<&
\td{\msr I} ( z_{s_1,t_3}z_{s_2,t_2}\omega_{t_1,t_0}f'),\;
  \td{\msr I}(f )< \td{\msr I}(  z_{s_1,t_1}z_{s_2,t_3}\omega_{t_2,t_0}f'),
\nonumber\\
  \td{\msr I}(f )&< &\td{\msr I} (  z_{s_1,t_2}z_{s_2,t_1}\omega_{t_3,t_0}f'),
 \td{\msr I}(f )< \td{\msr I} (z_{s_1,t_2}z_{s_2,t_3}\omega_{t_1,t_0}f'),
\nonumber\\ & &\td{\msr I}(f ) < \td{\msr I} (z_{s_1,t_3}z_{s_2,t_1}\omega_{t_2,t_0}f').
\end{eqnarray}

Continue this process for $f$, until there are no $\omega$-2-chains or $\omega$-3-chains. Then there exists $g \in \text{Span}\{S_k'\}$ such that $ \td{\phi}(f)=\td{ \phi}(g)$. Therefore, $W_k' =\text{Span}\{\td{\phi}(S_k')\}$.
\end{proof}

For $0<m\in\mbb N$, we set
\begin{equation}\wht J_1^m=\{\{i_1,i_2,...,i_m\}\mid i_s\in J_1\},\quad\wht J_3^m=\{\{j_1,j_2,...,j_m\}\mid i_t\in J_1\}.\end{equation}
For \begin{equation}
f=\left(\prod_{t_1=1}^{k_1} x_{s_{1,t_1}}\right)  \left(\prod_{t_2=1}^{k_2} y_{s_{2,t_2}}\right)  \left( \prod_{t_3=1}^{k_3} z_{j_{t_3},i_{t_3}}\right)
\left( \prod_{t_4=1}^{k_4} \omega_{p_{t_4},q_{t_4}}\right) g\in
\msr E
\end{equation}
with $g \in \mbb F \left[ x_i,y_j \vert i\in J_2\cup J_3 ,j\in J_1\cup J_2 \right]$; $s_{1,t_1},i_{t_3},p_{t_4},q_{t_4} \in J_1$; $s_{2,t_2},j_{t_3} \in J_3$ and $k_1$, $k_2$, $k_3$, $k_4 \in \mbb N$, we define
\begin{eqnarray}
& &\msr I_1\left( f \right) = \{s_{1,1},...,s_{1,k_1},i_1,...,i_{k_3}
,p_1,...,p_{k_4},q_1,...,q_{k_4}\} \in \wht J_1^{k_1+k_3+2k_4},
\nonumber \\
& &
\msr I_3\left( f \right) = \{s_{2,1},...,s_{2,k_2},j_1,...,j_{k_3}\}
\in \wht J_3^{k_2+k_3}.\end{eqnarray}
Set
\begin{equation}
\wht J_1^{k_1+k_3,k_4} =\{s \in \wht J_1^{k_1+k_3,k_4}| \text{there exists $g\in \Omega^{(k_4)}$ such that } \msr I_1(g) \subset s  \} .
\end{equation}

In the following, we use the lexical order $\leq$ on $\mbb N^2$.
Fix ${\cal I}_1\in  \wht J_1^{k_1+k_3,k_4}$ and ${\cal I}_3\in \wht J_3^{k_2+k_3}$, we define
\begin{eqnarray} \label{a5.35}
& &G_{k_1,k_2,k_3,k_4}^{{\cal I}_1,{\cal I}_3}
\nonumber\\
=& &
 \Big\{ f\,=\,gh\left(\prod_{t_1=1}^{k_1} x_{s_{1,t_1}}\right)  \left(\prod_{t_2=1}^{k_2} y_{s_{2,t_2}}\right)
\Big\vert\; g \in \Omega^{(k_4)}, h\in   Z^{(k_3)},
\msr I_1(f)={\cal I}_1, \;
\nonumber\\& &
\msr I_3(f)={\cal I}_3,\;\mbox{ no 3-chains in }\;\msr I(f ),\;
\mbox{no $\omega$-2-chains or $\omega$-3-chains in }\;\msr I(f );\;
\nonumber\\ & &
1\leq s_{1,1}\leq\cdots\leq s_{1,k_1} \leq n_1,\; n_2+1\leq  s_{2,1}\leq\cdots \leq s_{1,k_2} \leq n.
\Big\}
\end{eqnarray}
Set
\begin{equation} V_{k_1,k_2,k_3}^{{\cal I}_1,{\cal I}_3}=\text{Span}\{G_{k_1,k_2,k_3}^{{\cal I}_1,{\cal I}_3}\}.\end{equation}
Note that $k_2k_4=0$ in (\ref{a5.35}), otherwise there exists $\omega_{p ,q } \mid g$ for some $(p,q) \in O$ and $\msr I(f )$ contains a $\omega$-2-chain $(s_{k_2},0) \prec (n+n_1+2+p_{k_4},q_{k_4})$.

\begin{lemma}\label{lem:a5.3}
For $k_1,k_2,k_3,k_4  \in \mathbb{N}$, ${\cal I}_1\in  \wht J_1^{k_1+k_3,k_4}$ and ${\cal I}_3\in \wht J_3^{k_2+k_3}$,
$ \td{\phi}  \left(G_{k_1,k_2,k_3,k_4 }^{{\cal I}_1,{\cal I}_3} \right)$ is a linearly independent set in $\msr A$.
\end{lemma}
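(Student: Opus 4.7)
The plan is to prove that $f \mapsto \td\phi(f)$ is injective on $G_{k_1,k_2,k_3,k_4}^{{\cal I}_1,{\cal I}_3}$ by induction on $k_4$, mimicking and extending the strategy of Proposition \ref{prop1}.

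For the base case $k_4=0$, the map $\td\phi$ restricts to $\phi$ on the subalgebra $\msr C$, and the defining conditions on $G_{k_1,k_2,k_3,0}^{{\cal I}_1,{\cal I}_3}$ reduce to the no-$3$-chain condition on $\msr I(f)$. The linear independence then follows from Lemma \ref{lem:a3.4} ($\ker\phi|_{\msr C}=\msr R_3$) together with the combinatorial argument already used in the proof of Proposition \ref{prop1}.

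For the inductive step $k_4\geq 1$, I would start from a hypothetical linear relation $\sum_f c_f\,\td\phi(f)=0$ and write
\[
\td\phi(f)=\left(\prod_{t_1}x_{s_{1,t_1}}\right)\!\left(\prod_{t_2}y_{s_{2,t_2}}\right)\!\prod_{t_3}\bigl(x_{i_{t_3}}x_{j_{t_3}}-y_{i_{t_3}}y_{j_{t_3}}\bigr)\prod_{t_4}\bigl(x_{p_{t_4}}y_{q_{t_4}}-x_{q_{t_4}}y_{p_{t_4}}\bigr).
\]
Choose a distinguished $\omega$-factor $\omega_{p^\ast,q^\ast}$ canonically (for instance, the one with largest $p_{t_4}$ and, among those, largest $q_{t_4}$), and apply a coefficient-extraction operator on $\msr A$ which isolates the contribution $x_{p^\ast}y_{q^\ast}\cdot\td\phi(f/\omega_{p^\ast,q^\ast})$ in the expansion of each summand. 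Provided the residual data $f'=f/\omega_{p^\ast,q^\ast}$ lies in a suitably defined set $G_{k_1,k_2,k_3,k_4-1}^{{\cal I}_1',{\cal I}_3}$ (with ${\cal I}_1'={\cal I}_1\setminus\{p^\ast,q^\ast\}$), the inductive hypothesis yields $c_f=0$ for every $f$ whose $g$-factor contains $\omega_{p^\ast,q^\ast}$. Iterating over all admissible $(p^\ast,q^\ast)$ completes the argument.

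The hard part will be verifying the compatibility of the combinatorial constraints under the peeling-off operation: one must show that (a) the no-$\omega$-$2$-chain condition on $f$ guarantees the canonical, unambiguous choice of $\omega_{p^\ast,q^\ast}$ (so that the reduction is well defined on each summand), and (b) the no-$3$-chain on $\msr I(f)$ and the no-$\omega$-$3$-chain on $\td\msr I(f)$ both pass to the corresponding conditions on $f'$, so that $f'\in G_{k_1,k_2,k_3,k_4-1}^{{\cal I}_1',{\cal I}_3}$. The no-$\omega$-$3$-chain condition is precisely designed for this bookkeeping: it forbids the configurations in $\td\msr I(f)$ where removing $(n+n_1+2-p^\ast,q^\ast)$ would create a new $2$- or $3$-chain interaction between the $\omega$-data and the $z$- and standalone $(x,y)$-data. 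Once these compatibilities are checked, the induction closes and the linear independence of $\td\phi\bigl(G_{k_1,k_2,k_3,k_4}^{{\cal I}_1,{\cal I}_3}\bigr)$ follows.
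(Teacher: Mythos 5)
Your proposal has a genuine gap that the paper avoids by a different mechanism. You want to peel off a canonical $\omega$-factor $\omega_{p^\ast,q^\ast}$ via a coefficient-extraction operator, but within a fixed $G_{k_1,k_2,k_3,k_4}^{{\cal I}_1,{\cal I}_3}$ different elements carry different $\omega$-data (only the multiset ${\cal I}_1$ is prescribed, not the pairing), so the ``largest $(p,q)$'' choice is not uniform across the summands of your linear relation. Worse, the monomial expansions of $\td\phi(f)$ and $\td\phi(f')$ for two distinct admissible $\omega$-parts genuinely overlap: with ${\cal I}_1=\{1,2,3,4\}$, both $\td\phi(\omega_{2,1}\omega_{4,3})$ and $\td\phi(\omega_{3,1}\omega_{4,2})$ contain the monomial $-x_1x_4y_2y_3$, and the $z$-factors $\phi(z_{j,i})=x_ix_j-y_iy_j$ involve the same $x_i,y_i$ with $i\in J_1$, so a coefficient of a target monomial picks up cross-contributions from every summand rather than isolating one $\omega$-factor. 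Your ``iterate over all admissible $(p^\ast,q^\ast)$'' step therefore never cleanly decouples the coefficients $c_f$.

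The paper circumvents this not by any coefficient extraction but by the index-collapsing endomorphism $\Gamma_{i_1,i_2}$ (identifying the smallest two distinct $J_1$-indices of ${\cal I}_1$), which preserves all no-chain conditions and lowers $\mathfrak c({\cal I}_1)$ by one; this is why the induction is on $\mathfrak c({\cal I}_1)+k_4$ and not on $k_4$ alone. Only after showing $\Gamma_{i_1,i_2}(f)=0$ does the paper invoke Lemma \ref{lem:a5.2} to place $f$ in $I_{z,z}+I_{\omega,\omega}+I_{\omega,z}+\la\omega_{i_2,i_1}\ra$, and then use the $\omega$-$2$- and $\omega$-$3$-chain exclusions to kill $F_1,F_2,F_3$ and force $\omega_{i_2,i_1}\mid f_i$ for every $i$, at which point $k_4$ finally drops. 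Your reading of the $\omega$-$3$-chain condition as protecting the peeling step is also off: removing an entry from $\td{\msr I}(f)$ cannot create new chains, and the condition is instead used to rule out the terms $F_1$ from $I_{z,z}$ in the post-$\Gamma$ analysis. To repair your argument you would essentially have to rediscover the $\Gamma_{i_1,i_2}$ mechanism, since a plain induction on $k_4$ does not give you a handle to start the reduction.
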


\begin{proof}
According to Lemma 3.2 in \cite{ZX}, we have kown that $\phi(G_{k_1,k_2,k_3,0}^{{\cal I}_1,{\cal I}_3})$ is a linearly independent set in $\msr A$. Hence we suppose that $k_2=0$ and $k_4\geq 0$ in the following.

For convenience, we make the following convention:
\begin{equation} \omega_{s,t}=-\omega_{t,s} \qquad \for\;\;  s,t  \in J_1;t \geq s.\end{equation}
In particular, $\omega_{s,s}=0$.
For $i_1,i_2\in J_1$ with $i_1\neq i_2 $, we define an associative algebra endomorphism $\G_{i_1,i_2}$ of $\msr F$ (cf.(\ref{a5.15})) by
\begin{eqnarray}\label{a5.43}
& &\G_{i_1,i_2}(z_{j,i_1})=z_{j,i_2},\; \G_{i_1,i_2}(z_{j,i'})=z_{j,i'},\;
  \G_{i_1,i_2}(\omega_{i_1,t})= \omega_{i_2,t} ,\;\nonumber\\
& & \G_{i_1,i_2}(\omega_{s,i_1})= \omega_{s,i_2} ,\;
   \G_{i_1,i_2}(\omega_{s',t'})= \omega_{s',t'} \quad\for\;\;j\in J_3',\;
\nonumber\\
& & \; i_1\neq i'\in \ol{0,n_1},\; 1\leq t <i_1<s \leq n_1,\; i_1\not\in \{s',t'\}.
\end{eqnarray}

For any ${\cal I}\in (\wht J_1^m)$, we denote by
$\mfk c({\cal I})$ the number of distinct elements in ${\cal I}$. We want to prove that $\phi(G_{k_1,0,k_3,k_4}^{{\cal I}_1,{\cal I}_3})$ is linearly dependent by induction on $ \mfk c({\cal I}_1)+k_4$.

First $ \mfk c({\cal I}_1)+k_4 \geq 1$ by definition.
When $ \mfk c({\cal I}_1)=1$, $k_4=0$. We have kown that $\phi(G_{k_1,k_2,k_3,0}^{{\cal I}_1,{\cal I}_3})$ is a linearly independent set in $\msr A$, according to Lemma 3.2 in \cite{ZX}. Hence the lemma holds.

Next, we assume the lemma holds for $ \mfk c({\cal I}_1)+k_4<l \in\mbb N$. Consider $\mfk c({\cal I}_1)+k_4=l $ and $k_4>0$.

Suppose that $\phi(G_{k_1,0,k_3,k_4}^{{\cal I}_1,{\cal I}_3})$ is linearly dependent. Then there exists $0\neq f\in V_{k_1,0,k_3,k_4}^{{\cal I}_1,{\cal I}_3}$ such that $\phi(f)=0$.
Write
\begin{equation}
f = \sum_{i=1}^N a_i f_i \in V_{k_1,0,k_3,k_4}^{{\cal I}_1,{\cal I}_3} \cap \ker \td{\phi}
\end{equation}
where $f_i \in G_{k_1,0,k_3,k_4}^{{\cal I}_1,{\cal I}_3} $ and $0\neq a_i \in \mbb F$ for $i\in \ol{1,N}$.
For convenience, we assume that $f_1,\cdots,f_N$ are distinct.

Denote
\begin{equation} i_1 = \min \{ {\cal I}_1 \}\end{equation} and
\begin{equation}
i_2 =\min \{ i\in {\cal I}_1 \vert i>i_1 \}.
\end{equation}

According to (\ref{a5.43}), we have
\begin{equation}
\td{\phi}(\G_{i_1,i_2}(f)) =\td{\phi}(f)\vert_{x_{i_1}=x_{i_2},y_{i_1}=y_{i_2}}=0.
\end{equation}
Suppose that $\msr I\left(\G_{i_1,i_2}(f)\right)$ contains a 3-chain $(s_1,i_2) \prec (s_2,t_2) \prec (s_3,t_3)$ with $s_3\leq n+1$. Then $(s_1,i_1) \prec (s_2,t_2) \prec (s_3,t_3)$ is a 3-chain in $\msr I\left(f\right)$. Hence $\msr I\left(\G_{i_1,i_2}(f)\right)$ does not contain 3-chains.
Suppose that $\msr I\left(\G_{i_1,i_2}(f)\right)$ contains a $\omega$-2-chain $(s_1,i_2) \prec  (n+n_1+2-s_2,t_2)$ with $s_2>t_2$. Then $(s_1,i_1) \prec (n+n_1+2-s_2,t_2)$ is a $\omega$-2-chain in $\msr I\left(f\right)$. Hence $\msr I\left(\G_{i_1,i_2}(f)\right)$ does not contain $\omega$-2-chains.

Suppose that $\msr I\left(\G_{i_1,i_2}(f)\right)$ contains a $\omega$-3-chain $\{(s_1,t_1),(s_2,t_2) , (n+n_1+2-s_3,t_3)\}$ with $s_2>t_2$.
There are two cases. First, $t_1=i_2$, then $t_3=i_2$. Since $\msr I\left(f\right)$ does not contain $\omega$-2-chains, $\{ (s_1,i_1),(n+n_1+2-s_3,i_2)\} \not\subset \msr I\left(f\right)$. Then $(n+n_1+2-s_3,i_1) \in  \msr I\left(f\right)$, otherwise $\{(s_1,i_2),(s_2,t_2) , (n+n_1+2-s_3,i_2)\}$ is a $\omega$-3-chain in $\msr I\left(f\right)$. However, whether it is $\{(s_1,i_2),(s_2,t_2) , (n+n_1+2-s_3,i_1)\}$ or $\{(s_1,i_1),(s_2,t_2) , (n+n_1+2-s_3,i_1)\}$ in $\msr I\left(f\right)$, the are both $\omega$-3-chains. Hence $\msr I\left(\G_{i_1,i_2}(f)\right)$ does not contain $\omega$-3-chains.

Furthermore, we assume that ${\cal I}_1={\cal I}'_1 \cup \{i_1 \cdots,i_1\}$ with $i_1\not\in {\cal I}'_1$. Denote $ {\cal I}_1^\sharp=\msr I_1(\G_{i_1,i_2}(f_i))$, then we have $ {\cal I}_1^\sharp  ={\cal I}'_1 \cup \{ i_2,\cdots,i_2\}$ and $ \msr I_3(\G_{i_1,i_2}(f_i)) ={\cal I}_3 $ for $i\in \ol{1,N}$.

Therefore, $\G_{i_1,i_2}(f ) \in  V_{k_1,0,k_3,k_4}^{{\cal I}_1^\sharp,{\cal I}_3}$.

Note that
\begin{equation}\mfk c({\cal I}_1^\sharp)=\mfk c({\cal I}_1)-1.\end{equation}
and
\begin{equation}
\mfk c({\cal I}_1^\sharp)+ \deg_{\Omega}(\G_{i_1,i_2}(f ) ) =\mfk c({\cal I}_1)+k_4-1<l.
\end{equation}
By inductional assumption, $\phi(G_{k_1,0,k_3,k_4}^{{\cal I}_1^\sharp,{\cal I}_3})$ is linearly independent.
This yields
\begin{equation} \G_{i_1,i_2}(f )=0.\end{equation}

Write
\begin{equation} f_i=f_{i,i_1} f_{i,i_2} f_i',\end{equation}
where $f_{i,i_1}$ are monomials in $\mbb F[z_{j,i_1},\omega_{j',i_1}\vert n_2+1\leq j \leq n+1,i_1<j'\leq n_1]$, $f_{i,i_2}$ are monomials in $\mbb F[z_{j,i_2},\omega_{j',i_2}\vert n_2+1\leq j \leq n+1,i_2<j'\leq n_1]$ and $i_1,i_2 \not\in \msr I_1 (f_i')$.

Denote
\begin{equation}
I_{z,z} =\la z_{s,i_1}z_{s',i_2}-z_{s',i_1}z_{s,i_2}\vert s< s'\in J_3' \ra,
\end{equation}
\begin{equation}
I_{\omega,\omega} =\la \omega_{j,i_1}\omega_{j',i_2}-\omega_{j',i_1}\omega_{j ,i_2}\vert  i_2<j<j'\leq n_1 \ra,
\end{equation}
\begin{equation}
I_{\omega,z} =\la \omega_{j,i_1}z_{s,i_2}-\omega_{j,i_2}z_{s,i_1}\vert s \in J_3', i_2<j\leq n_1 \ra
\end{equation}
are ideals of $\msr F$.

Since there exits $c_1,c_2\in\mbb N$ such that $c_1=\deg(f_{i,i_1})$  and $c_2=\deg(f_{i,i_2})$ for $i\in \ol{1,N}$, we have
\begin{equation}
 f \in I_{z,z}+I_{\omega,\omega}+I_{\omega,z}+\la \omega_{i_2,i_1} \ra
\end{equation}
by Lemma \ref{lem:a5.2}.

Write
\begin{equation}
f= F_1+F_2+F_3+F_4,\;\;F_1\in I_{z,z},\;F_2\in I_{\omega,\omega},\;F_3 \in I_{\omega,z},F_4 \in \la \omega_{i_2,i_1} \ra .
\end{equation}
For $i=1,2,3,4$, we write
\begin{equation}
F_{i}=\sum_{j=1}^{m_i}F_{i,j}
\end{equation}
where $F_{i,j}$ are monomials in $\msr F$, and $\omega_{i_2,i_1}  \not\vert F_{i',j} $ for $i'=1,2,3$, $j \in \ol{1,m_{i'}}$.

Suppose that $F_1\neq 0$, then there exist $j  \in \ol{1,m_{i'}}$, $s_1<s_2\in J_3$ such that $ z_{s_1,i_1} z_{s_2,i_2}\vert F_{1,j}$. Note that $k_4>0$ and $\omega_{i_2,i_1}  \not| F_{1,j} $, there exists $t_0 \in \ol{i_2+1,n_1}$, such that $  \omega_{t_0,i_1} \vert F_{1,j}$.
Hence $\msr I(F_{j,1})$ contains a $\omega$-3-chain $\{ (s_1,i_1),(s_2,i_2), (n+n_1+2-t_0,i_1)\}$,  which leads to a contradiction. Therefore, $F_1 =0$.

Similarly, suppose that $F_2\neq 0$ or $F_3\neq 0$, it can be deduced that $\msr I(F_{2,j})$ or $\msr I(F_{3,j})$ has a $\omega$-2-chain for some $j$, respectively. Then $F_2=F_3=0$.

Hence $\omega_{i_2,i_1} \vert f_i$ for each $i\in \ol{1,N}$ and
\begin{equation}
\frac{f}{w_{i_2,i_1}} \in\; V_{k_1,0,k_3,k_4-1}^{{\cal I}_1\setminus \{i_1,i_2\},{\cal I}_3} \cap \ker \td{\phi}.
\end{equation}
By the inductional assumption, $\frac{f}{w_{i_2,i_1}} =0$. Then $f=0$.
Therefore, $\phi(G_{k_1,k_2,k_3,k_4}^{{\cal I}_1,{\cal I}_3})$ is linearly independent.

\end{proof}

\begin{proposition}
For $k\geq 1$, $|S_k'| = \dim (M_k/M_{k-1})$.
\end{proposition}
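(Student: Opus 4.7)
The plan is to invoke the spanning result of Lemma~\ref{lem:a5.1}, which gives $W_k' = \text{Span}\{\td\phi(S_k')\}$, together with (\ref{a5.13}), which identifies $\dim W_k'$ with $\dim(M_k/M_{k-1})$. It therefore suffices to show that $\td\phi(S_k')$ is linearly independent in $\msr A$; once this is known, $|S_k'| = \dim W_k' = \dim(M_k/M_{k-1})$ follows immediately.

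First I would partition $S_k'$ according to its index-multiset data. Every $f \in S_k'$ has the shape $f = gh\prod_{i=1}^{n_1} x_i^{\al_i}$ with $\sum_i\al_i = m_1$, $g \in \Omega^{(m_2)}$ and $h \in Z^{(k)}$; hence in the notation of (\ref{a5.35}) the parameters $k_1 = m_1$, $k_2 = 0$, $k_3 = k$, $k_4 = m_2$ are fixed, and the only varying information is the pair $(\mathcal{I}_1, \mathcal{I}_3) = (\msr I_1(f), \msr I_3(f))$. This produces a disjoint decomposition $S_k' = \bigsqcup_{(\mathcal{I}_1, \mathcal{I}_3)} \bigl(S_k' \cap G_{m_1,0,k,m_2}^{\mathcal{I}_1,\mathcal{I}_3}\bigr)$, where the union runs over the finitely many pairs giving non-empty intersection, and the chain-avoidance conditions in $S_k'$ are preserved automatically on each stratum.

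Next I would establish that the subspaces $\td\phi(\text{Span}(G_{m_1,0,k,m_2}^{\mathcal{I}_1,\mathcal{I}_3}))$ for distinct $(\mathcal{I}_1,\mathcal{I}_3)$ lie in different graded components of $\msr A$ with respect to the multigrading $r \mapsto \deg_{x_r} + \deg_{y_r}$, $r \in \ol{1,n}$. Using $\td\phi(z_{j,i}) = x_ix_j - y_iy_j$ and $\td\phi(\omega_{j,i}) = x_jy_i - x_iy_j$, each of the two monomials arising in either expansion carries bidegree $1$ in the paired variable $(x_r,y_r)$ for $r \in \{i,j\}$, so that every monomial appearing in $\td\phi(f)$ satisfies $\deg_{x_r}+\deg_{y_r}$ equal to the multiplicity of $r$ in $\mathcal{I}_1$ for $r \in J_1$ and equal to the multiplicity of $r$ in $\mathcal{I}_3$ for $r \in J_3$. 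In particular $(\mathcal{I}_1,\mathcal{I}_3)$ is recoverable from $\td\phi(f)$, so no cancellation can occur between different strata.

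Finally, within each stratum Lemma~\ref{lem:a5.3} already provides linear independence of $\td\phi(G_{m_1,0,k,m_2}^{\mathcal{I}_1,\mathcal{I}_3})$, and hence of its restriction to $S_k'$. Combining the three steps shows that $\td\phi(S_k')$ is a basis of $W_k'$, whence $|S_k'| = \dim W_k' = \dim(M_k/M_{k-1})$ by (\ref{a5.13}). The only subtle point is the multidegree argument distinguishing strata; it could in principle be spoiled by the alternating signs in $\td\phi(z_{j,i})$ and $\td\phi(\omega_{j,i})$, but since both resulting monomials carry the same $(\deg_{x_r}+\deg_{y_r})$ profile, the multigrading is unaffected and the argument goes through.
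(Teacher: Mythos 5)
Your proposal is correct and follows essentially the same route as the paper: use Lemma~\ref{lem:a5.1} for spanning, decompose $S_k'$ into the strata $G_{m_1,0,k,m_2}^{\mathcal{I}_1,\mathcal{I}_3}$, invoke Lemma~\ref{lem:a5.3} for independence within each stratum, and conclude via~(\ref{a5.13}). Your explicit multigrading argument (tracking $\deg_{x_r}+\deg_{y_r}$ through $\td\phi$ to show $(\mathcal{I}_1,\mathcal{I}_3)$ is recoverable from the image) simply makes precise the paper's terser observation that $\msr I_1$ and $\msr I_3$ are preserved, so no cancellation occurs across strata.
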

\begin{proof}
By Lemma \ref{lem:a5.1}, $\text{Span}\{ \td{\phi}(S'_k)\}=W_k'$ for $k \geq 0$.

We suppose that $f \in \text{Span}\{S_k'\} \cap \ker \td{\phi}$. Since
\begin{equation}  S_k'=  \bigcup_{{\cal I}_1  ,{\cal I}_3}G_{m_1,0,k,m_2}^{{\cal I}_1,{\cal I}_3},\end{equation}
we may assume that
\begin{equation}\label{a5.38}
f = \sum_{{\cal I}_1 \in \wht J_1^{k_1+k_3,k_4} ,{\cal I}_3\in \wht J_3^{k_2+k_3}} f_{{\cal I}_1  ,{\cal I}_3} \;,\;\;
\td{\phi}(f_{{\cal I}_1  ,{\cal I}_3})=\sum_j f'_{{\cal I}_1  ,{\cal I}_3,j},
\end{equation}  where $f_{{\cal I}_1  ,{\cal I}_3} \in V_{m_1,0,k,m_2}^{{\cal I}_1,{\cal I}_3}$ and $f'_{{\cal I}_1  ,{\cal I}_3,j}$ are monomials in $\msr A$.
Since $\msr I_1(f'_{{\cal I}_1  ,{\cal I}_3,j})={\cal I}_1$ and $\msr I_3(f'_{{\cal I}_1  ,{\cal I}_3,j})={\cal I}_3$ for each $j$, we have $f_{{\cal I}_1  ,{\cal I}_3}  \in \ker \td{\phi}$ for each ${\cal I}_1  ,{\cal I}_3$ in (\ref{a5.38}).

According to Lemma \ref{lem:a5.3}, $f_{{\cal I}_1  ,{\cal I}_3} =0$ for each ${\cal I}_1  ,{\cal I}_3$ in (\ref{a5.38}). Then $f=0$. That is, $\text{Span}\{S_k'\} \cap \ker \td{\phi} =\{0\}$.
Hence $\td{\phi}(S'_k)$ is a basis of $W_k'$.

We have known that $ W'_k \oplus M_{k-1}=M_k $ (cf. (\ref{a5.13})).
Then $|S_k'| = \dim (M_k/M_{k-1})$, for $k\geq 1$.
\end{proof}

For $p,q\in \mbb N$ and $p\geq q>0$, we denote
\begin{equation}
O_{p,q}=\{ (s,t) \vert  1\leq  s \leq p-t ,1\leq t\leq q \}
\end{equation}
as a ladder in $\{ (s,t) \vert  1\leq  s \leq p ,1\leq t\leq q \}$.
Denote
\begin{equation}
P_3(p,q,r)= \left\vert \left\{
\prod_{j=1}^r\omega_{s_j,t_j} \,\Big\vert \, (s_j,t_j) \in O_{p,q}\; \for\; j\in \ol{1,r}
\right\} \right\vert.
\end{equation}
Then $P_3$ is the Hilbert polynomial of the ladder-determinantal ideal, which can be calculated using  formulas in \cite{KDM,DGP,CM04}.
In particular, $P_3(p,1,r)= \binom{p+r-2}{r}$ for $p>1$.

For $f=ghf' \in S_k'$ with $g\in \Omega^{(m_2)}$, $h \in Z^{(k)}$, $f' =\prod_{i=1}^{n_1} x_i^{\al_r}$.
Since $\msr I(g)$ does not contain $\omega$-2-chains, we may assume that
\begin{equation}
g=\prod_{i=1}^{m_2} \omega_{s_i,t_i}
\end{equation}
with $1\leq s_{m_2} <\cdots< s_1 \leq n_1$ and $1\leq  t_{m_2} <\cdots< t_1 \leq n_1$.
Then the number of $g$ satisfying
\begin{equation}\label{a5.71}
s_1=\max\{ s \in J_1 \big\vert \omega_{s,t} \vert g \; \text{for some } t<s\} ,\;
t_1=\max\{ t\in J_1 \big\vert \omega_{s,t} \vert g \; \text{for some } s>t\}
\end{equation}
is
\begin{equation}
P_3(s_1,t_1,m_2-1).
\end{equation}

(1) Suppose that $\ell_1+\ell_2=0$, or equivalently, $m_1=0$.
Then $f'=1$.

For $i \in \ol{1,t_1-1}$, we have $i \not\in \msr I_1(h)$. Otherwise, there exists $j\in J_3$, such that $z_{j,i} \vert h$. Then $(j,i)\prec (n+n_1+2-s_1,t_1)$ is a $\omega$-2-chain in $\msr I(f)$.

We write
\begin{equation}\label{a5.73}
h=\prod_{(j,i)\in J_3\times J_1}
z_{j,i}^{\gamma_{j,i}}.
\end{equation}
For $h$ in (\ref{a5.73}) and $(i',i)\in O$, we denote
\begin{equation}
h|_{i,i'}
=\prod_{(j,i)\in J_3\times \ol{i,i'}} z_{j,i}^{\gamma_{j,i}}
\end{equation}

Suppose that $\msr I\left(h|_{t_1,s_1-1} \right)$ contains a 2-chain $(j_1,i_1)\prec (j_2,i_2)$.
Then $\msr I(f)$ contains a $\omega$-3-chain $\{(j_1,i_1),(j_2,i_2),(n+n_1+2-s_1,t_1) \}$ since $s_1>i_2$. Hence $\msr I\left(h|_{t_1,s_1-1} \right)$ does not contain 2-chains.

At the same time, for $g\in \Omega^{(m_2)}$ with (\ref{a5.71}) holding and no 2-chains in $\msr I(g)$, if $h\in Z^{k}$ satisfying $\ol{1,t_1-1} \cap \msr I_1(h)=\emptyset$, no 2-chains in $\msr I\left(h|_{t_1,s_1-1} \right)$ and no 3-chains in $\msr I(h)$. Thus $hg \in S_k'$.

Therefore, for $g\in \Omega^{(m_2)}$ with (\ref{a5.71}) holding, the number of $h\in Z^{k}$ is
\begin{equation}
P_1(n-n_1, n_1-t_1+1, s_1-t_1+1,k).
\end{equation}

Then
\begin{eqnarray}
|S_k|=
\sum_{(s_1,t_1)\in O}
P_1(n-n_1, n_1-t_1+1, s_1-t_1+1,k)P_3(s_1,t_1,m_2-1).
\end{eqnarray}

(2) Suppose that $\ell_1+\ell_2<0$, then $m_1>0$.
Denote
\begin{equation}\label{a5.75}
i_1= \max \{ i\in J_1  \vert \al_{i}>0  \} \in J_1.
\end{equation}
and
\begin{eqnarray}\label{a5.76}
i_2=
\left\{
    \begin{aligned}
    &  \max \{ i\in \ol{t_1,s_1-1} \vert \al_{i}>0  \} , &\text{ if } \{ i\in \ol{t_1,s_1-1} \vert \al_{i}>0  \} \neq \emptyset; \\
    &   t_1-1 , &\text{ if } \{ i\in \ol{t_1,s_1-1} \vert \al_{i}>0  \} =\emptyset.
    \end{aligned}
    \right.
\end{eqnarray}

For $i'\in \msr I_1(f')$, we have $i' \geq t_1$, otherwise $\msr I(f)$ cantains a $\omega$-2-chain $(n+1,i') \prec (n+n_1+2-s_1,t_1)$.

Set the function
\begin{eqnarray}\label{e4.184}
L(i_1,i_2,s_1,t_1)=
\begin{cases}
\binom { i_1+i_2-s_1-t_1 +m_1-1 }{m_1-2}& \text{ if }  i_1\geq s_1   \text{ and }t_1\leq i_2, \pse\\
\binom { i_1 -s_1  +m_1 -1 }{m_1-1} &\text{ if }  i_1\geq s_1   \text{ and } i_2-t_1+1=0, \pse\\
\binom { i_2 -t_1 +m_1 -1 }{m_1-1} &\text{ if }  i_1=i_2  \text{ and }t_1\leq i_2 , \pse\\
0 &\text{ else.}
\end{cases}
\end{eqnarray}

Then the number of $f'$ satisfying (\ref{a5.75}) adn (\ref{a5.76}) is
\begin{eqnarray}
L(i_1,i_2,s_1,t_1)
\end{eqnarray}

For $s_1,t_1,i_1,i_2\in \mbb N$, we denote
\begin{equation}\label{a5.81}
p_0 =\max \{ i_2,t_1\} ,
\quad
q_0=\max \{ i_1,s_1\} \geq p_0.
\end{equation}

We suppose that $  \ol{1,t_1-1} \cap \msr I_1(h) \neq \emptyset $ and $(j,i') \in J_3\times \ol{1,t_1-1} \cap \msr I(h) $. Then $\msr I(f)$ cantains a $\omega$-2-chain $ (j,i')\prec (n+n_1+2-s_1,t_1)$.
We suppose that $i_2>t_1$ $  \ol{1,i_2-1} \cap \msr I_1(h) \neq \emptyset $ and $(j,i') \in J_3\times \ol{1,i_2-1} \cap \msr I(h) $. Then $\msr I(f)$ cantains a $\omega$-3-chain $\{ (j,i'), (n+1,i_2),(n+n_1+2-s_1,t_1)\}$. Hence $  \ol{1,p_0-1} \cap \msr I_1(h) = \emptyset $.

Suppose that $\msr I\left(h|_{p_0,i_1-1 }\right)$ contains a 2-chain $(j,i) \prec (j',i')$. Then $\msr I(f)$ contains a 3-chain $(j,i)\prec (j',i')\prec (n+1, i_1)$.
Suppose that $\msr I\left(h|_{p_0,s_1-1} \right)$ contains a 2-chain $(j,i) \prec (j',i')$. Then $\msr I(f)$ contains a $\omega$-3-chain $(j,i)\prec (j',i')\prec (n+n_1+2-s_1, t_1)$.
Hence $\msr I\left(h|_{p_0,q_0-1} \right)$ does not cotain 2-chains.

On the contrary, as long as $h$ satisfies $  \ol{1,p_0-1} \cap \msr I_1(h) = \emptyset $, no 2-chains in $\msr I\left(h|_{p_0,q_0-1} \right)$ and no 3-chains in $\msr I(h)$, $ghf' \in S_k'$ holds.

Therefore, for $g\in \Omega^{(m_2)}$ with (\ref{a5.71}) holding and $f'$ satisfying (\ref{a5.75}) and (\ref{a5.76}), the number of $h\in Z^{(k)}$ is
\begin{equation}
P_1(n-n_1, n_1-p_0+1, q_0-p_0+1,k).
\end{equation}

Therefore
\begin{eqnarray}
|S_k|&=&
\sum_{(s_1,t_1)\in O}
\sum_{i_1=t_1 }^{n_1}
\sum_{i_2=t_1-1}^{ \min\{i_1,s_1-1\}}
L(i_1,i_2,s_1,t_1) P_3(s_1,t_1,m_2-1)
\nonumber\\
& &
\, P_1(n-n_1, n_1-\max \{ i_2,t_1\}+1, \max \{ i_1,s_1\}-\max \{ i_2,t_1\}+1,k).
\end{eqnarray}

In summary, we have
\begin{lemma}\label{leme4.8}
When $1<n_1=n_2<n $, $\ell_1=-m_1-m_2 $ and $\ell_2= m_2 $ with $m_1,m_2 \in \mathbb{N}$, we have
\begin{eqnarray}\label{e4.189}
\mfk p_{M,V_0} (k)&=&
\sum_{(s_1,t_1)\in O}
\sum_{i_1=t_1}^{n_1}
\sum_{i_2=t_1-1}^{ \min\{i_1,s_1-1\}}
L(i_1,i_2,s_1,t_1) P_3(s_1,t_1,m_2-1)
\nonumber\\
&  &
  P_1(n-n_1, n_1-\max \{ i_2,t_1\}+1, \max \{ i_1,s_1\}-\max \{ i_2,t_1\}+1,k).\qquad\;
\end{eqnarray}
when $m_1>0$.
\begin{eqnarray}\label{e4.190}
\mfk p_{M,V_0} (k)&=&
\sum_{(s_1,t_1)\in O}
P_1(n-n_1, n_1-t_1+1, s_1-t_1+1,k)P_3(s_1,t_1,m_2-1).\qquad
\end{eqnarray}
when $m_1=0$.
\end{lemma}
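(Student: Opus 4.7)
The plan is to refine the decomposition already sketched in the discussion preceding the lemma statement, pushing it to the point of a clean triple‑sum count. Since we have just shown $|S_k'| = \dim(M_k/M_{k-1}) = \mfk p_{M,V_0}(k)$, it suffices to partition the set $S_k'$ according to natural extremal statistics of its elements and to prove that the cardinalities of the pieces factor into the three counts $L$, $P_3$, $P_1$.

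First I will fix $(s_1,t_1) \in O$ and define $\Omega^{(m_2)}_{s_1,t_1}$ to be the set of $g = \prod_{i=1}^{m_2}\omega_{s_i,t_i} \in \Omega^{(m_2)}$ with $1\leq s_{m_2}<\cdots<s_1\leq n_1$, $1\leq t_{m_2}<\cdots<t_1\leq n_1$, and $(s_1,t_1)$ realised as in (4.71). The $\omega$-2-chain prohibition forces the indices to be strictly decreasing in both coordinates, and the extra constraint $s_i \geq t_i$ (inherited from $\omega_{s,s}=0$ and the antisymmetry convention) means that the remaining $m_2-1$ factors lie inside the ladder $O_{s_1,t_1}$. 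Thus $|\Omega^{(m_2)}_{s_1,t_1}| = P_3(s_1,t_1,m_2-1)$ by definition. Next, for $m_1>0$, I fix the pair $(i_1,i_2)$ defined by (4.75)--(4.76) for $f'=\prod_{i=1}^{n_1}x_i^{\alpha_i}$; the count of such $f'$ with $\sum\alpha_i=m_1$ is exactly the piecewise expression $L(i_1,i_2,s_1,t_1)$, which follows from the standard stars‑and‑bars formula applied separately to the segments $\ol{1,t_1-1}$ (which must be empty in support, forcing one of the two branches) and to the remaining range, because the $\omega$-2-chain rule forbids $\alpha_i>0$ for $i<t_1$.

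With $(s_1,t_1,i_1,i_2)$ fixed and $p_0,q_0$ as in (4.81), I then count the admissible $h\in Z^{(k)}$. The argument already presented shows that $gh f' \in S_k'$ if and only if $\msr I_1(h)\cap \ol{1,p_0-1} = \emptyset$, $\msr I(h|_{p_0,q_0-1})$ is 2-chain‑free, and $\msr I(h)$ is 3-chain‑free; conversely, under these restrictions every forbidden chain in $\msr I(ghf')$ would yield one of the chains already excluded. By translating indices so that rows start at $p_0$, this count is precisely $|S_1^{\ol{1,n-n_1},\ol{1,n_1-p_0+1}}(q_0-p_0+1,k)| = P_1(n-n_1,n_1-p_0+1,q_0-p_0+1,k)$. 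Multiplying the three independent counts and summing over $(s_1,t_1,i_1,i_2)$ gives (4.189); for $m_1=0$ the $f'$‑layer collapses ($f'=1$, so no $(i_1,i_2)$), $p_0=t_1$ and $q_0=s_1$, producing (4.190).

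The main obstacle is the bookkeeping for the two degenerate branches of $L(i_1,i_2,s_1,t_1)$: when $i_2=t_1-1$ (so no $\alpha_j$-support inside $\ol{t_1,s_1-1}$) and when $i_1=i_2<s_1$ (so no $\alpha_j$-support on or above $s_1$), the set $\ol{p_0,q_0-1}$ can collapse and the bound $q_0\geq p_0$ must be verified to fit the domain of $P_1$. I will check each of the three cases in (4.184) to ensure that the $P_1$-factor correctly encodes the residual 2‑chain constraint coming from the stronger of the two obstructions (the $\omega$-3‑chain through $(n+1,i_1)$ versus the $\omega$-3‑chain through $(n+n_1+2-s_1,t_1)$), and then invoke Lemma~3.3 of \cite{ZX} together with Lemma~\ref{lem:a5.1} to confirm that these local checks characterise membership in $S_k'$ exactly once each feature is specified.
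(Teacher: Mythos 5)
Your proposal matches the paper's own argument essentially step for step: partition $S_k'$ by the extremal $\omega$-pair $(s_1,t_1)$ of $g$, then (when $m_1>0$) by the extremal support indices $(i_1,i_2)$ of $f'$, show that admissibility of $h$ reduces to the no--2-chain/no--3-chain conditions on the truncated range $\ol{p_0,q_0-1}$ and hence depends only on $(s_1,t_1,i_1,i_2)$, and multiply the independent counts $P_3\cdot L\cdot P_1$. The degenerate branches of $L(i_1,i_2,s_1,t_1)$ and the domain bound $q_0\geq p_0$ you flag as requiring verification are exactly the case checks the paper carries out.
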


\begin{corollary}\label{core4.3}
When $1<n_1=n_2<n $, $\ell_1<0$ and $\ell_2>0$, we have
\begin{eqnarray}
lc(M)=
\begin{cases}
1-\ell_1-\ell_2& \text{ if }  n_1<n-1, \pse\\
\binom{ n+\ell_2 -3}{\ell_2 } &\text{ if } \ell_1+\ell_2=0   \text{ and } n_1=n-1, \pse\\
  \binom{n-\ell_1-2}{-\ell_1} &\text{ if }  \ell_1+\ell_2>0   \text{ and }n_1=n-1.
\end{cases}
\end{eqnarray}
\end{corollary}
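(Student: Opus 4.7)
Write $\ell_1=-m_1-m_2$ and $\ell_2=m_2$ with $m_1\in\mbb N$ and $m_2>0$, so that $\ell_1+\ell_2=-m_1$; thus $\ell_1+\ell_2=0\iff m_1=0$, while the third branch of the corollary (which should be read as $\ell_1+\ell_2<0$, since $\ell_1+\ell_2>0$ is vacuous under the standing hypotheses $\ell_1\leq 0$, $\ell_2\geq 0$) corresponds to $m_1>0$. The plan is to extract $lc(M)/lc(\mfk p_M)$ directly from the explicit formulas (\ref{e4.189}) and (\ref{e4.190}) of Lemma \ref{leme4.8} by singling out those quadruples $(s_1,t_1,i_1,i_2)$ whose summand has maximal $k$-degree. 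The $k$-dependence sits only in the factor $P_1(n-n_1,n_1-p_0+1,q_0-p_0+1,k)$ with $p_0=\max\{i_2,t_1\}$ and $q_0=\max\{i_1,s_1\}$; by Corollary \ref{core2.1} its degree equals $2n-p_0-q_0-2$ when $q_0>p_0$ and $2n-2p_0-3$ when $q_0=p_0$. Since $(s_1,t_1)\in O$ forces $s_1\geq 2$ and $t_1\geq 1$, one has $q_0\geq 2$ and $p_0\geq 1$.

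When $1<n_1<n-1$, the largest attainable $k$-degree is $2n-5$, realized exactly at $(p_0,q_0)=(1,2)$, i.e.\ $t_1=1$, $s_1=2$, $i_2\in\{0,1\}$, $i_1\in\{1,2\}$; this matches $\deg\mfk p_M=2n-5$, which follows from Proposition \ref{propf2.2} together with (\ref{c2.28}) giving $\mfk p_M=P_1(n_1,n-n_1,1,k)$. Evaluating (\ref{e4.184}) on the four candidate quadruples yields $L(1,0,2,1)=0$, $L(1,1,2,1)=L(2,0,2,1)=1$, $L(2,1,2,1)=m_1-1$, while $P_3(2,1,m_2-1)=\binom{m_2-1}{m_2-1}=1$. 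Hence the top-degree part of $\mfk p_{M,V_0}(k)$ equals $(m_1+1)\,P_1(n-n_1,n_1,2,k)$. The explicit $lc(P_1)$ formulas of the closing proposition of Section 2, combined with the binomial symmetry $\binom{n-4}{n_1-2}=\binom{n-4}{n-n_1-2}$ and the identity $\binom{n-3}{n_1-2}(n-n_1-1)=\binom{n-3}{n-n_1-2}(n_1-1)$, give $lc(P_1(n-n_1,n_1,2,k))=lc(P_1(n_1,n-n_1,1,k))=lc(\mfk p_M)$, whence $lc(M)/lc(\mfk p_M)=m_1+1=1-\ell_1-\ell_2$.

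When $n_1=n-1$, the collapse $n-n_1=1$ forbids 2-chains, so $P_1(1,a,\xi,k)=\binom{a+k-1}{a-1}$ is $\xi$-independent of degree $a-1$; the maximal $k$-degree $n-2$ is attained at $p_0=1$ and matches $\deg\mfk p_M=n-2$, with $lc(\mfk p_M)=1/(n-2)!$ because $\msr V(M)\cong\mbb P^{n-2}$. If $m_1=0$, (\ref{e4.190}) reduces the leading-coefficient sum to $\frac{1}{(n-2)!}\sum_{s_1=2}^{n-1}\binom{s_1+m_2-3}{m_2-1}$, which the hockey-stick identity (a special case of Lemma \ref{lemd2.4}(4)) collapses to $\binom{n+m_2-3}{m_2}=\binom{n+\ell_2-3}{\ell_2}$. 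If $m_1>0$, fix $t_1=1$ in (\ref{e4.189}) and sum over $(i_1,i_2)$ with $i_2\leq 1$: the three nonzero branches of $L$ combine, via Pascal's rule $\binom{j+m_1-1}{m_1-2}+\binom{j+m_1-1}{m_1-1}=\binom{j+m_1}{m_1-1}$ and hockey-stick, into $A_{s_1}=\binom{n-s_1+m_1}{m_1}$; a Vandermonde convolution via Lemma \ref{lemd2.4}(5) then reduces the outer sum $\sum_{s_1=2}^{n-1}A_{s_1}\binom{s_1+m_2-3}{m_2-1}$ to the stated value $\binom{n-\ell_1-2}{-\ell_1}$.

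The main obstacle is the careful bookkeeping in the first case, where one must verify that the quadruple $(i_1,i_2)=(1,0)$ falls outside all three nonzero branches of (\ref{e4.184}) and then establish the non-obvious off-diagonal equality $lc(P_1(n-n_1,n_1,2,k))=lc(P_1(n_1,n-n_1,1,k))$, which does not follow from the degree formula in Corollary \ref{core2.1} alone but relies on the explicit $lc(P_1)$ formulas of the closing proposition of Section 2; in the $n_1=n-1$ sub-case with $m_1>0$ the Vandermonde identification also requires tracking the endpoint $s_1=n-1$ of the outer sum so that the final hockey-stick collapse lands on a single binomial rather than a difference.
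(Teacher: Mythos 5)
Your handling of the $n_1<n-1$ branch and of the $n_1=n-1$, $\ell_1+\ell_2=0$ branch tracks the paper's proof closely: the same identification of the maximal-$k$-degree quadruples $(s_1,t_1,i_1,i_2)$, the same evaluation of $L$, the same hockey-stick collapse. One small difference is that the paper gets $lc(P_1(n-n_1,n_1,2,k))=lc(\mfk p_M)$ for free from the identity $P_1(m,n,\xi,r)=h_3(m,n,r)$ for $\xi\leq 2$ together with the transpose-symmetry $h_3(m,n,r)=h_3(n,m,r)$, whereas you route through the explicit leading-coefficient formulas of the closing proposition of Section 2; both work. You also correctly observe that the condition $\ell_1+\ell_2>0$ in the third branch must be read as $\ell_1+\ell_2<0$.

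There is, however, a genuine gap in that third branch ($n_1=n-1$, $m_1>0$). You assert that Lemma \ref{lemd2.4}(5) collapses
$\sum_{s_1=2}^{n-1}\binom{n-s_1+m_1}{m_1}\binom{s_1+m_2-3}{m_2-1}$ to $\binom{n-\ell_1-2}{-\ell_1}=\binom{n+m_1+m_2-2}{m_1+m_2}$. Substituting $r=s_1-2$ turns the sum into $\sum_{r=0}^{n-3}\binom{r+m_2-1}{m_2-1}\binom{(n-2)-r+m_1}{m_1}$, whereas (\ref{f2.70}) with $p=m_2-1$, $q=m_1$ and outer bound $n-2$ produces the claimed binomial only when $r$ runs to $n-2$, i.e.\ $s_1=n$. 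Since $(s_1,t_1)$ ranges over $O\subset J_1\times J_1=\ol{1,n-1}\times\ol{1,n-1}$, the index $s_1=n$ is absent, and the actual value is $\binom{n+m_1+m_2-2}{m_1+m_2}-\binom{n+m_2-3}{m_2-1}$. So your remark that "tracking the endpoint $s_1=n-1$" makes the collapse "land on a single binomial rather than a difference" is precisely where the argument breaks: the difference does not cancel. (For what it's worth, the paper's own proof writes $\sum_{s_1=2}^{n}$ here, which is not what Lemma \ref{leme4.8} supplies; at $n=3$, $n_1=n_2=2$, $\ell_1=-2$, $\ell_2=1$, formula (\ref{e4.189}) yields $\mfk p_{M,V_0}(k)=2(k+1)$, giving ratio $2$ rather than the stated $\binom{3}{2}=3$, which indicates the corollary's third branch as printed is itself in error.)
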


\begin{proof}
(1) When $|J_3|>1$ and $m_1=0$, we have $n_1-t_1+1 >1$ for $(s_1,t_1) \in 0$, and then
\begin{eqnarray}
\deg_k(P_1(n-n_1, n_1-t_1+1, s_1-t_1+1,k))\leq 2n - s_1-t_1 -2 \leq 2n-5
\end{eqnarray}
by Corollary \ref{core2.1}.
Moreover, \begin{equation}
\deg_k(P_1(n-n_1, n_1-t_1+1, s_1-t_1+1,k))= 2n-5
\end{equation}
if and only if $s_1=2$ and $t_1=1$.
Hence
\begin{eqnarray}
lc_k(\mfk p_{M,V_0} (k))&=&P_3(2,1,m_2-1)lc_k(P_1(n-n_1, n_1 , 2,k))
\nonumber \\
&=& lc_k(h_3(n-n_1, n_1  ,k))=lc_k(\mfk p_{M } (k)).
\end{eqnarray}
according to (\ref{e4.190}).\pse

(2)  When $|J_3|>1$ and $m_1>0$, we have $n_1-\max \{ i_2,t_1\}+1>1$ since $t_1<n_1 $ and $i_2<n_1$.
 According to Corollary \ref{core2.1},
\begin{eqnarray}
& & \deg_k(  P_1(n-n_1, n_1-\max \{ i_2,t_1\}+1, \max \{ i_1,s_1\}-\max \{ i_2,t_1\}+1,k))
\nonumber \\
&\leq&
2n-\max \{ i_2,t_1\}- \max \{ i_1,s_1\}-2 \leq 2n-5
\end{eqnarray}
Moreover, \begin{equation}
 \deg_k(  P_1(n-n_1, n_1-\max \{ i_2,t_1\}+1, \max \{ i_1,s_1\}-\max \{ i_2,t_1\}+1,k))= 2n-5
\end{equation}
if and only if $s_1=2$, $t_1=1$, $i_1\in \{1,2\}$ and $i_2\in \{0,1\}$.
Note that $P_3(2,1,m_2-1)=1$, then
\begin{eqnarray}
lc_k(\mfk p_{M,V_0} (k))&=&\sum_{i_1=1}^2 \sum_{i_2=0}^1 L(i_1,i_2,2,1)
\nonumber \\
&=& 1+m_1
\end{eqnarray}
by (\ref{e4.189}).\pse

(3) When $|J_3|=1$ and $m_1=0$, we have $n_1=n-1$ and
\begin{equation}
 P_1(n-n_1, n_1-t_1+1, s_1-t_1+1,k)
=\binom{n-t_1+k-1}{k} .
\end{equation}
Hence
\begin{eqnarray}
lc_k(\mfk p_{M,V_0} (k))&=&\frac{1}{(n-2)!}\sum_{s_1=2}^{n-1} P_3(s_1,1,m_2-1)
\nonumber \\
&=& \frac{1}{(n-2)!} \binom{ n+m_2-3}{m_2 }  = \binom{ n+m_2-3}{m_2 } lc_k(\mfk p_{M } (k)).
\end{eqnarray}\pse

(4) When $|J_3|=1$ and $m_1>0$, we have $n_1=n-1$ and
\begin{eqnarray}
& &   P_1(n-n_1, n_1-\max \{ i_2,t_1\}+1, \max \{ i_1,s_1\}-\max \{ i_2,t_1\}+1,k)
\nonumber \\
&=&
\binom{n -\max \{ i_2,t_1\}+k -1}{k}.
\end{eqnarray}
Hence
\begin{eqnarray}
& &lc_k(\mfk p_{M,V_0} (k))\nonumber \\&=&\frac{1}{(n-2)!}
\sum_{s_1=2}^{n}
\sum_{i_1=1}^{n-1}
\sum_{i_2=0}^{1}
L(i_1,i_2,s_1,1) P_3(s_1,1,m_2-1)
\nonumber \\
&=&\frac{1}{(n-2)!}
\sum_{s_1=2}^{n}
\left(1+
\sum_{i_1=s_1}^{n-1}
\left( L(i_1,0,s_1,1)+   L(i_1,1,s_1,1) \right )
\right)\binom{s_1+m_2-3}{m_2-1} \nonumber \\
&=& \frac{1}{(n-2)!}
\sum_{s_1=2}^{n}
\left(1+
\sum_{i_1=s_1}^{n-1}   \binom{i_1-s_1+m_1 }{m_1-1}
\right)\binom{s_1+m_2-3}{m_2-1} \nonumber \\
&=& \frac{1}{(n-2)!}\sum_{s_1=2}^{n}\binom{s_1+m_2-3}{m_2-1}\binom{n-s_1+m_1}{m_1} \nonumber \\
&=& \frac{1}{(n-2)!} \binom{n+m_1+m_2-2}{m_1+m_2} \nonumber \\
&=&   \binom{n-\ell_1-2}{-\ell_1} lc(\mfk p_{M})
\end{eqnarray}
by (\ref{e4.184}) and Lemma \ref{lemd2.4}-(5).

\end{proof}

In summary, we have the following theorem:
\begin{theorem}
Let $\mfk g=sl(n)$ and $M={\msr H}_{\la\ell_1,\ell_2\ra} $ be an infinite-dimensional $\mfk g$-module, under the conditions stated in Lemma \ref{lemc2.1}.
Then we have
\begin{eqnarray}\label{e4.202}
\frac{lc(M)}{lc(\mfk p_M)}=
\begin{cases}
1  & \text{ if }   n_1<n_2<n \text{ or } \ell_1=\ell_2=0; \pse\\
1-\ell_1-\ell_2& \text{ if }  1<n_1=n_2<n-1; \pse\\
\binom{ n+\ell_2 -2}{ \ell_2 } &\text{ if } n_1<n_2=n ; \pse\\
\binom{ n-\ell_2 -2}{-\ell_2 } &\text{ if } 1=n_1=n_2<n,   \ell_2\leq 0\text{ and }\ell_1+\ell_2 < 0 ; \pse\\
\binom{ n-\ell_1 -2}{-\ell_1 } &\text{ if } 1<n_1=n_2=n-1, \ell_1\leq 0\text{ and }\ell_1+\ell_2 < 0; \pse\\
\binom{ n+\ell_1 -3}{\ell_1 } &\text{ if } 1=n_1=n_2<n-1  \text{ and } \ell_2=-\ell_1<0  ; \pse\\
\binom{ n+\ell_2 -3}{\ell_2 } &\text{ if } 1<n_1=n_2=n-1  \text{ and }\ell_1 =-\ell_2<0 .
\end{cases}
\end{eqnarray}
\end{theorem}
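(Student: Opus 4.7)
The plan is to assemble this theorem from the case-by-case results already established in Section 4, using the algebra isomorphism $\Psi$ from (\ref{a2.25}) to cover the mirror cases. I would organize the verification along the disjunction in the theorem's piecewise formula. When $\ell_1=\ell_2=0$, the subspace $V_0=\mbb F$ is one-dimensional, so Theorem \ref{prope3.1} forces $\mfk p_{M,V_0}(k)=\mfk p_M(k)$ identically and the ratio is $1$. For $n_1<n_2<n$ with $\ell_1\leq 0$ or $\ell_2\leq 0$, Corollary \ref{core4.2} supplies $lc(M)=lc(\mfk p_M)$. For $n_1<n_2=n$, Lemma \ref{leme4.1} gives the ratio $\binom{n+\ell_2-2}{\ell_2}$. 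For $1<n_1=n_2<n-1$ with $\ell_1,\ell_2\leq 0$, the value $1-\ell_1-\ell_2$ comes from Lemma \ref{leme4.2}; the mixed-sign subcase $\ell_1<0,\ell_2>0$ is provided by Corollary \ref{core4.3}. The two boundary diagonal cases $n_1=n_2=1$ with $\ell_2\leq 0$ and $n_1=n_2=n-1$ with $\ell_1\leq 0$ come from Lemma \ref{leme4.2}, while the endpoint conditions $\ell_2=-\ell_1<0$ and $\ell_1=-\ell_2<0$ with $n_1=n_2=1$ or $n-1$ are furnished by Corollary \ref{core4.3} and its symmetric analog.

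The remaining configurations, in particular situations with $\ell_1>0,\ell_2\leq 0$ on either side of the diagonal, are handled by the symmetry $\Psi:{\msr H}^{n_1,n_2,n}_{\la\ell_1,\ell_2\ra}\to{\msr H}^{n-n_2,n-n_1,n}_{\la\ell_2,\ell_1\ra}$. By (\ref{a2.27}), $\Psi$ preserves $\dim(({\msr H}_{\la\ell_1,\ell_2\ra}^{n_1,n_2,n})_k)$ for all $k$, so $\mfk p_{M,V_0}$ is unchanged and therefore $lc(M)$ is invariant under the substitution $(n_1,n_2,\ell_1,\ell_2)\leftrightarrow(n-n_2,n-n_1,\ell_2,\ell_1)$. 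The same substitution leaves $\msr V(M)$ and hence $lc(\mfk p_M)$ invariant. This lets me swap the roles of the two half-ranges and reduce, for example, the pattern $\ell_1\geq 0,\ell_2\leq 0$ with $n_1<n_2<n$ to $\ell_2\geq 0,\ell_1\leq 0$, which is already handled by Corollary \ref{core4.2}.

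After invoking the lemmas and the symmetry, I would check that the cases in (\ref{e4.202}) are exhaustive and mutually consistent on overlaps—for instance, that Lemma \ref{leme4.2} and the symmetry agree for $n_1=n_2=n-1$, and that Corollary \ref{core4.3} reproduces the value of Lemma \ref{leme4.2} in the limit $\ell_1+\ell_2=0$. The main obstacle is not in this assembly step; it lies upstream, in the diagonal mixed-sign case $1<n_1=n_2<n$ with $\ell_1\ell_2<0$ treated in Corollary \ref{core4.3}. That analysis required introducing the $\omega$-$2$-chain and $\omega$-$3$-chain combinatorics on $\td{\msr I}(f)$ to parameterize an explicit basis of $\ol{M_k}$, and handling the boundary situations $n_1=1$ or $n_1=n-1$ via Lemma \ref{lemd2.4}, where the leading-coefficient identities $\binom{n+\ell_1-3}{\ell_1}$ and $\binom{n+\ell_2-3}{\ell_2}$ emerged from carefully summing the contributions in (\ref{e4.189}) and (\ref{e4.190}).
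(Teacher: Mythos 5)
Your proposal is correct and follows essentially the same route as the paper: assemble the piecewise formula from Lemma \ref{leme4.1}, Lemma \ref{leme4.2}, Corollary \ref{core4.2}, Corollary \ref{core4.3}, and use the linear isomorphism $\Psi$ (the paper actually introduces the close analogue $\Psi'$ in (\ref{a5.1}) for the $n_1=n_2$ case, since (\ref{a2.25}) is stated only for $n_1<n_2$, but the two maps agree when $J_2=\emptyset$) to transfer the $\ell_1>0,\ell_2<0$ cases to the $\ell_1<0,\ell_2>0$ cases handled directly. Your additional observation that the single-dimensional $V_0$ case follows from Theorem \ref{prope3.1}, and your flagging of overlap/consistency checks and of the upstream $\omega$-chain combinatorics as the real content, accurately reflect where the work resides; the assembly step itself is exactly what the paper does.
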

\begin{proof}
When $n_1=n_2$, similar to (\ref{a2.25}), we define an associative algebra isomorphism $\Psi': \msr A \rta  \msr A$ by
\begin{eqnarray}\label{a5.1}
& &\Psi'(x_i)=y_{n-i+1},\;\;\Psi'(y_i)=x_{n-i+1},\qquad i\in \ol{1,n}.
\end{eqnarray}
Similar to (\ref{a2.25})-(\ref{a2.27}), $\Psi': {\msr H}_{\la\ell_1,\ell_2\ra}^{n_1,n_1,n} \rta {\msr H}_{\la\ell_2,\ell_1\ra}^{n-n_1,n-n_1,n}$ is also a linear space isomorphism. And we have
\begin{equation}
\dim\left(({\msr H}_{\la\ell_1,\ell_2\ra}^{n_1,n_1,n})_k \right)=
\dim\left( ({\msr H}_{\la\ell_2,\ell_1\ra}^{n-n_1,n-n_1,n} )_k\right)
\end{equation}
for $k\geq 0$.
Then $lc\left({\msr H}_{\la\ell_1,\ell_2\ra}^{n_1,n_1,n}\right)=lc\left( {\msr H}_{\la\ell_2,\ell_1\ra}^{n-n_1,n-n_1,n} \right)$.
When $ n_1=n_2<n -1$, $\ell_1 > 0$ and $\ell_2<0$, we have
\begin{eqnarray}
lc(M)=
\begin{cases}
1-\ell_1-\ell_2& \text{ if }  n_1>1, \pse\\
\binom{ n+\ell_1 -3}{\ell_1 } &\text{ if } \ell_1+\ell_2=0   \text{ and } n_1= 1, \pse\\
  \binom{n-\ell_2-2}{-\ell_2} &\text{ if }  \ell_1+\ell_2>0   \text{ and }n_1= 1.
\end{cases}
\end{eqnarray}
according to Corollary \ref{core4.3}.

For other cases, Lemma \ref{leme4.1}, Lemma \ref{leme4.2}, Corollary \ref{core4.2} and Corollary \ref{core4.3} tell us that (\ref{e4.202}) holds.
\end{proof}

This completes the proof of Theorem 3 that we stated in the introduction.

 \end{document}